\newtheorem{thm}{Theorem}[section]
\newtheorem{cor}[thm]{Corollary}
\newtheorem{lem}[thm]{Lemma}
\newtheorem{prop}[thm]{Proposition}
\newtheorem{alg}[thm]{Algorithm}
\newtheorem{defn}[thm]{Definition}
\newtheorem{rem}[thm]{Remark}
\newcommand{\Z}{\mathbb Z}
\newcommand{\M}{\mathcal M}
\newcommand{\Int}{\text{Int\,}}
\DeclareMathOperator{\Spin}{Spin}
\DeclareMathOperator{\Up}{Up}
\DeclareMathOperator{\Down}{Down}
\newcommand{\Ob}{\text{Ob\,}}
\newcommand{\Mor}{\text{Mor\,}}
\newcommand{\To}{\longrightarrow}
\begin{document}

\title{Chord diagrams, contact-topological quantum field theory, and contact categories}

\author{Daniel V. Mathews}%

\date{}

\maketitle

\begin{abstract}

We consider contact elements in the sutured Floer homology of solid tori with longitudinal sutures, as part of the (1+1)-dimensional topological quantum field theory defined by Honda--Kazez--Mati\'{c} in \cite{HKM08}. The $\Z_2$ $SFH$ of these solid tori forms a ``categorification of Pascal's triangle'', and contact structures correspond bijectively to chord diagrams, or sets of disjoint properly embedded arcs in the disc. Their contact elements are distinct and form distinguished subsets of $SFH$ of order given by the Narayana numbers. We find natural ``creation and annihilation operators'' which allow us to define a QFT-type basis of each $SFH$ vector space, consisting of contact elements. Sutured Floer homology in this case reduces to the combinatorics of chord diagrams. We prove that contact elements are in bijective correspondence with comparable pairs of basis elements with respect to a certain partial order, and in a natural and explicit way. The algebraic and combinatorial structures in this description have intrinsic contact-topological meaning.

In particular, the QFT-basis of $SFH$ and its partial order have a natural interpretation in pure contact topology, related to the contact category of a disc: the partial order enables us to tell when the sutured solid cylinder obtained by ``stacking'' two chord diagrams has a tight contact structure. This leads us to extend Honda's notion of contact category to a ``bounded'' contact category, containing chord diagrams and contact structures which occur within a given contact solid cylinder. We compute this bounded contact category in certain cases. Moreover, the decomposition of a contact element into basis elements naturally gives a triple of contact structures on solid cylinders which we regard as a type of ``distinguished triangle'' in the contact category. We also use the algebraic structures arising among contact elements to extend the notion of contact category to a 2-category. 

\end{abstract}

\tableofcontents

\section{Introduction}

\subsection{Fun with chord diagrams}

\label{fun}

This paper contains elementary combinatorial results about chord diagrams which have applications to contact topology and sutured Floer homology. 

\begin{defn}
A \emph{chord diagram} $\Gamma$ is a set of disjoint properly embedded arcs (chords) in a disc $D^2$, considered up to homotopy relative to endpoints.
\end{defn}

Consider a chord diagram with $n$ chords; it has $2n$ marked points on the boundary of the disc, connected in pairs by disjoint chords. We declare one of those marked points on the boundary a base point; rotating a chord diagram will generally give a distinct chord diagram.

The chords of a chord diagram divide the disc $D$ into regions, which we alternately label as positive or negative. The labelling is induced from a labelling on the arcs of $\partial D^2$ between marked points; we declare that the arc immediately clockwise of the base point is positive, and the arc immediately anticlockwise is negative. See figure \ref{fig:1}.

\begin{rem}
The base point is always denoted by a solid red dot.
\end{rem}

\begin{figure}
\centering
\includegraphics[scale=0.3]{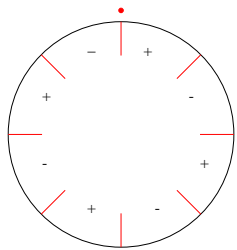}
\caption{Base point and sign of regions.} \label{fig:1}
\end{figure}

\begin{defn}[Euler class of chord diagram]
The \emph{(relative) euler class} $e$ of a chord diagram $\Gamma$ is the sum of the signs of the regions of $D - \Gamma$.
\end{defn}
That is, a $+$ region counts as $+1$ and a $-$ region counts as $-1$. It's not difficult to see that $e$ has opposite parity to $n$, and $|e| \leq n-1$.

We consider a certain vector space generated by chord diagrams.
\begin{defn}[Combinatorial $SFH$]
The $\Z_2$-vector space generated by chord diagrams of $n$ chords and euler class $e$, subject to the \emph{bypass relation} in figure \ref{fig:4}, is called $SFH_{comb}(T, n, e)$. The $\Z_2$-vector space generated by all chord diagrams of $n$ chords, subject to the same relation, is called $SFH_{comb}(T,n)$.
\begin{figure}
\centering
\includegraphics[scale=0.5]{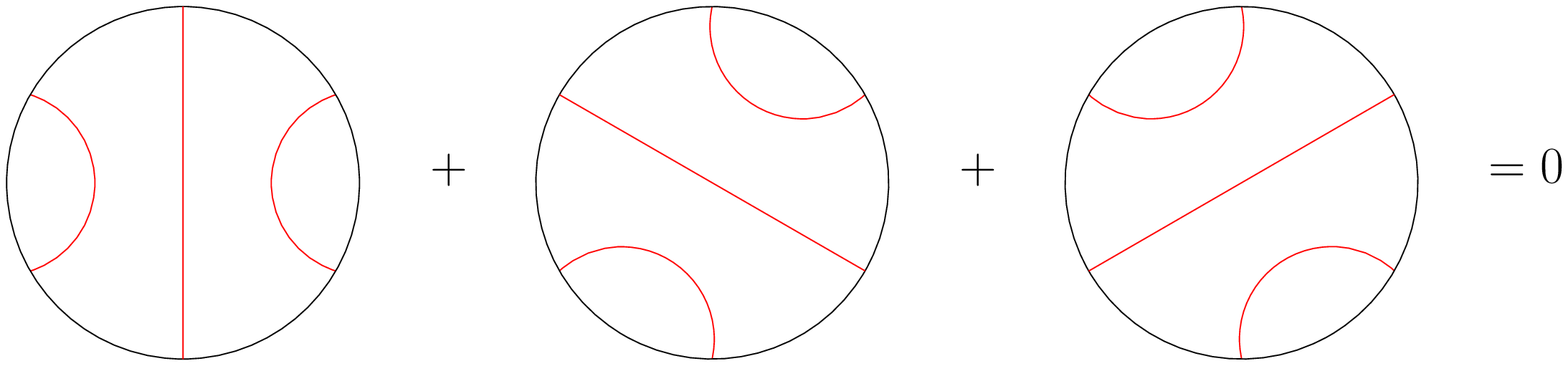}
\caption{The bypass relation.} \label{fig:4}
\end{figure}
\end{defn}

We will show that these combinatorial objects are isomorphic to $SFH(T,n,e)$ and $SFH(T,n)$, the sutured Floer homology of certain sutured manifolds.

The bypass relation means that if we have three chord diagrams $\Gamma_1, \Gamma_2, \Gamma_3$ which are all identical, except in a sub-disc $D' \subset D$, on which each of $\Gamma_1, \Gamma_2, \Gamma_3$ contains three arcs, respectively in the three arrangements shown in figure \ref{fig:4}, then we consider them to sum to zero.

The terminology ``bypass'' comes from contact geometry, but the idea of ``bypasses'' here can be considered purely as a type of surgery on a chord diagram.
\begin{defn}[Arc of attachment]
An \emph{arc of attachment}, or \emph{attaching arc} in a chord diagram $\Gamma$ is an embedded arc which intersects the chords of $\Gamma$ at precisely three points, namely, its two endpoints, and one interior point.
\end{defn}

We consider attaching arcs equivalent if they are homotopic through attaching arcs.

\begin{defn}[Bypass surgery / move]
Let $c$ be an attaching arc in $\Gamma$.
\begin{enumerate}
\item 
\emph{Upwards bypass surgery} $\Up_c$ along $c$ on $\Gamma$ removes a small disc neighbourhood of $c$ and replaces it with another disc with chords as shown in figure \ref{fig:2a} (right).
\item
\emph{Downwards bypass surgery} $\Down_c$ along $c$ on $\Gamma$ also removes a small neighbourhood of $c$, but now replaces it as shown in figure \ref{fig:2a} (left).
\end{enumerate}
We also call bypass surgeries \emph{bypass moves}.
\end{defn}

\begin{figure}
\centering
\includegraphics[scale=0.5]{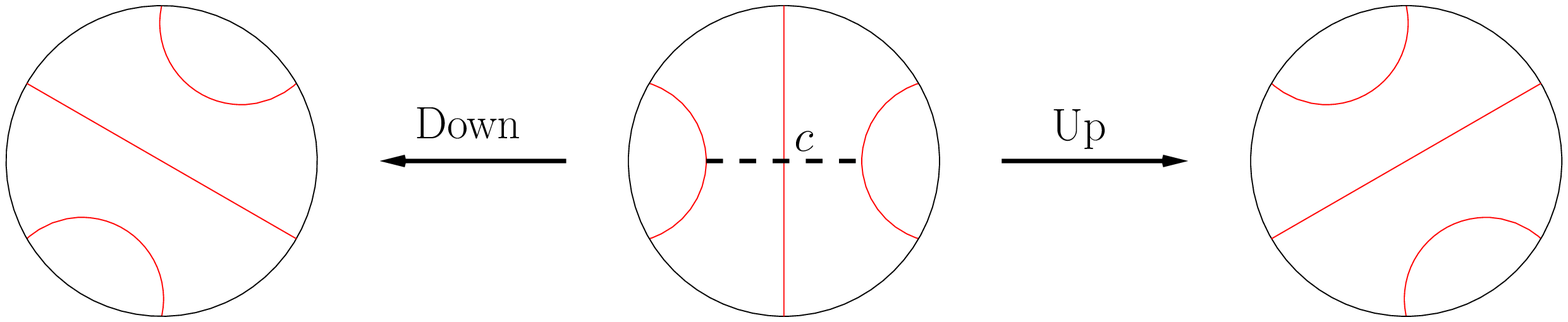}
\caption{Upwards and downwards bypass moves.} \label{fig:2a}
\end{figure}

Clearly, chord diagrams related by bypass surgery naturally come in triples, and such triples are defined to sum to $0$ in $SFH_{comb}$. In particular, in $\Gamma' = \Up_c \Gamma$, there is an attaching arc $c'$ such that $\Down_{c'} \Gamma' = \Gamma$, and in $\Gamma'' = \Down_c \Gamma$, there is an attaching arc $c''$ such that $\Up_{c''} \Gamma'' = \Gamma$. Bypass moves perform a local $60^\circ$ rotation on part of a chord diagram (see figure \ref{fig:49}); ``three bypass moves is the identity''. This observation, as we will see, is the source of much interesting algebraic and categorical structure.
\begin{defn}[Bypass triple]
\label{def_bypass_triple}
Three chord diagrams $\Gamma$, $\Gamma'$, $\Gamma''$ form a \emph{bypass triple} if there exists an attaching arc $c$ on $\Gamma$ such that
\[
\Gamma' = \Up_c \; \Gamma, \quad \Gamma'' = \Down_c \; \Gamma.
\]
\end{defn}
Clearly the existence of such an attaching arc on $\Gamma$ is equivalent to existence of such arcs on $\Gamma'$ or $\Gamma''$. If two distinct chord diagrams are related by a bypass move, then there is a unique third chord diagram forming a bypass triple.

\begin{figure}
\centering
\includegraphics[scale=0.5]{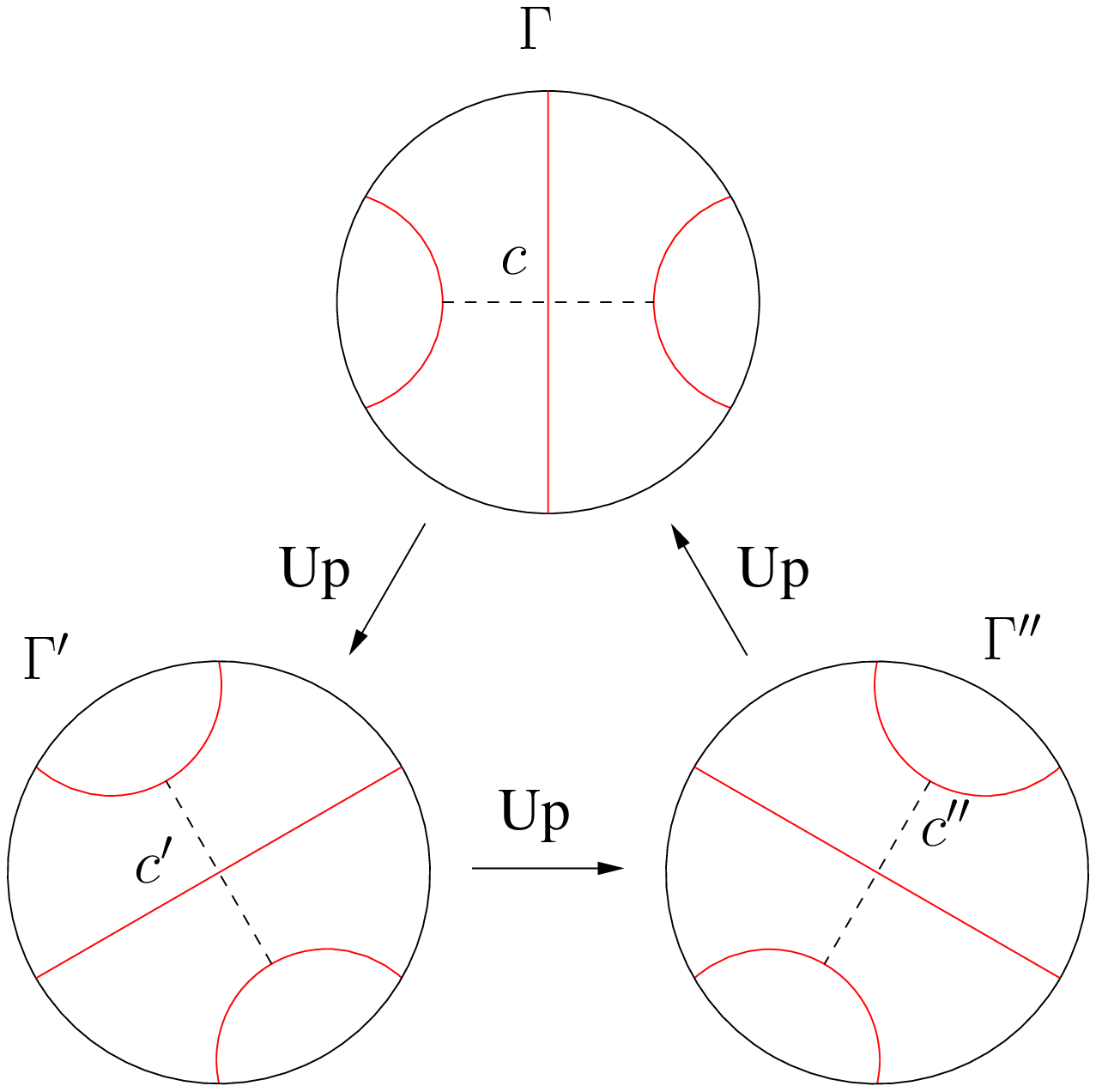}
\caption{Bypass triple.} \label{fig:49}
\end{figure}

In general a bypass move on a chord diagram need not produce a chord diagram; it may produce a closed loop. A diagram with a closed loop is considered to be zero in $SFH_{comb}$. In this case, the effect of the bypass move in the opposite direction leaves the chord diagram unchanged; the bypass relation still holds and is of the form $x + x + 0 = 0$.

We will give a nice basis for each vector space $SFH_{comb}(T, n, e)$ and show that when chord diagrams are decomposed into a sum of basis elements, this decomposition has certain nice properties. There will be a partial order on this basis, and chord diagrams will correspond bijectively with pairs of basis elements which are comparable with respect to this partial order.

For instance, consider $SFH_{comb}(T, 4, -1)$. This vector space is spanned by the $6$ chord diagrams which have $4$ chords and relative euler class $-1$: see figure \ref{fig:5}.
\begin{figure}
\centering
\includegraphics[scale=0.4]{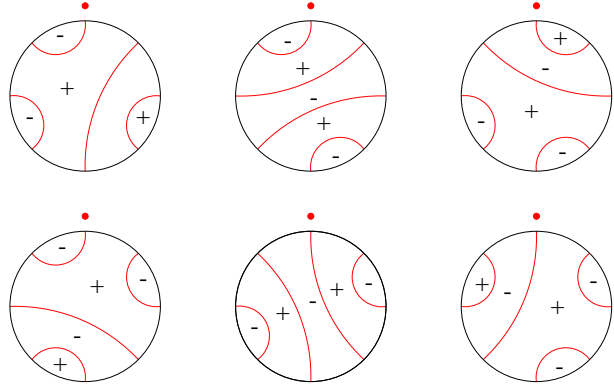}
\caption{Chord diagrams in $SFH(T,4,-1)$.} \label{fig:5}
\end{figure}

We will show, and it was essentially known previously in \cite{HKM08}, that
\[
 SFH_{comb}(T,4,-1) = \Z_2^3.
\]
Our basis will consist of the three chord diagrams in the top row of figure \ref{fig:5}; we will see later that they are naturally labelled with words 
\[
 --+, \quad -+-, \quad +--.
\]
In general, the basis for $SFH_{comb}(T, n+1, e)$ will be labelled by words on $\{-,+\}$ of length $n$ whose symbols sum to $e$, and the number of such words is $\binom{n}{k}$, where $k = (n+e)/2$. We will write $v_w$ to denote the basis element labelled by $w$.

On this set of words, there is a partial order defined by ``all minus signs move right (or stay where they are)''. (In this simple case, it is actually a total order; but this will not be true for words of longer length. For instance, $-++-$ and $+--+$ are not comparable.) Thus,
\[
 --+ \preceq -+- \preceq +--.
\]
In terms of our basis, the 6 chord diagrams (arranged as in figure \ref{fig:5}) are
\[
\begin{array}{ccc}
 v_{--+} & v_{-+-} & v_{+--} \\ v_{--+} + v_{-+-} & v_{--+} + v_{+--} & v_{-+-} + v_{+--}.
\end{array}
\]
Moreover, there are six pairs of words $w_1, w_2$ which are \emph{comparable} with respect to $\preceq$, namely three ``doubles''
\[
 (--+, --+), \quad (-+-, -+-), \quad (+--, +--)
\]
and three less trivial pairs
\[
 (--+, -+-), \quad (--+, +--), \quad (-+-, +--).
\]
And in fact, for each pair, there is precisely one chord diagram having that pair as its first and last basis element. That is, there is a bijection
\[
 \left\{ \text{Chord diagrams} \right\} \leftrightarrow \left\{ \text{Comparable pairs of words} \right\}
\]
given by taking a chord diagram to the first and last basis elements in its basis decomposition.

This is a general fact, and our main theorem. Moreover, this bijection, and its inverse, can be described explicitly. Given a chord diagram, we can algorithmically extract its first and last basis elements, and they are comparable. Conversely, given two comparable words, we can algorithmically produce the chord diagram for which those words give its first and last basis elements. We will also say more about the set of basis chord diagrams that occur in a given chord diagram; as well as relationships between the various $SFH_{comb}(T, n, e)$.

An information-theoretic note from this result is that a chord diagram of $4$ chords can be encoded in $6$ bits, with the redundancy that the first $3$ bits form a word lesser than the second $3$ bits, with respect to $\preceq$. In general, a chord diagram of $n+1$ chords can be encoded in $2n$ bits, with a similar redundancy.

This particular example, with $4$ chords and $e=-1$, is actually the essence of Honda's octahedral axiom (see \cite{HonCat}, also section \ref{sec_octahedral} below).

While this result is combinatorial, the motivation, notation, and applications come from the theory of sutured Floer homology, with its connections to topological quantum field theory and contact topology.

\subsection{Contact elements in $SFH$ of solid tori}

\subsubsection{Computation of $SFH$}

\label{sec_intro_SFH}

We study the contact elements in the sutured Floer homology of a very simple sutured manifold: the solid torus $(T,n)$ with $2n$ longitudinal sutures. We now give an overview of this aspect of our results.

\label{sec_SFH_contact_intro}
\label{sec_SFH_overview}

The theory of sutured Floer homology was introduced by Juh{\'a}sz in \cite{Ju06}. It is an extension of Heegaard Floer homology, developed in \cite{OS04Prop, OS04Closed, OSContact, OS06}. We refer to those papers for background, and recall some facts.

Sutured Floer homology is an invariant of a \emph{balanced sutured manifold}. A sutured manifold $(M, \Gamma)$ is \emph{balanced} if it satisfies the following conditions: $M$ has no closed components; $\chi(R_+(\Gamma)) = \chi(R_-(\Gamma))$; and every boundary component of $M$ has an annular suture. (In particular, there are no toric sutures.)

\label{sec_spin-c}

Throughout this paper, we take $\Z_2$ coefficients, so that $SFH(M, \Gamma)$ is a $\Z_2$-vector space.

The vector space $SFH(M, \Gamma)$ splits as a direct sum over spin-c structures, as in \cite{Ju06}:
\[
 SFH(M, \Gamma) = \bigoplus_{\mathfrak{s} \in \Spin^c(M, \Gamma)} SFH(M, \Gamma, \mathfrak{s}).
\]

\label{sec_ct_elts_TQFT}

A contact structure $\xi$ on $(M, \Gamma)$ gives rise to a \emph{contact element} or \emph{contact class} $c(\xi)$ in $SFH(-M, -\Gamma)$. (Here the minus signs refer to reversed orientation.) A contact structure on a sutured 3-manifold $(M, \Gamma)$ is required to be compatible with the sutures $\Gamma$: $\partial M$ must be convex with dividing set $\Gamma$, and the positive/negative regions of $\partial M$ as a convex surface must agree with the positive/negative regions arising from the sutures. With $\Z$-coefficients, there is a $(\pm 1)$ ambiguity; but with $\Z_2$ coefficients the contact element is a well-defined single element. Here we follow the definition of Honda--Kazez--Mati\'{c} in \cite{HKM06ContClass} and only consider $\Z_2$ coefficients. The contact class in this case is an extension of the definition of a contact class in the Heegaard Floer homology of a closed manifold, as defined in \cite{OSContact} and reformulated in \cite{HKMContClass}.

The contact class is known to satisfy various properties, also noted in \cite{HKM06ContClass}: for instance, $c(\xi) = 0$ when $\xi$ is overtwisted, or when the partial monodromy of a corresponding partial open book is not ``right-veering'' (see \cite{HKM05, HKM06RV}).

In \cite{HKM08}, Honda--Kazez--Mati\'{c} proved that $SFH$ has some of the properties of a topological quantum field theory (TQFT). In particular, an inclusion of sutured manifolds, together with an ``intermediate'' contact structure, induces a map on $SFH$. We give a $\Z_2$ version.
\begin{thm}[Honda--Kazez--Mati\'{c} \cite{HKM08}]
Let $(M', \Gamma')$ be a sutured submanifold of $(M, \Gamma)$ lying in $\Int M$, and let $\xi$ be a contact structure on $(M - \Int M', \Gamma \cup \Gamma')$. Let $M - \Int M'$ have $m$ components which are isolated, i.e. components which do not intersect $\partial M$. Then $\xi$ induces a natural map
\[
 \Phi_\xi: SFH(-M', -\Gamma') \To SFH(-M, -\Gamma) \otimes V^m,
\]
where $V = \Z_2 \oplus \Z_2 = \widehat{HF}(S^1 \times S^2)$. This map has the property that for any contact structure $\xi'$ on $(M', \Gamma')$,
\[
 \Phi_\xi \left( c(\xi') \right) = c(\xi' \cup \xi) \otimes x^{\otimes m},
\]
where $x$ is the contact class of the standard tight contact structure on $S^1 \times S^2$.
\end{thm}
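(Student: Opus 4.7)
The plan is to construct $\Phi_\xi$ using partial open book decompositions of contact sutured manifolds, following the framework developed by Giroux and Honda--Kazez--Mati\'{c}. First I would fix an arbitrary auxiliary contact structure $\xi_0$ on $(M', \Gamma')$ together with a compatible partial open book $(S_0, P_0, h_0)$, and then use the contact structure $\xi$ on the shell $N := M - \Int M'$ to extend $(S_0, P_0, h_0)$ to a partial open book $(S, P, h)$ for the glued contact structure $\xi_0 \cup \xi$ on $(M, \Gamma)$. Concretely, a bypass decomposition for $(N, \Gamma \cup \Gamma', \xi)$ relative to the dividing sets yields a sequence of basis arcs to adjoin to $P_0$, together with a prescribed extension of the monodromy, producing $(S, P, h)$ as an ``extension'' of $(S_0, P_0, h_0)$.

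At the level of the associated sutured Heegaard diagrams, the extension naturally includes the generators of the $M'$-complex into those of the $M$-complex, and I would define $\Phi_\xi$ as the corresponding chain map (inclusion on generators, extended by the polygon counts introduced by the new curves and handle attachments). Isolated components of $N$, those not meeting $\partial M$, require separate treatment: closing up such a component in the closed double used to compute $SFH$ introduces a connected sum with $S^1 \times S^2$, and this is precisely where the $V^m$ tensor factor, with $V = \widehat{HF}(S^1 \times S^2) = \Z_2 \oplus \Z_2$, enters the codomain.

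The main obstacle is proving well-definedness: independence of the choice of auxiliary $\xi_0$, independence of the bypass decomposition of $\xi$, and independence of stabilizations of the partial open books. I would handle this by showing that any two extensions are related by a sequence of moves---Giroux-type stabilizations, contact isotopies, and reordering of commuting disjoint bypasses---each of which induces a canonical identification on $SFH$; this is in the same spirit as, and technically parallel to, the proof in \cite{HKM08} that the contact class itself is well defined. Once well-definedness is in hand, the naturality statement $\Phi_\xi(c(\xi')) = c(\xi' \cup \xi) \otimes x^{\otimes m}$ is essentially tautological: by the Honda--Kazez--Mati\'{c} formulation, $c(\xi')$ is represented by the distinguished generator of the partial open book complex, and by construction $\Phi_\xi$ sends this to the distinguished generator of the extended partial open book (which represents $c(\xi' \cup \xi)$), tensored with the standard contact-class generator $x \in V$ for each isolated $S^1 \times S^2$ summand.
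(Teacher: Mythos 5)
This statement is not proved in the paper at all: it is quoted verbatim as background, attributed to Honda--Kazez--Mati\'{c} \cite{HKM08}, and used later only as a black box (``TQFT-inclusion''). So there is no internal proof to compare against, and your proposal has to be judged as a proof of the HKM theorem itself. Read that way, it follows the right general strategy (partial open books, extending a diagram for $(M',\Gamma')$ by a contact-adapted piece for $\xi$ on $M-\Int M'$, sending a generator to itself together with the distinguished tuple on the new curves), but the step you defer --- well-definedness --- is the entire content of the theorem, and the way you propose to handle it does not work as stated. If you define $\Phi_\xi$ using a partial open book compatible with an auxiliary contact structure $\xi_0$ on $(M',\Gamma')$, then independence of $\xi_0$ is not ``technically parallel'' to the well-definedness of the contact class: there the contact structure is fixed and only the open book varies, so Giroux stabilization suffices, whereas partial open books of two different contact structures on $M'$ are in general not related by stabilizations, contact isotopies, or reordering of disjoint bypasses. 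Your list of moves therefore cannot connect the different choices; what is actually needed is to show that the chain maps defined on the special diagrams commute with the (arbitrary) Heegaard moves identifying the various admissible diagrams for $(-M',-\Gamma')$ --- equivalently, to set the construction up, as HKM do, by gluing a contact-compatible diagram for $M-\Int M'$ onto an \emph{arbitrary} admissible diagram for $M'$, so that no auxiliary $\xi_0$ enters. Note also that $\Phi_\xi$ must be defined on all of $SFH(-M',-\Gamma')$, not merely on contact classes, so this independence cannot be sidestepped by the ``tautological'' argument you give for the contact-class property.

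A smaller inaccuracy: the $V^m$ factor does not come from ``closing up isolated components in a closed double'' --- sutured Floer homology is not computed from a closed double. The standard mechanism is that each isolated component of $M-\Int M'$ must be joined to $\partial M$ (by a tube/contact $1$-handle, or equivalently an extra $\alpha$--$\beta$ pair meeting in two points in the extended diagram), and each such tube contributes a tensor factor $V=\widehat{HF}(S^1\times S^2)$. Your conclusion is right, but the justification as written would not survive scrutiny.
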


\label{sec_SFH_solid_tori}

A contact structure on $(T,n)$ can be described by examining the dividing set on a convex meridional disc, which is a chord diagram of $n$ chords. The tight contact structures on $(T,n)$, up to isotopy rel boundary, are in bijective correspondence with chord diagrams of $n$ chords (see \cite{Hon00I}, but note \cite{Hon01}, also \cite{Hon00II, Hon02, Gi00, GiBundles}; we also prove this as part of our study of bypasses, as proposition \ref{prop_tight_ct_str_solid_torus}). The relative euler class of a contact structure on $(T,n)$ (evaluated on a meridian disc) is the relative euler class of the corresponding chord diagram. And, in a notationally-executed blatant cover-up of the unpleasant reversals of orientation, we will write $SFH(T,n)$ to denote the $SFH$ of the appropriately orientation-reversed manifold. The orientation reversal is never an issue in the following, so hopefully the abuse of notation will not cause too much confusion.

Juh{\'a}sz \cite{Ju08} and Honda--Kazez--Mati\'{c} \cite{HKM06ContClass, HKM08} have proved theorems to calculate $SFH$ when two sutured manifolds are glued together in certain ways.  One immediate corollary of these theorems, given in \cite{HKM08}, is a computation of $SFH(T,n)$. Moreover, euler classes of contact elements correspond to spin-c structures.  
\begin{thm}[Honda--Kazez--Mati\'{c} \cite{HKM08}, Juh{\'a}sz \cite{Ju08}]
$SFH(T, n+1) = \Z_2^{2^{n}}$ and splits as a direct sum over spin-c structures
\[
 SFH(T,n+1) = \Z_2^{\binom{n}{0}} \oplus \cdots \oplus \Z_2^{\binom{n}{n}}.
\]
If $\xi$ is a contact structure on $(T, n+1)$ with relative euler class $e$, then its contact element $c(\xi)$ lies in the summand $\Z_2^{\binom{n}{k}}$, where $k = (e+n)/2$.
\end{thm}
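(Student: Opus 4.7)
I would compute $SFH(T,n+1)$ by induction on $n$, with the bypass exact triangle of Honda--Kazez--Mati\'c as the main ingredient, then read off the Euler-class/spin-c correspondence from the definition of the contact class.

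\emph{Base case.} For $n=0$, the sutured manifold $(T,1)$ is the solid torus with two longitudinal sutures. A direct sutured Heegaard diagram computation (or, equivalently, the observation that this manifold carries a unique tight contact structure, whose contact class must generate) gives $SFH(T,1)=\Z_2$, which agrees with $\binom{0}{0}=1$.

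\emph{Inductive step.} For the step from $n$ to $n+1$, I would invoke the bypass exact triangle: attaching a bypass to the boundary torus of $(T,n+2)$ replaces its dividing set by one of three dividing sets forming a bypass triple, and this produces an exact triangle in $SFH$ of the three resulting sutured solid tori. By choosing the attaching arc on $\partial T$ so that two of the three resulting dividing sets have only $2(n+1)$ longitudinal sutures (with Euler classes on the meridian disc shifted by $\pm 1$), the three corners of the triangle become $SFH(T,n+2,e)$, $SFH(T,n+1,e+1)$, and $SFH(T,n+1,e-1)$. This yields the upper bound
\[
\dim SFH(T,n+2,e) \;\leq\; \dim SFH(T,n+1,e+1)+\dim SFH(T,n+1,e-1),
\]
which is exactly the shape required for Pascal's rule to close the induction.

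\emph{Main obstacle.} The crucial step is showing that the connecting homomorphism in this triangle vanishes, so that the above inequality is an equality. I would establish this by producing, for each Euler class $e$, a matching lower bound of $\binom{n+1}{(n+1+e)/2}$ linearly independent elements of $SFH(T,n+2,e)$. These would be the contact classes of a list of explicitly chosen tight contact structures on $(T,n+2)$, labelled by words in $\{-,+\}^{n+1}$ with sign-sum equal to $e$; their linear independence modulo the bypass relation can be verified combinatorially and is essentially the content of the basis discussion in section \ref{fun}. Together the two bounds give Pascal's recursion, and the induction yields $\dim SFH(T,n+1,e)=\binom{n}{(n+e)/2}$ and total dimension $2^n$. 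Finally, for any contact structure $\xi$ on $(T,n+1)$, the spin-c class of $c(\xi)$ is determined by the relative first Chern class $c_1(\xi,\partial)$, whose pairing with a meridian disc is precisely the relative Euler class of the associated chord diagram; this identifies the Euler-class grading with the spin-c grading and places $c(\xi)$ in the claimed summand $\Z_2^{\binom{n}{k}}$ with $k=(n+e)/2$.
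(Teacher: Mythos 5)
You should first note that the paper does not prove this statement at all: it is imported wholesale from \cite{HKM08} and \cite{Ju08}, where the computation is done via the TQFT-type gluing maps (respectively, sutured manifold decompositions along product annuli/discs), not via an exact triangle. So your route is genuinely different, and its overall shape -- an upper bound at each corner of a triangle by exactness, a matching lower bound by exhibiting linearly independent contact classes, Pascal's recursion to close the induction, and the identification of the spin-c grading with the relative Euler class via $c_1$ evaluated on the meridian disc -- is a sensible plan. (Your identification of the triple is also correct: a bypass move on three adjacent longitudinal sutures of $(T,n+2)$ does yield, for both the up and the down member, $2(n+1)$ longitudinal sutures, so the triangle has the shape you describe.)

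However, two steps are genuinely unsupported as written. First, the ``bypass exact triangle of Honda--Kazez--Mati\'c'' is not in \cite{HKM08}; it belongs to Honda's unpublished contact-category circle of ideas \cite{HonCat} and to later literature, so you are invoking a heavier input than the theorem itself, and you would need both to verify that its proof does not presuppose the computation of $SFH(T,n)$ and to justify the Euler-class refinement you use (that the triangle restricts to $SFH(T,n+1,e-1)\to SFH(T,n+2,e)\to SFH(T,n+1,e+1)$, which requires tracking how the bypass-layer maps shift relative $c_1$). Second, and more seriously, the claim that linear independence of your $\binom{n+1}{k}$ contact classes ``can be verified combinatorially and is essentially the content of section \ref{fun}'' is wrong in principle: the combinatorics of chord diagrams modulo the bypass relation only bounds the span from above and says nothing about independence inside the actual group $SFH(T,n+2,e)$. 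The independence requires SFH-theoretic input, namely the nonvanishing of the vacuum class on $(T,1)$ (lemma \ref{lem_vacuum}) together with the gluing maps of \cite{HKM08}: for each chosen word one applies a composition of annihilation maps sending its contact class to $v_\emptyset\neq 0$ and every other candidate to $0$ (the paper's own argument for proposition \ref{QFT_basis} and proposition \ref{contact_distinct} in section \ref{sec_observations_SFH}). With that argument substituted in, your lower bound does go through; without it, the key step of your induction is missing.
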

We therefore denote the summand $\Z_2^{\binom{n}{k}}$ as $SFH(T,n+1,e)$. The upshot is that \emph{we may regard chord diagrams of $n$ chords as contact elements in $SFH(T, n)$.}

We study the question: \emph{How do contact elements lie in sutured Floer homology?}

A first proposition in this direction was known to Honda--Kazez--Mati\'{c} in \cite{HKM08}; we will prove it again.
\begin{prop}[Contact elements distinct]
\label{contact_distinct}
Distinct tight contact structures (up to isotopy) on $(T,n)$, or equivalently, distinct chord diagrams, give distinct nonzero contact elements of $SFH(T,n)$.
\end{prop}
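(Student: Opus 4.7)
The plan is to use the Honda--Kazez--Mati\'{c} TQFT gluing theorem recalled above to simultaneously establish nonvanishing and pairwise distinctness. For each chord diagram $\Gamma$ on the meridional disc of $(T,n)$, I would construct a ``dual'' contact structure $\xi^*_\Gamma$ on the complement of an embedding $(T,n)\hookrightarrow(M,\Gamma_M)$ into a suitable sutured $3$-ball $M$, chosen so that the glued contact structure $\xi_\Gamma\cup\xi^*_\Gamma$ on $M$ is tight with identifiably nonzero contact class, while $\xi_{\Gamma'}\cup\xi^*_\Gamma$ is overtwisted for every other chord diagram $\Gamma'$. By the TQFT, the induced map $\Phi_{\xi^*_\Gamma}$ then sends $c(\xi_\Gamma)$ to a nonzero element and $c(\xi_{\Gamma'})$ to $0$, yielding simultaneously $c(\xi_\Gamma)\neq 0$ and $c(\xi_\Gamma)\neq c(\xi_{\Gamma'})$.

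Concretely, I would embed $(T,n)$ in a $3$-ball $M=B^3$ (with a single circle as suture on $\partial B^3$) in such a way that a meridional disc of $(T,n)$ extends to a convex $2$-sphere $\Sigma\subset B^3$ cutting $B^3$ into two pieces. The dividing set on $\Sigma$ is obtained by pasting the chord diagram $\Gamma$ on the inner hemisphere to a chord diagram $\Gamma^*$ on the outer hemisphere, where $\Gamma^*$ is selected to be the unique ``dual matching'' --- the unique chord diagram on the shared $2n$ marked points such that $\Gamma\cup\Gamma^*$ forms a single circle on $\Sigma$. Equipping the complement of $(T,n)$ in $B^3$ with the tight contact structure compatible with $\Gamma^*$, Giroux's criterion ensures that $\xi_\Gamma\cup\xi^*_\Gamma$ is tight on $B^3$, and Eliashberg's uniqueness then identifies its contact class with the generator of $SFH(-B^3,-S^1)\cong\Z_2$. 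For $\Gamma'\neq\Gamma$, the combined diagram $\Gamma'\cup\Gamma^*$ on $\Sigma$ necessarily contains at least one closed loop (any two distinct non-crossing matchings on $2n$ points, drawn on opposite sides of the circle, produce a disconnected curve), and this loop bounds a disc in $\Sigma$, giving an overtwisted disc in the glued contact structure. Hence $c(\xi_{\Gamma'}\cup\xi^*_\Gamma)=0$.

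The main obstacle I anticipate is sutured-manifold bookkeeping: verifying that the embedding $(T,n)\hookrightarrow B^3$ and its complement can be arranged as a bona fide sutured decomposition satisfying the hypotheses of the HKM08 theorem, and controlling the number $m$ of isolated components of the complement so that no extraneous tensor factors of $V$ obscure the detection. The combinatorial heart of the argument --- that two distinct planar matchings on $2n$ points, superposed on $S^2$, must produce a disconnected union --- is an elementary exercise with Euler characteristic on the resulting $4$-valent graph, and the tight/overtwisted dichotomy for convex spheres is Giroux's classical criterion, so these pieces should fit together without surprises once the embedding is pinned down.
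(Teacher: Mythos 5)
The central detection claim in your construction is false, and the argument collapses with it. You need, for each $\Gamma$, a single dual diagram $\Gamma^*$ such that $\Gamma\cup\Gamma^*$ is one circle while $\Gamma'\cup\Gamma^*$ is disconnected for \emph{every} $\Gamma'\neq\Gamma$. Already for $n=3$ this fails: label the marked points $1,\dots,6$ in cyclic order and take $\Gamma^*=\{12,36,45\}$ on the outer hemisphere. Then both $\Gamma'=\{16,25,34\}$ and $\Gamma''=\{14,23,56\}$ glue with $\Gamma^*$ to a single circle (trace $1\to2\to5\to4\to3\to6\to1$, respectively $1\to2\to3\to6\to5\to4\to1$), so $\Phi_{\xi^*}$ kills neither of the corresponding contact classes and cannot distinguish them. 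For the same reason ``the unique dual matching'' is not well defined: $\{16,25,34\}$ has connected union with both $\{12,36,45\}$ and $\{14,23,56\}$. Your parenthetical justification --- that any two \emph{distinct} non-crossing matchings glued along the circle give a disconnected curve --- is moreover self-refuting: since $\Gamma\cup\Gamma$ always has $n$ components, your $\Gamma^*$ is necessarily distinct from $\Gamma$, so if the parenthetical were true no $\Gamma^*$ could ever yield a single circle; and it is simply false, e.g.\ $\{12,34,56\}\cup\{16,23,45\}$ is one circle.

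There is also a problem with the geometric setup: the boundary of a meridional disc of $(T,n)$ is a meridian of the solid torus, which does not bound a disc in $B^3$ minus the solid torus, so no convex sphere $\Sigma$ extending the meridional disc exists as described. The paper deliberately avoids any such one-shot duality --- it even exhibits distinct diagrams with $m(\Gamma_0,\Gamma_1)=m(\Gamma_1,\Gamma_0)=1$ (figure \ref{fig:55}), a warning that pairing against a single fixed diagram cannot separate contact classes. Instead it uses the annular inclusions $(T,n+1)\hookrightarrow(T,n)$, i.e.\ annihilation operators attached at arbitrary positions: nonvanishing holds because some sequence of annihilations reduces $\Gamma$ to the vacuum $v_\emptyset\neq 0$, and distinctness because for $\Gamma_1\neq\Gamma_2$ one can choose the sequence reducing $\Gamma_1$ to the vacuum so that at some stage it closes off a loop on $\Gamma_2$; the composed linear map then sends $c(\Gamma_1)\mapsto v_\emptyset$ and $c(\Gamma_2)\mapsto 0$. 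To rescue your idea you would have to pair against several complementary structures chosen adaptively, which is in effect exactly what that sequence of annihilation operators does.
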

Thus chord diagrams of $n$ chords may be identified with contact elements in $SFH(T,n)$.

A second proposition is that the meaning of addition in $SFH$ corresponds to bypass moves on chord diagrams. This was probably known to the authors of \cite{HKM08}, although the whole of this result was not made explicit. The set of contact elements in $SFH(T,n,e)$ is not a subgroup under addition, but the extent to which it is closed under addition is described by bypass moves.
\begin{prop}[Addition means bypass moves]
\label{addition_bypasses}
Suppose $a,b$ are contact elements in $SFH(T, n, e)$. Then $a+b$ is a contact element if and only if $a,b$ are related by a bypass move. If so, then $a+b$ is the third element of their bypass triple.
\end{prop}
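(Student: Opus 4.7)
The forward direction is direct from the bypass exact triangle for $SFH$. If $\Gamma_a, \Gamma_b$ are chord diagrams related by a bypass move along an attaching arc $c$, there is a unique $\Gamma_c$ completing a bypass triple $\{\Gamma_a, \Gamma_b, \Gamma_c\}$. The identity $c(\Gamma_a) + c(\Gamma_b) + c(\Gamma_c) = 0$ in $SFH(T,n,e)$ holds: this is the bypass relation that motivates the definition of $SFH_{comb}$ and is standard in the TQFT framework of \cite{HKM08}. Rearranging gives $a + b = c(\Gamma_c)$, a contact element, and it is the third element of the triple.

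For the converse, suppose $a + b$ equals a contact element $c = c(\Gamma_c)$, so that $a + b + c = 0$ in $SFH(T,n,e)$. By Proposition \ref{contact_distinct}, the elements $a, b, c$ are nonzero; they are also pairwise distinct, since if any two coincided the third would vanish. So $\Gamma_a, \Gamma_b, \Gamma_c$ are three distinct chord diagrams with $[\Gamma_a] + [\Gamma_b] + [\Gamma_c] = 0$ in $SFH(T,n,e)$. Via the isomorphism $SFH_{comb}(T,n,e) \cong SFH(T,n,e)$ proved later in the paper, this becomes a relation in the combinatorial model, which by definition is a $\Z_2$-linear combination of bypass relations; the task is to show that a three-term relation of this form is itself a single bypass relation.

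To pin this down I would invoke the main theorem: every chord diagram $\Gamma$ has a canonical basis expansion whose smallest and largest (with respect to $\preceq$) basis elements $F(\Gamma), L(\Gamma)$ form a comparable pair, and $\Gamma \mapsto (F(\Gamma), L(\Gamma))$ is a bijection between chord diagrams and comparable pairs of words. Expanding $\Gamma_a, \Gamma_b, \Gamma_c$ in this basis and equating the sum to zero produces a combinatorial identity among the three associated comparable pairs. The explicit inverse algorithm that reconstructs a chord diagram from its pair then forces the three pairs to come from chord diagrams that agree outside a small disc and differ inside it by the three local configurations of figure \ref{fig:4}, i.e., form a bypass triple; the third element of that triple is exactly $\Gamma_c$.

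The principal obstacle is the converse: a priori, many triples of unrelated contact elements could conspire to satisfy $a + b + c = 0$, and ruling these out cleanly requires the structural content of the QFT basis theorem. With that result in hand the argument reduces to reading off the consequences of one identity on comparable pairs; without it, one would be forced into a case-by-case attack using bypasses and TQFT functoriality, which appears much less tractable.
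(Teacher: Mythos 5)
Your forward direction is fine and matches the paper in spirit: the paper also deduces the relation $c(\Gamma_a)+c(\Gamma_b)+c(\Gamma_c)=0$ for a bypass triple, though it does so self-containedly, by observing that any bypass triple is the image of the three diagrams of $SFH(T,3,0)\cong\Z_2^2$ under a TQFT-inclusion map, and that those three (distinct, nonzero) contact classes necessarily sum to zero in $\Z_2^2$; citing \cite{HKM08} instead is acceptable but weaker.

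The converse, however, has a genuine gap: you do not actually prove it. The step ``expanding $\Gamma_a,\Gamma_b,\Gamma_c$ in the basis and equating the sum to zero produces a combinatorial identity among the three comparable pairs, and the inverse algorithm then forces the three diagrams to agree outside a small disc and differ inside it as in figure \ref{fig:4}'' is an assertion of the conclusion, not an argument. You give no mechanism by which the vanishing of the mod-2 sum of three basis decompositions, together with knowledge of the extremal pairs $(w_-,w_+)$ of each diagram, yields the local bypass configuration; bypass triples have no simple characterisation in terms of their extremal pairs (note that $\Gamma_-,\Gamma_+,[\Gamma_-,\Gamma_+]$ form only a \emph{generalised} bypass triple, not a bypass triple), and you supply none. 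You yourself flag this as ``the principal obstacle,'' which is exactly right — but that obstacle is the entire content of the converse. There is also a logical-ordering issue: you invoke the main theorem and the isomorphism $SFH_{comb}\cong SFH$, which appear much later; while those results happen to use only the forward bypass relation (so the dependence is probably not circular), your proposal does not check this, whereas the paper proves the proposition early and independently. The paper's actual converse argument is an induction on the number of chords: given $\Gamma_1+\Gamma_2+\Gamma_3=0$ with the $\Gamma_i$ distinct, one shows that two distinct chord diagrams with the same $n$ and $e$ always admit an annihilation operator $A$ (at some position) creating no closed loop on either; if $A\Gamma_1\neq A\Gamma_2$ one reduces to a smaller case, and if $A\Gamma_1=A\Gamma_2$ the two diagrams are forced into the configuration of figure \ref{fig:50}, exhibiting them as bypass-related, whence $\Gamma_3=\Gamma_1+\Gamma_2$ is the third element of the triple by the forward direction. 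Some argument of this concrete kind is what your proposal is missing.
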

(Note here we identify chord diagrams with contact elements, and we continue this abuse of notation throughout.) 

The $SFH_{comb}$ described in section \ref{fun} seems to have been known in \cite{HKM08}, though not made explicit; it also appears to be the origin of Honda's ``contact category'' \cite{HonCat}. In any case, $SFH_{comb}$ is indeed a combinatorial version of $SFH$.
\begin{prop}[$SFH$ is combinatorial]
\label{combinatorial_SFH}
There is an isomorphism
\[
 SFH_{comb}(T,n,e) \stackrel{\cong}{\To} SFH(T,n,e).
\]
This isomorphism takes a chord diagram to the contact element of the tight contact structure on $(T,n)$ with that chord diagram as its dividing set on a meridional disc. 
\end{prop}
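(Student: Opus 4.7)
The plan is to define a linear map $\Psi : SFH_{comb}(T, n, e) \to SFH(T, n, e)$ on generators by sending a chord diagram $\Gamma$ to the contact element $c(\xi_\Gamma)$ of the (unique up to isotopy rel boundary) tight contact structure $\xi_\Gamma$ on $(T, n)$ whose dividing set on a meridional disc is $\Gamma$, and then to verify that $\Psi$ is a well-defined isomorphism.

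For well-definedness, both defining relations of $SFH_{comb}$ must be respected. The bypass relation is handled by Proposition \ref{addition_bypasses}: for a bypass triple $\Gamma_1, \Gamma_2, \Gamma_3$, the sum of any two of their contact elements is the third, so over $\Z_2$ we have $c(\xi_{\Gamma_1}) + c(\xi_{\Gamma_2}) + c(\xi_{\Gamma_3}) = 0$. The vanishing on diagrams containing a closed loop follows from Giroux's criterion: a closed dividing curve on a convex meridional disc bounds an overtwisted disc, forcing $\xi_\Gamma$ to be overtwisted, and an overtwisted contact structure has zero contact class by \cite{HKM06ContClass}.

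For surjectivity, Proposition \ref{contact_distinct} tells us every $c(\xi_\Gamma)$ is nonzero and distinct, so the image of $\Psi$ is the subspace of $SFH(T, n, e)$ spanned by contact elements. To show this subspace is everything, I would argue by induction on $n$: a bypass triple placed on a collar of a meridional disc produces, via the TQFT gluing maps of \cite{HKM08}, an exact triangle on $SFH$; applying this triangle repeatedly expresses any class in $SFH(T, n, e)$ as a sum of contact elements, with the base case $n = 1$ being $\Z_2$ and generated by the unique contact element.

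Finally, injectivity follows from a dimension count. The main combinatorial theorem of this paper, previewed at the end of section \ref{fun}, provides an explicit basis of $SFH_{comb}(T, n, e)$ of cardinality $\binom{n-1}{k}$ with $k = (n-1+e)/2$, matching $\dim_{\Z_2} SFH(T, n, e)$ as computed by Honda--Kazez--Mati\'c and Juh\'asz. A surjective $\Z_2$-linear map between finite-dimensional vector spaces of the same dimension is automatically an isomorphism. The main obstacle I expect is surjectivity, which is the only step not formally implied by the earlier propositions and which genuinely requires invoking the TQFT bypass-triangle machinery; the other ingredients are either immediate from Propositions \ref{contact_distinct} and \ref{addition_bypasses} or part of the paper's main combinatorial theorem.
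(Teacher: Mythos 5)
Your overall scaffolding (define the map on generators, check the bypass relation via Proposition \ref{addition_bypasses}, kill closed-loop diagrams via overtwistedness, then conclude by a dimension count) is the same as the paper's, and those parts are fine. The genuine gap is your surjectivity step. The mechanism you propose --- ``a bypass triple on a collar of a meridional disc produces, via the TQFT gluing maps, an exact triangle on $SFH$, and applying this triangle repeatedly expresses any class as a sum of contact elements'' --- is not established anywhere in the paper and is not an argument as stated: the TQFT inclusion maps of \cite{HKM08} give linear maps between the $SFH$ of different sutured manifolds and identities among contact elements, but no exactness statement, and even granting some bypass exact triangle it is not explained why exactness would force an arbitrary class of $SFH(T,n,e)$ to be a sum of contact classes. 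The point you are missing is that no new machinery is needed: Proposition \ref{QFT_basis}, proved before this statement, says the contact elements $v_w$ form a basis of $SFH(T,n,e)$, so contact elements already span $SFH$ and the map is surjective on the nose (this is exactly the paper's one-line argument; alternatively the splitting of Proposition \ref{categorification} gives the induction on $n$ you were reaching for, since $B_\pm$ take contact elements to contact elements).

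A second, smaller caution: for injectivity you invoke ``an explicit basis of $SFH_{comb}(T,n,e)$ of cardinality $\binom{n-1}{k}$''. Linear independence of the $\Gamma_w$ \emph{inside} $SFH_{comb}$ is not available before the isomorphism is proved (in the paper independence is proved in $SFH$, via annihilation operators, and transported back); what is available combinatorially is only that the $\Gamma_w$ \emph{span} $SFH_{comb}$, via the decomposition algorithm which uses nothing but the bypass relation. Fortunately spanning is all you need: it gives $\dim SFH_{comb}(T,n,e) \leq \binom{n-1}{k} = \dim SFH(T,n,e)$, and together with surjectivity this yields the isomorphism. Phrase it that way and the dimension count is exactly the paper's.
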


\subsubsection{Categorification of Pascal's triangle}

If we consider all the sutured Floer homology groups $SFH(T,n+1)$ and their decompositions into direct sums of $SFH(T,n+1,e)$, over all possible $n$ and $e$, we can arrange these in a triangle. (Recall $-n \leq e \leq n$ and $e \equiv n$ mod $2$.)
\[
\begin{array}{ccccc}
		&		&SFH(T,1,0)	&		&		\\
 		&SFH(T,2,-1)	&\oplus		&SFH(T,2,1)	&	\\
SFH(T,3,-2)	&\oplus		&SFH(T,3,0)	&\oplus		&SFH(T,3,2)	\\
\end{array}
\]

These are isomorphic respectively to a ``categorification of Pascal's triangle''. 
\[
\begin{array}{ccccccc}
			&			&			&\Z_2^{\binom{0}{0}}	&			&			& \\
			&			&\Z_2^{\binom{1}{0}}	&\oplus			&\Z_2^{\binom{1}{1}}	&			& \\
			&\Z_2^{\binom{2}{0}}	&\oplus			&\Z_2^{\binom{2}{1}}	&\oplus			&\Z_2^{\binom{2}{2}}	& \\
\end{array}
\]

There are various maps between these vector spaces. There are maps denoted
\[
\begin{array}{cccc}
 B_\pm : & SFH(T, n) & \To & SFH(T, n+1) \\
		& \Z_2^{2^{n-1}} & \To & \Z_2^{2^n}
\end{array}
\]
which we call \emph{creation} maps. They are defined by the picture in figure \ref{fig:6} of ``creating a chord, adding a $\pm$ outermost region near the base point''. 

\begin{figure}
\centering
\includegraphics[scale=0.5]{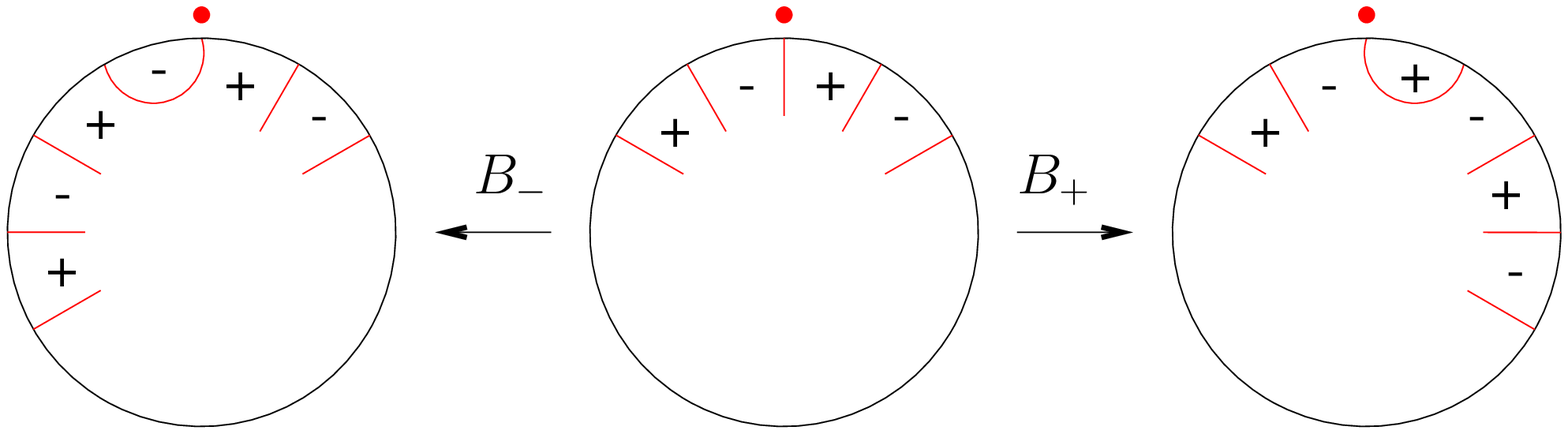}
\caption{Creation maps $B_\pm$.} \label{fig:6}
\end{figure}

In $SFH_{comb}$, it is clear that they are linear maps. The fact that they are linear in $SFH$ comes from the TQFT-inclusion property, as we see more precisely in section \ref{sec_observations_SFH}.

The maps $B_\pm$ add $\pm 1$ to the relative euler class of the chord diagram / contact structure; so that, restricting to particular summands, $B_\pm$ define maps
\[
\begin{array}{ccc}
SFH(T,n,e) & \stackrel{B_\pm}{\To} & SFH(T,n+1,e \pm 1) \\
\Z_2^{\binom{n-1}{k}} & \stackrel{B_-}{\To} & \Z_2^{\binom{n}{k}}  \\
\Z_2^{\binom{n-1}{k}} & \stackrel{B_+}{\To} & \Z_2^{\binom{n}{k+1}} 
\end{array}
\]
where $k = (n+e-1)/2$. (Strictly speaking, a $B_\pm$ is defined on each $SFH(T,n,e)$ or $SFH(T,n)$, but we denote them all by $B_\pm$; alternatively, $B_\pm$ may be considered to act on the direct sum of all the $SFH(T,n)$.)

\begin{prop}[Categorification of Pascal recursion]
\label{categorification}
There are linear maps
\[
 B_\pm: SFH(T, n, e) \To SFH(T, n+1, e \pm 1)
\]
which correspond to ``creating'' a chord as in figure \ref{fig:6} above. These are injective and
\[
SFH(T, n+1, e) = B_+ \left( SFH(T, n, e-1) \right) \oplus B_- \left( SFH(T, n, e+1) \right).
\]
\end{prop}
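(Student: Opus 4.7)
The plan is to combine a dimension count from the Honda--Kazez--Mati\'{c}/Juh\'{a}sz calculation with a bypass-reduction argument establishing surjectivity.

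First, I would observe that $B_\pm$ is linear: each is (up to the identification $SFH_{comb}\cong SFH$) the TQFT inclusion map $\Phi_\xi$ associated to the inclusion $(T,n) \hookrightarrow (T,n+1)$, where the intermediate shell carries the contact structure whose meridional dividing set is a single outermost arc of sign $\pm$ adjacent to the base point. The claimed shift $e \mapsto e \pm 1$ in relative euler class is immediate from the definition of $e$ as the signed sum of the regions of $D - \Gamma$: the creation introduces exactly one new region of sign $\pm$ and leaves the sign of every preexisting region unchanged.

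Next, the Honda--Kazez--Mati\'{c}/Juh\'{a}sz dimension formula gives $\dim SFH(T,n+1,e) = \binom{n}{k}$, $\dim SFH(T,n,e-1) = \binom{n-1}{k-1}$, and $\dim SFH(T,n,e+1) = \binom{n-1}{k}$, where $k = (n+e)/2$. Pascal's identity $\binom{n-1}{k-1} + \binom{n-1}{k} = \binom{n}{k}$ then shows that the domain and target of $B_+ \oplus B_-$ have equal dimension; consequently, once surjectivity is known, injectivity of each $B_\pm$ and triviality of the intersection of images (and hence the direct-sum decomposition) follow automatically.

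For surjectivity I would work in $SFH_{comb}$. Fix a chord diagram $\Gamma$ of $n+1$ chords, with marked points $p_0, p_1, \ldots, p_{2n+1}$ arranged clockwise and base point $p_0$. Let $\alpha$ be the chord incident to $p_0$, with its other endpoint at $p_k$; parity considerations force $k$ to be odd. If $k = 1$ then $\alpha$ cuts off a small $+$ region adjacent to the base point, so $\Gamma$ lies in the image of $B_+$; symmetrically, $k = 2n+1$ places $\Gamma$ in the image of $B_-$. Otherwise, choose an attaching arc $c$ near $p_0$ whose endpoints lie on $\alpha$ and on the chord incident to $p_2$ and whose unique interior crossing is with the chord at $p_1$. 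The bypass triple $\Gamma + \Up_c \Gamma + \Down_c \Gamma = 0$ then expresses $\Gamma$ as a sum of two chord diagrams in which the endpoint of the chord at the base point has migrated strictly closer to $p_0$. Induction on the complexity $m(\Gamma) := \min(k-1,\, 2n+2-k)$ completes the surjectivity argument.

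The main obstacle is executing the bypass reduction cleanly: one must verify that a valid attaching arc always exists (handling the degenerate case in which the chord at $p_1$ is itself the outermost chord $p_1 p_2$, and the mirror situation at $p_{2n+1}$), and that both resulting chord diagrams strictly decrease $m$. This amounts to a short case analysis on the local configuration of the chords incident to $p_1, p_2, p_3$, reflecting the fact that the $60^\circ$ rotation inherent in a bypass move always sweeps the far endpoint of $\alpha$ inward towards the base point.
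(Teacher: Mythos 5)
Your overall strategy is sound and genuinely different from the paper's. The paper first establishes that the $v_w = B_w v_\emptyset$ form a basis (linear independence via annihilation operators, plus the dimension formula), after which the proposition is immediate: every basis vector $v_w$ starts with a $B_+$ or a $B_-$, and injectivity follows from $A_\mp B_\pm = 1$. You instead prove that the images of $B_+$ and $B_-$ together span $SFH(T,n+1,e)$ by a bypass-relation induction on chord diagrams, and let the count $\binom{n-1}{k-1}+\binom{n-1}{k}=\binom{n}{k}$ deliver injectivity and directness; that logic is fine, and the inputs you need are ones the paper also uses: the bypass relation in $SFH$ (proposition \ref{prop_sum_to_zero}) and the fact that contact classes of chord diagrams span $SFH(T,n+1,e)$. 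One caution: quote the spanning fact directly rather than the identification $SFH_{comb}\cong SFH$, since proposition \ref{combinatorial_SFH} is proved in the paper \emph{after} this proposition (no genuine circularity, but the cleaner input is the spanning statement).

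The gap is in your reduction step. Your attaching arc (endpoints on $\alpha$ and on the chord through $p_2$, interior crossing on the chord through $p_1$) requires the chords through $p_1$ and $p_2$ to be distinct. If $p_1p_2$ is an outermost chord, your arc is only ``slightly trivial'': one surgery creates a closed loop and the other returns $\Gamma$, so the relation degenerates to $\Gamma+\Gamma+0=0$ and gives no reduction; worse, an arc of your type may not exist on either side of $\alpha$, e.g.\ for the diagram with chords $p_0p_3$, $p_1p_2$, $p_4p_5$, where every chord other than $\alpha$ is outermost. So the ``short case analysis'' you defer is not a boundary nuisance --- it forces a different choice of arc. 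The repair is the arc of the paper's decomposition algorithm \ref{alg_base_point_decomposition}: centre the attaching arc on $\alpha$ itself, with endpoints on the chords through $p_1$ and through $p_{2n+1}$. For $k\neq 1,2n+1$ these three chords are automatically distinct (a chord joining $p_1$ to $p_{2n+1}$ would have to cross $\alpha$), so the arc is nontrivial, and the two surgered diagrams contain the outermost chords $p_0p_1$ and $p_0p_{2n+1}$ respectively; hence both lie in the image of $B_+$ or $B_-$ after a single bypass relation, and no complexity function is needed at all. (Your complexity $\min(k-1,\,2n+2-k)$ also only strictly decreases if you reduce on the side achieving the minimum --- another detail the single-step arc renders moot.)
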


Similarly, there are two maps 
\[
\begin{array}{cccc}
 A_\pm : & SFH(T, n+1) & \To & SFH(T, n) \\
		& \Z_2^{2^n} & \To & \Z_2^{2^{n-1}}
\end{array}
\]
which we call \emph{annihilation} maps, defined by ``closing off an outermost $\pm$ region near the base point''. See figure \ref{fig:7}.

\begin{figure}
\centering
\includegraphics[scale=0.5]{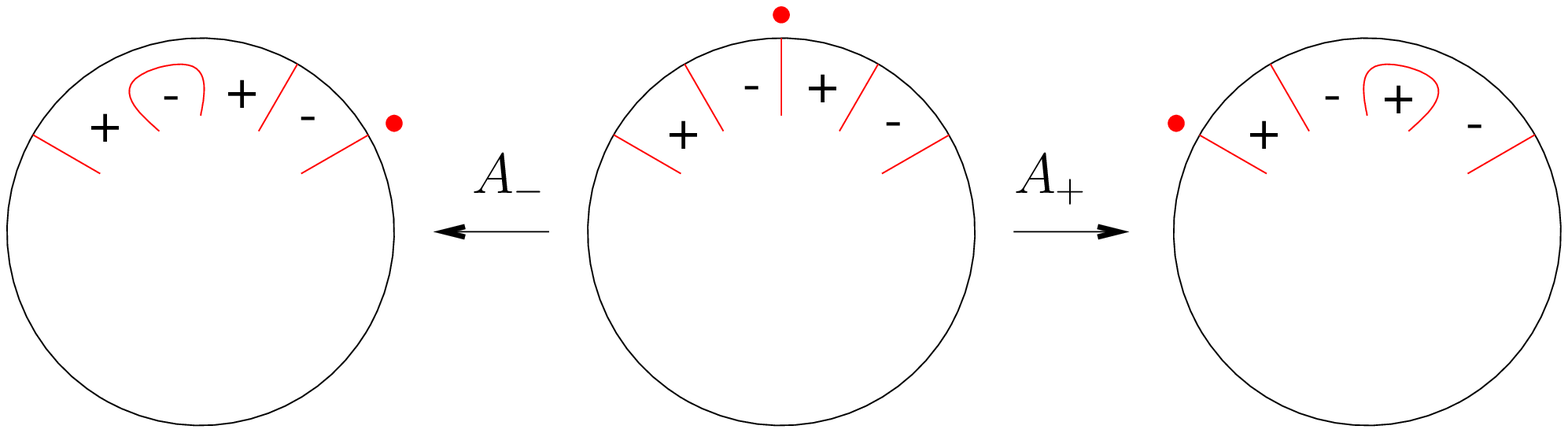}
\caption{Annihilation maps $A_\pm$.} \label{fig:7}
\end{figure}

The maps $A_\pm$ add $\pm 1$ to the relative euler class of the chord diagram / contact structure; restricting to summands again, for $k=(n+e)/2$,
\[
\begin{array}{ccc}
SFH(T,n+1,e) & \stackrel{A_-}{\To} & SFH(T,n,e \pm 1) \\
\Z_2^{\binom{n}{k}} & \stackrel{A_-}{\To} & \Z_2^{\binom{n-1}{k-1}} \\ 
\Z_2^{\binom{n}{k}} & \stackrel{A_+}{\To} & \Z_2^{\binom{n-1}{k}}.
\end{array}
\]

\begin{prop}[Annihilation operators]
\label{annihilation}
There are maps
\[
 A_\pm: SFH(T,n+1,e) \To SFH(T,n,e \pm 1)
\]
which correspond to ``annihilating'' a chord as in figure \ref{fig:7} above.
These are surjective and satisfy
\[
A_+ \circ B_- = A_- \circ B_+ = 1 \quad \text{and} \quad A_+ \circ B_+ = A_- \circ B_- = 0.
\]
\end{prop}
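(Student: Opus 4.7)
The plan is to define $A_\pm$ by the same TQFT recipe as $B_\pm$: each $A_\pm$ is the map induced, via the Honda--Kazez--Mati\'{c} inclusion theorem, by a specific contact structure on the sutured cobordism from $(T, n+1)$ to $(T, n)$, namely the one whose effect on the dividing set of a meridional disc is the local ``closing off'' picture of Figure \ref{fig:7}. Linearity and independence of the chord-diagram representative of a given $SFH$ class are then automatic from the TQFT theorem recalled in Section \ref{sec_ct_elts_TQFT}, and the claim that $A_\pm$ shifts the relative euler class by $\pm 1$ is a matter of counting signed regions before and after the local move. So the content is really the four composition identities, and surjectivity of $A_\pm$ follows formally from them.

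Because both $A_\epsilon$ and $B_\delta$ are inclusion-TQFT maps, the composition $A_\epsilon \circ B_\delta$ is, by the gluing part of that theorem, the map induced by the stacked contact structure on the composed cobordism $(T, n) \hookrightarrow (T, n+1) \hookrightarrow (T, n)$. By Propositions \ref{contact_distinct} and \ref{combinatorial_SFH}, contact elements span each $SFH(T, n, e)$, so it suffices to check the four identities on an arbitrary chord diagram $\Gamma$ for $(T, n)$ and identify the resulting dividing set on the meridional disc of the composite.

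For $A_+ \circ B_-$ (and symmetrically $A_- \circ B_+$): the two local pieces glue to the product dividing set $\Gamma \times I$ on the composite cobordism --- the tiny outermost region introduced by $B_-$ adjacent to the base point is exactly the one that $A_+$ then closes off --- so the composite is the trivial cobordism and induces the identity on contact elements, giving $A_+ \circ B_- = 1$. For $A_+ \circ B_+$ (and symmetrically $A_- \circ B_-$): the two local pieces fail to glue compatibly near the base point, and the combined dividing set on the composite meridional disc contains a closed loop. By the convention that a dividing set with a closed loop represents $0$ in $SFH_{comb}$ --- equivalently, because the composite contact structure is overtwisted and $c(\xi) = 0$ for overtwisted $\xi$ --- this gives $A_+ \circ B_+ = 0$. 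Finally, $A_\pm$ is surjective because $A_+ \circ B_- = 1$ and $A_- \circ B_+ = 1$ exhibit $B_\mp$ as right inverses.

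The main obstacle I expect is the picture-level verification that the two stacked local cobordism pieces really glue as claimed in each case --- product dividing set when the signs are opposite, closed loop when the signs match. Once this is done carefully (which is essentially what Figures \ref{fig:6} and \ref{fig:7} encode), everything else --- linearity, the euler class grading, the four composition identities, and surjectivity --- follows formally from the TQFT package and from facts already established in the paper.
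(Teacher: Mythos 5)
Your proposal is correct and takes essentially the same route as the paper: the paper defines $A_\pm$ by TQFT-inclusion along the annular cobordisms of figure \ref{fig:13}, verifies the four composition identities by ``placing the figures of annuli together'' (product dividing set when the signs are opposite, a closed loop --- hence the zero contact element --- when they agree), checks them on contact elements, and gets surjectivity formally from $A_+ \circ B_- = A_- \circ B_+ = 1$. The only small nuance is that reducing to contact elements uses the fact that contact elements span $SFH(T,n,e)$, which comes from the Honda--Kazez--Mati\'{c} computation/TQFT structure rather than from proposition \ref{contact_distinct}; the paper relies on the same spanning fact implicitly.
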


Thus, the $A_\pm, B_\pm$ operators give a categorification of Pascal's triangle, in the sense of the following diagram:
\[
\xymatrix{
&&& \Z_2^{\binom{0}{0}} \ar@/^/[dr]^{B_+} \ar@/_/[dl]_{B_-} &&& \\
&& \Z_2^{\binom{1}{0}} \ar@/_/[ur]_{A_+} \ar@/^/[dr]^{B_+} \ar@/_/[dl]_{B_-} && \Z_2^{\binom{1}{1}} \ar@/^/[ul]^{A_-} \ar@/^/[dr]^{B_+} \ar@/_/[dl]_{B_-} && \\
& \Z_2^{\binom{2}{0}} \ar@/_/[ur]_{A_+} && \Z_2^{\binom{2}{1}} \ar@/_/[ur]_{A_+} \ar@/^/[ul]^{A_-} && \Z_2^{\binom{2}{2}} \ar@/^/[ul]^{A_-} &
}
\]
(The effect of a composition $B_\pm \circ A_\pm$ is also easily understood, with the basis described in the next section.)

\subsubsection{Basis, words, orderings, and quantum field theory}

\label{sec_intro_basis}

Denote by $v_\emptyset$ the nonzero element of $SFH(T,1) = \Z_2$ (the ``vacuum''), which corresponds to the unique chord diagram with $1$ chord (lemma \ref{lem_vacuum}). Then in 
\[
SFH(T,n+1,e) \cong \Z_2^{\binom{n}{k}}
\]
there are $\binom{n}{k}$ contact elements of the form
\[
 B_\pm \; B_\pm \; \cdots \; B_\pm \; v_\emptyset
\]
where there are $n_\pm$ of the $B_\pm$'s, satisfying $k = n_+$, $n = n_+ + n_-$ and $e = n_+ - n_-$.

Denote by $W(n_-, n_+)$ the set of all words on $\{-,+\}$ of length $n = n_+ + n_-$, with $n_-$ minus signs and $n_+$ plus signs; equivalently, which sum to $e = n_+ - n_-$. For every word $w \in W(n_-, n_+)$ there is a corresponding element $v_w = B_w v_\emptyset$ in $SFH(T, n+1, e)$; $B_w$ denotes the string of $B_\pm$'s corresponding to $w$.  Each $v_w \in SFH(T, n+1, e)$ is a contact element, corresponding to a chord diagram $\Gamma_w$ of $n+1$ chords and relative euler class $e$.
\begin{rem}[Conventions for variables]
\label{rem_variables}
Unless mentioned otherwise, we will assume that the variables $n_-, n_+, n, e, k$ are related as above: $SFH(T, n+1,e) = \Z_2^{\binom{n}{k}}$ contains the $v_w$ with $w \in W(n_-, n_+)$, i.e.
\[
 k = (e+n)/2, \quad e = 2k-n, \quad n_+ = k, \quad n = n_+ + n_-, \quad e = n_+ - n_-.
\]
\end{rem}

The set $W(n_-, n_+)$ has some orderings.
\begin{defn}[Lexicographic order]
There is a total order on $W(n_-, n_+)$ obtained from regarding $-$ as coming before $+$ in the dictionary. This also induces a total order on the elements $v_w \in SFH(T, n+1,e)$ and the chord diagrams $\Gamma_w$.
\end{defn}

\begin{defn}[Partial order $\preceq$]
There is a partial order $\preceq$ on $W(n_-, n_+)$ defined by: $w_1 \preceq w_2$ if and only if, for all $i = 1, \ldots, n_-$, the $i$'th $-$ sign in $w_1$ occurs to the left of (or in the same position as) the $i$'th $-$ sign in $w_2$. This also induces a partial order, also denoted $\preceq$, on the $v_w \in SFH(T, n+1, e)$ and chord diagrams $\Gamma_w$.
\end{defn}
Thus $\preceq$ essentially says ``all minus signs move right (or stay where they are)'', or equivalently ``all $+$ signs move left (or stay where they are)''. It is clear that this is a partial order, and a sub-order of the lexicographic total order.

Note that $|W(n_-,n_+)| = \binom{n}{n_+} = \binom{n}{k} = \dim SFH(T,n+1,e)$. Even better:
\begin{prop}[QFT basis]
\label{QFT_basis}
The set of $v_w$, for $w \in W(n_-, n_+)$, forms a basis for $SFH(T,n+1,e)$.
\end{prop}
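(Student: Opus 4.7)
The plan is to prove this by induction on $n = n_- + n_+$, using Proposition \ref{categorification} (the categorification of Pascal's recursion) as the key engine. The statement is really just the observation that the tree of creation operators applied to the vacuum $v_\emptyset$ builds up a basis in parallel with Pascal's triangle, at every step respecting the direct sum splitting.

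The base case is $n = 0$, where $SFH(T, 1, 0) = \Z_2$ is one-dimensional (by Lemma~\ref{lem_vacuum}) and $W(0,0) = \{\emptyset\}$, so $\{v_\emptyset\}$ is a basis. For the inductive step, fix $n_-, n_+$ with $n = n_- + n_+ \geq 1$ and $e = n_+ - n_-$, and assume the statement for all strictly smaller values of $n$. Partition $W(n_-, n_+)$ according to the first letter of the word:
\[
W(n_-, n_+) \;=\; \bigl\{\, +w' : w' \in W(n_-, n_+ - 1) \,\bigr\} \;\sqcup\; \bigl\{\, -w' : w' \in W(n_- - 1, n_+) \,\bigr\}.
\]
With the convention that $B_w = B_{s_1} B_{s_2} \cdots B_{s_n}$ (so that the first letter corresponds to the outermost creation), we have $v_{+w'} = B_+ v_{w'}$ and $v_{-w'} = B_- v_{w'}$.

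By the inductive hypothesis, $\{v_{w'} : w' \in W(n_-, n_+-1)\}$ is a basis of $SFH(T, n, e-1)$ and $\{v_{w'} : w' \in W(n_- - 1, n_+)\}$ is a basis of $SFH(T, n, e+1)$. By Proposition \ref{categorification} the maps $B_\pm$ are injective, so they carry these bases to bases of their images $B_+(SFH(T,n,e-1))$ and $B_-(SFH(T,n,e+1))$, respectively. But the same proposition gives
\[
SFH(T, n+1, e) \;=\; B_+\bigl(SFH(T, n, e-1)\bigr) \;\oplus\; B_-\bigl(SFH(T, n, e+1)\bigr),
\]
so the union of these two bases is a basis for $SFH(T, n+1, e)$, which is precisely $\{v_w : w \in W(n_-, n_+)\}$.

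The only real thing to check, and where one has to be careful, is the bookkeeping that the ``first-letter'' splitting of $W(n_-, n_+)$ lines up correctly with the splitting of $SFH(T, n+1, e)$ into the $B_+$- and $B_-$-images; once the conventions for $B_w$ are pinned down (specifically, the order in which the creation operators are composed, and the fact that $B_\pm$ adds a chord near the base point without moving previously created chords past it), this is automatic. Everything else is a direct consequence of Proposition~\ref{categorification}, and no further geometric input is required.
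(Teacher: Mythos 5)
Your argument is circular as written, relative to the logical order of the paper. The key engine you invoke, Proposition~\ref{categorification} (injectivity of $B_\pm$ together with the splitting $SFH(T,n+1,e) = B_+ SFH(T,n,e-1) \oplus B_- SFH(T,n,e+1)$), is proved in the paper \emph{after} Proposition~\ref{QFT_basis} and \emph{from} it: the paper's proof of \ref{categorification} begins by taking the basis $\{v_w\}$ and sorting the words by their first letter. The injectivity half is harmless — it follows from the relations $A_+ B_- = A_- B_+ = 1$ of Proposition~\ref{annihilation}, which are established beforehand — but the direct-sum decomposition, and in particular the assertion that the images of $B_+$ and $B_-$ together span $SFH(T,n+1,e)$, is not available at this point; it is essentially equivalent to the statement you are trying to prove (spanning by the $v_w$), so citing it begs the question.

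The gap is repairable without changing your overall plan, but you must supply an independent proof of the splitting. Trivial intersection of the images follows from the relations: if $B_+ u = B_- v$, apply $A_-$ to get $u = A_- B_+ u = A_- B_- v = 0$. For spanning you can count dimensions: injectivity gives $\dim B_+ SFH(T,n,e-1) = \binom{n-1}{k-1}$ and $\dim B_- SFH(T,n,e+1) = \binom{n-1}{k}$, and by the Honda--Kazez--Mati\'{c}/Juh\'{a}sz computation $\dim SFH(T,n+1,e) = \binom{n}{k} = \binom{n-1}{k-1} + \binom{n-1}{k}$, so the (direct) sum of the images is everything. Note, though, that once you use the annihilation relations and the dimension formula in this way, your induction is really a reorganization of the same two ingredients the paper uses directly: the paper proves linear independence of the $v_w$ by composing annihilation operators that undo the creation operators of a given word (sending that $v_w$ to $v_\emptyset \neq 0$ and every other basis candidate to $0$ via $A_\pm B_\pm = 0$), and then concludes by the count $|W(n_-,n_+)| = \binom{n}{k} = \dim SFH(T,n+1,e)$, with no induction needed.
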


The terminology is by analogy with operators for the creation and annihilation of particles in quantum field theory. We think of particles with charge (spin?) $\pm 1$, and consider $SFH(T,n+1,e)$ as the space generated by $n$-particle states of charge $e$. Each chord diagram with $n+1$ chords and relative euler class $e$ becomes an ``$n$-particle state of charge $e$''; the chord diagram with $1$ chord, ``the vacuum''. The bypass relation says ``the superposition of two bypass-related states is the third state in their triple''.

Proposition \ref{QFT_basis} then says ``the space of states has a basis obtained by applying creation operators to the vacuum''. This is usual in quantum field theory. However, for bosons, creation operators commute; for fermions, they anti-commute. Our case however is completely non-commutative.

\subsubsection{Catalan and Narayana numbers}
\label{sec_intro_Catalan_Narayana}

A simple first question is: \emph{How many contact elements are there in $SFH(T,n)$?}

The number of distinct chord diagrams of $n$ chords is given by $1, 1, 2, 5, 14, \ldots$, i.e. the ubiquitous \emph{Catalan numbers} $C_n$. Recall $C_n$ can be defined recursively by $C_0 = 1$, $C_1 = 1$ and
\[
 C_n = C_0 C_{n-1} + C_1 C_{n-2} + \cdots + C_{n-1} C_0;
\]
also $C_n = \frac{1}{n+1} \binom{2n}{n}$. Chord diagrams of $n$ chords are bijective with bracketings of $n$ pairs of brackets; it's well known that there are $C_n$ of these. Since each chord diagram gives a distinct contact element (proposition \ref{contact_distinct}), the nonzero contact elements form a distinguished subset of size $C_n$ in $SFH(T, n) \cong \Z_2^{2^{n-1}}$.

Refining, we ask: \emph{How many contact elements are there in $SFH(T,n+1,e)$?} 

Let this number, which is simply the number of chord diagrams with $n+1$ chords and relative euler class $e$, be $C_{n+1}^e$. It will also be useful to define $C_{n+1,k} = C_{n+1}^{2k-n} = C_{n+1}^e$, following our convention in remark \ref{rem_variables}; so that $k$ is an integer, $0 \leq k \leq n$.

From counting chord diagrams of various relative euler classes, we have
\[
C_{n+1} = C_{n+1,0} + C_{n+1,1} + \cdots + C_{n+1,n} = C_{n+1}^{-n} + C_{n+1}^{-n+2} + \cdots + C_{n+1}^n.
\]

The numbers $C_{n+1}^e$ form a triangle, which is known as the \emph{Catalan triangle}. Its entries are known as the \emph{Narayana numbers}.
\[
\begin{array}{cccccccccccccccccccc}
	&	&	&	&C_1^0	&	&	&	&	&	&	&	&	&	&1	&	&	&	& \\
	&	&	&C_2^{-1}&	&C_2^{1}&	&	&	&	&	&	&	&1	&	&1	&	&	& \\
	&	&C_3^{-2}&	&C_3^0	&	&C_3^{2}&	&	&=	&	&	&1	&	&3	&	& 1	& 	&\\
	&C_4^{-3}&	&C_4^{-1}&	&C_4^{1}&	&C_4^3	&	&	&	&1	&	&6	&	& 6	&	& 1 	& \\
C_5^{-4}&	&C_5^{-2}&	&C_5^0	&	&C_5^2	&	&C_5^4	&	&1	&	&10	&	&20	&	&10	&	&1 \\
\end{array}
\]
The Narayana numbers are usually given as $N_{n,k} = C_{n,k-1}$; we have shifted them for our purposes. They have an explicit formula, although we shall not use it:
\[
 C_{n+1}^e = C_{n+1,k} = N_{n+1, k+1} = \frac{1}{n+1} \binom{n+1}{k+1} \binom{n+1}{k} .
\]
There is a substantial literature on the Narayana numbers (e.g. \cite{Aigner, Benchekroun, Bona-Sagan, DSV, Fomin-Reading, HNT, Hwang-Mallows, Novelli-Thibon, Sulanke, Williams05, Yano-Yoshida}); we will restate some of their properties.
\begin{prop}[Narayana numbers]
\label{Narayana_recursion}
The Narayana numbers give the number of chord diagrams $C_{n+1}^e$ in $SFH(T,n+1,e)$, and satisfy the following relations:
\begin{enumerate}
\item $C_{n+1} = C_{n+1,0} + C_{n+1,1} + \cdots + C_{n+1,n}$.
\item
\[
 C_{n+1,k} = C_{n,k} + C_{n,k-1} + \sum_{\substack{ n_1 + n_2 = n \\ k_1 + k_2 = k-1}} C_{n_1, k_1} \; C_{n_2, k_2},
\]
or equivalently,
\[
 C_{n+1}^e = C_n^{e-1} + C_n^{e+1} + \sum_{\substack{n_1 + n_2 = n \\ e_1 + e_2 = e}} C_{n_1}^{e_1} \; C_{n_2}^{e_2}.
\]
\end{enumerate}
\end{prop}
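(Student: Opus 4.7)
Part (i) follows immediately from the fact that for any chord diagram with $n+1$ chords the Euler class has the form $e = 2k - n$ for some $k \in \{0, 1, \ldots, n\}$, so $C_{n+1} = \sum_{k=0}^{n} C_{n+1,k}$.

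For part (ii), the plan is to classify each chord diagram $\Gamma$ of $n+1$ chords and Euler class $e$ by the chord incident to the marked point $1$ immediately clockwise of the base point (labelling the boundary marked points $0, 1, \ldots, 2n+1$ clockwise, with the base at $0$). By the non-crossing condition this chord is of the form $(1,j)$ for some even $j \in \{0, 2, \ldots, 2n\}$, and the three terms on the right of the recursion will correspond to $j = 0$, $j = 2$, and $j \geq 4$ respectively.

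When $j = 0$, the chord $(0,1)$ cuts off the positive outermost region at the base point; removing it and relabelling yields a chord diagram $\Gamma'$ of $n$ chords with $e(\Gamma') = e - 1$, realising $\Gamma$ as $B_+\Gamma'$ in the sense of Proposition~\ref{categorification}. Hence this case contributes $C_n^{e-1} = C_{n,k-1}$ diagrams. The case $j = 2$ is analogous, with the chord $(1,2)$ cutting off a negative outermost region at the base: one finds $\Gamma = B_-\Gamma'$ with $e(\Gamma') = e + 1$, contributing $C_n^{e+1} = C_{n,k}$.

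When $j \geq 4$, the chord $(1,j)$ separates the disc into two non-trivial sub-discs $D_A$ (containing positions $2, \ldots, j-1$) and $D_B$ (containing positions $0, j+1, \ldots, 2n+1$), carrying $n_1 = (j-2)/2 \geq 1$ and $n_2 = n - n_1 \geq 1$ chords respectively. The plan is to realise the restrictions $\Gamma_A, \Gamma_B$ as base-pointed chord diagrams by placing the base point of $\Gamma_A$ at position $2$ and the base point of $\Gamma_B$ at position $0$. A direct check of the alternating-sign convention will show that under these choices the arc immediately clockwise of each chosen base point (the arc $2$–$3$ for $\Gamma_A$, and the long arc from $0$ through the chord $(1,j)$ to $j+1$ for $\Gamma_B$) is positive in the colouring inherited from $\Gamma$. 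Consequently $e(\Gamma_A) + e(\Gamma_B)$ equals the sum of inherited signs of all regions of $\Gamma$, which is $e$. The map $\Gamma \mapsto (\Gamma_A, \Gamma_B)$ is manifestly a bijection, with inverse given by gluing the two diagrams along a new chord $(1,j)$, so this case contributes
\[
 \sum_{\substack{n_1 + n_2 = n \\ n_1, n_2 \geq 1}} \; \sum_{e_1 + e_2 = e} C_{n_1}^{e_1} \, C_{n_2}^{e_2}.
\]
Summing the three cases yields the stated recursion. The main technical hurdle is the base-point bookkeeping in the final case: a naive choice of base point tends to flip one of the sub-diagram colourings and cause the Euler classes to subtract rather than add, so the specific assignment above is essential.
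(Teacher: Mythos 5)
Your proof is correct and is essentially the paper's argument run in reverse: the paper defines a merging operation (two diagrams glued along one new chord placed next to the base point, with euler classes adding, degenerating to $B_\pm$ when one factor is empty) and observes every diagram is uniquely a merge, while you split each diagram along the chord at the marked point adjacent to the base point and check the same euler-class bookkeeping, which I agree works with your choice of base points for $\Gamma_A,\Gamma_B$. One small caveat: in your $j=2$ case the diagram is not literally $B_-\Gamma'$, since with the paper's conventions $B_-$ inserts its outermost chord anticlockwise of the base point at positions $(2n+1,0)$ rather than at $(1,2)$; however, your real claim---that deleting the outermost chord $(1,2)$ (a negative region) is a bijection onto diagrams with $n$ chords and euler class $e+1$---is still true by the same removal/relabelling argument, so the count $C_n^{e+1}=C_{n,k}$ and hence the recursion are unaffected.
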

In a certain tenuous sense, there is a ``categorification'' of this recursion also.
\begin{prop} [Categorification of Catalan recursion]
\label{Catalan_categorification}
There is an operator
\[
M: SFH(T, n_1, e_1) \otimes SFH(T, n_2, e_2) \To SFH(T, n_1 + n_2 + 1, e_1 + e_2)
\]
which, applied to contact elements $c(\xi_1) \otimes c(\xi_2)$, gives the contact element obtained by ``merging'' the corresponding chord diagrams. The operator $M$ reduces to a creation operator $B_\pm$ in the case $n_1 = 0$ or $n_2 = 0$. Every contact element in $SFH(T, n+1, e)$ can then be written uniquely as $M(c(\xi_1), c(\xi_2))$, where $c(\xi_i)$ is a contact element in $SFH(T, n_i, e_i)$, and $n_i, e_i$ satisfy $n_1 + n_2 = n$ and $e_1 + e_2 = e$ (possibly $n_i = 0$, regarding $SFH(T,0)$ as trivial). That is,
\begin{align*}
 \left\{ \begin{array}{c} \text{Contact el'ts in} \\ \text{$SFH(T,n+1,e)$} \end{array} \right\} &=
	\bigsqcup_{\substack{n_1 + n_2 = n \\ e_1 + e_2 = e}}  M \left( \left\{ \begin{array}{c} \text{Contact el'ts in} \\ \text{$SFH(T, n_1, e_1)$} \end{array} \right\} , \left\{ \begin{array}{c} \text{Contact el'ts in} \\ \text{$SFH(T, n_2, e_2)$} \end{array} \right\} \right).
\end{align*}
\end{prop}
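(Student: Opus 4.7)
The plan is to construct $M$ on the combinatorial side via Proposition \ref{combinatorial_SFH}, verify linearity from the bypass relation, and then read off the bijection from a canonical ``first chord'' decomposition of every chord diagram. I would first define $M(\Gamma_1,\Gamma_2)$ geometrically: place $(\Gamma_1,p_1)$ and $(\Gamma_2,p_2)$ in disjoint discs $D_1,D_2$, glue them along boundary arcs into a new disc $D$, and draw a single new chord $c_0$ along the gluing locus; the base point of the merge is inherited from $p_1$, with $\Gamma_2$ living on the far side of $c_0$. A direct chord count gives $n_1+n_2+1$, and since the two regions adjacent to $c_0$ have opposite signs their contributions to the euler class cancel, so $e_1+e_2=e$. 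With the convention that $SFH(T,0)\cong\Z_2$ is represented by the empty chord diagram, taking $n_1=0$ adds exactly one chord neighbouring $p_2$, reproducing $B_+$ or $B_-$ depending on which side of $c_0$ carries the base point; similarly for $n_2=0$.

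To promote $M$ to a linear map I would argue combinatorially: an attaching arc in $\Gamma_1$ or $\Gamma_2$ remains an attaching arc in $M(\Gamma_1,\Gamma_2)$ and the associated bypass surgery is local, so any bypass triple in one factor produces a bypass triple in the merge. Hence $M$ respects the bypass relation in each argument and descends to a bilinear map $SFH_{comb}\otimes SFH_{comb}\to SFH_{comb}$, which by Proposition \ref{combinatorial_SFH} transports to the desired map on $SFH$. Contact-topologically, the same map should arise from the Honda--Kazez--Mati\'c inclusion theorem: view $(T,n_1)\sqcup(T,n_2)$ as a sutured submanifold of $(T,n_1+n_2+1)$ and equip the complementary cobordism with the tight contact structure $\xi_0$ whose only meridional chord is $c_0$; combined with the K\"unneth isomorphism $SFH((T,n_1)\sqcup(T,n_2))\cong SFH(T,n_1)\otimes SFH(T,n_2)$, the induced $\Phi_{\xi_0}$ realises $M$, and the contact-class naturality gives $M(c(\xi_1),c(\xi_2))=c(\xi_1\cup\xi_0\cup\xi_2)$ directly.

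For the bijection I would produce the inverse of $M$ canonically. Given $\Gamma$ with $n+1\ge 1$ chords, let $c_0$ be the chord incident to the marked point immediately clockwise of the base point; then $c_0$ separates $D$ into subdiscs $D_1$ (containing the base point) and $D_2$ (whose base point is placed at the endpoint of $c_0$ chosen so the signs line up), each carrying a chord diagram $\Gamma_i$ of $n_i$ chords with $n_1+n_2=n$ and $e_1+e_2=e$. By construction this is inverse to $M$, and since $c_0$ is determined by $\Gamma$ the decomposition $\Gamma=M(\Gamma_1,\Gamma_2)$ is unique. Summing over decompositions recovers the Catalan convolution $C_{n+1}=\sum C_{n_1}C_{n_2}$ and, refined by euler class, exactly the convolution summand of Proposition \ref{Narayana_recursion}.

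The main obstacle is the well-definedness step on $SFH$: combinatorially it is quick, but the contact-topological route requires pinning down the bridging contact structure $\xi_0$ unambiguously up to isotopy rel boundary and verifying that it contributes trivially to contact classes, so that the merged contact element is genuinely $c(\xi_1\cup\xi_0\cup\xi_2)$ rather than a twisted version of it; in particular one needs to know that gluing two tight pieces across $\xi_0$ does not introduce overtwistedness, which at this stage follows from Honda's classification of tight contact structures on $(T,n)$ cited via Proposition \ref{contact_distinct}.
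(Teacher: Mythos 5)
Your overall framework is the paper's: the merge is realised as the sutured inclusion $(T,n_1)\sqcup(T,n_2)\hookrightarrow(T,n_1+n_2+1)$ together with the bridging contact structure on the complement, so the Honda--Kazez--Mati\'{c} map $\Phi_{\xi_0}$ gives the linear operator $M$ and naturality gives $M(c(\xi_1),c(\xi_2))=c(\xi_1\cup\xi_0\cup\xi_2)$ at once; your closing worry is not really an issue, since the union is the $S^1$-invariant structure whose meridional dividing set is the merged chord diagram (no closed components), hence tight with nonzero contact class by propositions \ref{prop_tight_ct_str_solid_torus} and \ref{contact_distinct}. Your alternative combinatorial route to linearity (merging sends bypass triples to bypass triples, so the map descends through proposition \ref{combinatorial_SFH}) is also fine.

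The genuine gap is in the uniqueness/inverse step, which is exactly what the displayed disjoint-union decomposition requires. For the merge to reduce to \emph{both} $B_+$ and $B_-$ in the degenerate cases, the new chord $c_0$ must have one endpoint at the base point of the merged diagram (every marked point, in particular the base point, is an endpoint of exactly one chord, and $B_\pm\Gamma$ have their new chord ending at the base point); your merge, with the base point inherited from $p_1$ and $c_0$ along the gluing interface, does not have this property, so at most one of the two $n_i=0$ degenerations can give a $B_\pm$. Correspondingly, your decomposition rule (``the chord incident to the marked point immediately clockwise of the base point'') is not inverse to your merge: under your merge the first marked point clockwise of the base point generically belongs to $\Gamma_1$, not to $c_0$; moreover the side of your $c_0$ containing the base point always contains the chord through the base point, so one of the strata with an empty factor is never produced and the map onto pairs cannot be the claimed bijection (there are $C_{n+1}$ diagrams but only $C_{n+1}-C_n$ pairs with that factor nonempty). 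The fix is the paper's convention: take $c_0$ to be the chord ending at the base point itself; it splits the disc into the two factors (with base points chosen so the signs match), the empty-factor cases are exactly $B_\pm$, and $e=e_1+e_2$ holds because the regions of the merged diagram are in sign-preserving bijection with the regions of $\Gamma_1$ together with those of $\Gamma_2$ --- not because of a cancellation of ``two regions adjacent to $c_0$'', which is not the right bookkeeping.
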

The precise nature of ``merging'' will be made clear in section \ref{sec_Catalan_Narayana}.

We also have a crucial enumerative result for our main theorem (section \ref{sec_enumerative}). Recall that the partial order $\preceq$ of $W(n_-, n_+)$ indexes the basis elements of $SFH(T,n+1,e)$.
\begin{prop}[Number of comparable pairs]
\label{enumerative_bijection}
The number of pairs $w_0, w_1$ in $W(n_-, n_+)$ with $w_0 \preceq w_1$ is $C_{n+1}^e$.
\end{prop}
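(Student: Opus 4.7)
The plan is to translate $\preceq$ into a path-dominance condition, apply the standard Lindström--Gessel--Viennot (LGV) shift trick, and simplify the resulting $2 \times 2$ determinant into the Narayana formula displayed in section \ref{sec_intro_Catalan_Narayana}.

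First I would associate to each word $w \in W(n_-, n_+)$ the monotone lattice path $P_w$ from $(0,0)$ to $(n_+, n_-)$ obtained by reading $w$ left to right, taking an east step for each $+$ and a north step for each $-$. The $y$-coordinate of $P_w$ after $j$ steps is then the number of minus signs among the first $j$ letters of $w$, so the definition of $\preceq$ -- the $i$-th minus of $w_1$ lies weakly left of the $i$-th minus of $w_2$ for every $i$ -- is equivalent to the condition that $P_{w_1}$ lies weakly above $P_{w_2}$ at every step. Hence the number of comparable pairs equals the number of ordered pairs of NE lattice paths from $(0,0)$ to $(n_+, n_-)$ with the first weakly above the second.

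Next I would apply the shift trick: translate $P_2$ by $(+1,-1)$ to obtain $\tilde P_2$ running from $(1,-1)$ to $(n_+ + 1, n_- - 1)$. Because $P_1$ and $\tilde P_2$ both satisfy $x+y=j$ at step $j$, they can share a vertex only at equal time parameters; a direct coordinate comparison shows the shifted pair is vertex-disjoint precisely when $P_1$ stays weakly above $P_2$. The sources $(0,0),(1,-1)$ and sinks $(n_+, n_-), (n_+ + 1, n_- - 1)$ sit in the standard non-crossing configuration, so LGV expresses the count as
\[
\binom{n}{n_+}^2 - \binom{n}{n_+ + 1} \binom{n}{n_+ - 1}.
\]

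A short algebraic manipulation -- writing $\binom{n}{n_+ \pm 1}$ as multiplicative ratios of $\binom{n}{n_+}$ and collecting terms -- reduces this determinant to $\frac{1}{n+1}\binom{n+1}{n_+ + 1}\binom{n+1}{n_+}$, which is exactly $C_{n+1}^e = C_{n+1,k}$ with $k = n_+$ under the conventions of remark \ref{rem_variables}. The substantive steps are the identification of $\preceq$ with path dominance and the verification that the shift converts weak dominance into strict non-intersection; once these are in place, LGV and the binomial identity are mechanical, and the argument uses no contact-topological or Floer-theoretic input, so the proposition is genuinely enumerative and available as preparatory material for the main theorem.
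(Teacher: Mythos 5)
Your argument is correct, but it takes a genuinely different route from the paper. The paper encodes a comparable pair $w_0 \preceq w_1$ as a monotone function $f\colon [n+1] \to [n+1]$ with $f(i) \leq i$ taking $k+1$ values (recording positions of $+$ signs after prepending a $+$ to each word), and then shows that the count of such functions satisfies the Narayana recursion of proposition \ref{Narayana_recursion}; notably it never invokes the closed formula for $C_{n+1}^e$, which the introduction explicitly sets aside. You instead identify $\preceq$ with weak dominance of NE lattice paths from $(0,0)$ to $(n_+,n_-)$ (this is exactly the paper's ``baseball'' reformulation in path language, and your verification that the $(+1,-1)$ shift turns weak dominance into vertex-disjointness is sound: a shared vertex forces equal time parameter, and a first failure of dominance forces $y_1(j) = y_2(j)-1$, hence a shared vertex), then apply Lindstr\"om--Gessel--Viennot to get
\[
\binom{n}{n_+}^2 - \binom{n}{n_+ + 1}\binom{n}{n_+ - 1}
= \binom{n}{n_+}^2 \frac{n+1}{(n_+ +1)(n - n_+ +1)}
= \frac{1}{n+1}\binom{n+1}{n_+ +1}\binom{n+1}{n_+},
\]
which is $C_{n+1}^e$ under the conventions of remark \ref{rem_variables} (the determinant is correct since the transposed connection $A_1 \to B_2$, $A_2 \to B_1$ always forces an intersection). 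What your approach buys is a recursion-free, closed-form evaluation via a standard non-intersecting-paths determinant; what the paper's approach buys is that it runs entirely on the Narayana recursion, staying parallel to the recursive/merging structure (propositions \ref{Narayana_recursion} and \ref{Catalan_categorification}) used elsewhere, and needs no appeal to the explicit product formula. Either proof is purely enumerative and serves equally well as input to theorem \ref{main_theorem}.
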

That is, for given $n$ and $e$, contact elements and comparable pairs of words are equal in number.

\subsubsection{Contact elements and comparable pairs}

\label{sec_main_theorem}

Our main theorem fleshes out the enumerative proposition \ref{enumerative_bijection}, giving an explicit bijection between contact elements and comparable pairs of words. A general contact element is determined by decomposing it in terms of basis elements and looking at the first and last basis elements among them. We can think of every state as a morphism from a first state to a last state.
\begin{thm}[Contact elements and comparable pairs]
\label{main_theorem_maxmin}
\label{main_theorem}
Consider a contact element $v \in SFH(T,n+1,e)$. Writing $v$ as a sum of basis vectors $v_w$, where $w \in W(n_-, n_+)$, there is a lexicographically first $v_{w_-}$ and last $v_{w_+}$ basis vector amongst them. Then for every basis vector $v_w$ occurring in the sum, $w_- \preceq w \preceq w_+$. In particular, $w_- \preceq w_+$. Moreover, the map
\[
\Phi: \left\{ \begin{array}{c} \text{Contact} \\ \text{elements in} \\ SFH(T,n+1,e) \end{array} \right\} 
\cong
\left\{ \begin{array}{c} \text{Chord diagrams} \\ \text{with } n+1 \text{ chords,} \\ \text{euler class } e \end{array} \right\}
\To
\left\{ \begin{array}{c} \text{Comparable pairs} \\ \text{of words } w_1 \preceq w_2 \\ \text{in } W(n_-, n_+) \end{array} \right\}
\]
given by $v \mapsto (w_-, w_+)$ is a bijection.

That is, given any comparable pair $w_1 \preceq w_2$, there is precisely one contact element which, when written as a sum of basis elements, has $v_{w_1}$ as its first and $v_{w_2}$ as its last.
\end{thm}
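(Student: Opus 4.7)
The plan is to prove the theorem by induction on $n$, combining an explicit description of $\Phi$ via the annihilation operators $A_\pm$ with the enumerative equality of Proposition \ref{enumerative_bijection}. The base case $n = 0$ is trivial since $SFH(T,1)$ is one-dimensional. The key preparatory observation is that $A_\pm$ acts very simply on the QFT-basis: from the identities $A_\mp B_\pm = 1$ and $A_\pm B_\pm = 0$, writing $v_w = B_{s_1} B_{s_2} \cdots B_{s_n} v_\emptyset$ so that $s_1$ corresponds to the outermost application of a creation operator, one obtains $A_- v_w = v_{s_2 \cdots s_n}$ if $s_1 = +$ and $0$ otherwise, with the symmetric statement for $A_+$. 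Since $A_\pm$ sends contact elements to contact elements (by the HKM TQFT inclusion property), for any chord diagram $\Gamma$ with QFT-basis decomposition $\Gamma = \sum v_w$, the partial sums $A_- \Gamma = \sum_{s_1 = +} v_{s_2 \cdots s_n}$ and $A_+ \Gamma = \sum_{s_1 = -} v_{s_2 \cdots s_n}$ are themselves contact elements in $SFH(T,n,e \mp 1)$, to which the inductive hypothesis applies.

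Using this, I will establish simultaneously by induction: (i) every $w$ appearing in $\Gamma$'s decomposition satisfies $w_-(\Gamma) \preceq w \preceq w_+(\Gamma)$, so in particular $w_-(\Gamma) \preceq w_+(\Gamma)$; and (ii) $w_+(\Gamma) = +\,w_+(A_-\Gamma)$ whenever $A_-\Gamma \neq 0$, with the symmetric identity $w_-(\Gamma) = -\,w_-(A_+\Gamma)$ whenever $A_+\Gamma \neq 0$. In the generic case where both $A_\pm \Gamma$ are nonzero, comparability follows directly from the inductive $\preceq$-statement for $A_\pm \Gamma$, together with the trivial observation that prepending $-$ to a word produces something $\preceq$-smaller than prepending $+$. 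These same formulas yield injectivity of $\Phi$: if $\Phi(\Gamma_1) = \Phi(\Gamma_2)$, then (ii) gives $\Phi(A_\pm \Gamma_1) = \Phi(A_\pm \Gamma_2)$, so $A_\pm \Gamma_1 = A_\pm \Gamma_2$ by induction, and then $\Gamma$ is determined by the pair $(A_+\Gamma, A_-\Gamma)$ since on the QFT-basis level $\Gamma = B_+ A_- \Gamma + B_- A_+ \Gamma$. Once injectivity is in hand, the cardinality equality of Proposition \ref{enumerative_bijection} upgrades it to bijectivity.

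The main obstacle will be handling the degenerate cases in step (ii), where all $w$ in the decomposition of $\Gamma$ share a common leading sign (so one of $A_\pm \Gamma$ vanishes); then the inductive identification of the extremals requires descent along several annihilation steps, and one must check that the comparability $w_-(\Gamma) \preceq w_+(\Gamma)$ survives along the way. A complementary approach, which I intend to weave into the forward induction for cleanest exposition, is to construct $\Phi^{-1}$ directly: given a comparable pair $w_- \preceq w_+$, interpolate a $\preceq$-chain $w_- = u_0 \prec u_1 \prec \cdots \prec u_\ell = w_+$ (adjacent elements differing by a single minimal move) and build $\Gamma$ by successively summing the third elements of bypass triples along this chain, producing an explicit chord diagram whose extremal basis decomposition is exactly $(w_-, w_+)$. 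Verifying $\Phi \circ \Phi^{-1} = \mathrm{id}$ then reduces to a direct check on this construction, realising the explicit algorithmic bijection advertised in the introduction.
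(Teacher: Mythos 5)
There is a genuine gap at the heart of your forward induction. Step (i) pivots on the claim that prepending $-$ to a word always yields something $\preceq$-smaller than prepending $+$; this is false, and it is exactly where the content of the theorem lives. For instance $-++- \not\preceq +--+$ (the paper's own example of an incomparable pair, from section \ref{fun}): the first minus sign moves right but the second moves left. Consequently, knowing only that $A_+\Gamma$ and $A_-\Gamma$ are contact elements whose decompositions satisfy the inductive statement does \emph{not} give the cross-comparability between a word $-w'$ coming from the $B_-A_+\Gamma$ half and the maximum $+\,w_+(A_-\Gamma)$ coming from the $B_+A_-\Gamma$ half. Concretely, $v = v_{-++-} + v_{+--+} = B_-v_{++-} + B_+v_{--+}$ satisfies every hypothesis your induction uses --- $A_+v = v_{++-}$ and $A_-v = v_{--+}$ are contact elements with (trivially) comparable extremes --- yet the lexicographic extremes of $v$ are the incomparable pair $(-++-,\,+--+)$. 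Of course $v$ is not a contact element, but nothing in your argument detects this; any proof must exploit the fact that $\Gamma$ is a single chord diagram beyond the mere fact that $A_\pm\Gamma$ are contact elements. Your injectivity step leaks in a similar way: from $\Phi(\Gamma_1)=\Phi(\Gamma_2)$ and your identities (ii) you recover only $w_-(A_+\Gamma_i)$ and $w_+(A_-\Gamma_i)$, i.e.\ half of each pair $\Phi(A_\pm\Gamma_i)$, so the inductive appeal to injectivity for $A_\pm\Gamma_i$ does not close.

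Your fallback construction of $\Phi^{-1}$ is where the real work lies, and as sketched it is not yet a proof: ``successively summing the third elements of bypass triples along a chain $w_-\prec\cdots\prec w_+$'' is not well defined, since after the first summation the intermediate element is generally not a single chord diagram, so there is no bypass triple to continue with; and verifying that the end product is one chord diagram whose basis decomposition has $w_-$ and $w_+$ as its $\preceq$-minimum and $\preceq$-maximum is precisely what the paper's section \ref{ch_bypass_systems_basis_diagrams} machinery accomplishes: one builds the bypass system $FBS(w_-,w_+)$ of disjoint forwards attaching arcs on $\Gamma_{w_-}$, shows upwards surgery yields $\Gamma_{w_+}$, and then shows that \emph{downwards} surgery on the same system yields a chord diagram whose decomposition is sandwiched between $w_-$ and $w_+$ (proposition \ref{bypass_system_other_way}, using lemma \ref{lem_expanding_down_over_up}, the stability proposition \ref{upwards_moves_forwards}, and minimality so the extremes do not cancel). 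Your final counting step --- upgrading an injection to a bijection via proposition \ref{enumerative_bijection} --- is the same as the paper's, but the forward induction does not work as stated and the inverse construction needs the bypass-system analysis to be carried out in full.
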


We will denote the unique contact element with first basis element $v_{w_-}$ and last basis element $v_{w_+}$ by $[w_-, w_+]$ or $[v_{w_-}, v_{w_+}]$ or $[\Gamma_{w_-}, \Gamma_{w_+}]$, and throughout we will abuse notation, often identifying contact elements with chord diagrams and basis contact elements with words; hopefully this will not cause too much confusion.

\subsubsection{Moves on chord diagrams and words}
\label{sec_moves_diags_words}

The proof of the main theorem is by explicit construction. Given $w_- \preceq w_+$, we construct a chord diagram whose decomposition has $v_{w_-}$ as its first and $v_{w_+}$ as its last element. Then by the enumerative proposition \ref{enumerative_bijection}, this is shown to be a bijection.

\begin{defn}
\label{def_bypass_system}
A \emph{bypass system} is a finite set of disjoint arcs of attachment.
\end{defn}
We will build up a method for performing bypass surgery on bypass systems on basis chord diagrams, taking $\Gamma_{w_1}$ to $\Gamma_{w_2}$, whenever $w_1 \preceq w_2$, by upwards surgery on the \emph{bypass system of the pair} $(w_1, w_2)$. Conversely, we can take $\Gamma_{w_2}$ to $\Gamma_{w_1}$ by downwards moves. This method will be explicitly analogous to certain combinatorial ``word-processing'' moves on the corresponding words. 

Then, we will show that performing all these bypass moves \emph{in the opposite direction}, gives us a chord diagram whose decomposition has $w_1, w_2$ as first and last elements.
\begin{prop}[Bypass system of a comparable pair]
\label{bypass_system_one_to_other}
\label{bypass_system_other_way}
Suppose $\Gamma_1 \preceq \Gamma_2$ are basis chord diagrams.
On $\Gamma_1$, there exists a bypass system $FBS(\Gamma_1, \Gamma_2)$, and on $\Gamma_2$, there exists a bypass system $BBS(\Gamma_1, \Gamma_2)$ such that:
\begin{enumerate}
\item
performing \emph{upwards} bypass moves on $FBS(\Gamma_1, \Gamma_2)$ gives $\Gamma_2$;
\item
performing \emph{downwards} bypass moves on $BBS(\Gamma_1, \Gamma_2)$ gives $\Gamma_1$;
\item
performing \emph{downwards} bypass moves on $FBS(\Gamma_1, \Gamma_2)$ or \emph{upwards} bypass moves on $BBS(\Gamma_1, \Gamma_2)$ gives a chord diagram whose basis decomposition contains $\Gamma_1$ and $\Gamma_2$, and for every basis element $\Gamma_w$ in this decomposition, $\Gamma_1 \preceq \Gamma_w \preceq \Gamma_2$. That is, $\Gamma_1$ is a total minimum and $\Gamma_2$ a total maximum, with respect to $\preceq$, among all the basis elements occurring in the decomposition.
\end{enumerate}
\end{prop}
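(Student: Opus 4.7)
The plan is to construct $FBS$ and $BBS$ explicitly, exploiting the combinatorial fact that $\Gamma_1 \preceq \Gamma_2$ (equivalently, $w_1 \preceq w_2$ for the corresponding words) means $w_2$ is obtainable from $w_1$ by a sequence of adjacent transpositions $-+ \to +-$, one for each unit of rightward drift among the minus signs. I would first treat the elementary case: when $w_1$ and $w_2$ differ by a single adjacent transposition at positions $(i, i+1)$, I claim there is a canonical attaching arc $c$ on $\Gamma_1$ with $\Up_c \Gamma_1 = \Gamma_2$, and a corresponding arc $c'$ on $\Gamma_2$ with $\Down_{c'} \Gamma_2 = \Gamma_1$. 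This reduces to locating, in the nested ``fan'' structure of the basis diagram $\Gamma_1$, the local sub-disc where the $i$-th and $(i+1)$-th chords sit in a local $-+$ pattern, and exhibiting the attaching arc that rotates them to $+-$. The third element $\Gamma^\ast$ of this bypass triple then satisfies $\Gamma^\ast = \Gamma_1 + \Gamma_2$ in $SFH_{comb}$ by the bypass relation, verifying property (3) for a single swap.

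For the general case, I would fix a canonical factorization of the motion from $w_1$ to $w_2$ into adjacent transpositions (one per unit of rightward drift of a minus) and collect the corresponding attaching arcs, defining $FBS(\Gamma_1, \Gamma_2)$ on $\Gamma_1$ and $BBS(\Gamma_1, \Gamma_2)$ on $\Gamma_2$. The essential technical step is to verify that the arcs can be chosen pairwise disjoint on the relevant basis diagram, so they form a bypass system in the sense of Definition \ref{def_bypass_system}. I would do this by using the explicit nested geometry of basis chord diagrams: each adjacent transposition acts locally at a distinct ``level'' of the nesting, so its attaching arc can be placed in a thin collar around the relevant pair of chords, disjoint from the collars for the other swaps. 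With disjointness established, properties (1) and (2) follow at once by iterating the single-swap picture, because disjoint bypass surgeries commute and their composition matches the composition of the corresponding transpositions.

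For property (3), I would perform the opposite-direction surgeries one arc at a time and apply the bypass relation at each step. Surging a single arc of $FBS$ downwards on $\Gamma_1$ gives, by the bypass relation, a sum $\Gamma_1 + \Gamma_w$, where $w$ is obtained from $w_1$ by executing just that one swap. Iterating over all arcs in $FBS$, and using disjointness to ensure that the local bypass relations at distinct arcs do not interfere, expresses the resulting chord diagram as a sum of basis elements $\Gamma_w$ indexed by subsets of the swap positions: the subset records which swaps have been executed. Each such subset yields a word $w$ with $w_1 \preceq w \preceq w_2$, and the empty and full subsets contribute precisely $\Gamma_{w_1}$ and $\Gamma_{w_2}$, which are therefore the minimum and maximum basis elements appearing. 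The argument for $BBS$ is symmetric.

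The main obstacle I expect is the geometric step: precisely identifying the attaching arc for a single adjacent transposition, and then verifying that the collection of arcs for a general comparable pair is pairwise disjoint on a single basis diagram. Once this geometric picture is in place, the algebraic content amounts to applying the bypass relation in parallel at each arc, combined with the straightforward combinatorics of the partial order $\preceq$.
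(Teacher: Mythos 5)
Your single-swap case is fine, but the passage to the general case has a genuine gap. A single upwards bypass move on a basis chord diagram along a nontrivial forwards attaching arc does not perform an adjacent transposition in general: by the mechanics of lemma \ref{bypass_moves_elementary_moves} it performs a block swap, carrying the whole tail of the relevant $-$ block and the whole head of the relevant $+$ block, and it effects a single transposition only when the $-$ sign is last in its block and the $+$ sign first in its block \emph{at the moment of surgery}. Consequently the arcs for the later transpositions in your bubble-sort factorization only exist, with the intended effect, on intermediate diagrams; to carry out your plan you must place all of them on $\Gamma_1$ in advance and control what each becomes after the others have been surgered. Your disjointness argument (``each transposition acts at a distinct level of the nesting, so use thin collars'') is not correct: the same $-$ chord of $\Gamma_1$ must cross several $+$ chords and vice versa, so several of your transpositions involve the very same chords, and after earlier surgeries an arc can become trivial, change type, or change its effect. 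This transport/disjointness/effect bookkeeping is precisely the technical content the paper supplies — generalised attaching arcs (one per $-$ sign, not one per unit of drift; compare figure \ref{fig:25}), the nicely ordered ``southwest'' placement of lemma \ref{well_placed_many}, the pushing-off analysis culminating in lemma \ref{multiple_generalised_elementary_moves_multiple_generalised_arcs}, and the stability statements (lemma \ref{arc_still_okay}, proposition \ref{upwards_moves_forwards}) — and your proposal leaves all of it asserted rather than proved.

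There is a second gap in your argument for part (3). The expansion of the downwards surgeries as a sum over subsets of upwards surgeries (lemma \ref{lem_expanding_down_over_up}, $\Down_c(\Gamma) = \sum_{c' \subseteq c} \Up_{c'}(\Gamma)$) is correct, but the $2^{|c|}$ resulting diagrams need not be distinct and the sum is mod $2$, so terms can cancel. To conclude that $\Gamma_1$ and $\Gamma_2$ actually survive in the basis decomposition you must show each is produced by an odd number of subsets. For $\Gamma_1$ this follows once stability is known (every nonempty subset strictly increases the word), but for $\Gamma_2$ it can fail for a redundant system: by bypass rotation (figure \ref{fig:22}) a proper subset of arcs can already produce $\Gamma_2$, in which case $\Gamma_2$ may appear an even number of times and cancel out. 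The paper avoids this by passing from the coarse system to a \emph{minimal} subsystem and invoking minimality at exactly this point in the proof; your construction has no analogue of this step, and it also silently assumes that every subset of swaps yields a basis element whose word lies between $w_1$ and $w_2$, which is again the stability result that needs proof rather than the straightforward combinatorics you describe.
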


The proof of this proposition is based on correspondences between the following notions, which we will define in due course.
\[
\begin{array}{rcl}
\left\{ \begin{array}{c} \text{elementary move} \\ \text{on a word} \end{array} \right\} &\leftrightarrow& \left\{ \begin{array}{c} \text{bypass move} \\ \text{on an attaching arc} \end{array} \right\} \\
\left\{ \begin{array}{c} \text{generalised elementary} \\ \text{move on a word} \end{array} \right\} &\leftrightarrow& \left\{ \begin{array}{c} \text{bypass moves on the} \\ \text{bypass system of a} \\ \text{generalised attaching arc} \end{array} \right\} \\
\left\{ \begin{array}{c} \text{nicely ordered sequence of} \\ \text{generalised elementary moves} \\ \text{on a word} \end{array} \right\} &\leftrightarrow& \left\{ \begin{array}{c} \text{bypass moves on the} \\ \text{bypass system of a} \\ \text{nicely ordered sequence of} \\ \text{generalised attaching arcs} \end{array} \right\}.
\end{array}
\]
The final correspondence is strong enough to give the constructions in the above two propositions, explicitly; which in turn gives the main theorem.

\subsubsection{Contact elements are tangled}

If a contact element is determined by the first and last basis elements in its decomposition, and (by theorem \ref{main_theorem_maxmin}) every other basis element lies between the first and last with respect to $\preceq$, then a natural question arises: what are the other basis elements?

First, we can prove results about \emph{the number of basis elements} in a contact element. For a basis element, this answer is clear: one --- itself. Otherwise, we have:
\begin{prop}[Size of basis decomposition]
\label{even_number_decomposition}
Every chord diagram which is not a basis element has an even number of basis elements in its decomposition.
\end{prop}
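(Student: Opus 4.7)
The plan is to realise a non-basis chord diagram $\Gamma$ as the outcome of simultaneous downward bypass moves on a disjoint system of attaching arcs via Proposition~\ref{bypass_system_one_to_other}, then to expand using the bypass relation to obtain a sum with $2^k$ terms, which is manifestly even when $k \geq 1$.

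By Theorem~\ref{main_theorem_maxmin}, a non-basis chord diagram $\Gamma$ corresponds to a strictly comparable pair $w_- \prec w_+$. Proposition~\ref{bypass_system_one_to_other} supplies the forward bypass system $FBS(\Gamma_{w_-}, \Gamma_{w_+}) = \{c_1, \ldots, c_k\}$ on $\Gamma_{w_-}$, with $k \geq 1$ disjoint attaching arcs, such that simultaneously performing $\Down$ on each arc returns $\Gamma$. Since the arcs are pairwise disjoint, the local bypass relation $\Gamma' + \Up_c \Gamma' + \Down_c \Gamma' = 0$ may be applied at each $c_i$ in turn (the remaining arcs stay valid attaching arcs, and operations on disjoint arcs commute as chord-diagram operations), yielding in $SFH_{comb}(T, n+1, e)$ the expansion
\[
\Gamma \;=\; \Down_{c_1} \cdots \Down_{c_k} \, \Gamma_{w_-} \;=\; \sum_{S \subseteq \{1, \ldots, k\}} \Up_S \, \Gamma_{w_-},
\]
a sum of exactly $2^k$ terms, where $\Up_S$ denotes upward moves on the arcs $\{c_i : i \in S\}$ and no move on the others.

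The next step is to argue that each summand $\Up_S \, \Gamma_{w_-}$ is itself a basis chord diagram $\Gamma_{w(S)}$. This uses the construction of $FBS$ described in Section~\ref{sec_moves_diags_words}: the arcs of $FBS$ correspond to a nicely ordered sequence of generalised elementary moves on the word $w_-$, and performing the elementary moves indexed by any subset $S$ still yields a valid word $w(S) \in W(n_-, n_+)$ with $w_- \preceq w(S) \preceq w_+$. Writing $c_w$ for the coefficient of $v_w$ in the basis expansion of $\Gamma$, one then has $c_w \equiv |\{S : w(S) = w\}| \pmod{2}$, and since $\sum_w |\{S : w(S) = w\}| = 2^k$, summing over $w$ modulo $2$ gives
\[
\sum_w c_w \;\equiv\; 2^k \;\equiv\; 0 \pmod{2}
\]
for $k \geq 1$, so the number of basis elements in the decomposition of $\Gamma$ is even.

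The main obstacle is the intermediate claim that every partial application $\Up_S \, \Gamma_{w_-}$ is itself a basis chord diagram; the delicate subcase is ruling out the appearance of closed loops or of non-basis intermediates. One must verify that the arcs of $FBS$ are positioned so that all $2^k$ combinations of up/leave-alone moves simultaneously yield valid chord diagrams corresponding to the intermediate words $w(S)$, which should follow from the explicit combinatorial description of $FBS$ via the correspondence in Section~\ref{sec_moves_diags_words} between generalised attaching arcs and sequences of elementary moves on words.
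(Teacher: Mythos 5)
Your argument is correct, but it is not the route the paper takes: the paper proves Proposition \ref{even_number_decomposition} twice, once by observing that a non-basis $\Gamma$ satisfies $m(\Gamma, \Gamma_{(+)^{n_+}(-)^{n_-}}) = 0$ (the ``ski slope'' argument of Lemma \ref{lem_only_basis_in_universal}, with $m$ expanded over basis elements detecting parity), and once directly from the decomposition algorithm, where each basis element first appears by splitting a bypass-related non-basis diagram into a sum of two basis diagrams (Lemma \ref{bypass_moves_elementary_moves}), so basis elements arrive in pairs. You instead invoke the main theorem to write $\Gamma = [w_-, w_+]$ with $w_- \prec w_+$, expand $\Down_{FBS(w_-,w_+)}\Gamma_{w_-}$ over subsets of the $k \geq 1$ arcs (Lemma \ref{lem_expanding_down_over_up}), note each of the $2^k$ summands is a basis diagram, and conclude by parity of $2^k$. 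This works and is not circular (the main theorem rests on Propositions \ref{bypass_system_other_way} and \ref{enumerative_bijection}, neither of which uses the statement being proved); in fact your expansion is exactly the one appearing inside the paper's proof of Proposition \ref{bypass_system_other_way}. The step you flag as the ``main obstacle'' --- that every partial application $\Up_S\,\Gamma_{w_-}$ is again a basis diagram, with no closed loops --- is precisely the stability Proposition \ref{upwards_moves_forwards} together with Lemma \ref{arc_still_okay} (the arcs of $FBS$ are forwards nontrivial, and remain of the allowed types after each move), so you may simply cite it rather than re-verify it. The trade-off: your proof uses heavier machinery (the full bijection of the main theorem), while the paper's direct proof is more elementary and gives a bonus (consecutive basis elements in lexicographic order are bypass-related), and the ski-slope proof yields the sharper criterion that $m(\Gamma, \Gamma_{(+)^{n_+}(-)^{n_-}})$ detects whether $\Gamma$ is a basis diagram. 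One small point worth making explicit in your write-up: a non-basis $\Gamma$ has $w_- \neq w_+$ (hence $k \geq 1$) because a one-term decomposition $\Gamma = v_w$ would force $\Gamma = \Gamma_w$ by distinctness of contact elements (Proposition \ref{contact_distinct}).
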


Second, we can show that basis elements occurring in the decomposition of a contact element are ``tangled up'', in some sense. We have said that the first and last basis elements of a contact element $\Gamma$ are comparable to all others in the decomposition. We show that no other basis element has this property: we cannot ``untangle them''. 

\begin{thm}[Not much comparability]
\label{not_much_comparability}
Suppose $v_w$ occurs in the basis decomposition of $v = [v_{w_-}, v_{w_+}]$ and is comparable, with respect to $\preceq$, with every other basis element occurring in the decomposition. Then $w = w_-$ or $w_+$.
\end{thm}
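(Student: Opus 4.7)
I would argue by contradiction. Suppose $v_w$ occurs in the basis decomposition of $v = [v_{w_-}, v_{w_+}]$, that $w_- \prec w \prec w_+$, and that $w$ is comparable under $\preceq$ with every other word appearing in this decomposition. Writing $v = \sum_{u \in S} v_u$, the universal comparability hypothesis partitions $S$ as $S_\le \cup S_\ge$ with intersection $\{w\}$, where $S_\le = \{u \in S : u \preceq w\}$ and $S_\ge = \{u \in S : u \succeq w\}$. I set $v_1 = \sum_{u \in S_\le} v_u$ and $v_2 = \sum_{u \in S_\ge} v_u$; over $\Z_2$ these satisfy $v_1 + v_2 = v + v_w$, since $v_w$ is the unique element counted twice.

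My strategy then has two claims. First, (i) that $v_1$ and $v_2$ are themselves contact elements: the lex first and last basis elements of $v_1$ are $v_{w_-}$ and $v_w$, and of $v_2$ are $v_w$ and $v_{w_+}$, so the main theorem (Theorem \ref{main_theorem_maxmin}) would identify $v_1 = [v_{w_-}, v_w]$ and $v_2 = [v_w, v_{w_+}]$. Second, (ii) that $v_1 + v_2 = v + v_w$ is itself a contact element; since its basis decomposition is $S \setminus \{w\}$ and so still has lex first $w_-$ and lex last $w_+$, the uniqueness clause of Theorem \ref{main_theorem_maxmin} would then force $v + v_w = v$, i.e. $v_w = 0$, the required contradiction.

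The hard part, I expect, is claim (i) --- the structural statement that a ``universally comparable'' pivot $w$ causes the decomposition of $[v_{w_-}, v_{w_+}]$ to factor cleanly through $w$. I would attack this via the explicit construction of Proposition \ref{bypass_system_one_to_other}: the chord diagram $v$ is produced from $\Gamma_{w_-}$ by downward bypass moves on $FBS(\Gamma_{w_-}, \Gamma_{w_+})$, and its basis decomposition arises by iterating the bypass relation $\Gamma + \Gamma' + \Gamma'' = 0$. Under the universal-comparability hypothesis on $w$, one should be able to show that $FBS(\Gamma_{w_-}, \Gamma_{w_+})$ splits as a disjoint union of sub-systems realising $FBS(\Gamma_{w_-}, \Gamma_w)$ and $FBS(\Gamma_w, \Gamma_{w_+})$, using the dictionary between (generalised) elementary word-moves and bypass moves developed in Section \ref{sec_moves_diags_words}. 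This factorisation would simultaneously yield (i), and claim (ii) would follow from Proposition \ref{addition_bypasses} once an explicit bypass arc realising $v_1$ and $v_2$ as bypass-related is exhibited (the pivot $v_w$ being their shared basis element should point to a natural candidate).

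If this direct factorisation of $FBS$ proves too delicate, my fallback is induction on $n$. When $w_-$, $w$, $w_+$ share a common first symbol, strip it via the appropriate annihilation operator $A_\pm$ (Proposition \ref{annihilation}), reducing to a shorter case in which universal comparability of the image of $w$ is preserved. When instead $w_-$ begins with $-$ and $w_+$ begins with $+$, decompose $v$ using the recursive categorification structure of Proposition \ref{categorification}, namely $v = B_+\alpha + B_-\beta$, and apply induction separately to $\alpha$ and $\beta$. The consistent technical point to check throughout is that ``comparable to every other basis element'' is a hypothesis that transfers cleanly through $A_\pm$, $B_\pm$, and the bypass-system decomposition; this monotonicity is where I expect most of the bookkeeping to live.
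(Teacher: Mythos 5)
Your overall skeleton (contradiction via the injectivity of $\Phi$ in Theorem \ref{main_theorem_maxmin}) is fine, but the entire content of the theorem is concentrated in your claims (i) and (ii), and neither is actually established. The justification you offer for (i) is a converse error: Theorem \ref{main_theorem_maxmin} says that every \emph{contact element} is determined by its lexicographically extreme basis elements; it does not say that an arbitrary vector whose decomposition happens to have lex-first $v_{w_-}$ and lex-last $v_w$ is a contact element. Most vectors in $SFH(T,n+1,e)$ are not contact elements (there are $2^{\binom{n}{k}}$ vectors but only $C_{n+1}^e$ contact elements), so identifying $v_1$ with $[v_{w_-},v_w]$ requires a genuine argument that the partial sum $\sum_{u\preceq w}v_u$ is a contact element at all. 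The fallback you propose for this --- showing that $FBS(w_-,w_+)$ splits into disjoint sub-systems realising $FBS(w_-,w)$ and $FBS(w,w_+)$ as a consequence of the universal-comparability hypothesis --- is not carried out and is not plausibly within reach of the word/bypass dictionary alone: the hypothesis is a statement about which of the $2^{|FBS|}$ terms of $\Down_{FBS}\Gamma_{w_-}$ survive cancellation, and no mechanism is given for converting that into a geometric splitting of the bypass system. Claim (ii) has the same character: even granting (i), Proposition \ref{addition_bypasses} requires $[w_-,w]$ and $[w,w_+]$ to be \emph{bypass-related}, and sharing the basis element $v_w$ in their decompositions does not imply this; no candidate attaching arc is produced. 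The inductive fallback via $A_\pm$, $B_\pm$ likewise leaves the key transfer of the comparability hypothesis (and of non-extremality of $w$) unverified.

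For contrast, the paper's proof avoids any such structural factorisation and is a short parity argument using the stackability pairing $m$. One first shows (Proposition \ref{MGammaGammaw}, proved by expanding over subsets of the bypass system $c_-=FBS(w_-,w_+)$ together with the tight-cylinder Lemma \ref{lem_tight_cyls}) that $m(\Gamma,\Gamma_w)=0$ and $m(\Gamma_w,\Gamma)=0$ for every basis element $\Gamma_w$ of $\Gamma=[\Gamma_-,\Gamma_+]$ other than $\Gamma_\pm$. Expanding these over the basis of $\Gamma$ and using $m(\Gamma_{w_1},\Gamma_{w_2})=1$ iff $w_1\preceq w_2$ (Proposition \ref{contact_interp_partial_order}) gives Proposition \ref{prop_prec_follow_even}: for non-extreme $w$, the number of basis elements $\preceq w$ and the number $\succeq w$ are each even. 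If $w$ were comparable to every basis element, the total count $|S|$ would then be odd, contradicting the evenness of $|S|$ for non-basis diagrams (Proposition \ref{even_number_decomposition}). You would need either to adopt an argument of this kind or to supply complete proofs of (i) and (ii); as it stands the proposal has a genuine gap.
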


More generally, proposition \ref{prop_prec_follow_even} will show that for any basis element $v_w$ in the decomposition of $v$, other than $v_{w_\pm}$, the number of basis elements $v_{w'}$ of $\Gamma$ such that $w' \preceq w$ (resp. $w \preceq w'$) is even (resp. even also). This implies the above theorem.

Third, the presence of $\pm$ symbols in certain positions in both $w_-$ and $w_+$ implies the presence of certain symbols in similar positions in all $w$ occurring in $[w_-, w_+]$. Such symbols also tell us about the corresponding chord diagram:
\begin{itemize}
\item (Lemma \ref{symbolic_basepoint}) A chord diagram $\Gamma = [\Gamma_-, \Gamma_+]$ has an outermost region at the base point, iff the words for $\Gamma_-, \Gamma_+$ begin with the same symbol, iff all basis elements of $\Gamma$ have words which begin with the same symbol. 
\item (Section \ref{symbolic_outermost}) Similarly, for various locations on the disc, a chord diagram has an outermost region at that location, iff the words for $\Gamma_-, \Gamma_+$ both possess a certain property (ending with the same symbol; having the $j$'th $-$ sign not the first in its block; etc.), iff each basis element of $\Gamma$ has the same property.
\end{itemize}

Fourth, we note it is possible to give an algorithm to write down the basis decomposition of any $[v_{w_-}, v_{w_+}]$ with $w_- \preceq w_+$. However, this basically just replicates the construction of bypass systems in the construction of the chord diagram, in combinatorial language (or writes a computer program to manipulate chord diagrams!).

\subsubsection{Computation by rotation}

None of the above gives a good way to \emph{compute} all contact elements. One way is to use \emph{rotation} of a chord diagram, giving a linear operator $R$ on $SFH(T,n,e)$. We may rotate any chord diagram until there is an outermost region adjacent to the base point; then it lies in the image of $B_\pm$. We will give a recursive formula (proposition \ref{prop_recursive_rotation}) for $R$, and describe it explicitly (proposition \ref{prop_explicit_rotation}). There is interesting combinatorics in the matrix of $R$; see section \ref{sec_rotation}. We wonder if it has other applications.

\subsubsection{Simplicial structure}

We will also show that there is a simplicial structure on the $SFH$ vector spaces forming the various diagonals of Pascal's triangle. We note that our creation and annihilation operators $A_\pm, B_\pm$ were defined at a particular point, namely the base point, but there are $2n$ marked points on the boundary of the disc. Choosing other points gives more creation and annihilation operators, which, as it turns out, obey the same relations as face and degeneracy maps in simplicial structures. The associated boundary maps make the categorified Pascal's triangle into a double chain complex.
\begin{prop}[Simplicial structure]
\label{prop_pascal_double_complex}
On each diagonal of Pascal's triangle, there are face and degeneracy maps giving it a simplicial structure, with boundary maps making each diagonal into a chain complex with trivial homology, and the whole triangle into a double complex.
\end{prop}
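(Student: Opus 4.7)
The plan is to define face and degeneracy maps as variants of the creation and annihilation operators applied at marked points other than the base point, verify the simplicial identities on the word basis, and then exhibit a contracting homotopy to obtain trivial homology.

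First, for each position $i = 0, 1, \ldots, n$ around the boundary of the meridional disc I would define operators $B^{(i)}_\pm$ and $A^{(i)}_\pm$ performing the creation/annihilation moves of figures \ref{fig:6} and \ref{fig:7} at that position rather than at the base point. The TQFT inclusion property of Honda--Kazez--Mati\'{c} guarantees that each is a well-defined linear map on $SFH$, and under the identification $SFH_{comb} \cong SFH$ of Proposition \ref{combinatorial_SFH}, on the basis $\{v_w\}$ the operator $B^{(i)}_\pm$ (respectively $A^{(i)}_\pm$) acts as insertion (respectively deletion) of a $\pm$ symbol at position $i$ in $w$. Restricting to a diagonal of Pascal's triangle on which $n_-$ is held fixed, the operators $d_i := A^{(i)}_-$ and $s_i := B^{(i)}_-$ carry $W(n_-, n_+)$ to $W(n_-, n_+ - 1)$ and $W(n_-, n_+ + 1)$ respectively by manipulating only the $+$ symbols.

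Next, I would verify the simplicial identities $d_i d_j = d_{j-1} d_i$ for $i < j$, $s_i s_j = s_{j+1} s_i$ for $i \leq j$, together with the three standard relations between $d_i$ and $s_j$. Because each operator only inserts or deletes a single symbol, every identity reduces to a transparent statement about positions in a word and holds by inspection. With $\Z_2$ coefficients, the boundary $\partial := \sum_i d_i$ automatically satisfies $\partial^2 = 0$. For the claim of trivial homology, I would exhibit a contracting chain homotopy $h$; the natural candidate is a creation operator at the base point, whose interaction with the face maps via the simplicial relations yields the identity $\partial h + h \partial = \mathrm{id}$, following the standard pattern for simplicial objects equipped with an extra degeneracy.

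Finally, for the double-complex structure I would combine the two families: the operators $A^{(j)}_+, B^{(j)}_+$ act on $-$ symbols and give an analogous simplicial structure on the transverse diagonals (those with $n_+$ fixed). Since insertion and deletion of $+$ and $-$ symbols at different positions commute at the level of words, the associated boundaries $\partial_+$ and $\partial_-$ commute, which in $\Z_2$ is precisely the double-complex condition. The main obstacle I foresee is not the algebraic verification but the geometric justification that applying the Honda--Kazez--Mati\'{c} inclusion map at an arbitrary marked point coincides, under Proposition \ref{combinatorial_SFH}, with positional insertion or deletion in a word: this requires careful bookkeeping of the base-point convention and of the bypass relation near the modification site, after which both the combinatorial verification of the simplicial identities and the contracting-homotopy construction become routine.
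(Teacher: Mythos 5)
Your plan is essentially the paper's own proof: positional creation/annihilation operators (the westside/eastside $B_\mp^{\cdot,i}, A_\pm^{\cdot,i}$) acting on words by insertion/deletion, verification of the simplicial identities by inspection, the base-point creation operator serving as an extra degeneracy giving the contracting homotopy $\partial h + h\partial = 1$, and commutation of the two boundaries (``partial differentiation by $-$ and $+$'', with a minor check at the final symbol) for the double complex. The only slip is notational: if your face maps delete $+$ symbols, the matching degeneracies must insert $+$ symbols (the paper's $B_+^{east,j}$, which close off \emph{positive} regions), not $B_-$.
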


\subsection{Contact categories, stacking}

\label{sec_contact_overview}

Studying the $SFH$ of the solid torus leads to considerable algebraic and combinatorial structure, detailed above. Much of this structure actually has direct contact-geometric meaning; it has applications independent of sutured Floer homology. ``Bypass moves'', regarded as actual bypass attachments, relate these algebraic and combinatorial structures to contact structures on solid cylinders $D \times I$. This leads us to consider the ``contact category'' of Honda \cite{HonCat}, and various extensions and generalisations of it. Indeed, we seem to be led in the direction of a ``categorification of contact geometry''.

\subsubsection{Contact ``cobordisms'' and stackability}
\label{sec_stackability}

Bypass moves arise from the contact-geometric construction of \emph{bypass attachment} \cite{Hon00I}. A bypass is half an overtwisted disc (thickened), and an elementary contact-geometric building block. We analyse bypasses in detail in section \ref{sec_bypasses}. Attachment of bypasses on a disc gives a cylinder $D^2 \times I$ with distinct dividing sets on $D^2 \times \{0\}$, $D^2 \times \{1\}$.

This motivates a construction we call \emph{stacking}. Given two chord diagrams $\Gamma_0, \Gamma_1$, we form a sutured solid cylinder $\M(\Gamma_0, \Gamma_1)$, which is $D^2 \times I$ with sutures $\Gamma_i$ on $D^2 \times \{i\}$. We ask whether there is a tight contact structure on this sutured manifold: if so, we say $\Gamma_1$ is \emph{stackable} on $\Gamma_0$. We think of such a contact structure as a ``cobordism'' between the two convex discs given by $\Gamma_0, \Gamma_1$. Details will be given in section \ref{sec_discontents}.

The question of whether $\Gamma_1$ is stackable on $\Gamma_0$ is a linear question in $SFH$.
\begin{prop}[Stackability map]
\label{mexists}
There is a linear map
\[
 m: SFH(T,n) \otimes SFH(T,n) \To \Z_2
\]
which takes pairs of contact elements, corresponding to pairs of chord diagrams $\Gamma_0$, $\Gamma_1$, to $1$ or $0$ respectively as $\Gamma_1$ is stackable on $\Gamma_0$ or not.
\end{prop}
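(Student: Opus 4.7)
The plan is to define $m$ on the generating set of chord-diagram pairs via the stackability indicator, extend $\Z_2$-bilinearly to the free vector space on chord-diagram pairs, and then show the result descends to $SFH(T,n) \otimes SFH(T,n)$ by verifying the bypass relation in each argument. Explicitly, set $m(\Gamma_0 \otimes \Gamma_1) = 1$ if $\M(\Gamma_0,\Gamma_1)$ admits a tight contact structure and $0$ otherwise, and extend bilinearly. By Proposition~\ref{combinatorial_SFH}, the isomorphism $SFH(T,n) \cong SFH_{comb}(T,n)$ reduces well-definedness to the requirement that for every bypass triple $(\Gamma,\Gamma',\Gamma'')$ and every chord diagram $\Gamma_1$,
\[
 m(\Gamma, \Gamma_1) + m(\Gamma', \Gamma_1) + m(\Gamma'', \Gamma_1) = 0
\]
in $\Z_2$, together with the symmetric statement in the second argument.

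The key simplification is topological. Note that $\M(\Gamma_0,\Gamma_1) = D^2 \times I$ is a $3$-ball, and its boundary $S^2$ carries the dividing set obtained by doubling: the $2n$ longitudinal sutures on the side annulus identify the endpoints of $\Gamma_0$ on the bottom disc with the corresponding endpoints of $\Gamma_1$ on the top disc, so the dividing set on $S^2$ consists of closed curves alternating between chords of $\Gamma_0$ and chords of $\Gamma_1$, meeting at the $2n$ matched endpoints. By Giroux's tightness criterion for convex spheres, the ball admits a tight contact structure iff this doubled dividing set is a single closed curve, equivalently iff the $2$-regular graph $\Gamma_0 \cup \Gamma_1$ on the $2n$ endpoints is connected. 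Thus $m(\Gamma_0,\Gamma_1)$ becomes a purely combinatorial invariant of the pair of matchings, and correspondingly $\dim SFH(\M(\Gamma_0,\Gamma_1))$ is $1$ or $0$ in the two cases.

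With this reformulation, bypass-invariance becomes a combinatorial parity claim about matchings on $2n$ points. A bypass move on $\Gamma_0$ rearranges three chords within a small subdisc among six boundary endpoints, while the external matching of these six endpoints through the rest of $\Gamma_0 \cup \Gamma_1$ is unchanged across the triple. The claim reduces to showing, for each fixed external matching on the six endpoints, that the number of the three bypass configurations yielding a globally connected doubling is even. This is the main obstacle, handled by a finite case analysis of how the three local rearrangements interact with each external matching; sample computations with the three bypass configurations $\{(1\text{-}2)(3\text{-}6)(4\text{-}5),\ (1\text{-}4)(2\text{-}3)(5\text{-}6),\ (1\text{-}6)(2\text{-}5)(3\text{-}4)\}$ confirm the parity across all external matchings on six points. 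An alternative, more structural route uses the HKM TQFT: for fixed $\Gamma_1$ the functional $\Gamma_0 \mapsto m(\Gamma_0,\Gamma_1)$ is realized as the composition of a TQFT gluing map with contact-element evaluation on a standard sutured ball, and such compositions are $\Z_2$-linear by construction and thus automatically respect the bypass relation, inheriting well-definedness directly from the linearity of $SFH$.
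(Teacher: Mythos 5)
Your proposal is correct in outline, but it establishes linearity by a different mechanism than the paper. The paper never verifies the bypass relation by hand: it realises $m$ directly as an HKM TQFT-inclusion map, by rounding $\partial \M(\Gamma_0,\Gamma_1)$ to a sphere, removing a neighbourhood of a point on a vertical suture, and crossing with $S^1$, so that the sutured inclusion $(T,n)\sqcup(T,n)\hookrightarrow (T,1)$ with an explicit intermediate contact structure induces a linear map $SFH(T,n)\otimes SFH(T,n)\to SFH(T,1)=\Z_2$; linearity is automatic, and on a pair of contact elements the image is the contact class of the glued-up structure, which is $1$ or $0$ according as the rounded sutures are connected or not. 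Your main route instead defines the stackability indicator on pairs of chord diagrams and checks descent through the bypass relation combinatorially via the connectedness criterion; your closing ``alternative route'' is essentially the paper's construction, stated loosely. What the paper's construction buys, besides avoiding any case analysis, is that $m$ naturally lands in $SFH(T,1)$, from which the euler class orthogonality of proposition \ref{lem_euler_orthogonality} falls out by restricting to summands; what your route buys is a self-contained argument inside $SFH_{comb}$ that does not invoke the gluing maps beyond the identification of proposition \ref{combinatorial_SFH}.

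Two points need tightening. First, the parity verification is the entire content of well-definedness, so ``sample computations'' is not enough; you should note that the complement of the bypass subdisc in the rounded $S^2$ is itself a disc, so the external pairing of the six points is one of only five non-crossing matchings (or else there is an external closed component, in which case all three values vanish), and then check in each of these finitely many cases that exactly $0$ or $2$ of the three bypass configurations close up to a single circle; this finite check does succeed, so the step is completable, but it must be done in full. Second, your combinatorial model of the boundary sphere is described inaccurately: edge-rounding does not join each endpoint of $\Gamma_0$ to the ``corresponding'' endpoint of $\Gamma_1$, but shifts by one marked point through the vertical sutures. With the literal straight identification, $\M(\Gamma,\Gamma)$ would double every chord into its own circle and appear disconnected for $n\geq 2$, contradicting lemma \ref{MGammaGamma}. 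This slip does not affect your linearity argument, which only uses that the data outside the bypass subdisc is fixed across the triple, but it must be corrected before the model is used to evaluate $m$ on particular pairs.
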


Thus $m$ is the boolean question ``Is $\Gamma_1$ stackable on $\Gamma_0$?'' Moreover, the summands $SFH(T,n,e)$ of $SFH(T,n)$ are ``orthogonal'' with respect to this question:
\begin{prop}[Relative euler class orthogonality]
\label{lem_euler_orthogonality}
Let $\Gamma_0$ and $\Gamma_1$ be chord diagrams with $n$ chords. If $\Gamma_0, \Gamma_1$ have distinct relative euler class then $m(\Gamma_0, \Gamma_1) = 0$.
\end{prop}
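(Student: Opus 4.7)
The plan is to prove the contrapositive: if $\M(\Gamma_0, \Gamma_1)$ admits a tight contact structure $\xi$, then $e(\Gamma_0) = e(\Gamma_1)$. By construction of the sutured manifold $\M(\Gamma_0,\Gamma_1)$, the two disc faces $D^2 \times \{0\}$ and $D^2 \times \{1\}$ are convex boundary pieces with dividing sets $\Gamma_0$ and $\Gamma_1$ respectively, so the question reduces to how the dividing set on a convex meridional disc can vary inside a contact solid cylinder.

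The first key input is a combinatorial lemma that \emph{bypass moves on chord diagrams preserve the relative Euler class}. This is immediate from inspection of figure \ref{fig:4}: the three bypass-related configurations on the sub-disc $D'$ divide $D'$ into the same combinatorial pattern of regions, whose signs (determined by the fixed signs on $\partial D' \cap \partial D$) produce the same signed sum in all three configurations; since the regions of $D \setminus \Gamma$ outside $D'$ are untouched, the total signed region sum $e(\Gamma)$ is preserved. Equivalently, a bypass move realises an isotopy of the disc through convex surfaces in an ambient contact manifold, and the Euler class of a convex surface is an isotopy invariant.

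The second step is the standard movie / convex surface argument, following Giroux and Honda. Slice $D^2 \times I$ by the one-parameter family of meridional discs $D_t = D^2 \times \{t\}$. After a generic perturbation within the contact solid cylinder $(\M(\Gamma_0,\Gamma_1), \xi)$, each $D_t$ is convex with dividing set a chord diagram, except at finitely many values $t_1 < \cdots < t_N$ at which a single bypass is attached; the dividing set is locally constant (up to isotopy) between these moments and changes by a bypass move of the type in figure \ref{fig:4} at each $t_i$. Since the dividing set at $t=0$ is $\Gamma_0$ and at $t=1$ is $\Gamma_1$, the two chord diagrams are related by a finite sequence of bypass moves, and the lemma then gives $e(\Gamma_0) = e(\Gamma_1)$.

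The main obstacle is the rigorous application of the convex-surface movie in the sutured setting — in particular, verifying that the bypass moves produced by evolution of convex meridional discs coincide with the combinatorial bypass moves defined earlier — but this is standard contact topology from the cited references (e.g.\ \cite{HonCat, Hon00I, HKM08}) and can be invoked directly. Once it is granted, only the short local check that $e(\Gamma)$ is bypass-invariant remains, and the proposition follows.
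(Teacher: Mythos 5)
Your proof is correct, but it takes a genuinely different route from the paper's. The paper disposes of this proposition in two lines of TQFT bookkeeping: $m$ is defined by a sutured inclusion $(T,n) \sqcup (T,n) \hookrightarrow (T,1)$, so it respects the spin-c (relative euler class) grading and restricts to $m: SFH(T,n,e_0) \otimes SFH(T,n,e_1) \To SFH(T,1,e_0-e_1)$; since $SFH(T,1,e)=0$ unless $e=0$, the map vanishes whenever $e_0 \neq e_1$. You instead argue geometrically on the contrapositive: a tight $\M(\Gamma_0,\Gamma_1)$ decomposes into finitely many bypass layers (the discretization-of-isotopy/movie argument --- this is exactly lemma \ref{lem_cobs_constructed_bypasses} of the paper, which you could cite directly instead of the external references), and bypass moves preserve the relative euler class, so tightness forces $e(\Gamma_0)=e(\Gamma_1)$; bilinearity then extends the vanishing to the whole summands, which is all the proposition asserts. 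What each buys: the paper's argument is essentially formal, needing nothing beyond the definition of $m$ and the spin-c splitting, while yours invokes heavier convex-surface machinery but stays entirely at the level of dividing sets and additionally records the geometric fact that two chord diagrams cobounding a tight cylinder are related by a finite sequence of bypass moves. One small repair to your first step: the claim that a bypass move preserves $e$ is stated loosely, since regions of $D \setminus \Gamma$ can merge or split across $\partial D'$ (and for an interior sub-disc $\partial D' \cap \partial D$ is empty, so the signs are read off the arcs of $\partial D' \setminus \Gamma$); the clean fix is to compute $e = \chi(R_+) - \chi(R_-)$ and check that in all three local configurations $\chi(R_\pm \cap D')$ and $\chi(R_\pm \cap \partial D')$ agree, so the gluing corrections cancel and only the unchanged exterior contribution remains. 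Alternatively, your remark that the relative euler class evaluates equally on the two homologous convex slices of a bypass layer is a perfectly good substitute.
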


We give a complete description of $m$, intimately related to the partial order $\preceq$; in fact, on basis chord diagrams $\Gamma_w$, $m$ \emph{is} $\preceq$ (regarded as a boolean function). 
\begin{prop}[Contact interpretation of $\preceq$]\
\label{contact_interp_partial_order}\
$m(\Gamma_{w_0}, \Gamma_{w_1})=1$ iff $w_0 \preceq w_1$.
\end{prop}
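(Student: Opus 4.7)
The plan is to exploit bilinearity of $m$ from Proposition \ref{mexists}, reducing the statement to evaluating $m$ on the basis pairs $(v_{w_0}, v_{w_1})$, and then to prove sufficiency and necessity separately.

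\emph{Sufficiency.} Suppose $w_0 \preceq w_1$. Proposition \ref{bypass_system_one_to_other} supplies a bypass system $FBS(\Gamma_{w_0}, \Gamma_{w_1})$ on $\Gamma_{w_0}$ whose successive upward bypass surgeries produce $\Gamma_{w_1}$. I would realise each upward surgery contact-geometrically as a bypass attachment onto a convex horizontal slice of $D^2 \times I$, and stack the attachments vertically to assemble a contact structure $\xi$ on $\M(\Gamma_{w_0}, \Gamma_{w_1})$ with the prescribed boundary dividing sets. For tightness: glue $\xi$ above the unique tight contact structure on $(T,n)$ whose meridional disc carries $\Gamma_{w_0}$ (which exists by Proposition \ref{contact_distinct}); the result is a contact structure on $(T,n)$ with meridional dividing set $\Gamma_{w_1}$, which must be the tight structure realising $\Gamma_{w_1}$. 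Since $\xi$ embeds in a tight manifold it is tight, so $\Gamma_{w_1}$ is stackable on $\Gamma_{w_0}$ and $m(\Gamma_{w_0}, \Gamma_{w_1}) = 1$.

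\emph{Necessity.} I would argue by contrapositive. If $w_0, w_1$ have distinct relative euler class, Proposition \ref{lem_euler_orthogonality} immediately yields $m=0$. Otherwise, suppose $w_0 \not\preceq w_1$ at a fixed euler class and, toward contradiction, a tight contact structure $\xi$ on $\M(\Gamma_{w_0}, \Gamma_{w_1})$. Topologically $\M \cong B^3$, and its boundary $S^2$ carries the combined dividing set built from $\Gamma_{w_0}$ on the bottom disc, $\Gamma_{w_1}$ on the top disc, and $2n$ vertical arcs on the side cylinder joining matching endpoints. By Eliashberg's classification of tight contact structures on the $3$-ball with prescribed convex boundary, such a $\xi$ exists iff this boundary dividing set is a single circle on $S^2$, equivalently iff the composition $\iota_{w_1} \circ \iota_{w_0}$ of the endpoint-pairing involutions encoding $\Gamma_{w_0}, \Gamma_{w_1}$ is a single $2n$-cycle. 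Necessity therefore reduces to the purely combinatorial claim that, among basis chord diagrams of equal euler class, $\iota_{w_1} \circ \iota_{w_0}$ is a single cycle precisely when $w_0 \preceq w_1$.

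The main obstacle is this combinatorial claim. I would prove it by induction on word length using the recursive construction $\Gamma_w = B_{s_n} \cdots B_{s_1} \Gamma_\emptyset$: prepending the same symbol to both $w_0$ and $w_1$ attaches identical outermost chords on the two discs and preserves the cycle count of the involution product, while prepending different symbols, or prepending on only one side, modifies the product in a controlled way whose effect on cycle count can be traced case-by-case. Aligned outermost layers preserve cycles, whereas offset layers can split or merge them; organising this case analysis cleanly, without missing any cycle-splitting configuration, is the delicate part. Cross-checked against Proposition \ref{enumerative_bijection}, which gives exactly $C_{n+1}^e$ comparable pairs per euler class, and against the basis pairs with $m=1$ already produced by sufficiency, the combinatorial equivalence is pinned down, closing the proof.
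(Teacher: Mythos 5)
Your overall reduction is sound in outline (tightness of $\M(\Gamma_{w_0},\Gamma_{w_1})$ is equivalent to connectedness of the rounded boundary dividing set, and stripping common leading symbols corresponds to cancelling outermost chords as in lemma \ref{cancel_outermost}), but both halves of your argument have genuine gaps. In the sufficiency direction, the tightness claim is unjustified: attaching bypasses along a bypass system does \emph{not} automatically give a tight structure (this is exactly the pinwheel phenomenon of theorem \ref{thm_pinwheel}), and your rescue -- gluing $\xi$ to the tight solid torus carrying $\Gamma_{w_0}$ and asserting the result ``must be the tight structure realising $\Gamma_{w_1}$'' -- begs the question. There are overtwisted contact structures on $(T,n)$ with the same boundary dividing set; if $\xi$ were overtwisted, the glued-up manifold would contain the same overtwisted disc while still having boundary data matching $\Gamma_{w_1}$, so the classification of \emph{tight} structures on $(T,n)$ certifies nothing here. (The gluing itself is also not well posed: to close $\M(\Gamma_{w_0},\Gamma_{w_1})$ into a solid torus you must identify the two ends, whose dividing sets differ.) The legitimate routes are the ones the paper uses: either the inductive argument of section \ref{sec_contact_partial_order}, which shows no closed suture component ever appears during the base point construction (via the finger-move reduction on the $(j-1)$'th $+$ sign), or, later, showing that $FBS(w_-,w_+)$ has no pinwheels (proposition \ref{MGamma+-}) so that theorem \ref{thm_pinwheel} applies.

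In the necessity direction (and in fact for both directions, since your cycle criterion is an ``iff''), you correctly reduce everything to the combinatorial statement that the composed endpoint involutions form a single cycle precisely when $w_0 \preceq w_1$, but you then leave this -- the actual content of the proposition -- as a sketched induction whose ``delicate'' case analysis is not carried out. This is precisely where the paper does its work: lemma \ref{easy_direction} handles $w_0 \npreceq w_1$ by locating the first ``inning'' where the lead changes and observing via lemma \ref{construction_mechanics} that the used intervals then match up into a closed loop before the last step, while the converse is the induction described above. Your proposed cross-check against proposition \ref{enumerative_bijection} cannot substitute for this: knowing there are exactly $C_{n+1}^e$ comparable pairs tells you nothing about how many pairs satisfy $m=1$ unless you already have both implications, so the count pins nothing down. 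As it stands, the proposal identifies the right combinatorial target but proves neither implication.
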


Then we can use this to obtain a result for general chord diagrams.
\begin{prop}[General stackability]
\label{general_stackability}
Let $\Gamma_0, \Gamma_1$ be chord diagrams of $n$ chords with relative euler class $e$. Then $\Gamma_1$ is stackable on $\Gamma_0$ if and only if the cardinality of the following set is odd:
\[
 \left\{ (w_0, w_1) \; : \; w_0 \preceq w_1, \; \Gamma_{w_i} \text{ occurs in the decomposition of } \Gamma_i \right\}.
\]
\end{prop}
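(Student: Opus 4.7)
The plan is to prove this by a one-line bilinear expansion, using exactly the tools already assembled: the bilinear stackability map $m$ of Proposition \ref{mexists}, the euler-class orthogonality of Proposition \ref{lem_euler_orthogonality}, and the basis-level identification $m(\Gamma_{w_0}, \Gamma_{w_1}) = 1 \Leftrightarrow w_0 \preceq w_1$ of Proposition \ref{contact_interp_partial_order}. The idea is that once $m$ is known to be linear, everything about general stackability is forced by its values on the QFT basis.

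Since $\Gamma_0$ and $\Gamma_1$ both have relative euler class $e$, Proposition \ref{QFT_basis} lets us write uniquely
\[
\Gamma_0 = \sum_{w \in S_0} \Gamma_w, \qquad \Gamma_1 = \sum_{w \in S_1} \Gamma_w
\]
over $\Z_2$, where $S_i \subseteq W(n_-, n_+)$ is the set of basis elements appearing in the decomposition of $\Gamma_i$. By Proposition \ref{mexists}, $\Gamma_1$ is stackable on $\Gamma_0$ iff $m(\Gamma_0, \Gamma_1) = 1$; bilinearity then gives
\[
m(\Gamma_0, \Gamma_1) = \sum_{w_0 \in S_0} \sum_{w_1 \in S_1} m(\Gamma_{w_0}, \Gamma_{w_1}),
\]
with no cross-euler-class contributions by Proposition \ref{lem_euler_orthogonality}. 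Invoking Proposition \ref{contact_interp_partial_order}, each summand is $1$ exactly when $w_0 \preceq w_1$, so $m(\Gamma_0, \Gamma_1)$ equals, modulo $2$, the cardinality of $\{(w_0, w_1) \in S_0 \times S_1 : w_0 \preceq w_1\}$. This is precisely the set in the statement, so stackability is equivalent to that cardinality being odd.

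There is essentially no serious obstacle: the whole argument is routine bilinear expansion, and the substantive content has already been packaged into the three cited propositions. The only things to double-check are that $m$ really is linear rather than merely defined on contact elements (guaranteed by Proposition \ref{mexists} stating $m$ as a map out of the tensor product of $SFH$ spaces), and that Proposition \ref{contact_interp_partial_order} applies to every basis pair $(w_0, w_1)$ in the fixed euler summand, which it does by construction. If any difficulty arises, it would be in confirming that the decompositions of $\Gamma_0, \Gamma_1$ into basis elements used above coincide with the ones referenced in the statement of the proposition -- but this is exactly the content of Theorem \ref{main_theorem_maxmin} and the definition of $[w_-, w_+]$, so no ambiguity remains.
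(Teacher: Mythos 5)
Your proof is correct and is essentially the paper's own argument: the paper proves Proposition \ref{contact_interp_partial_order} and then notes that Proposition \ref{general_stackability} "is then clear by expanding over basis elements," which is exactly your bilinear expansion using Propositions \ref{mexists}, \ref{lem_euler_orthogonality} and \ref{QFT_basis}. (Your closing appeal to Theorem \ref{main_theorem_maxmin} is unnecessary, since uniqueness of the basis decomposition is already Proposition \ref{QFT_basis}, but this does not affect the argument.)
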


\begin{rem}
We write $\Gamma_w \in \Gamma$ to denote that $\Gamma_w$ occurs in the basis decomposition of $\Gamma$. After all, mod 2 arithmetic is boolean addition.
\end{rem}

We will show various other properties of $m$ and $\M$:
\begin{enumerate}
\item (Lemma \ref{MGammaGamma}) $m(\Gamma, \Gamma) = 1$, i.e. $\Gamma$ is stackable on itself.
\item (Lemma \ref{cancel_outermost}) If $\Gamma_0, \Gamma_1$ respectively have outermost chords $\gamma_0, \gamma_1$ in the same position, then $m(\Gamma_0, \Gamma_1) = m(\Gamma_0 - \gamma_0, \Gamma_1 - \gamma_1)$.
\item (Lemma \ref{bypass-related}) If $\Gamma_0, \Gamma_1$ are related by a bypass move then, when placed in the right order, they are stackable. (The order is given in lemma \ref{bypass-related}.)
\end{enumerate}

Lemma \ref{bypass-related} relates stackability to bypass moves; in fact, bypass triples naturally give triples of tight contact cobordisms. When $\Gamma_1$ can be obtained from $\Gamma_0$ by attaching bypasses on top of $\Gamma_0$, we have a construction of a contact structure on $\M(\Gamma_0, \Gamma_1)$. 

Our explicit construction of bypass moves from $\Gamma_{w_1}$ to $\Gamma_{w_2}$, via a bypass system $FBS(w_1, w_2)$ for any $w_1 \preceq w_2$, thus gives a contact structure on $\M(\Gamma_{w_1}, \Gamma_{w_2})$. We show this is tight (lemma \ref{MGamma+-}), and obtain a generalisation of bypass triples. We consider various possible generalisations of bypass triples as we proceed.

\subsubsection{Contact categories}

The question of which dividing sets are stackable on which others is the essence of the \emph{contact category} defined by Honda \cite{HonCat}. Honda shows that this category possesses certain properties of a triangulated category, and behaves functorially with respect to $SFH$. Essentially, objects in this category are dividing sets on a surface $\Sigma$, and morphisms are contact structures on $\Sigma \times I$ (a rigorous definition is given in \ref{defn_contact_category}). A nontrivial (tight) morphism $\Gamma_0 \To \Gamma_1$ precisely means that $\Gamma_1$ is stackable on $\Gamma_0$. Our map $m$ describes the morphisms in the contact category of a disc, $\mathcal{C}(D^2)$.

Further, we can start from a given cobordism $\M(\Gamma_0, \Gamma_1)$ with tight contact structure, and ask what chord diagrams $\Gamma$ occur as dividing sets of discs \emph{inside} this cobordism (definition \ref{def_existence_chord_diagram}). We give a criterion for when $\Gamma$ occurs (lemma \ref{lem_existence_of_chord_diagram}). Using this, we obtain easily that the only chord diagram existing in $\M(\Gamma, \Gamma)$ is $\Gamma$ itself (lemma \ref{lem_CbGammaGamma}).

This leads us to the notion of \emph{bounded contact category} (definition \ref{defn_bonded_ct_cat}) $\mathcal{C}^b (\Gamma_0, \Gamma_1)$: the ``subcategory of $\mathcal{C}(D^2)$ which is contained in $\M(\Gamma_0, \Gamma_1)$'', or the ``subcategory of $\mathcal{C}(D^2)$ bounded by $\Gamma_0$ and $\Gamma_1$''. Its objects are those dividing sets $\Gamma$ which occur in a tight $\M(\Gamma_0, \Gamma_1)$, and its morphisms are those cobordisms $\M(\Gamma, \Gamma')$ which occur in $\M(\Gamma_0, \Gamma_1)$. We prove (lemma \ref{lem_Cb_is_category}) that $\mathcal{C}^b (\Gamma_0, \Gamma_1)$ is indeed a category.

Lemma \ref{lem_CbGammaGamma} says that $\mathcal{C}^b (\Gamma, \Gamma)$ is trivial.

Any partially ordered set (such as $W(n_-, n_+)$ with $\preceq$) can be considered as a category; conversely, under certain conditions a category can be considered a partially ordered set (lemma \ref{lem_category_partial_order}). We can then prove the following.
\begin{prop}
\label{lem_Cb_partial_order}
The bounded contact category $\mathcal{C}^b (\Gamma_0, \Gamma_1)$ is partially ordered.
\end{prop}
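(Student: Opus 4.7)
The plan is to invoke lemma \ref{lem_category_partial_order}, which presumably gives sufficient conditions for a category to be (equivalent to) a poset: namely, that the category be \emph{thin} (at most one morphism between any two objects) and \emph{skeletal} in the sense that only isomorphisms are identities, i.e.\ antisymmetry holds. The proof of proposition \ref{lem_Cb_partial_order} then amounts to verifying these two properties for $\mathcal{C}^b(\Gamma_0,\Gamma_1)$, together with the obvious reflexivity coming from lemma \ref{MGammaGamma} ($\M(\Gamma,\Gamma)$ is tight so every object carries an identity morphism) and transitivity from the categorical structure already established in lemma \ref{lem_Cb_is_category}.

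First I would handle antisymmetry, which I expect to be the clean step. Suppose $\Gamma$ and $\Gamma'$ are two objects of $\mathcal{C}^b(\Gamma_0,\Gamma_1)$ and that there exist morphisms in both directions, i.e.\ tight cobordisms $\M(\Gamma,\Gamma')$ and $\M(\Gamma',\Gamma)$ occurring inside $\M(\Gamma_0,\Gamma_1)$. Stacking these two tight cylinders yields a tight cobordism $\M(\Gamma,\Gamma)$, also sitting inside $\M(\Gamma_0,\Gamma_1)$, and by construction this cylinder contains $\Gamma'$ as the dividing set on an intermediate meridional disc. But lemma \ref{lem_CbGammaGamma} says $\mathcal{C}^b(\Gamma,\Gamma)$ is trivial: the only chord diagram that can appear as a dividing set in a tight $\M(\Gamma,\Gamma)$ is $\Gamma$ itself. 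Therefore $\Gamma' = \Gamma$.

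For thinness I would argue that, in the bounded setting, the Hom-set between two objects collapses to at most one element. The natural approach is to use the characterisation given by the stackability map: by proposition \ref{mexists} the morphism datum is controlled by the boolean value of $m(\Gamma,\Gamma')$, and together with proposition \ref{lem_euler_orthogonality} and the comparability criterion of proposition \ref{contact_interp_partial_order}, any two tight contact structures realising $\Gamma'$ as stackable on $\Gamma$ inside $\M(\Gamma_0,\Gamma_1)$ should be identified at the level of the contact category. In other words, the morphism set $\Hom_{\mathcal{C}^b(\Gamma_0,\Gamma_1)}(\Gamma,\Gamma')$ is nonempty precisely when $m(\Gamma,\Gamma') = 1$ and is a singleton in that case.

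The main obstacle will be this last step: genuinely verifying thinness, because a priori there may be many non-isotopic tight contact structures on $\M(\Gamma,\Gamma')$ and one must justify that in the bounded category they all represent the same morphism. The resolution I envisage is to combine the antisymmetry argument above with the construction of distinguished cobordisms from bypass systems $FBS(\Gamma,\Gamma')$ (proposition \ref{bypass_system_one_to_other}) to produce a canonical representative, and then invoke lemma \ref{lem_CbGammaGamma} again to show any other tight morphism $\Gamma \to \Gamma'$ sitting inside $\M(\Gamma_0,\Gamma_1)$ must agree with it. Once both thinness and antisymmetry are in hand, lemma \ref{lem_category_partial_order} produces the partial order structure directly.
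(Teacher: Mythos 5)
Your antisymmetry argument is exactly the paper's (implicit) argument: compose the two morphisms via lemma \ref{lem_gluing_cobordisms} to get a tight $\M(\Gamma,\Gamma)$ occurring in $\M(\Gamma_0,\Gamma_1)$ and containing $\Gamma'$ on an intermediate disc, then apply lemma \ref{lem_CbGammaGamma} to conclude $\Gamma'=\Gamma$. That part is correct and is precisely how condition (2) of lemma \ref{lem_category_partial_order} is verified.

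The problem is your treatment of thinness, which you call ``the main obstacle''. It is not an obstacle at all: by definition \ref{defn_bonded_ct_cat} the bounded contact category has \emph{by fiat} at most one morphism $\Gamma \To \Gamma'$ --- a morphism is simply the occurrence of $\M(\Gamma,\Gamma')$ inside the ambient tight cylinder, not a choice of contact structure on it --- so condition (1) of lemma \ref{lem_category_partial_order} holds by definition, which is all the paper says. (Even if one insisted on tracking contact structures, $\M(\Gamma,\Gamma')$ rounds to a ball with connected dividing set, so the tight structure is unique; your worry about ``many non-isotopic tight contact structures'' does not arise.) Worse, the machinery you propose to resolve this non-issue would not go through as stated: $FBS(\Gamma,\Gamma')$ (proposition \ref{bypass_system_one_to_other}) and the comparability criterion of proposition \ref{contact_interp_partial_order} are defined only for \emph{basis} chord diagrams $\Gamma_w$ indexed by comparable words, whereas the objects of $\mathcal{C}^b(\Gamma_0,\Gamma_1)$ are arbitrary chord diagrams, and proposition \ref{general_stackability} is about the value of $m$, not about canonical representatives of morphisms. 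So the proof is salvageable --- delete the thinness detour, cite the definition for condition (1), and keep your antisymmetry argument for condition (2) --- but as written the proposed route to thinness is both unnecessary and would fail for non-basis objects.
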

 
In fact, if a tight contact structure on $\M(\Gamma_0, \Gamma_1)$ is obtained by attaching bypasses to $\Gamma_0$ along a bypass system $c$, then the power set $\mathcal{P}(c)$, considered a partially ordered set under inclusion, possesses a functor to $\mathcal{C}^b (\Gamma_0, \Gamma_1)$ (lemma \ref{lem_up_down}).

Contact categories have structures resembling ``exact triangles'' and ``cones'', analogously to a triangulated category. Bypass triples resemble exact triangles: the composition of two morphisms in the triangle is overtwisted, since a bypass is half an overtwisted disc. Two bypass-related chord diagrams determine a third one (their sum in $SFH$), which can be regarded as the cone of the morphism between them. 

When $\M(\Gamma_0, \Gamma_1)$ can be described by bypass attachments along a bypass system, we have generalised versions of bypass triples, exact triangles and cones. This includes triples $(\Gamma, \Gamma_-, \Gamma_+)$ where $\Gamma = [\Gamma_-, \Gamma_+]$. However, we believe these notions are still in an unsatisfactory state: not every tight contact cylinder can be constructed from attachments along a bypass system (lemma \ref{lem_not_every_cob_elementary}); and there is no general ``cone'' or ``exact triangle'' for a morphism. We discuss these and related issues in sections \ref{sec_discontents} and \ref{sec_conseq_main_results}.

\subsubsection{Computation of bounded contact categories}
\label{sec_computation_Cb}

We can compute some bounded contact categories, making use of the partial order $\preceq$. First, we can compute $\mathcal{C}^b (\Gamma_{w_0}, \Gamma_{w_1})$ for any basis chord diagrams $\Gamma_{w_0}, \Gamma_{w_1}$ corresponding to words $w_0, w_1 \in W(n_-, n_+)$. For a tight cobordism, we assume $w_0 \preceq w_1$.

\begin{defn}[Partially ordered set $W(w_0, w_1)$]
Given $w_0 \preceq w_1$ in $W(n_-,  n_+)$, let
\[
 	W(w_0, w_1) = \left\{ w \in W(n_-, n_+ ) \; : \; w_0 \preceq w \preceq w_1 \right\},
\]
endowed with the partial order (hence category structure) inherited from $W(n_-, n_+)$.
\end{defn}

\begin{prop}[Bounded contact category of basis cobordism]
\label{prop_Cb_basis_cob}
For $w_0 \preceq w_1$ corresponding to basis chord diagrams $\Gamma_{w_0}, \Gamma_{w_1}$,
\[
 \mathcal{C}^b \left( \Gamma_{w_0}, \Gamma_{w_1} \right) \cong W(w_0, w_1).
\]
The word $w \in W(w_0, w_1)$ corresponds to the basis chord diagram $\Gamma_w$.
\end{prop}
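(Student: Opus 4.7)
\medskip
\noindent\textbf{Proof proposal.} The plan is to define a functor $\Phi : W(w_0, w_1) \To \mathcal{C}^b(\Gamma_{w_0}, \Gamma_{w_1})$ sending $w$ to $\Gamma_w$ and the unique morphism $w \preceq w'$ to the cobordism $\M(\Gamma_w, \Gamma_{w'})$, and then to verify that $\Phi$ is an isomorphism of categories. Since both sides are partially ordered categories (the target by Proposition \ref{lem_Cb_partial_order}), each hom-set has cardinality at most one, so fullness and faithfulness on morphisms reduce to an equality of relations; essential surjectivity must be strengthened to a genuine bijection on objects, because in a poset isomorphism coincides with equality.

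For \emph{well-definedness}, I would use the bypass system $FBS(\Gamma_{w_0}, \Gamma_{w_1})$ of Proposition \ref{bypass_system_one_to_other}, together with the correspondence from Section \ref{sec_moves_diags_words} between bypass systems on basis diagrams and generalised elementary moves on words. For any $w$ with $w_0 \preceq w \preceq w_1$, the moves taking $w_0$ to $w_1$ can be split into those taking $w_0$ to $w$ and those taking $w$ to $w_1$; transporting this splitting through the correspondence, the bypass system $FBS(\Gamma_{w_0}, \Gamma_{w_1})$ decomposes into two disjoint sub-systems realizing $FBS(\Gamma_{w_0}, \Gamma_w)$ and $FBS(\Gamma_w, \Gamma_{w_1})$. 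Stacking the two resulting tight cobordisms (tight by Lemma \ref{MGamma+-}) produces a tight contact structure on $\M(\Gamma_{w_0}, \Gamma_{w_1})$ in which $\Gamma_w$ appears as an intermediate dividing set, so $\Gamma_w$ is an object of $\mathcal{C}^b(\Gamma_{w_0}, \Gamma_{w_1})$, and $\M(\Gamma_w, \Gamma_{w'})$ for $w \preceq w'$ is a tight sub-cobordism.

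For the \emph{bijection on morphisms}, I would observe that in $\mathcal{C}^b$ a morphism $\Gamma_w \to \Gamma_{w'}$ exists precisely when $\M(\Gamma_w, \Gamma_{w'})$ is tight, i.e.\ $m(\Gamma_w, \Gamma_{w'}) = 1$, and by Proposition \ref{contact_interp_partial_order} this holds iff $w \preceq w'$. So the functor is full and faithful.

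The \emph{main obstacle} is showing that every object of $\mathcal{C}^b(\Gamma_{w_0}, \Gamma_{w_1})$ is of the form $\Gamma_w$ for some $w \in W(w_0, w_1)$, not merely that each is isomorphic to such a diagram. Given an object $\Gamma$, apply Lemma \ref{lem_existence_of_chord_diagram} to the tight cobordism witnessing the existence of $\Gamma$: this yields tightness of both sub-cobordisms, hence $m(\Gamma_{w_0}, \Gamma) = m(\Gamma, \Gamma_{w_1}) = 1$. Write $\Gamma = [v_{u_-}, v_{u_+}]$ in the sense of Theorem \ref{main_theorem}, and expand via Proposition \ref{general_stackability}: the hypotheses translate into parity constraints on $|\{v \in S : w_0 \preceq v\}|$ and $|\{v \in S : v \preceq w_1\}|$, where $S$ is the basis support of $\Gamma$. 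Combining these parities with Proposition \ref{even_number_decomposition} (so $|S|$ is even unless $\Gamma$ is basis), together with the ``tangling'' constraints of Theorem \ref{not_much_comparability} and the evenness statements of Proposition \ref{prop_prec_follow_even} applied internally to $S$, I expect to force $u_- = u_+$, so that $\Gamma = \Gamma_w$ for a single $w$; the stackability hypotheses then reduce to $w_0 \preceq w \preceq w_1$. The delicate point is the compatibility of the two parity conditions with the extremes $u_-, u_+$ being comparable to every element of $S$ while no intermediate element is; this is where I expect most of the combinatorial work to lie, and it would be natural to feed these constraints into the explicit bijection $\Phi$ of Theorem \ref{main_theorem} and argue by contradiction.
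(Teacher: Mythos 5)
Your overall strategy (a functor from $W(w_0,w_1)$, fullness/faithfulness via $m=\,\preceq$, plus a bijection on objects) is reasonable, but two of its load-bearing steps fail as stated. First, the claimed decomposition of $FBS(\Gamma_{w_0},\Gamma_{w_1})$ into disjoint sub-systems realising $FBS(\Gamma_{w_0},\Gamma_w)$ and $FBS(\Gamma_w,\Gamma_{w_1})$ is false in general: the canonical generalised elementary moves of the pair $(w_0,w_1)$ do not split into moves $w_0\to w$ and $w\to w_1$, and a minimal system can be too small to see any intermediate diagram. For example, for $w_0=--+$, $w_1=+--$ one may take $FBS(w_0,w_1)$ to be the single arc $FA(1,1)$, yet $\Gamma_{-+-+\!}$, i.e.\ $\Gamma_{-+-}$, exists in $\M(\Gamma_{--+},\Gamma_{+--})=\mathcal{U}(2,1)$; this is exactly the phenomenon of ``bypasses inside the bypass''. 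The correct route to well-definedness is to use a bypass system adapted to $w$ itself, namely $FBS(w_0,w)$, attach its bypasses one at a time, and invoke the excavation criterion (lemma \ref{lem_existence_of_chord_diagram}): by stability (proposition \ref{upwards_moves_forwards}) each intermediate diagram is a basis diagram $\Gamma_{w'}$ with $w_0\preceq w'\preceq w\preceq w_1$, hence $m(\Gamma_{w'},\Gamma_{w_1})=1$ by proposition \ref{contact_interp_partial_order}, so each bypass exists inside the cobordism. The same construction, continued with $FBS(w,w')$, is also what you need to show that the morphism $\Gamma_w\To\Gamma_{w'}$ actually \emph{occurs} in $\M(\Gamma_{w_0},\Gamma_{w_1})$; tightness of $\M(\Gamma_w,\Gamma_{w'})$ alone is necessary but not the definition of a morphism in $\mathcal{C}^b$.

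Second, and more seriously, your plan for surjectivity on objects cannot work: the two parity conditions $m(\Gamma_{w_0},\Gamma)=m(\Gamma,\Gamma_{w_1})=1$, even combined with propositions \ref{even_number_decomposition}, \ref{prop_prec_follow_even} and theorem \ref{not_much_comparability}, do not force $\Gamma$ to be a basis diagram. Concretely, take $\Gamma=v_{--++}+v_{-++-}+v_{+--+}+v_{+-+-}$ (the diagram of figure \ref{fig:15}) and $w_0=w_1=-+-+$: exactly three support elements are $\succeq w_0$ and exactly one is $\preceq w_1$, so both stackability conditions hold, yet $\Gamma$ is not a basis element (of course it does not exist in $\M(\Gamma_{-+-+},\Gamma_{-+-+})$, by lemma \ref{lem_CbGammaGamma}). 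So you must use the full strength of existence, not just stackability against the two ends. The paper does this by first computing the universal case: lemma \ref{lem_only_basis_in_universal} shows every diagram existing in $\mathcal{U}(n_-,n_+)$ is a basis diagram (either by the edge-rounding ``ski slope'' argument, or by excavating bypasses from $\Gamma_{(-)^{n_-}(+)^{n_+}}$ and using the classification of single bypass moves on basis diagrams, noting each intermediate must have odd basis support); this gives $\mathcal{C}^b(\mathcal{U}(n_-,n_+))\cong W(n_-,n_+)$, and the proposition follows because the tight $\M(\Gamma_{w_0},\Gamma_{w_1})$ exists inside $\mathcal{U}(n_-,n_+)$, so its bounded category is the full subcategory on the $\Gamma_w$ with $w_0\preceq w\preceq w_1$. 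You should either reproduce that nesting argument or redo the excavation argument directly inside $\M(\Gamma_{w_0},\Gamma_{w_1})$; the purely arithmetic route you sketch is a dead end.
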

That is, the chord diagrams occurring in $\M(\Gamma_{w_0}, \Gamma_{w_1})$ are precisely the basis chord diagrams $\Gamma_w$ with $w_0 \preceq w \preceq w_1$; and convex discs in the cobordism with dividing sets $\Gamma_w$, $\Gamma_{w'}$ can be separated, $\Gamma_w$ below $\Gamma_{w'}$, if and only if $w \preceq w'$.

Taking $w_0$ and $w_1$ to be the total minimum and maximum in $W(n_-, n_+)$, i.e. $(-)^{n_-} (+)^{n_+}$ and $(+)^{n_+} (-)^{n_-}$ respectively, the bounded contact category is precisely $W(n_-, n_+)$; the chord diagrams occurring are precisely all the basis chord diagrams. This leads us to consider a sort of ``universal cobordism'' $\mathcal{U}(n_-, n_+)$, defined as
\[
\mathcal{U}(n_-, n_+) = \M \left( \Gamma_{(-)^{n_-} (+)^{n_+}}, \Gamma_{(+)^{n_+} (-)^{n_-}} \right).
\]
We denote its bounded contact category by
\[
 \mathcal{C}^b \left( \mathcal{U}(n_-, n_+) \right) = \mathcal{C}^b \left( \Gamma_{(-)^{n_-} (+)^{n_+}}, \Gamma_{(+)^{n_+} (-)^{n_-}} \right).
\]
Thus as a special case of the preceding proposition, the bounded contact category of a universal cobordism is given by
\[
\mathcal{C}^b \left( \mathcal{U}(n_-, n_+) \right) \cong W(n_-, n_+).
\]

We may regard $\mathcal{U}(n_-, n_+)$ as a ``geometric realisation'' of the category $W(n_-, n_+)$, in a moral (not technical) sense; similarly, $\M(\Gamma_{w_0}, \Gamma_{w_1})$ ``realises'' $W(w_0, w_1)$.

Although in a sense $\mathcal{U}(n_-, n_+)$ is the ``most complicated'' bounded contact category for given $n_\pm$, it is just a \emph{bypass cobordism}: a single bypass attachment on $\Gamma_{(-)^{n_-} (+)^{n_+}}$ gives $\Gamma_{(+)^{n_+} (-)^{n_-}}$ (see section \ref{sec_single_bypass_el_moves}). In effect, the computation of $\mathcal{C}^b(\mathcal{U}(n_-, n_+))$ tells us what ``bypasses exist inside the bypass''. The presence of extra chords near the attaching arc allows for extra ``intermediate'' bypasses.

As it turns out, $\mathcal{U}(n_-, n_+)$ actually describes the ``bypasses inside \emph{any} bypass''. We can compute the bounded contact category of \emph{any} cobordism $\M(\Gamma_0, \Gamma_1)$ obtained from attaching a single bypass above $\Gamma_0$ (here $\Gamma_0, \Gamma_1$ need not be basis chord diagrams): $\mathcal{C}^b(\Gamma_0, \Gamma_1)$ is isomorphic to the ``contact category of the largest universal cobordism that can be embedded into $\M(\Gamma_0, \Gamma_1)$''. The presence of other chords makes no difference. The precise statement is theorem \ref{thm_Cb_bypass_cob}.

\subsubsection{Categorical meaning of main theorem}
\label{sec_intro_categorical_meaning}

Our main theorem can be interpreted in this language of contact categories. This largely amounts to saying the same thing with fancier words, but may still be of interest.

The theorem, for given $w_- \preceq w_+$, furnishes a bypass system $FBS(w_-, w_+)$ such that $\Up_{FBS(w_-, w_+)} \Gamma_{w_-} = \Gamma_{w_+}$ and $\Down_{FBS(w_-, w_+)} \Gamma_{w_-} = \Gamma = [\Gamma_{w_-}, \Gamma_{w_+}]$. Moreover, attaching bypasses along $FBS(w_-, w_+)$ actually gives the tight contact structure on $\M(\Gamma_{w_-}, \Gamma_{w_+})$ (lemma \ref{MGamma+-}); that is, it is \emph{elementary} (definition \ref{def_elementary_cob}).
\begin{prop}[Tight basis cobordisms elementary]
\label{prop_tight_basis_cob_elementary}
Let $\Gamma_0$ and $\Gamma_1$ be basis chord diagrams, and suppose $\M(\Gamma_0, \Gamma_1)$ is tight. Then $\M(\Gamma_0, \Gamma_1)$ is elementary.
\end{prop}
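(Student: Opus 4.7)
The plan is to translate the tightness hypothesis into the combinatorial partial order via Proposition \ref{contact_interp_partial_order}, then invoke the explicit bypass system furnished by the main theorem's construction in Proposition \ref{bypass_system_one_to_other}, and finally identify the resulting contact structure with the tight one using Lemma \ref{MGamma+-}.

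First, I would write $\Gamma_0 = \Gamma_{w_0}$ and $\Gamma_1 = \Gamma_{w_1}$ with $w_0, w_1 \in W(n_-, n_+)$. The hypothesis that $\M(\Gamma_0, \Gamma_1)$ is tight means precisely that $m(\Gamma_{w_0}, \Gamma_{w_1}) = 1$. Since both sides are basis chord diagrams, Proposition \ref{contact_interp_partial_order} applies directly and yields $w_0 \preceq w_1$. This is the combinatorial translation step, and it is where the ``tightness'' hypothesis is consumed.

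Next, because $w_0 \preceq w_1$, Proposition \ref{bypass_system_one_to_other} produces a bypass system $FBS(w_0, w_1)$ on $\Gamma_{w_0}$ with the property that $\Up_{FBS(w_0, w_1)} \Gamma_{w_0} = \Gamma_{w_1}$. Contact-geometrically, I would interpret each arc of $FBS(w_0, w_1)$ as the attaching arc of a bypass placed above the convex disc $\Gamma_{w_0}$; since the arcs of a bypass system are disjoint, these bypasses can be attached independently, building up a contact cobordism from $\Gamma_{w_0}$ to $\Gamma_{w_1}$ inside $\M(\Gamma_0, \Gamma_1)$. By the definition of ``elementary'' (Definition \ref{def_elementary_cob}), the contact structure on $\M(\Gamma_0, \Gamma_1)$ obtained in this way is elementary.

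The final step is to verify that this elementary contact structure \emph{is} the tight one on $\M(\Gamma_0, \Gamma_1)$. This is exactly the content of Lemma \ref{MGamma+-}, which asserts that the bypass-attachment construction along $FBS(w_0, w_1)$ produces a tight contact structure on $\M(\Gamma_{w_0}, \Gamma_{w_1})$. Since the classification of tight contact structures on the sutured solid cylinder with fixed boundary dividing sets (implicit in the stackability map $m$ being boolean-valued) forces any two tight contact structures on $\M(\Gamma_0, \Gamma_1)$ to agree, we conclude that the tight contact structure on $\M(\Gamma_0, \Gamma_1)$ is elementary. The main obstacle is thus not in this proposition itself but in the supporting Lemma \ref{MGamma+-}; granting that lemma and the explicit construction of $FBS(w_0, w_1)$ from the main theorem, the proof is essentially immediate.
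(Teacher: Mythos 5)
Your proof is correct and follows the paper's own route: the paper likewise deduces the result "immediately from Proposition \ref{MGamma+-}", i.e.\ from $w_0 \preceq w_1$ one takes the bypass system $FBS(w_0,w_1)$, whose upward attachment yields a tight (hence, by uniqueness on the ball, \emph{the} tight) contact structure on $\M(\Gamma_{w_0},\Gamma_{w_1})$, so the cobordism is elementary. You have merely spelled out the intermediate citations (Propositions \ref{contact_interp_partial_order} and \ref{bypass_system_one_to_other}) that the paper leaves implicit.
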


We may therefore consider $\Gamma, \Gamma_{w_-}, \Gamma_{w_+}$ as a ``generalised bypass triple'' or ``exact triangle''. In fact, $\Gamma = [\Gamma_{w_-}, \Gamma_{w_+}]$ can be regarded as the ``cone'', the third element in an exact triangle arising from the morphism $\Gamma_{w_-} \To \Gamma_{w_+}$. Chord diagrams of $n$ chords and euler class $e$ are in bijective correspondence with morphisms of the bounded contact category $\mathcal{C}^b(\mathcal{U}(n_-, n_+))$ of the universal cylinder, and may be regarded as their cones. We make this precise in proposition \ref{prop_chord_diagrams_cones}.

\subsubsection{A contact 2-category}
\label{sec_2-category_simplicial}

We also consider generalisations of Honda's contact category in another direction.

The abstract nonsense version of our main theorem (proposition \ref{prop_chord_diagrams_cones}) says that in our simple case (i.e. a disc), the objects of the contact category can themselves be viewed as morphisms. For chord diagrams are described by pairs $w_0 \preceq w_1$, and $\preceq$ describes morphisms in $W(n_-, n_+)$. In this spirit, we define a \emph{contact 2-category} $\mathcal{C}(n+1,e)$: the objects of Honda's contact category become its 1-morphisms, and the morphisms of Honda's category become 2-morphisms, or ``morphisms between morphisms''.
\begin{prop}[Contact 2-category]
\label{contact_2_category}
There is a 2-category $\mathcal{C}(n+1,e)$ such that:
\begin{enumerate}
\item objects are words $w \in W(n_-, n_+)$, equivalently basis chord diagrams $\Gamma_w$;
\item 1-morphisms are chord diagrams of $n+1$ chords and euler class $e$;
\item 2-morphisms $\Gamma_0 \rightarrow \Gamma_1$ are contact structures on $\M(\Gamma_0, \Gamma_1)$, with (vertical) composition given by stacking contact structures.
\end{enumerate}
\end{prop}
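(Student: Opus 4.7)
My plan is to construct the 2-category layer-by-layer, specifying its data and then verifying the associativity, unit, and interchange axioms, using Theorem \ref{main_theorem_maxmin} to organise the 1-category and the stacking operation of Section \ref{sec_stackability} to organise the 2-morphisms.

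For the underlying 1-category, I will use Theorem \ref{main_theorem_maxmin} to assign to each chord diagram $\Gamma = [\Gamma_{w_-}, \Gamma_{w_+}]$ the source $\Gamma_{w_-}$ and the target $\Gamma_{w_+}$; the identity 1-morphism on the object $\Gamma_w$ will be the basis chord diagram $\Gamma_w = [\Gamma_w, \Gamma_w]$ itself. Composition of $[\Gamma_{w_0}, \Gamma_{w_1}]$ with $[\Gamma_{w_1}, \Gamma_{w_2}]$ will be $[\Gamma_{w_0}, \Gamma_{w_2}]$, well-defined by the transitivity of $\preceq$; then the associativity and unit axioms become tautologies about comparable pairs, and the resulting 1-category is naturally the thin poset category $W(n_-, n_+)$.

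For 2-morphisms and their vertical composition, I will take a 2-morphism $\xi: \Gamma_0 \to \Gamma_1$ to be an isotopy class of contact structure on $\M(\Gamma_0, \Gamma_1)$, with vertical composition of $\xi: \Gamma_0 \to \Gamma_1$ and $\eta: \Gamma_1 \to \Gamma_2$ the contact structure on $\M(\Gamma_0, \Gamma_2) \cong \M(\Gamma_0, \Gamma_1) \cup_{D^2} \M(\Gamma_1, \Gamma_2)$ obtained by stacking along the common convex disc with dividing set $\Gamma_1$, as in Section \ref{sec_stackability}. Associativity of vertical composition follows from associativity of cylinder gluing, the identity 2-morphism on $\Gamma$ will be the $I$-invariant tight contact structure on $\M(\Gamma, \Gamma)$ supplied by Lemma \ref{MGammaGamma}, and the unit laws reduce to the standard collar-absorption property.

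The main obstacle will be horizontal composition and the interchange law. Because Theorem \ref{main_theorem_maxmin} forces parallel 1-morphisms to coincide, every 2-morphism is an endomorphism, and I must specify how to horizontally compose $\xi$ on $\M(\Gamma, \Gamma)$ with $\eta$ on $\M(\Gamma', \Gamma')$ into an endomorphism of $\Gamma' \circ \Gamma$. My plan is to define $\eta \ast \xi$ as the contact structure on $\M(\Gamma' \circ \Gamma, \Gamma' \circ \Gamma)$ obtained by inserting $\xi$ and $\eta$ into disjoint sub-cylinders of the ambient $I$-invariant background (itself produced by Lemma \ref{MGammaGamma}), so that the interchange identity $(\eta' \circ \eta) \ast (\xi' \circ \xi) = (\eta' \ast \xi') \circ (\eta \ast \xi)$ reduces to the commutativity of stacking at disjoint heights along a cylinder. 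Making this construction canonical up to isotopy, and checking full compatibility with the 1-category structure of $W(n_-, n_+)$, will be the delicate part.
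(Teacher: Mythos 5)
Your 1-category layer and your treatment of vertical composition match the paper: objects $W(n_-,n_+)$, chord diagrams $[\Gamma_{w_-},\Gamma_{w_+}]$ as 1-morphisms composed via transitivity of $\preceq$, vertical composition by stacking, and the tight $I$-invariant structure on $\M(\Gamma,\Gamma)$ (lemma \ref{MGammaGamma}) as the identity 2-cell. You also correctly isolate the crucial observation that, the 1-category being a poset, parallel 1-morphisms coincide, so horizontal composition only ever involves 2-endomorphisms.

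The gap is in your definition of horizontal composition. ``Inserting $\xi$ and $\eta$ into disjoint sub-cylinders of the $I$-invariant background'' on $\M(\Gamma'\circ\Gamma,\,\Gamma'\circ\Gamma)$ is not a meaningful operation: every horizontal convex slice of that $I$-invariant cylinder has dividing set $\Gamma'\circ\Gamma$, and indeed lemma \ref{lem_CbGammaGamma} says the only chord diagram existing in $\M(\Gamma'',\Gamma'')$ is $\Gamma''$ itself; so there is no sub-cylinder whose sutured boundary is $\M(\Gamma,\Gamma)$ or $\M(\Gamma',\Gamma')$ into which $\xi$ or $\eta$ could be glued (unless $\Gamma=\Gamma'\circ\Gamma$, which generically fails). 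The paper's route avoids any such geometric splicing: it first collapses all overtwisted structures to a single zero 2-morphism $*$, and then notes that the only non-$*$ 2-endomorphism of a 1-morphism $\Gamma$ is the \emph{unique} tight structure on $\M(\Gamma,\Gamma)$, i.e.\ the identity. Horizontal composition is then forced --- identity with identity gives the identity on $\Gamma'\circ\Gamma$, and anything involving $*$ gives $*$ --- and the interchange law is immediate, since each side is either $1_{\Gamma'\circ\Gamma}$ or $*$. Your version, which keeps distinct isotopy classes of overtwisted structures as distinct 2-morphisms, would additionally have to say \emph{which} overtwisted structure a horizontal composite is and check this is well defined and compatible with interchange; without the collapse to $*$ (which the paper's formal definition includes even though the proposition's summary statement omits it), that bookkeeping has no natural answer. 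So you should replace the insertion construction by the degenerate-but-correct definition above.
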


Note that $\mathcal{C}(n+1,e) \cong \mathcal{C}^b(\mathcal{U}(n_-,n_+)) \cong W(n_-, n_+)$ as a $1$-category; so $\mathcal{C}(n+1,e)$ can be regarded as a ``2-category'' structure on $W(n_-, n_+)$ or $\mathcal{U}(n_-, n_+)$. It may be that considering all values of $n$ and $e$, we obtain a 3-category.

\subsection{Structure of this paper, acknowledgments}

\label{sec_intro_str_thesis}

This introduction gives an overview of our results, in a narrative order; results relating to contact elements in $SFH$ are separated from results about contact categories and cobordisms. However, the body of this paper presents results in logical order.

Section \ref{ch_first_steps} contains preliminary steps. We build up our picture of $SFH$ vector spaces categorifying Pascal's triangle, and establish our basis. We prove a $TQFT$-version of the Narayana recursion, and the relationship of Narayana numbers to the order $\preceq$.

In section \ref{ch:considerations}, we return to contact geometry with a thorough study of bypasses and contact categories and ``cobordisms'' (section \ref{sec_discontents}).

In section \ref{ch_basis_interpretations} we consider basis chord diagrams. We show how to construct them from words, how to decompose into this basis, and prove ``the stackability map $m$ is $\preceq$''.

In section \ref{ch_bypass_systems_basis_diagrams} we consider bypass systems on basis chord diagrams. We detail the possible bypass moves on basis diagrams, giving a bypass system taking $\Gamma_{w_0}$ to $\Gamma_{w_1}$, for any $w_0 \preceq w_1$; we use these to compute bounded contact categories. 

Section \ref{ch_ct_elts} then turns to a study of contact elements. We complete the main theorem and prove various properties of contact elements. We discuss how the main theorem describes a generalised bypass triple with contact and categorical implications.

Finally, in section \ref{ch_further_considerations} we compute the operator for rotation; we give the simplicial and double complex structure on the categorified Pascal's triangle; we introduce our contact 2-category; and make some remarks about and extending our results beyond discs.

As may already be clear, we assume familiarity with basic 3-dimensional contact geometry (see e.g. \cite{Et02, Hon3Dim, HonDimThree, Kaz}), including convex surfaces \cite{Gi91} and bypasses \cite{Hon00I}.

This paper is an abridged version of the author's PhD thesis at Stanford, completed during a postdoctoral fellowship at the Universit\'{e} de Nantes, supported by the ANR grant ``Floer power''. I would like to thank Yasha Eliashberg and Steve Kerckhoff.

\section{First steps}

\label{ch_first_steps}

\subsection{First observations in $SFH$}

\label{sec_observations_SFH}

\subsubsection{The vacuum}

\label{sec_vacuum}

Begin with $(T, 1)$, the solid torus with one pair of longitudinal sutures; $SFH(T,1) = SFH(T,1,0) = \Z_2$. There are not many tight contact structures on $(T,1)$ --- not many chord diagrams with $1$ chord! The unique tight contact structure on this sutured manifold is a standard neighbourhood of a closed legendrian curve.

It's not difficult to see that the contact element for this contact structure must be nonzero: it can be embedded into the (Stein fillable) standard contact $S^3$, for instance.
\begin{lem}
\label{lem_vacuum}
The contact element of the unique tight contact structure on $(T,1)$ is the nonzero element $v_\emptyset \in \Z_2 = SFH(T,1)$.
\qed
\end{lem}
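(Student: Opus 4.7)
Since $SFH(T,1) = \Z_2$ by the computation recalled above, the contact class $c(\xi)$ is either $0$ or the unique nonzero element $v_\emptyset$; the entire task is to rule out $c(\xi) = 0$. The plan is to embed $(T,1)$, with its tight contact structure, as a sutured submanifold of a larger sutured manifold whose contact invariant is known to be nonzero, and then invoke the TQFT-type inclusion map of Honda--Kazez--Mati\'c stated above.

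Concretely, the tight contact structure on $(T,1)$ is a standard convex neighborhood of a closed Legendrian curve; in particular it embeds into the standard tight sutured ball $(B^3,\delta)$ (with one connected suture $\delta$) as a neighborhood of a small Legendrian unknot sitting inside a Darboux chart. This exhibits $(T,1)$ as a sutured submanifold in the interior of $(B^3,\delta)$, and the ambient standard tight structure restricts to a tight contact structure $\xi'$ on the complementary sutured region $B^3 - \Int T$ that matches the sutures on both boundary components. Applying the inclusion theorem produces a linear map
\[
\Phi_{\xi'}: SFH(-T,-1) \;\longrightarrow\; SFH(-B^3,-\delta) \otimes V^m
\]
that sends $c(\xi)$ to $c(\xi \cup \xi') \otimes x^{\otimes m}$, where $\xi \cup \xi'$ is precisely the standard tight structure on $(B^3,\delta)$.

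The input from contact Heegaard Floer theory is that $c$ of the standard tight ball is the nonzero generator of $SFH(-B^3,-\delta) = \Z_2$, since this contact structure is Stein fillable (sitting inside $(S^3,\xi_{std})$, whose Ozsv\'ath--Szab\'o contact invariant is nonvanishing); the class $x$ is nonzero by definition. Hence $\Phi_{\xi'}(c(\xi))$ is nonzero, forcing $c(\xi) \neq 0$ and therefore $c(\xi) = v_\emptyset$. The only bookkeeping issue is identifying the complementary sutured region and counting its isolated components $m$ in the sense of the inclusion theorem; both are immediate from the explicit picture of a Legendrian unknot inside a Darboux ball, and no essential difficulty is expected there. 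The core content is simply Stein fillability plus TQFT functoriality.
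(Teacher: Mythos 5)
Your argument is correct and is essentially the paper's own (one-line) proof: the paper simply notes that the tight $(T,1)$ embeds into the Stein fillable standard contact $S^3$, which is exactly your embedding-plus-TQFT-inclusion argument spelled out in detail via the standard tight ball. No gap; you have just made explicit the functoriality and the $m=0$ bookkeeping that the paper leaves implicit.
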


We call $v_\emptyset$ \emph{the vacuum}. The vacuum state in quantum field theory is not zero.

\subsubsection{Creation and annihilation}
\label{sec_creation_annihilation}

To define creation operators, consider the following embedding $(T,n) \hookrightarrow (T,n+1)$ and contact structure on $(T,n+1) - (T,n)$. Embed a disc inside a larger disc, all times $S^1$. The intermediate manifold $(T,n+1) - (T,n)$ is an annulus times $S^1$, with $2n+2$ longitudinal sutures ``on the outside'', and $2n$ longitudinal sutures ``on the inside''. Specify an $S^1$-invariant contact structure by drawing a dividing set on the annulus, as in figure \ref{fig:11}, with base points marked as shown.

\begin{figure}[tbh]
\centering
\includegraphics[scale=0.6]{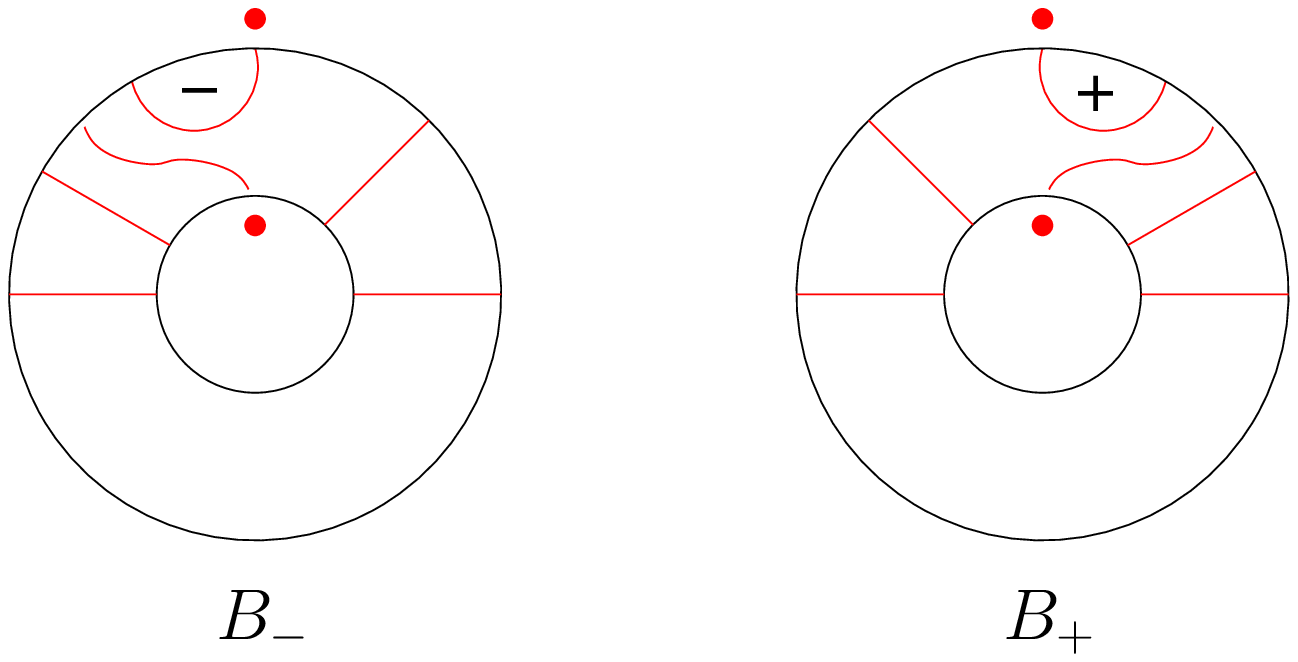}
\caption{Inclusion of sutured manifolds for $B_\pm$.} \label{fig:11}
\end{figure}

\begin{defn}[Creation operators]
The \emph{creation operators} are the maps
\[
 B_-, B_+ : SFH(T, n) \To SFH(T, n+1)
\]
given by TQFT-inclusion, from $(T,n) \hookrightarrow (T,n+1)$ together with the contact structures on $(T,n+1) - (T,n)$ described by the dividing sets in figure \ref{fig:11}.
\end{defn}

Given a chord diagram $\Gamma$ of $n$ chords, applying $B_\pm$ to its contact element gives the contact element described by the chord diagram with $(n+1)$ chords, adding a chord enclosing an outermost $\pm$ region near the base point, as described in the introduction. Applying $B_\pm$ adds $\pm 1$ to the euler class of this contact structure, so takes contact elements in $SFH(T,n,e)$ to $SFH(T,n+1,e\pm 1)$.

Define annihilation maps similarly, from an embedding $(T,n+1) \hookrightarrow (T,n)$.

\begin{figure}[tbh]
\centering
\includegraphics[scale=0.6]{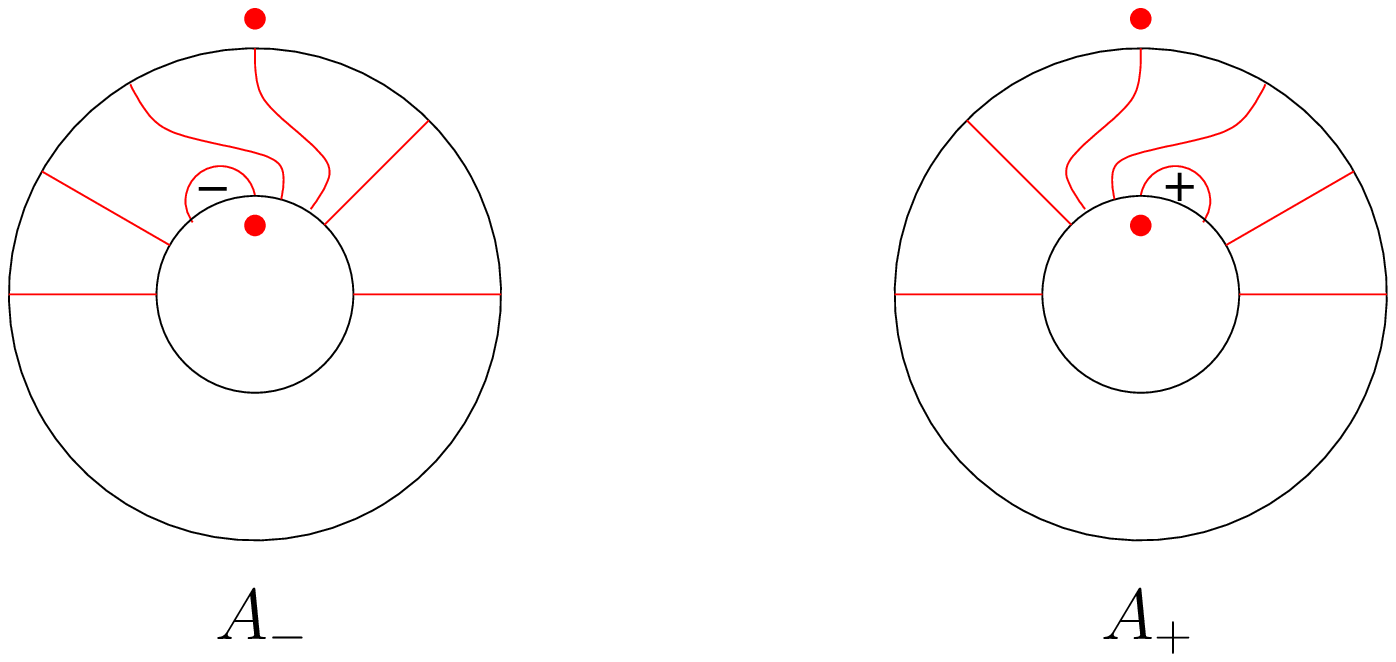}
\caption{Inclusion of sutured manifolds for $A_\pm$.} \label{fig:13}
\end{figure}

\begin{defn}[Annihilation operators]
The \emph{annihilation operators} are maps
\[
A_+, A_- : SFH(T, n+1) \To SFH(T, n)
\]
given by TQFT-inclusion, from $(T,n+1) \hookrightarrow (T,n)$ together with the contact structures on $(T,n) - (T,n+1)$ described by the dividing sets in figure \ref{fig:13}.
\end{defn}

It's clear that $A_\pm$ has the effect on contact elements described in the introduction, taking contact elements in $SFH(T,n+1,e)$ to $SFH(T,n,e \pm 1)$.

Note that $A_\pm$ may produce a diagram with a closed loop. The corresponding contact structure is overtwisted, and the contact element is zero.

Clearly, we could consider ``creation'' and ``annihilation'' not just near the base point, but at any specific location. We will use these later; for now we note they exist.

It's also now clear that the creation and annihilation effects have the relations
\[
A_+ \circ B_- = A_- \circ B_+ = 1 \quad \text{and} \quad A_+ \circ B_+ = A_- \circ B_- = 0,
\]
when applied to contact elements. Just place these figures of annuli together. Restricting to each summand $SFH(T, n+1, e)$, we have proved proposition \ref{annihilation}.

\subsubsection{Nontriviality and uniqueness}

\label{sec_ct_elt_uniqueness}

We prove contact classes are distinct and nonzero; arguments also appearing in \cite{HKM08}.

\begin{lem}
Any tight contact structure $\xi$ on $(T,n)$, corresponding to a chord diagram $\Gamma$, has nonzero contact element $c(\xi)$.
\end{lem}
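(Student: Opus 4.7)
The plan is to prove the lemma by induction on $n$, using the creation and annihilation operators together with the non-triviality of the vacuum (Lemma \ref{lem_vacuum}). The base case $n=1$ is immediate: on $(T,1)$ there is a unique tight contact structure, and its contact element equals $v_\emptyset \neq 0$.

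For the inductive step, suppose the result holds for $n-1$ and let $\Gamma$ be a chord diagram with $n \geq 2$ chords, corresponding to a tight $\xi$ on $(T,n)$. The key observation is that $\Gamma$ must possess at least one \emph{outermost chord}: a chord $\gamma$ that cuts off a half-disc of $D^2$ containing no other chord. This half-disc is either a $+$ or $-$ region. The paper already notes that creation and annihilation operators $B_\pm^{\mathrm{loc}}, A_\pm^{\mathrm{loc}}$ can be defined at \emph{any} marked location on the boundary, not just the base point, by exactly the same $S^1$-invariant contact structure on the collar annulus pictured in the analogues of figures \ref{fig:11} and \ref{fig:13}. Thus there is a chord diagram $\Gamma'$ with $n-1$ chords, obtained by deleting $\gamma$ from $\Gamma$, and
\[
 c(\xi_\Gamma) \;=\; B_\pm^{\mathrm{loc}} \, c(\xi_{\Gamma'}),
\]
where the sign of $B$ records the sign of the region enclosed by $\gamma$. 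The relation $A_\mp \circ B_\pm = 1$, proved for the base-point operators by stacking the two inclusion annuli of figures \ref{fig:11} and \ref{fig:13}, is equally valid at the general location (the stacked annulus is identified with a trivial collar). Hence
\[
 A_\mp^{\mathrm{loc}} \, c(\xi_\Gamma) \;=\; c(\xi_{\Gamma'}),
\]
and $c(\xi_{\Gamma'})$ is nonzero by the inductive hypothesis (since $\Gamma'$ also corresponds to a tight contact structure on $(T,n-1)$, being the convex dividing set after removing an outermost chord). Therefore $c(\xi_\Gamma) \neq 0$.

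The only step requiring care is verifying that the generalised creation/annihilation operators at arbitrary locations really do satisfy $A_\mp^{\mathrm{loc}} \circ B_\pm^{\mathrm{loc}} = 1$ when composed, so that the inductive descent recovers a nonzero output. This is a direct consequence of the TQFT-inclusion theorem: the composition is induced by an inclusion $(T,n) \hookrightarrow (T,n)$ together with an $S^1$-invariant contact structure on a thickened annulus whose dividing set is two nested parallel arcs — i.e. a product structure — and this induces the identity on $SFH$. Everything else in the argument is formal. As an alternative (essentially equivalent to the induction, but shorter in narrative), one can observe that every tight $(T,n)$ embeds into a Stein-fillable closed contact manifold such as standard $S^3$, with tight complementary contact structure; the TQFT-inclusion map then carries $c(\xi)$ to the nonzero Ozsv\'{a}th--Szab\'{o} invariant of the Stein filling, forcing $c(\xi) \neq 0$. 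I would include this embedding argument as a remark while using the inductive proof as the main argument, since the induction ties the statement directly into the $B_\pm$/$A_\pm$ machinery that the rest of the paper relies on.
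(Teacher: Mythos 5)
Your proof is correct and is essentially the paper's argument: both reduce the contact element to the vacuum $v_\emptyset \neq 0$ by a sequence of (linear, TQFT-induced) annihilation operators, using $A \circ B = 1$ and Lemma \ref{lem_vacuum}. The only cosmetic difference is that the paper iterates at the base point, observing that at most one of $A_\pm$ creates a closed loop, whereas you induct by annihilating at the location of an outermost chord; your closing remark about embedding into Stein-fillable $S^3$ is the same observation the paper uses to justify the vacuum's nonvanishing.
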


\begin{proof}
For any such $\xi$ and $\Gamma$, at most one of $A_\pm$ can create a closed loop. Therefore, repeatedly applying $A_\pm$ we may reduce $\Gamma$ to one chord. The composition of these annihilation operators is a linear map which takes $c(\xi)$ to $v_\emptyset \neq 0$.
\end{proof}

\begin{proof}[Proof of proposition \ref{contact_distinct}]
Let $\Gamma_1, \Gamma_2$ be two distinct dividing sets of $n$ chords. There is a sequence of annihilation operators which reduces the contact element of $\Gamma_1$ to the vacuum state but which, when applied to the contact element of $\Gamma_2$, at some point creates a closed curve. These annihilation operators might not be applied in the positions of $A_+$, $A_-$, but may be at other positions; there is nothing special about annihilating at the base point. The composition of these operators takes the contact element of $\Gamma_1$ to $v_\emptyset$ but takes that of $\Gamma_2$ to $0$; hence they cannot be equal.
\end{proof}

This establishes a bijective correspondence; the same is true for refinements by $e$.
\[
\left\{ 
\begin{array}{c}
\text{Tight contact} \\ \text{structures on } (T,n)
\end{array}
\right\}
\cong
\left\{
\begin{array}{c}
\text{Chord diagrams} \\ \text{of } n \text{ chords}
\end{array}
\right\}
\cong
\left\{
\begin{array}{c}
\text{Nonzero contact} \\ \text{elements in } SFH(T,n)
\end{array}
\right\}
\]

\begin{rem}[Lax notation]
We often denote by $\Gamma$ a chord diagram, or its corresponding contact element, and drop the notation $c(\xi)$. The meaning should be clear.
\end{rem}

\subsubsection{Bypasses and addition}

\label{sec_byp_add}

The smallest $n,e$ for which there is more than one chord diagram is $n=3$ and $e=0$. Since $SFH(T,3,0) = \Z_2^2$ and $C_3^0 = 3$, there are 3 chord diagrams giving 3 distinct elements of $\Z_2^2$: see figure \ref{fig:14}. They form the simplest nontrivial bypass triple.
\begin{figure}
\centering
\includegraphics[scale=0.35]{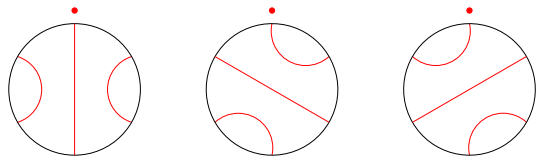}
\caption{Chord diagrams in $SFH(T,3,0)$.} \label{fig:14}
\end{figure}

The 3 nonzero elements of $\Z_2^2$ have the property that they sum to zero. The following proposition is a reformulation of \ref{addition_bypasses}.
\begin{prop}
\label{prop_sum_to_zero}
If three chord diagrams $\Gamma_1, \Gamma_2, \Gamma_3$ form a bypass triple, then $\Gamma_1 + \Gamma_2 + \Gamma_3 = 0$. Conversely, if three chord diagrams $\Gamma_1, \Gamma_2, \Gamma_3$ satisfy $\Gamma_1 + \Gamma_2 + \Gamma_3 = 0$, then $\Gamma_1, \Gamma_2, \Gamma_3$ form a bypass triple.
\end{prop}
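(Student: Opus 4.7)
The forward direction is an instance of the bypass exact triangle in sutured Floer homology established by Honda--Kazez--Mati\'{c} \cite{HKM08}. Given a bypass triple $(\Gamma_1, \Gamma_2, \Gamma_3)$ arising from an attaching arc $c$, realize the three chord diagrams as dividing sets on convex meridional discs in three tight contact structures $\xi_1, \xi_2, \xi_3$ on $(T,n)$ which differ only in a neighborhood of a bypass half-disc sitting above $c$. The bypass triangle for $SFH$ then yields the contact-class relation $c(\xi_1) + c(\xi_2) + c(\xi_3) = 0$ in our $\Z_2$ setting, which is the stated identity.

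For the converse, suppose $\Gamma_1 + \Gamma_2 + \Gamma_3 = 0$ in $SFH$. Proposition \ref{contact_distinct} guarantees the three $\Gamma_i$ are distinct nonzero contact elements --- if any two coincided, the third would vanish --- and by the spin-c decomposition they must all lie in the same summand $SFH(T,n,e)$. I would proceed by induction on $n$, with base case $n = 3$, $e = 0$: here $SFH(T,3,0) = \Z_2^2$ has exactly three nonzero elements, which are precisely the three contact elements depicted in Figure \ref{fig:14}, evidently forming a bypass triple. For the inductive step, one selects an annihilation operator $A$ (placed at some boundary marked point, using the generalized creation/annihilation operators noted in Section \ref{sec_creation_annihilation}) such that $A\Gamma_i$ is a nonzero chord diagram for each $i$. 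Applying $A$ to the given relation produces a smaller relation $A\Gamma_1 + A\Gamma_2 + A\Gamma_3 = 0$; if the $A\Gamma_i$ are also distinct, the inductive hypothesis provides a bypass triple relation, which we then lift back via the one-sided inverse creation operator $B$ (which reconstructs each $\Gamma_i$ since nontrivial annihilation followed by the matching creation is the identity on chord diagrams) to recover an attaching arc exhibiting the original triple as a bypass triple.

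The principal obstacle is precisely the selection of this ``safe'' annihilation operator: one must argue that for any three distinct chord diagrams summing to zero there exists a boundary position whose annihilation neither creates a closed loop in any of the $\Gamma_i$ nor collapses the three outputs. This requires a local analysis of the distribution of outermost regions across the three diagrams --- distinct diagrams sharing a common euler class must differ somewhere, and one should be able to site the annihilation so as to exploit that difference.

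A cleaner structural alternative, which sidesteps this case analysis, is a dimension count. By \cite{HKM08, Ju08}, $\dim SFH(T,n,e) = \binom{n-1}{k}$, and by Proposition \ref{contact_distinct} the $C_n^e$ contact elements span this space. The forward direction already furnishes bypass relations among contact elements; one verifies (by enumerating bypass triples versus the corank of the span) that these relations saturate the available corank $C_n^e - \binom{n-1}{k}$. Consequently the only linear relations among three distinct nonzero contact elements are bypass relations, yielding the converse. Either route suffices; I would write up the proof via whichever enumerative identity proves more transparent.
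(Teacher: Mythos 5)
Your strategy for the converse (induction on the number of chords via annihilation operators) is the same as the paper's, but two steps you rely on fail. First, the lift-back mechanism is wrong: creation is only a \emph{one-sided} inverse of annihilation, $A_+B_-=A_-B_+=1$, and the reverse composites are not the identity on chord diagrams --- annihilating at a position joins two generally distinct chords, and re-creating inserts a new outermost chord there, so ``annihilation followed by the matching creation'' returns $\Gamma$ only when $\Gamma$ already had an outermost chord at that position. Hence even granting a bypass triple among the $A\Gamma_i$, your argument does not recover the statement for the $\Gamma_i$. Second, the ``principal obstacle'' you name --- an annihilation site that is safe on all three diagrams \emph{and} keeps the outputs distinct --- is exactly where your proof stops, and it is also the wrong target: distinctness of the outputs is neither needed nor always achievable. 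The paper demands less: for any \emph{two} distinct diagrams with the same $n$ and $e$ there is a position whose annihilation creates no closed loop on either (if every position killed one of them, both diagrams would consist entirely of outermost chords, all positive on one and all negative on the other, contradicting equal euler class). Applying this to $\Gamma_1,\Gamma_2$: if $A\Gamma_1\neq A\Gamma_2$ then $A\Gamma_3=A\Gamma_1+A\Gamma_2\neq 0$ automatically and one inducts; and the ``collapse'' case $A\Gamma_1=A\Gamma_2$ is not a failure mode to be avoided but the terminal case, since two distinct diagrams with the same image under $A$ must be arranged as in figure \ref{fig:50}, hence are bypass-related, and then $\Gamma_3=\Gamma_1+\Gamma_2$ is the third diagram of their triple by the forward direction.

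Two further points. For the forward direction, citing a ``bypass exact triangle for contact classes'' from \cite{HKM08} is essentially citing the statement to be proved (the paper notes this was likely known there but not made explicit); the self-contained argument is simpler: a bypass triple is obtained from the three diagrams of figure \ref{fig:14} by gluing on a fixed annulus, TQFT-inclusion gives a linear map $SFH(T,3,0)\cong\Z_2^2\to SFH(T,n,e)$, and the three contact classes in $\Z_2^2$, being its three distinct nonzero elements, sum to zero. Finally, your ``cleaner'' dimension-count alternative has a logical gap: knowing that bypass relations generate \emph{all} linear relations among chord diagrams (i.e.\ that $SFH_{comb}(T,n,e)\to SFH(T,n,e)$ is an isomorphism) does not imply that a relation among exactly three distinct contact elements is itself a \emph{single} bypass relation --- a priori it could be a sum of several bypass relations --- so that route would still require the very argument it was meant to avoid.
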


\begin{proof}
Suppose $\Gamma_1, \Gamma_2, \Gamma_3$ form a bypass triple in $SFH(T,n,e)$. Then they are obtained from the three diagrams in figure \ref{fig:14} by adding an annulus, with fixed arcs on it, to their outsides. Using TQFT-inclusion we obtain a linear map
\[
\Z_2^2 \cong SFH(T,3,0) \To SFH(T,n,e).
\]
The three contact classes in $SFH(T,3,0)$ sum to zero; hence so too do $\Gamma_1, \Gamma_2, \Gamma_3$.

For the converse: proof by induction on the number of chords $n$. For $n=3$ it is clear. Suppose three chord diagrams sum to zero. Then they all have the same relative euler class, and (since all contact elements of chord diagrams are nonzero) are all distinct.

We use the following fact: given any two distinct chord diagrams with the same $n$ and $e$, there exists an annihilation operator, annihilating at some location (possibly not the base point), that creates no closed curves on either. If annihilating at every position creates a closed curve on at least one of the diagrams, then the two chord diagrams consist entirely of outermost chords, enclosing all positive regions on one diagram, and all negative regions on the other. Thus the diagrams have distinct $e$, a contradiction.

Applying this to $\Gamma_1, \Gamma_2$, we find an annihilation operator $A$ such that $A \Gamma_1, A \Gamma_2$ are nonzero. If $A \Gamma_1 \neq A \Gamma_2$ then $A \Gamma_3 \neq 0$; thus we have reduced to a smaller case and are done by induction. If $A \Gamma_1 = A \Gamma_2$ then the situation must be as in figure \ref{fig:50}; and hence $\Gamma_1, \Gamma_2$ are related by a bypass move.  Then $\Gamma_3 = \Gamma_1 + \Gamma_2$ (by the first part of the proposition) is the third diagram in their bypass triple.
\end{proof}

\begin{figure}
\centering
\includegraphics[scale=0.5]{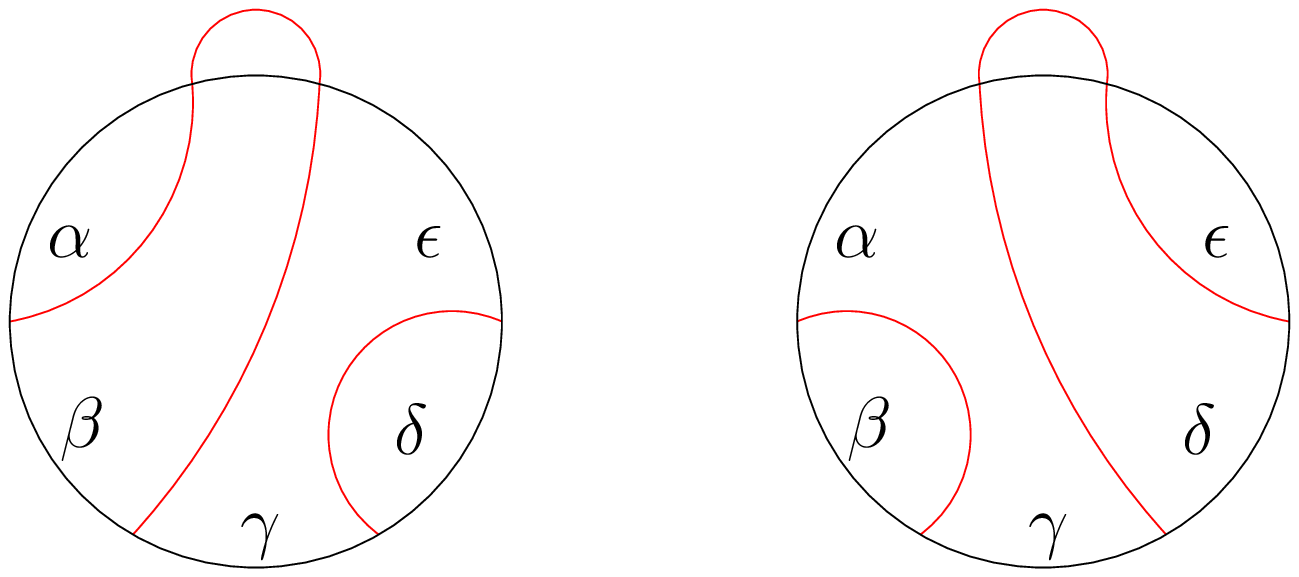}
\caption[Distinct $\Gamma_1, \Gamma_2$ for which $A \Gamma_1 = A \Gamma_2$.]{Distinct $\Gamma_1, \Gamma_2$ for which $A \Gamma_1 = A \Gamma_2$. Here $\alpha, \beta, \gamma, \delta, \epsilon$ denote that the two diagrams contain identical chords in these regions.} \label{fig:50}
\end{figure}

\subsubsection{The basis}
\label{sec_basis}

We now show that $\{ v_w, \; w \in W(n_-,n_+)\}$ forms a basis, proving proposition \ref{QFT_basis}. Recall (section \ref{sec_intro_basis}) $v_w$ is obtained from applying $B_\pm$ to $v_\emptyset$ according to the word $w$. 

\begin{proof}[Proof of proposition \ref{QFT_basis}]
To see the $v_w$ are linearly independent, suppose some $v_{w_1} + \cdots + v_{w_j} = 0$. To this sum apply annihilation operators which undo the creation operators in the definition of $v_{w_1}$. The composition $A$ of these operators takes $v_{w_1}$ to $v_\emptyset \neq 0$ and every other $v_{w_i}$ to $0$; hence $A(v_{w_1} + \cdots + v_{w_j}) = v_\emptyset = 0$, a contradiction.

The number of $v_w$ is $|W(n_-,n_+)| = \binom{n}{k} = \dim SFH(T,n+1,e)$. Hence they form a basis.
\end{proof}

\begin{proof}[Proof of proposition \ref{categorification}]
It remains to prove that $B_\pm$ are injective and that
\[
SFH(T,n+1,e) = B_+ SFH(T,n,e-1) \oplus B_- SFH(T,n,e+1).
\]
A basis of $SFH(T,n+1,e)$ consists of elements $v_w$. If $w$ begins with a $+$ (resp. $-$), $w=+w'$ (resp. $-w'$), then $v_w = B_+ v_{w'} \in B_+ SFH(T,n,e-1)$ (resp. $B_- v_{w'} \in B_- SFH(T,n,e+1)$. This proves the recursion, and injectivity is clear.
\end{proof}

There are simple algorithms to decompose a contact element into this basis, detailed in section \ref{sec_decomposition}. Essentially, there is either an outermost region at the base point, or there is not. If there is, we factor out a $B_\pm$ and reduce to a smaller chord diagram. If not, we perform bypass surgery near the base point to write our diagram as a sum of two other diagrams, each containing an outermost region at the base point. We proceed until we reach the vacuum. See figure \ref{fig:15} for an example.
\begin{figure}
\centering
\includegraphics[scale=0.35]{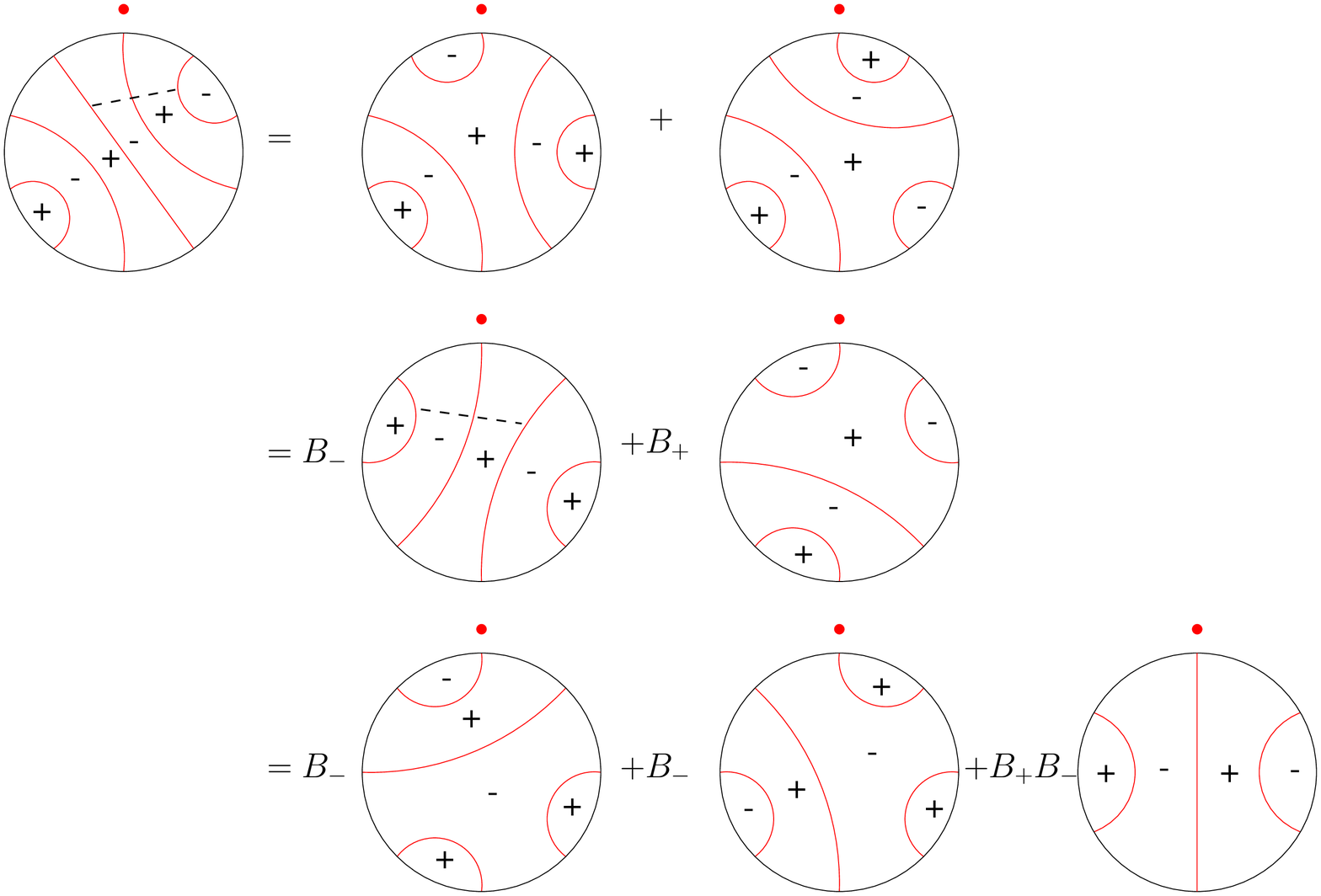}
\caption[Basis decomposition.]{Basis decomposition. A couple more steps shows that the original diagram is $v_{--++} + v_{-++-} + v_{+--+} + v_{+-+-}$.} \label{fig:15}
\end{figure}

We can now prove proposition \ref{combinatorial_SFH}, ``SFH is combinatorial''.
\begin{proof}[Proof of proposition \ref{combinatorial_SFH}]
Recall $SFH_{comb}(T,n,e) = \Z_2 \langle V \rangle / \mathcal{B}$ where $\Z_2 \langle V \rangle$ is the free $\Z_2$-vector space generated by chord diagrams, and $\mathcal{B}$ is the subspace generated by bypass relations. There is certainly a linear map $\Z_2 \langle V \rangle \To SFH(T,n,e)$, taking chord diagrams to the corresponding contact elements, and taking $\mathcal{B} \mapsto \{0\}$; hence it descends to a map $\phi: SFH_{comb} \To SFH$. Since $SFH$ is spanned by chord diagrams, $\phi$ is surjective and $\dim SFH_{comb} \geq \binom{n}{k}$. But $SFH_{comb}(T,n,e)$ is spanned by the $\Gamma_w$, of which there are $\binom{n}{k}$, hence $\phi$ is an isomorphism.
\end{proof}

\subsubsection{The octahedral axiom}

\label{sec_octahedral}

We briefly return to the example $SFH(T,4,-1) = \Z_2^3$ from section \ref{fun}. We have $6$ chord diagrams as described there, namely $3$ basis elements and sums of them in pairs. Consider elements of $\Z_2^3$ as vertices of a cube with coordinates $(x,y,z)$. Each 2-dimensional subspace generated by two basis elements contains 3 contact elements which form a bypass triple; and also the subspace $x+y+z=0$. Thus, the 6 vertices of the cube which are contact elements contain between them 4 triangles which are bypass triples. We can arrange these 6 vertices as an octahedron, with 4 of the 8 faces exact. This is the arrangement which appears in the octahedral axiom of Honda \cite{HonCat}.

Every $SFH(T,n,e)$ can be considered a higher-order version of the octahedral axiom.

\subsection{Enumerative combinatorics}
\label{sec_enumerative}

\subsubsection{Catalan, Narayana, and merging}
\label{sec_Catalan_Narayana}

Recall the discussion of Catalan and (shifted) Narayana numbers in section \ref{sec_intro_Catalan_Narayana}; take the recursive definitions
\[
 C_{n+1} = \sum_{n_1 + n_2 = n} C_{n_1} C_{n_2}, \quad 
 C_{n+1}^e = C_n^{e-1} + C_n^{e+1} + \sum_{\substack{n_1 + n_2 = n \\ e_1 + e_2 = e}} C_{n_1}^e \; C_{n_2}^e.
\]
To see that the number of chord diagrams with $n$ chords (resp. and with relative euler class $e$) satisfies the Catalan (resp. Narayana) recursion, consider the \emph{merging operation} shown in figure \ref{fig:16}. Given two chord diagrams $\Gamma_1, \Gamma_2$ with $n_1, n_2$ chords and relative euler classes $e_1, e_2$, we obtain a chord diagram with $n_1 + n_2 + 1$ chords and relative euler class $e_1 + e_2$. Note the specification of base points. When one $n_i = 0$, the operation reduces to $B_\pm$. 
\begin{figure}
\centering
\includegraphics[scale=0.35]{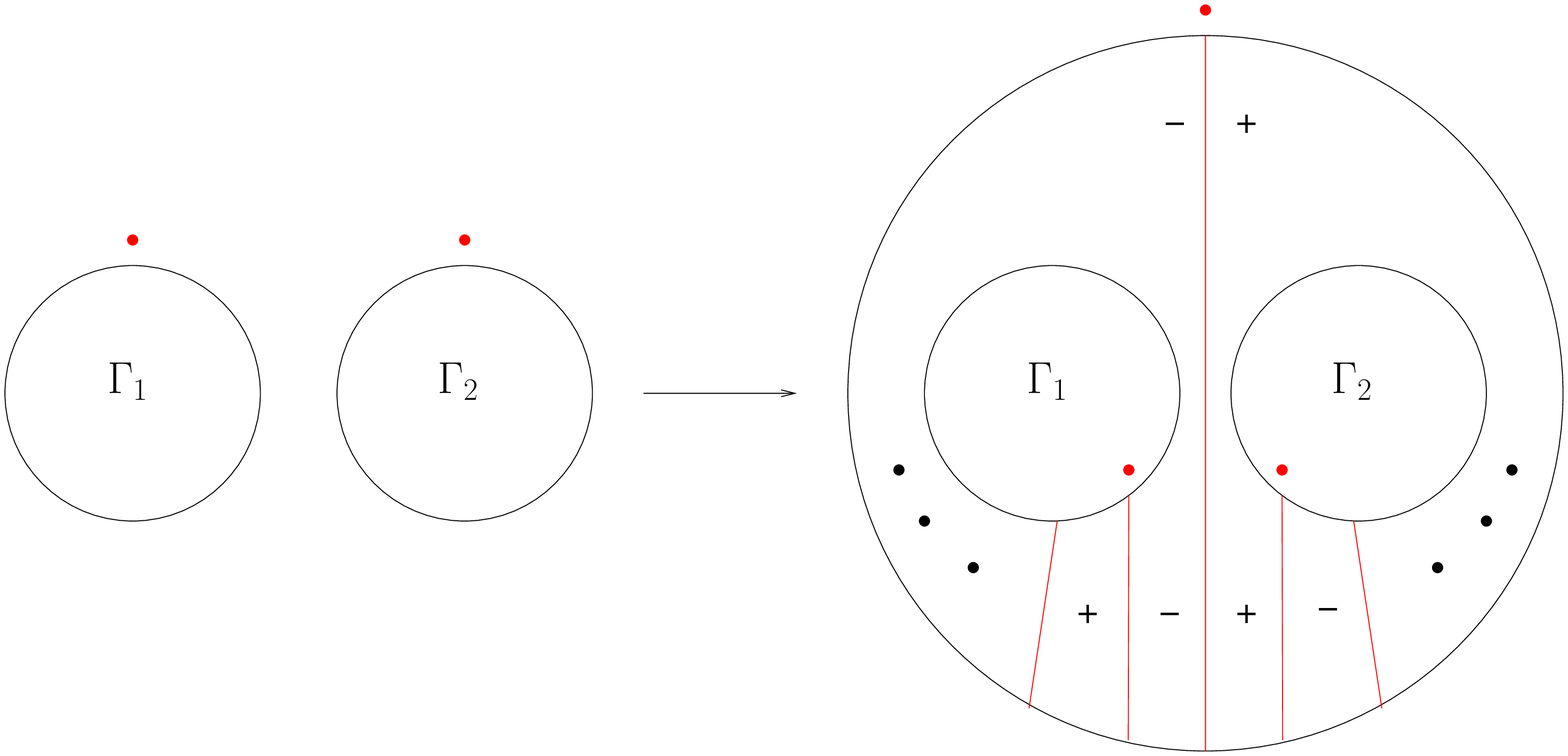}
\caption{Merging operation.} \label{fig:16}
\end{figure}

Any chord diagram can be expressed as the merge of two (possibly null) chord diagrams, in precisely one way. Counting the number of chord diagrams of $n$ chords gives the Catalan recursion. Keeping track of relative euler class gives the Narayana recursion. It is then clear that $C_n = \sum_e C_n^e$. This proves proposition \ref{Narayana_recursion}.

The ``merging'' operation precisely describes a sutured inclusion $(T, n_1) \sqcup (T, n_2) \hookrightarrow (T, n_1 + n_2 + 1)$, together with a contact structure on $(T, n_1 + n_2 + 1) - \left( (T, n_1) \sqcup (T, n_2) \right)$. Thus TQFT-inclusion applies. Note $SFH( (T,n_1) \sqcup (T, n_2) ) = SFH(T, n_1) \otimes SFH(T, n_2)$.
\begin{defn}[Merge operator]
The linear map
\[
 M: SFH(T, n_1) \otimes SFH(T, n_2) \To SFH(T, n_1 + n_2 + 1)
\]
arises from the merging operation on chord diagrams described above. It restricts to a map on summands
\[
 M: SFH(T, n_1, e_1) \otimes SFH(T, n_2, e_2) \To SFH(T, n_1 + n_2 + 1, e_1 + e_2).
\]
\end{defn}

When one $n_i$ is $0$, this definition naturally extends as $B_\pm$. Every contact element lies in the image of $M$, applied to contact elements; this proves proposition \ref{Catalan_categorification}.

\subsubsection{Counting comparable pairs}
\label{sec_counting_comparable}

We now prove proposition \ref{enumerative_bijection}: the number of pairs $w_0 \preceq w_1$ in $W(n_-, n_+)$ is $C_{n+1}^e$.

We give a ``baseball'' interpretation of the partial order $\preceq$. The $m$'th symbol in a word $w$ is the $m$'th \emph{inning}. The sum of the first $m$ symbols the \emph{score after $m$ innings}. The relation $w_0 \preceq w_1$ means precisely that after every inning, $w_1$ is not losing.

(Note, this is \emph{low-scoring} baseball: every inning, each team scores $\pm 1$ run. It is also \emph{fixed}: the end result is tied. The lead changes precisely when words are not comparable; comparable words are \emph{uninteresting} as spectator sport. Two words are comparable iff they describe a low-scoring, fixed, and uninteresting baseball game.)

\begin{proof}[Proof of proposition \ref{enumerative_bijection}]
First, there is a bijection between pairs of comparable words of length $n$ with $k$ plus signs, and monotone increasing functions 
\[
f: \{1, \ldots, n+1\} \To \{1, \ldots, n+1\}, \quad \text{i.e. } \quad f: [n+1] \To [n+1],
\]
satisfying $f(i) \leq i$ for all $i$ and taking $k+1$ distinct values. The bijection is as follows. Given a comparable pair $w_0 \preceq w_1$, we know that for all $j$, the $j$'th $+$ sign in $w_0$ is to the right of the $j$'th $+$ sign in $w_1$. Insert a $+$ at the start of $w_0$ and $w_1$ to obtain $w'_0, w'_1$, so these are words of length $n+1$ with $k+1$ plus signs. For $i \in \{1, \ldots, n+1\}$, let the number of $+$ signs up to and including the $i$'th symbol of $w_0$ be $j(i)$; then define $f(i)$ to be the position of the $j(i)$'th $+$ sign in $w_1$.

Conversely, given such a function, we can easily reconstruct the words $w_0, w_1$. The positions of the $+$ signs in $w'_1$ are precisely the values of $f$. And the positions of the $+$ signs in $w'_0$ are precisely those $i$ for which $f(i)$ jumps, $f(i) > f(i-1)$.

The number of such functions $f: [n+1] \To [n+1]$ with $k+1$ distinct values is well known to be $N_{n+1,k+1} = C_{n+1,k} = C_{n+1}^e$.

To see this, we can show that $F_{n,k}$, the number of increasing $f: [n] \To [n]$ with $f(i) \leq i$ taking $k$ values, satisfies the Narayana recursion; clearly $F_{n,k} = N_{n,k}$ for small values. Clearly any such function has the fixed point $f(1) = 1$. The number with no other fixed points is $F_{n-1,k}$. The number with a fixed point $f(2) = 2$ is $F_{n-1,k-1}$. Otherwise let $j$ be the least fixed point $\geq 2$. We can then ``break the function into two'' at that fixed point, and the number of such functions is given by $F_{j-2,k_1} F_{n-j+1,k_2}$ over the possible $k_1, k_2$ where $k_1 + k_2 = k$.
\end{proof}

\section[Contact considerations]{Contact considerations}

\label{ch:considerations}

\subsection{Edge rounding}

\label{ch_preliminaries}
\label{sec_contact_prelim}
\label{sec_fundamental_contact}
\label{sec_convex_surfaces}

Suppose two convex surfaces $\Sigma_1, \Sigma_2$ with dividing sets $\Gamma_1, \Gamma_2$, meet along a common legendrian boundary $C$, forming a corner. In this case $\Gamma_1, \Gamma_2$ ``interleave'' along $C$ (figure \ref{fig:8}(left)) and $|\Gamma_1 \cap C| = |\Gamma_2 \cap C| = \frac{1}{2} |tb(C)|$. Rounding the corner gives a smooth surface; dividing curves behave as in figure \ref{fig:8}(right). See \cite{Hon00I} for details.

\begin{figure}
\centering
\includegraphics[scale=0.3]{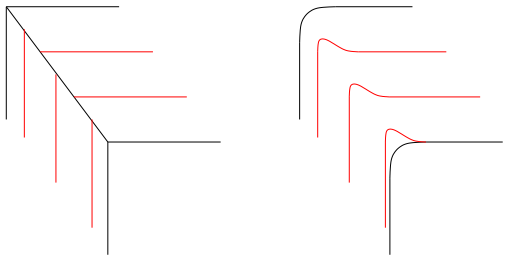}
\caption{Edge rounding of convex surfaces.} \label{fig:8}
\end{figure}

\label{sec_ct_strs_balls_tori}

\subsection{On Bypasses}

\label{sec_bypasses}
\label{sec_bypass_manifold}

The notion of bypass was introduced by Honda in \cite{Hon00I}: see there for further details. A thickened convex overtwisted disc $D \times I$ has dividing set on its boundary consisting of 3 closed loops; one on each of $D \times \{0\}$, $D \times \{1\}$, and $\partial D \times \{ \frac{1}{2} \}$. A bypass is half of this object, slicing through a diameter of $D$, times $I$; on this ``sliced'' part, thought of as the ``base'', the dividing set consists of three arcs of the form $\{\cdot\} \times I$.

We may attach a bypass to a convex surface along an attaching arc, above or below. Rounding and flattening then gives upwards or downwards bypass surgery on dividing sets. Clearly, adding two bypasses, above and below a convex surface, along the same attaching arc, attaches an overtwisted disc.

Bypasses are the ``smallest building blocks'' of contact structures; on the other hand, a bypass is half an overtwisted disc. The smallest step in contact geometry is half way to oblivion; such is the precariousness of all tight contact life.

\subsubsection{When do bypasses exist?}
\label{sec_bypasses_exist}

Suppose we have a contact 3-manifold $(M, \xi)$ with convex boundary, and an attaching arc $c$ on $\partial M$. We ask whether there exists a bypass inside $M$ along $c$. Sometimes there is an easy answer to this question.
\begin{enumerate}
\item 
If $\xi$ is tight, and bypass surgery along $c$, inwards into $M$, would result in a convex surface with overtwisted neighbourhood (easily detected by looking at the dividing set), then no such bypass exists.
\item
If bypass surgery along $c$, inwards into $M$, would result in a convex surface with dividing set isotopic to the original, then a bypass exists there. This principle has been mischievously named the ``right to life'' principle \cite{Hon02, HKM01}.
\item
One existing bypass may imply the existence of others. After attaching one bypass, other arcs of attachment may become trivial, so that bypasses exist by the right-to-life principle: this is ``bypass rotation'' \cite{HKMPinwheel}. See figure \ref{fig:22} (left). 
\end{enumerate}

\begin{figure}
\centering
\mbox{
\includegraphics[scale=0.35]{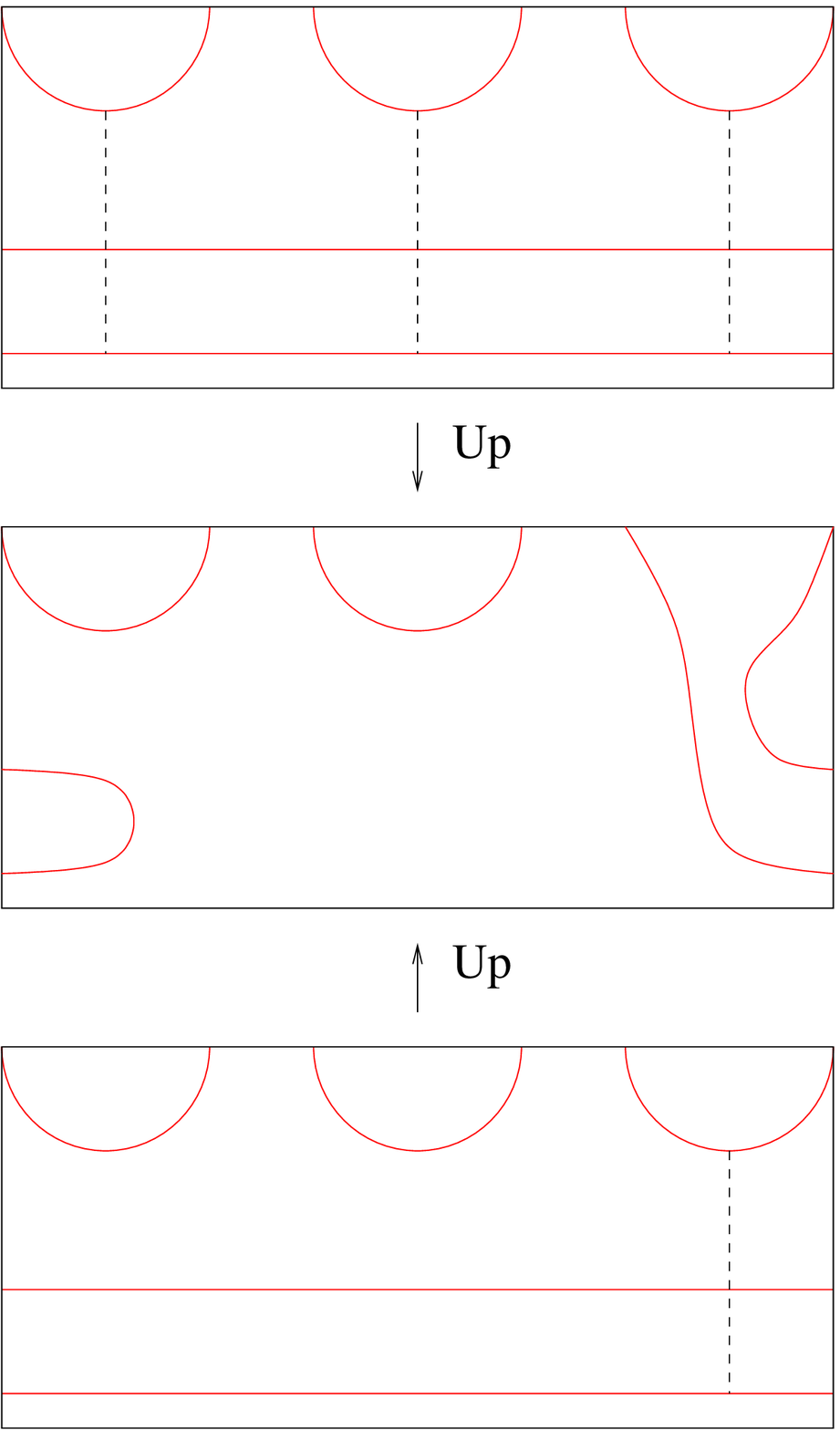}
\quad \quad \quad \quad \quad
\includegraphics[scale=0.5]{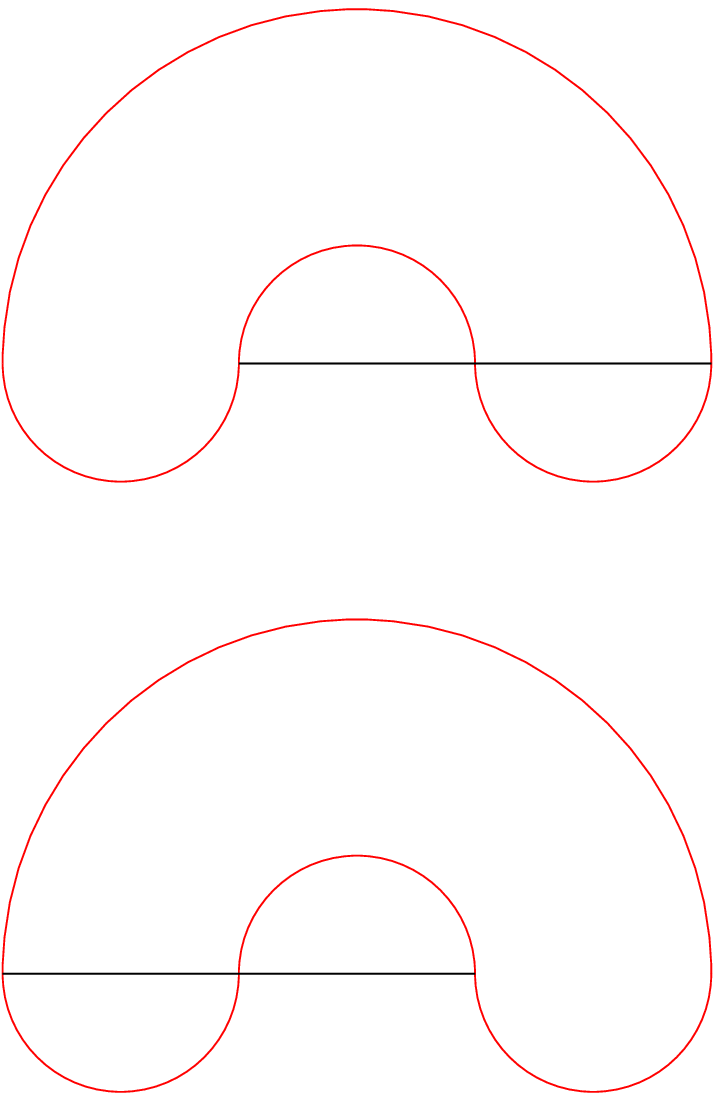}
}
\caption{Left: Redundancy of bypasses, or bypass rotation. Right: Possible attaching arcs on a tight $\partial B^3$. View this as $\partial B^3$ from the outside.}
\label{fig:22}
\label{fig:9}
\end{figure}

\subsubsection{Bypasses on a tight 3-ball}

\label{sec_bypasses_3-ball}

We consider bypasses along attaching arcs on a tight $B^3$ with convex boundary. Only two topologically distinct attaching arcs exist on a tight $\partial B^3$: see figure \ref{fig:9} (right). 

In the first case, after adding a bypass to the outside of the ball, the manifold becomes overtwisted; the dividing set is disconnected. But there exists a bypass inside the ball along this attaching arc, by the right-to-life principle. In the second case we obtain precisely the opposite answers, for similar reasons: adding a bypass outside the ball, the contact structure remains tight; but no bypass exists inside.

In the first case, call the arc of attachment \emph{inner}; in the second case \emph{outer}. The above applies to any attaching arc on a tight contact $D \times I$.

\subsubsection{Bypasses are building blocks}

\label{sec_building_blocks}

We now show how a tight contact solid cylinder can be constructed out of bypasses. The proof is in essence a version of Honda's \emph{imbalance principle} (see \cite{Hon00I}), with complications arising from the corners and boundary.

\begin{lem}[Cobordisms are constructed out of bypasses]
\label{lem_cobs_constructed_bypasses}
Suppose that on the cylinder $D \times I$ there is a tight contact structure $\xi$ with dividing sets $\Gamma_0, \Gamma_1$ on $D \times \{0\}$, $D \times \{1\}$ and with vertical dividing set along $\partial D \times I$. Then $(D \times I, \xi)$ is contactomorphic to the thickened convex surface $D \times \{0\}$ with some finite set of bypass attachments.
\end{lem}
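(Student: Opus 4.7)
The plan is to argue by induction on a complexity measure of the cobordism that strictly decreases when one bypass layer is peeled off, say the total number of chords in the two diagrams or the minimal number of bypass moves required to pass from $\Gamma_0$ to $\Gamma_1$.

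For the base case, I would take $\Gamma_0 = \Gamma_1$ as chord diagrams (up to isotopy rel $\partial D$). Here I would invoke Giroux/Honda uniqueness for tight contact structures on a thickened convex surface with prescribed vertical boundary: such a structure is contactomorphic to an $I$-invariant one, so zero bypasses are needed. This is exactly the kind of statement that will appear in the paper as a standard fact about tight $\M(\Gamma,\Gamma)$.

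For the inductive step, assume $\Gamma_0\neq\Gamma_1$ and look for a bypass disc inside $(D\times I,\xi)$ attached to $D\times\{0\}$ (or, symmetrically, to $D\times\{1\}$). The key move is to choose a properly embedded arc $\alpha\subset D$ that can be Legendrian-realised on both $D\times\{0\}$ and $D\times\{1\}$, and to consider the vertical disc $\alpha\times I$, isotoped rel boundary to be convex with Legendrian boundary. Its dividing set meets $\alpha\times\{0\}$ in $|\alpha\cap\Gamma_0|$ points, meets $\alpha\times\{1\}$ in $|\alpha\cap\Gamma_1|$ points, and meets $\partial\alpha\times I$ in a fixed number of points determined by the vertical sutures. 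Since distinct chord diagrams can always be distinguished by the intersection number with a suitable properly embedded arc, $\alpha$ can be chosen so that $|\alpha\cap\Gamma_0|\neq|\alpha\cap\Gamma_1|$. Tightness of $\xi$ rules out closed components, so Honda's imbalance principle forces a $\partial$-parallel dividing arc on $\alpha\times I$ straddling either $\alpha\times\{0\}$ or $\alpha\times\{1\}$. Such an arc is precisely a bypass inside $(D\times I,\xi)$ attached along some arc of attachment $c$ on $D\times\{0\}$ (say).

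Peeling off a collar neighbourhood of this bypass produces a smaller sutured cylinder: the new bottom disc has dividing set $\Up_c\Gamma_0$ or $\Down_c\Gamma_0$ (depending on which side of $\alpha\times I$ the bypass lies on), the top is still $\Gamma_1$, and after edge-rounding at the newly created corners the vertical sutures on the side are restored. The residual cylinder inherits a tight contact structure (as a sub-domain of a tight one), so the inductive hypothesis applies and expresses it as a stack of bypasses; adjoining the peeled bypass at the bottom gives the desired decomposition of $(D\times I,\xi)$.

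The main obstacles I anticipate are twofold. First, the existence of an arc $\alpha$ giving an imbalance requires the combinatorial fact that distinct chord diagrams are separated by some arc's intersection number; this has to be handled before invoking the imbalance principle, and it is where the structure of chord diagrams (rather than abstract dividing curves) is really used. Second, one must verify that after peeling the bypass and rounding the resulting corners, the remaining piece genuinely satisfies the hypotheses of the lemma — i.e.\ the sides still carry a vertical dividing set, the top and bottom are still convex discs, and tightness is preserved. Both are technical but standard, and once settled the induction goes through cleanly.
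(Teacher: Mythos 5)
Your overall strategy (find a bypass inside via an imbalance argument, peel it off, induct) is the same idea as the paper's, but as written the induction has a genuine hole: neither of your proposed complexity measures is shown to decrease, and the first one simply does not. Peeling a bypass layer off the bottom replaces $\Gamma_0$ by $\Up_c\Gamma_0$, which has the \emph{same} number of chords, so ``total number of chords'' is constant; and the bypass produced by a generic imbalance arc need not make progress towards $\Gamma_1$ in your ``minimal number of bypass moves'' metric --- it can be attached along a trivial attaching arc (trivial bypasses always exist by the right-to-life principle of section \ref{sec_bypasses_exist}), or along a nontrivial arc that does not reduce the bypass distance, so nothing guarantees termination. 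The paper's proof avoids this by inducting on the number of chords $n$: if $\Gamma_0,\Gamma_1$ share an outermost chord, it cuts along a vertical convex annulus just outside that chord, splitting off an $I$-invariant piece and a cobordism with $n-1$ chords; if not, it takes an outermost chord of $\Gamma_1$ absent from $\Gamma_0$ and chooses the vertical disc over an arc $\delta$ meeting $\Gamma_0$ in exactly $3$ points and $\Gamma_1$ in $1$ point, so that the single bypass it finds \emph{creates} a common outermost chord and the chord-reducing case applies. That careful choice of where to look for the bypass is exactly what makes the induction terminate, and it is missing from your argument.

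A second, related gap is in your application of the imbalance principle itself. The boundary of your vertical disc $\alpha\times I$ meets the dividing set of $\partial(D\times I)$ not only along $\alpha\times\{0\}$ and $\alpha\times\{1\}$ but also (after rounding) along the vertical sides, and a $\partial$-parallel dividing arc on $\alpha\times I$ may straddle a vertical side or a corner rather than the top or bottom; in that case it does not hand you a bypass attached to $D\times\{0\}$ or $D\times\{1\}$, and your ``peel off a collar'' step does not apply. In the paper this is controlled because the chosen $\delta\times I$ has exactly six dividing-set endpoints ($3$ from $\Gamma_0$, $1$ from $\Gamma_1$, $2$ from the rounded vertical sutures), and tightness of the two complementary pieces forces a unique dividing set on $\delta\times I$, which visibly contains a bypass above $\Gamma_0$ in the required position. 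So the corner/boundary bookkeeping you defer as ``technical but standard'' is in fact where the work is done, and together with the missing decreasing measure it leaves your proposal incomplete as it stands. (Your combinatorial claim that distinct chord diagrams are separated by the intersection number with some properly embedded arc is true, but it is not enough to drive the induction.)
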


\begin{proof}
Obviously $\Gamma_0, \Gamma_1$ must be chord diagrams with the same number $n$ of chords. Proof by induction on $n$. For $n=1$ or $2$ we have $\Gamma_0 = \Gamma_1$; if $\Gamma_0 \neq \Gamma_1$ then after edge rounding we have an overtwisted 3-ball.

Suppose $\Gamma_0, \Gamma_1$ have a common chord $\gamma$ enclosing an outermost region $R$. Then consider another arc $\delta$ in $D$ running close and parallel to $\gamma$, enclosing it and $R$. Legendrian realise $\delta$ and (possibly after perturbing) consider the convex surface $\delta \times I$. After edge rounding, we can legendrian realise $\partial(\delta \times I)$, intersecting the dividing set on $\partial(D \times I)$ in two points. Since the contact structure is tight, there is only one possible dividing set on $\delta \times I$. Indeed, cutting along $\delta \times I$ gives two solid cylinders, both of which must be tight, and both of which (after re-sharpening corners) have vertical sutures on $\partial D \times I$. One of these has dividing set $\gamma$ on top and bottom, hence has an $I$-invariant contact structure. The other cylinder has dividing sets on both ends with $n-1$ chords; hence by induction is obtained by attaching bypasses to the base; hence so is the original cylinder.

Now suppose that $\Gamma_1$ has an outermost chord $\gamma$ which does not occur in $\Gamma_0$. Let its endpoints, labelled clockwise, be $p$ and $q$; let the next marked point clockwise be $r$. Then on $\Gamma_0$, there is no outermost chord joining $p$ and $q$ (by assumption), nor joining $q$ and $r$ (which after edge rounding would give a closed dividing set component along with $\gamma$). Thus the situation must be as shown in figure \ref{fig:52} (left), and we may take an arc $\delta$ on $D$ as shown, intersecting $\Gamma_0$ in $3$ points and $\Gamma_1$ in $1$ point. 

After perturbing if necessary, consider a convex $\delta \times I$; we determine the dividing set on $\delta \times I$. By the interleaving property of dividing sets, the dividing set on $\delta \times I$ has six boundary points ($3$ from $\Gamma_0$, $1$ from $\Gamma_1$, and $2$ from the vertical sutures after rounding), hence contains 3 arcs. Cutting along $\delta \times I$ gives two smaller cylinders. The smaller cylinder containing $\{p,q,r\} \times I$ has boundary dividing set as shown in figure \ref{fig:53} (centre).  Since this is tight, there is only one possible dividing set on $\delta \times I$, shown in figure \ref{fig:54} (right). 

\begin{figure}
\centering
\mbox{
\includegraphics[scale=0.4]{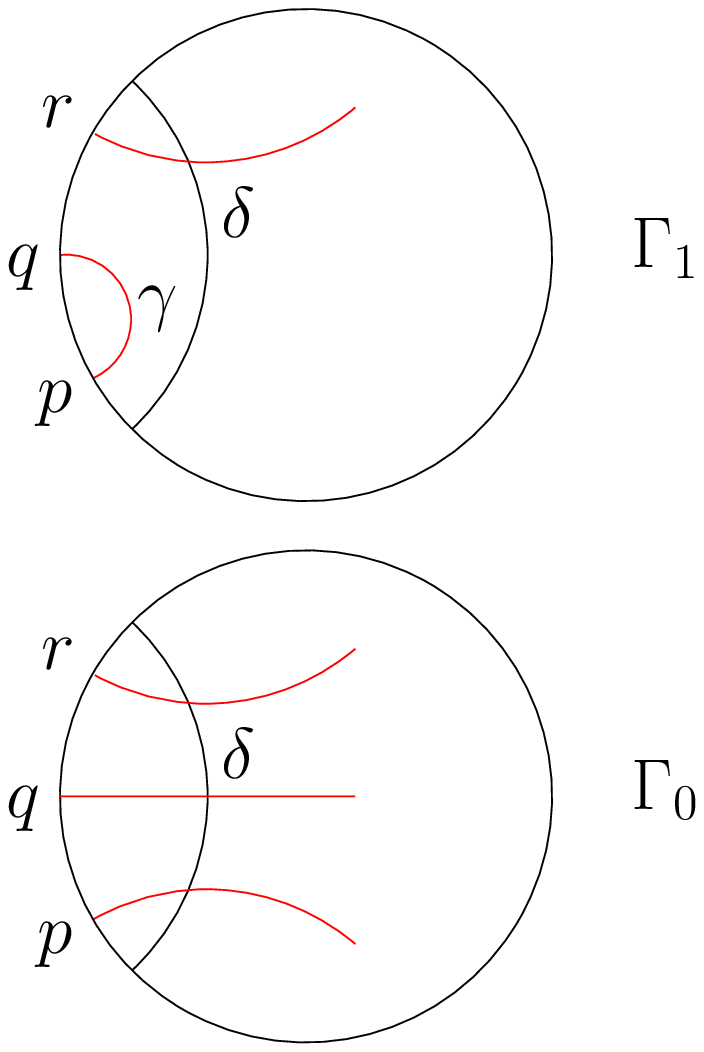}
\quad \quad \quad
\includegraphics[scale=0.3]{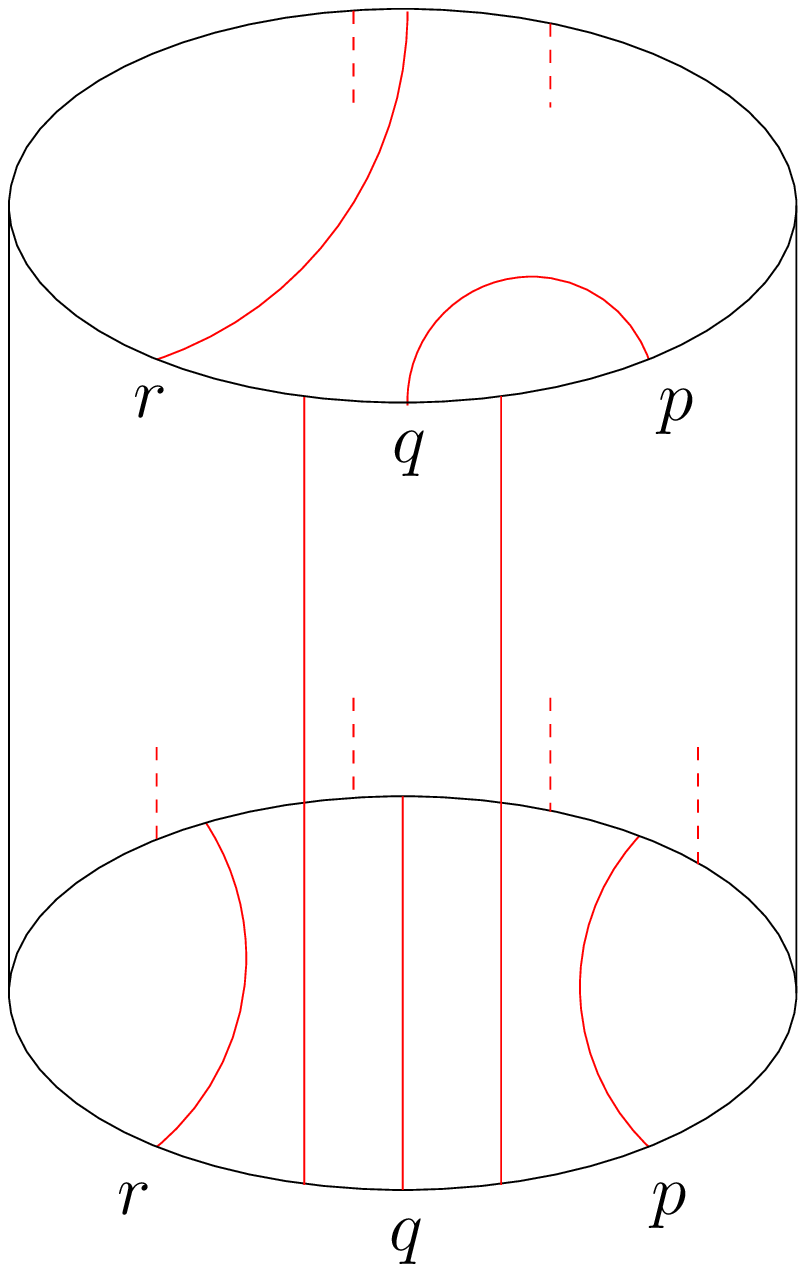}
\quad \quad \quad
\includegraphics[scale=0.3]{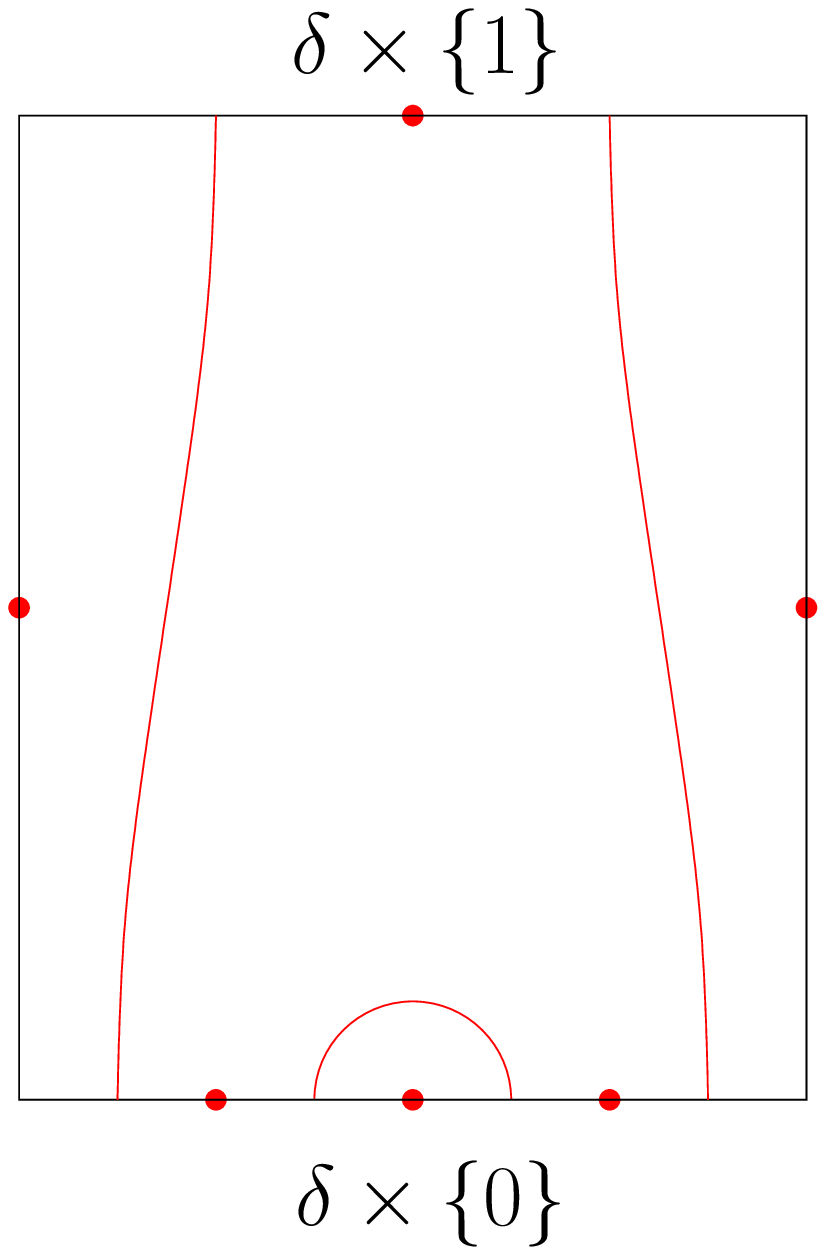}
}
\caption{Left: Arc $\delta$ on $\Gamma_0, \Gamma_1$. Centre: Dividing set on cylinder obtained by cutting along $\delta \times I$. Here $\delta \times I$ forms the back of the picture; its dividing set is to be determined but its boundary points are shown. Right: Dividing set on $\delta \times I$. Red dots show interleaving intersections with the dividing set of $\partial(D \times I)$.} 
\label{fig:52}
\label{fig:53}
\label{fig:54}
\end{figure}

Hence there is a bypass above $\Gamma_0$ along a sub-arc of $\delta$. Attaching this bypass to $\Gamma_0$ gives a dividing set with an outermost chord in the same position as $\Gamma_1$. After removing a layer containing this bypass, we reduce to the previous case and are done.
\end{proof}

\subsubsection{Pinwheels}
\label{sec_pinwheels}

Consider attaching several bypasses to a convex disc along a bypass system (definition \ref{def_bypass_system}). Is the resulting manifold tight? By \cite{HKMPinwheel}, the key indicator is a \emph{pinwheel}.

\begin{defn}[Pinwheel]
\label{def_pinwheel}
An (upwards) \emph{pinwheel} is an embedded polygonal region $P$ on a convex surface satisfying the following conditions.
\begin{enumerate}
\item 
The boundary of $P$ consists of $2k$ ($k \geq 1$) consecutive sides
\[
 \gamma_1, \alpha_1, \gamma_2, \alpha_2, \ldots, \gamma_k, \alpha_k,
\]
labelled anticlockwise, where $\gamma_i$ is an arc on a chord of the dividing set $\Gamma$, and $\alpha_i$ is half of an arc of attachment $c_i$.
\item
For each $i$, $c_i$ extends beyond $\alpha_i$ in the direction shown in figure \ref{fig:42}, and does not again intersect $P$.
\end{enumerate}
\end{defn}

\begin{figure}
\centering
\includegraphics[scale=0.6]{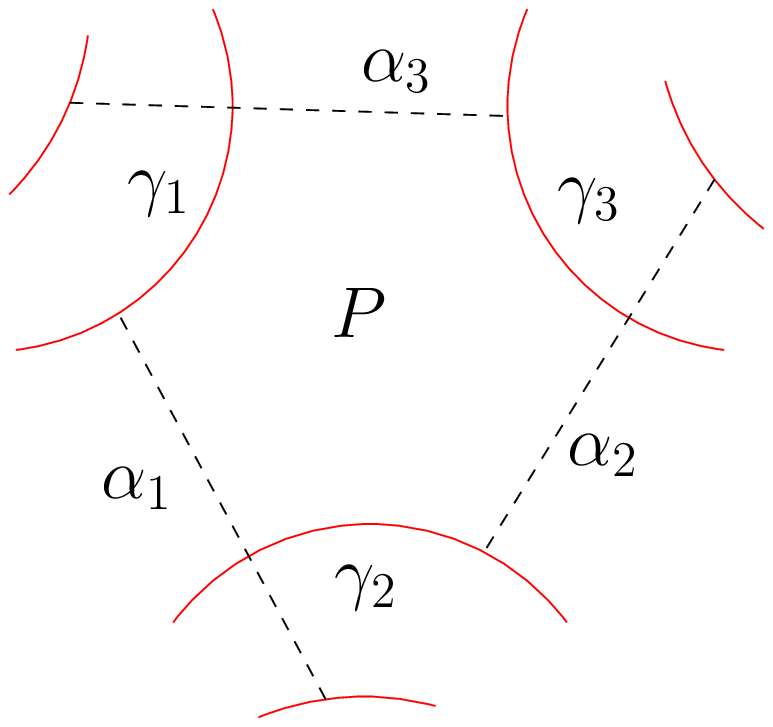}
\caption{An (upwards) pinwheel.} \label{fig:42}
\end{figure}

Attaching bypasses above a surface along the attaching arcs of a pinwheel clearly results in an overtwisted contact structure. The converse is also true.
\begin{thm}[Honda--Kazez--Mati\'{c} \cite{HKMPinwheel}]
\label{thm_pinwheel}
Let $D$ be a convex disc with legendrian boundary and $c$ a bypass system. The contact manifold obtained by attaching bypasses above a thickened $D$ along $c$ is tight iff there are no pinwheels in $D$.
\qed
\end{thm}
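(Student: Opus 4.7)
The two directions have rather different flavors, so I would handle them separately: a direct construction of an overtwisted disk for the forward implication, and an inductive tightness argument for the reverse.

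For the forward direction (pinwheel $\Rightarrow$ overtwisted), I would build an overtwisted disk out of a pinwheel $P \subset D$ bounded by $\gamma_1, \alpha_1, \ldots, \gamma_k, \alpha_k$ by assembling $P$ with portions of the $k$ attached bypasses. Since each bypass is half of an overtwisted disk (section \ref{sec_bypasses}), the pinwheel arrangement is precisely what is needed to glue $k$ such halves through $P$ into one complete overtwisted disk. Concretely, perturb $P$ slightly up to a disk $P' \subset D \times I$ and modify $\partial P'$ near each $\alpha_i$ so that it traverses the bypass attached along $c_i$. The orientation condition in the pinwheel definition --- that every $c_i$ extends beyond $\alpha_i$ in the coherent rotational direction shown in figure \ref{fig:42} --- is exactly what makes the $k$ bypass contributions to the framing of $\partial P'$ align rather than cancel. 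Combined with the contributions from the edges $\gamma_i$ running parallel to chords of $\Gamma$, a direct count gives $tb(\partial P') = 0$, exhibiting $P'$ as an overtwisted disk.

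For the reverse direction (no pinwheel $\Rightarrow$ tight), I would induct on $|c|$. The base case $|c| = 0$ is an $I$-invariant neighborhood of $(D, \Gamma)$, which is tight. In the inductive step the aim is to locate an ``innermost'' attaching arc $c_i \in c$ --- one which, together with portions of $\Gamma$ and $\partial D$, bounds a face of the cell decomposition of $D$ disjoint from the other attaching arcs. At such a $c_i$ the bypass attachment is trivial in the sense of right-to-life (section \ref{sec_bypasses_exist}), so peeling $c_i$ off reduces to the smaller system $c \setminus \{c_i\}$. Removing an arc cannot create a pinwheel, so induction gives tightness of the attachment along $c \setminus \{c_i\}$, and reattaching the trivial $c_i$-bypass preserves tightness.

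\textbf{Main obstacle.} The combinatorial crux is producing such a peelable $c_i$ whenever no pinwheel exists --- this is the step in which the pinwheel hypothesis must actually be used. I would expect to set up the cell decomposition of $D$ cut out by $\Gamma$ together with the arcs of $c$ and argue that any cyclic obstruction to peeling off some arc would assemble into a pinwheel, contradicting the hypothesis. By contrast, the framing calculation in the forward direction, although it involves careful local bookkeeping at each bypass traversal, is essentially mechanical given the standard local model of a bypass as half an overtwisted disk.
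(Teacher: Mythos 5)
This theorem is quoted from Honda--Kazez--Mati\'{c} \cite{HKMPinwheel} and the paper gives no proof of it (the statement carries an immediate \qed), so there is no internal argument to compare against; judged on its own, your proposal has a genuine gap in the reverse direction. The inductive scheme --- peel off an ``innermost'' arc $c_i$, declare its attachment trivial by right-to-life, prove tightness for $c \setminus \{c_i\}$ by induction, and reattach --- fails at two points. First, a pinwheel-free system need not contain any arc whose attachment is trivial: already for $|c|=1$ with a single nontrivial attaching arc (or for the systems $FBS(w_1,w_2)$ used in this paper, all of whose arcs meet three distinct dividing curves), the right-to-life principle of section \ref{sec_bypasses_exist} simply does not apply, so there is nothing to peel and your inductive step cannot begin, even though the conclusion is true. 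Second, and more fundamentally, ``induction gives tightness of the attachment along $c \setminus \{c_i\}$, and reattaching the $c_i$-bypass preserves tightness'' presupposes that stacking two tight layers yields a tight cobordism. That is false in general, and its failure is exactly what the pinwheel criterion detects: each single bypass attachment along a nontrivial arc gives a tight layer (lemma \ref{bypass-related}), yet the union of such layers over a pinwheel is overtwisted. So the real content of the tight direction --- a global argument for the whole cobordism at once, which in \cite{HKMPinwheel} is carried out by identifying/embedding the result of all the attachments in a known tight model rather than by stacking layers --- is not supplied by the proposed induction; the combinatorial ``main obstacle'' you single out (finding a peelable arc) is not the step where the pinwheel hypothesis does its work.

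The forward direction is morally the right picture but the decisive claim is asserted rather than proved: ``a direct count gives $tb(\partial P') = 0$'' is precisely what needs an argument, and one must also make $\partial P'$ Legendrian and check embeddedness --- note that definition \ref{def_pinwheel} only forbids the boundary arcs $c_1,\ldots,c_k$ from re-entering $P$, so other arcs of the system may cross the interior of $P$ and their bypasses sit directly above it. A cleaner and more standard route is to attach the $k$ bypasses of the pinwheel and observe that, because each $c_i$ extends past $\alpha_i$ in the coherent direction of figure \ref{fig:42}, each surgery reconnects the dividing set across the corresponding corner of $P$, so the sides $\gamma_1,\ldots,\gamma_k$ close up into a contractible closed component of the new dividing set; Giroux's criterion then gives an overtwisted neighbourhood of the resulting convex surface, with no framing computation needed.
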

A similar result obviously holds, attaching bypasses downwards; the orientation of a pinwheel is reversed. Hence we speak of \emph{upwards} and \emph{downwards pinwheels}.

\subsection{Contact cobordisms}

\label{sec_discontents}

Despite the heading, by ``contact cobordisms'' we mean manifolds $D \times I$, considered as ``cobordisms'' between $D \times \{0\}$ and $D \times \{1\}$. Moreover, we consider only vertical sutures $F \times I \subset \partial D \times I$, where $F \subset \partial D$ is finite. This is enough for our purposes; some of the following clearly applies in far greater generality.

\subsubsection{Stackability}
\label{sec_Mandm}

We now formalise the stacking construction of section \ref{sec_stackability}. Take two chord diagrams $\Gamma_0, \Gamma_1$ of $n$ chords, and the cylinder $D \times I$. Align the marked points on $\Gamma_0, \Gamma_1$ at $\{p_i\} \times \{0,1\}$, and the base points at $\{p_0\} \times \{0,1\}$. Take $2n$ points $\{q_i\}$ on $\partial D$, evenly spaced between successive $p_i$.
\begin{defn}
With $\Gamma_i$ as above, the sutured manifold $\M(\Gamma_0, \Gamma_1)$ is $D \times I$ with sutures
\[
 \left( \Gamma_0 \times \{0\} \right) \; \cup \; \left( \left( \bigcup \{q_i\} \right) \times I \right) \; \cup \; \left( \Gamma_1 \times \{1\} \right).
\]
\end{defn}
See figure \ref{fig:39} (left). We think of the $I$ factor as ``vertical'': positive up, negative down.

This $\M(\Gamma_0, \Gamma_1)$ is a sutured 3-ball with corners. Regarding the boundary as convex, and corners legendrian, $\M(\Gamma_0, \Gamma_1)$ has a natural contact structure near the boundary. If, after rounding corners, the dividing set is disconnected, then this contact structure is overtwisted; otherwise, it extends to the unique tight contact structure on the ball. 
\begin{defn}[Tight/overtwisted $\M(\Gamma_0, \Gamma_1)$]
The sutured manifold $\M(\Gamma_0, \Gamma_1)$ is \emph{tight} if it admits a tight contact structure, i.e. if after rounding corners, the sutures of $\M(\Gamma_0, \Gamma_1)$ are connected. Otherwise $\M(\Gamma_0, \Gamma_1)$ is \emph{overtwisted}.
\end{defn}

\begin{defn}[Stackable]
A chord diagram $\Gamma_1$ is \emph{stackable} on another chord diagram $\Gamma_0$ if $\M(\Gamma_0, \Gamma_1)$ is tight.
\end{defn}

We can now define the \emph{stackability map}. Consider $\partial \M(\Gamma_0, \Gamma_1)$, which after rounding is $S^2$. Remove a small neighbourhood of an interior point on one of the vertical dividing curves $\{q_i\} \times I$, to obtain a convex disc. Taking a product of this disc with $S^1$, with $S^1$-invariant contact structure, gives a solid torus $(T,1)$, which contains two solid tori ($T,n)$, i.e. $(T, n) \cup (T,n) \hookrightarrow (T, 1)$, and with a specified contact structure on the intermediate $(\text{pants}) \times S^1$. Hence there is a map
\[
 m: SFH(T, n) \otimes SFH(T, n) \To SFH(T,1) = \Z_2.
\]
(Note that on $\partial B^3 = S^2$, the two discs with chord diagrams $\Gamma_0, \Gamma_1$ are oriented; one agrees with the orientation of $S^2$, the other does not. This is in addition to the orientation issues discussed in section \ref{sec_SFH_contact_intro}! Nonetheless, $m$ still exists.)

From two contact elements in $SFH(T,n)$, $m$ gives a contact element in $SFH(T,1)$. If $\M(\Gamma_0, \Gamma_1)$ has disconnected sutures, this is an overtwisted contact structure, and we obtain $0$. Otherwise, it is the unique tight contact structure on $(T,1)$, and we obtain $1$. That is, $m(\Gamma_0, \Gamma_1) \in \Z_2$ is $0$ or $1$, respectively as $\M(\Gamma_0, \Gamma_1)$ is overtwisted or tight. In fact, $m$ could be defined purely combinatorially in $SFH_{comb}$. We have proved proposition \ref{mexists}.

Clearly $m$ restricts to various summands $SFH(T,n,e)$, giving
\[
 m: SFH(T,n,e_0) \otimes SFH(T,n,e_1) \To SFH(T,1,e_0 - e_1);
\]
but $SFH(T,1,e) = 0$ unless $e=0$. Hence $m$ is trivial when $e_0 \neq e_1$, and proposition \ref{lem_euler_orthogonality} is proved; $SFH(T,n) = \bigoplus_e SFH(T,n,e)$ is orthogonal with respect to $m$.

\begin{figure}
\centering
\mbox{
\includegraphics[scale=0.4]{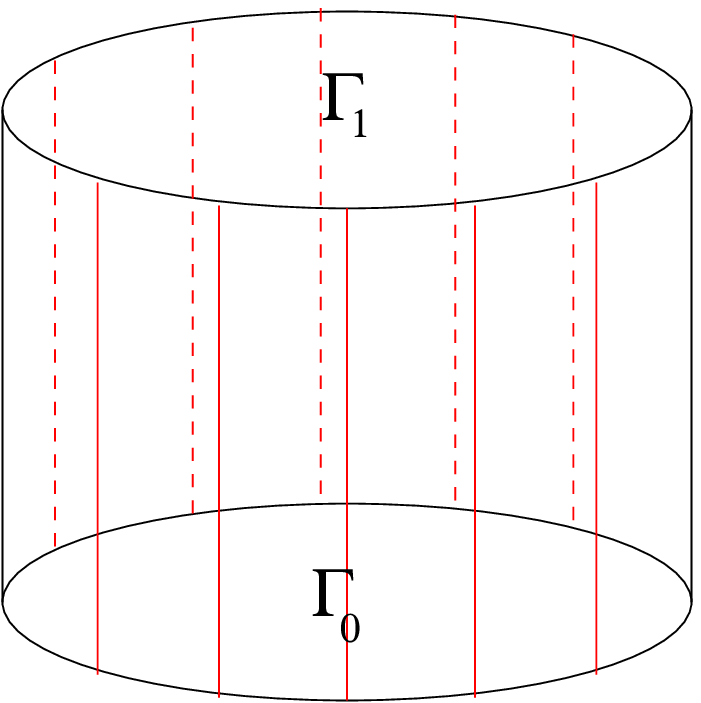}
\quad \quad \quad \quad \quad
\includegraphics[scale=0.4]{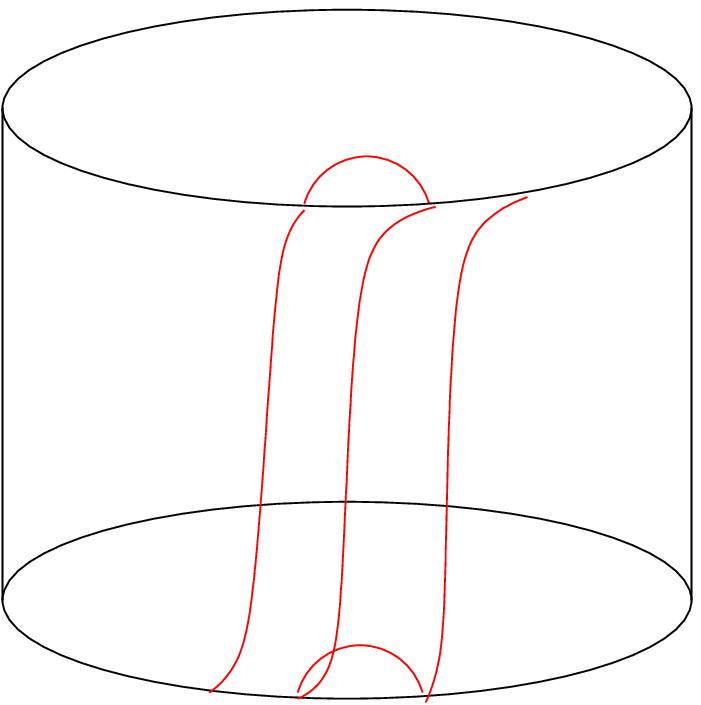}
}
\caption{Left: $\M(\Gamma_0, \Gamma_1)$. Right: $\M(\Gamma, \Gamma)$ with some edge-rounding.} 
\label{fig:39}
\label{fig:40}
\end{figure}

\subsubsection{First properties of $\M$ and $m$}
\label{sec_properties_Mandm}

\begin{lem}
\label{MGammaGamma}
For any chord diagram $\Gamma$, $\M(\Gamma,\Gamma)$ is tight: $m(\Gamma, \Gamma) = 1$.
\end{lem}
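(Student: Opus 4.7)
The plan is to exhibit a tight contact structure on $\M(\Gamma, \Gamma)$ directly, using the $I$-invariant construction on $D \times I$. Once we have this, $m(\Gamma, \Gamma) = 1$ follows immediately from the definition of stackability.

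First I would take a convex disc $D$ with dividing set $\Gamma$ and legendrian boundary, and invoke the standard uniqueness theorem (Giroux, Honda) to build the $I$-invariant contact structure $\xi_\Gamma$ on $D \times I$ whose dividing set on every horizontal slice $D \times \{t\}$ equals $\Gamma$. Since $\Gamma$ is a chord diagram, it contains only properly embedded arcs and no null-homotopic closed components, so by Honda's classification of tight contact structures on thickened convex surfaces, $\xi_\Gamma$ is tight. Concretely one can also embed $(D, \Gamma)$ into a closed convex surface whose dividing set still has no null-homotopic components, and the $I$-invariant structure on the corresponding thickening is tight; restricting back gives tightness of $\xi_\Gamma$.

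Next I would identify the sutured structure that $\xi_\Gamma$ induces on $\partial(D \times I)$. On the top $D \times \{1\}$ and bottom $D \times \{0\}$ faces the dividing set is $\Gamma$, agreeing with $\M(\Gamma, \Gamma)$; on the side $\partial D \times I$, the $I$-invariance forces the dividing set to be $2n$ vertical arcs through the points $p_i$ where $\Gamma$ meets $\partial D$. The definition of $\M(\Gamma, \Gamma)$ instead places the vertical side sutures at the points $q_i$ between the $p_i$, so that the top/bottom dividing points and the side dividing points interleave correctly around the corner and one may round edges as in section \ref{sec_convex_surfaces}. I would finish by observing that a small boundary isotopy sliding each side arc from $p_i$ to the adjacent $q_i$ (supported in a collar of the corner, realized by a contact isotopy of $\xi_\Gamma$) turns the natural boundary of $\xi_\Gamma$ into that of $\M(\Gamma, \Gamma)$, yielding a tight contact structure on the smooth sutured manifold $\M(\Gamma, \Gamma)$.

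The only delicate point is this last reconciliation at the corners. In the $I$-invariant model the side sutures sit exactly at the chord endpoints $p_i$, which is not a valid smooth sutured structure; one has to perturb them to the alternating positions $q_i$ so that the interleaving edge-rounding rule applies. An equivalent, more combinatorial, check that I would give as a backup is to trace the dividing set on $\partial \M(\Gamma, \Gamma)$ directly after rounding with the interleaving convention (as depicted in figure \ref{fig:40}): at each corner the $p_i$ and $q_j$ alternate, rounding pairs each $p_i$ with an adjacent $q_j$ consistently with the $\pm$ regions, and the resulting closed 1-manifold on $S^2$ is connected, so by tightness of $B^3$ with connected dividing set the sutured ball is tight. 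Either route gives $m(\Gamma, \Gamma) = 1$.
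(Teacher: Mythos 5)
Your proposal is correct and follows essentially the paper's first proof: exhibit the tight $I$-invariant contact structure on $D \times I$ induced by the convex disc with dividing set $\Gamma$ (tight since a chord diagram has no closed components), and identify its boundary data with the sutures of $\M(\Gamma,\Gamma)$ after the corner adjustment. Your extra care about sliding the side dividing arcs from the $p_i$ to the $q_i$ before edge-rounding is a detail the paper leaves implicit, and your combinatorial backup is in the spirit of the paper's alternative inductive proof via cancelling outermost chords, so no gap here.
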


We give two proofs.  The second is less elegant but similar in spirit to subsequent proofs (e.g. section \ref{sec_contact_partial_order} proving proposition \ref{contact_interp_partial_order}), and leads to a useful lemma.

\begin{proof}[Proof \# 1]
Since a chord diagram has no closed curves, there is a tight $I$-invariant contact structure on $D^2 \times I$ with boundary dividing set given by $\M(\Gamma, \Gamma)$.
\end{proof}

\begin{proof}[Proof \# 2]
Proof by induction on the number of chords in the chord diagram. For $1$ chord it is clear. Now consider general $\Gamma$. Let $\gamma$ be an outermost chord of $\Gamma$. We reduce the case of $\M(\Gamma, \Gamma)$ to that of $M(\Gamma - \gamma, \Gamma - \gamma)$.

Note that $\gamma \times \{1\}$ has two endpoints; after rounding corners, one of these is connected to an endpoint of $\gamma \times \{0\}$. Thus on the rounded ball we have a connected arc $c$, part of the dividing set, of the form $c = c_1 \cup (\gamma \times \{1\}) \cup c_2 \cup (\gamma \times \{0\}) \cup c_3$, where the $c_i$ are (rounded versions of) arcs $q_i \times [0,1]$. But now we can perform a ``finger move'' on the dividing set and see that this is equivalent to the dividing set of $\M(\Gamma - \gamma, \Gamma - \gamma)$; where all of $c$ becomes one of the $q_i \times [0,1]$ arcs. See figure \ref{fig:40} (right).
\end{proof}

The argument of this proof is useful in its own right.
\begin{lem}[Cancelling outermost chords]
\label{cancel_outermost}
Suppose $\Gamma_0, \Gamma_1$ each have an outermost chord $\gamma$ in the same position. Then $\M(\Gamma_0, \Gamma_1)$ is tight iff $\M(\Gamma_0 - \gamma, \Gamma_1 - \gamma)$ is tight. 
\qed
\end{lem}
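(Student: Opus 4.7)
The plan is to adapt Proof \# 2 of Lemma \ref{MGammaGamma} to the present more general setting. That argument never really used $\Gamma_0 = \Gamma_1$ away from the outermost chord $\gamma$; it only used that $\gamma$ occurred as an outermost chord in both diagrams at the same position. So the same finger-move reduction should carry over.

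In detail, I would proceed in three steps. First, identify the relevant portion of the boundary dividing set of $\M(\Gamma_0, \Gamma_1)$ after edge rounding. Let $p_i, p_{i+1} \in \partial D$ be the endpoints of $\gamma$, and $q_j$ the unique $q$-point between them. Both $\gamma \times \{0\}$ (coming from $\Gamma_0$) and $\gamma \times \{1\}$ (coming from $\Gamma_1$) lie in the outermost region. By the interleaving property of dividing sets at corners (section \ref{sec_convex_surfaces}), after rounding at $p_i \times \{0,1\}$ and $p_{i+1} \times \{0,1\}$ the endpoints of $\gamma \times \{0\}$ and $\gamma \times \{1\}$ get connected through small arcs of the adjacent vertical sutures, producing a single arc
\[
 c = c_1 \cup (\gamma \times \{1\}) \cup c_2 \cup (\gamma \times \{0\}) \cup c_3
\]
of the rounded dividing set, with $c_2$ coming from $q_j \times I$ and $c_1, c_3$ from the vertical sutures at the $q$-points immediately to the other side of $p_i$ and $p_{i+1}$.

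Second, isotope the dividing set on $\partial \M(\Gamma_0, \Gamma_1) \cong S^2$ by a finger move that retracts the finger traced out by $c$ so that only a single vertical-suture arc remains in the place of $c$. Since the dividing set determines the sutured manifold, and the tightness/overtwistedness of a sutured $B^3$ depends only on the isotopy class (and in fact only on the connectedness) of its boundary dividing set, this finger move does not affect tightness. Comparing the resulting $S^2$ dividing set with the rounded boundary dividing set of $\M(\Gamma_0 - \gamma, \Gamma_1 - \gamma)$, one sees directly that they agree. Third, conclude that $\M(\Gamma_0, \Gamma_1)$ is tight iff its boundary dividing set is connected iff the boundary dividing set of $\M(\Gamma_0 - \gamma, \Gamma_1 - \gamma)$ is connected iff $\M(\Gamma_0 - \gamma, \Gamma_1 - \gamma)$ is tight.

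The main obstacle is the bookkeeping in the first step: one has to be sure that the rounding really does glue one endpoint of $\gamma \times \{1\}$ to an endpoint of $\gamma \times \{0\}$ (rather than, say, to an endpoint of some other chord coming into $p_i$), and that the finger retraction produces exactly the dividing set of $\M(\Gamma_0 - \gamma, \Gamma_1 - \gamma)$ and not a configuration differing by an extra bypass-like modification. Both facts are forced by the interleaving of dividing arcs with vertical sutures at the corners $p_i \times \{0,1\}, p_{i+1} \times \{0,1\}$, together with the hypothesis that $\gamma$ is outermost in the \emph{same} position on both ends; once this is checked, the reduction is essentially identical to Proof \# 2 of Lemma \ref{MGammaGamma}.
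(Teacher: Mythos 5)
Your proposal is correct and is essentially the paper's own argument: the paper states Lemma \ref{cancel_outermost} as an immediate consequence of the finger-move reduction in Proof \#2 of Lemma \ref{MGammaGamma}, which, as you note, never uses $\Gamma_0 = \Gamma_1$ away from the common outermost chord. Your spelled-out version (rounding produces the arc $c$ through the two copies of $\gamma$, retracting the finger yields the rounded dividing set of $\M(\Gamma_0-\gamma,\Gamma_1-\gamma)$, and tightness is equivalent to connectedness of the boundary dividing set) is exactly the intended proof.
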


As $m$ is bilinear, is ``positive definite'', and satisfies an orthogonality relation, it behaves something like a metric. However, $m$ is not symmetric; nor antisymmetric; in fact, there exist $\Gamma_0, \Gamma_1$ with same $n$ and $e$ giving any of
\[
 \left( m(\Gamma_0, \Gamma_1), m(\Gamma_1, \Gamma_0) \right) = (0,0), \; (0,1), \; (1,1).
\]
The pair $(1,1)$ is possible even when $\Gamma_0 \neq \Gamma_1$. See figure \ref{fig:55}.

\begin{figure}
\centering
\includegraphics[scale=0.25]{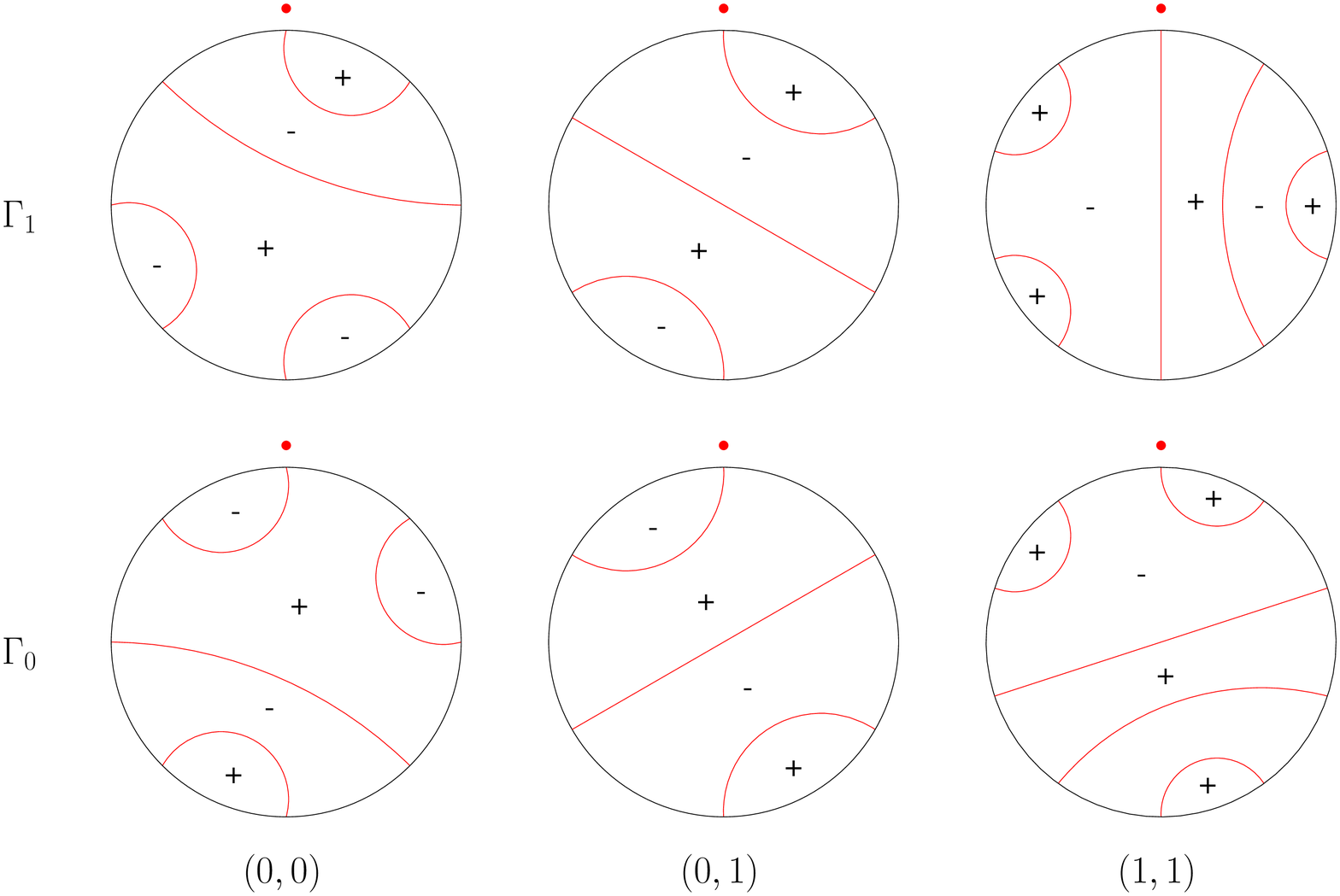}
\caption{Pairs $(\Gamma_0, \Gamma_1)$ giving various values for $\left( m(\Gamma_0, \Gamma_1), m(\Gamma_1, \Gamma_0) \right)$.} \label{fig:55}
\end{figure}

\subsubsection{Bypass cobordisms and bypass triples}
\label{sec_bypass_cobordisms} 

The simplest nontrivial cobordism is a \emph{bypass cobordism}: a disc with a bypass attached. By lemma \ref{lem_cobs_constructed_bypasses}, every tight $\M(\Gamma_0, \Gamma_1)$ can be decomposed into bypass cobordisms.

As long as we attach a single bypass along a nontrivial attaching arc $c$ (i.e. $c$ intersects three different components of $\Gamma$), there can be no pinwheel, and we obtain a tight contact structure on $D \times I$. Attaching above or below, then, $\M(\Gamma, \Up_c(\Gamma))$ and $\M(\Down_c(\Gamma), \Gamma)$ are tight. Moreover, bypass-related chord diagrams naturally come in a triples $\Gamma, \Up_c(\Gamma)$ and $\Down_c(\Gamma)$.
\begin{lem}
\label{bypass-related}
Let $c$ be a nontrivial arc of attachment on a chord diagram $\Gamma$. Then
\begin{enumerate}
\item 
$\M(\Gamma, \Up_c(\Gamma))$, $\M(\Up_c(\Gamma), \Down_c(\Gamma))$ and $\M(\Down_c(\Gamma), \Gamma)$ are all tight, with tight contact structure given by a single bypass attachment;
\item
$\M(\Gamma, \Down_c(\Gamma))$, $\M(\Down_c(\Gamma), \Up_c(\Gamma))$, $\M(\Up_c(\Gamma), \Gamma)$ are all overtwisted.
\end{enumerate}
\end{lem}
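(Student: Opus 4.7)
The plan is to prove part (1) by realising each cobordism as a single bypass attachment and invoking the pinwheel theorem, then to deduce part (2) by a gluing argument against the uniqueness of the $I$-invariant tight structure on $\M(\Gamma,\Gamma)$.

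For part (1), I would observe that each of the three cobordisms is built by attaching exactly one bypass above a convex disc along a nontrivial attaching arc. The cobordism $\M(\Gamma, \Up_c\Gamma)$ comes from attaching a bypass above $\Gamma$ along $c$. By the ``three-bypasses-is-the-identity'' principle recalled in section \ref{fun}, there is an attaching arc $c'$ on $\Up_c\Gamma$ with $\Up_{c'}(\Up_c\Gamma) = \Down_c\Gamma$, so $\M(\Up_c\Gamma, \Down_c\Gamma)$ is a single bypass along $c'$; and likewise $\M(\Down_c\Gamma, \Gamma)$ is a bypass along some $c''$. Because $c$ is nontrivial, all three of $\Gamma, \Up_c\Gamma, \Down_c\Gamma$ are distinct and the arcs $c, c', c''$ are nontrivial on their respective diagrams. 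A single bypass attachment along a nontrivial arc cannot produce a pinwheel (any pinwheel with only one half arc of attachment would have to bound a closed polygonal region with a single chord, forcing the attaching arc to be trivial). Hence by theorem \ref{thm_pinwheel}, all three cobordisms carry tight contact structures given by this single bypass attachment.

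For part (2) I would argue by contradiction. Suppose $\M(\Gamma, \Down_c\Gamma)$ admits a tight contact structure. Glue this to the tight structure on $\M(\Down_c\Gamma, \Gamma)$ provided by part (1), along the convex disc with dividing set $\Down_c\Gamma$. By convex gluing this yields a contact structure $\xi$ on the composite, which is topologically $\M(\Gamma, \Gamma)$. By lemma \ref{MGammaGamma} this sutured ball is tight, and any two tight contact structures on it are isotopic rel boundary to the $I$-invariant one. But in the $I$-invariant structure every convex disc parallel to the boundary has dividing set isotopic to $\Gamma$, since an $I$-invariant structure contains no bypasses (no nontrivial change of dividing set is possible between parallel convex discs). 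The middle convex disc of $\xi$ has dividing set $\Down_c\Gamma \neq \Gamma$, so $\xi$ cannot be the tight structure, and must be overtwisted. Since $\M(\Down_c\Gamma, \Gamma)$ is tight, the overtwisted disc must lie in $\M(\Gamma, \Down_c\Gamma)$, contradicting the supposed tightness. The remaining two cases are identical: $\M(\Up_c\Gamma, \Gamma)$ is composed with the tight $\M(\Gamma, \Up_c\Gamma)$ to yield $\M(\Gamma, \Gamma)$, and $\M(\Down_c\Gamma, \Up_c\Gamma)$ is composed with the tight $\M(\Up_c\Gamma, \Down_c\Gamma)$ to yield $\M(\Down_c\Gamma, \Down_c\Gamma)$, in each case with a middle convex disc whose dividing set disagrees with the outer boundary data.

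The main obstacle I expect is justifying cleanly that an $I$-invariant tight structure on $D^2 \times I$ admits no convex disc parallel to the boundary whose dividing set is not isotopic (rel boundary) to the invariant one. This should follow from the imbalance/right-to-life principle recalled in section \ref{sec_bypasses_exist}: a convex disc with different dividing set would yield a bypass somewhere in the $I$-invariant slab, which contradicts $I$-invariance. A secondary but purely combinatorial check is to verify that nontriviality of $c$ propagates to the companion arcs $c'$ and $c''$ under the bypass rotation, and that no single-bypass pinwheel can form along a nontrivial arc — both of these amount to tracing the local picture of figure \ref{fig:2a}.
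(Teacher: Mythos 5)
Your part (1) is essentially the paper's own argument: a single bypass attached along a nontrivial attaching arc creates no pinwheel, so theorem \ref{thm_pinwheel} gives tightness, and the ``three bypass moves is the identity'' symmetry supplies the companion arcs $c'$, $c''$ for the other two cobordisms.

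Part (2), however, has a genuine gap at the concluding step. After gluing your hypothetical tight structure on $\M(\Gamma,\Down_c(\Gamma))$ to the tight structure on $\M(\Down_c(\Gamma),\Gamma)$ and deducing that the resulting structure $\xi$ on $\M(\Gamma,\Gamma)$ is overtwisted, you claim ``the overtwisted disc must lie in $\M(\Gamma,\Down_c(\Gamma))$''. That inference is false: tightness is not preserved by gluing along a convex surface, and an overtwisted disc in the union need not be contained in either piece --- it can straddle the middle convex disc. This is precisely the phenomenon at stake here: as recalled in section \ref{sec_bypass_manifold}, attaching a bypass above and a bypass below a convex surface along the same attaching arc produces an overtwisted disc even though each single-bypass layer is tight. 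So your argument only shows that the glued structure is overtwisted, which is perfectly consistent with both pieces being tight; the assumed tightness of $\M(\Gamma,\Down_c(\Gamma))$ is never contradicted. There is also a circularity worry in the ingredient you flag as the ``main obstacle'': the statement that in the $I$-invariant $\M(\Gamma,\Gamma)$ every parallel convex disc has dividing set $\Gamma$ is, in the paper, lemma \ref{lem_CbGammaGamma}, which is deduced from lemma \ref{lem_MGammaGamma_outer}, itself a reformulation of the very part (2) you are proving; the right-to-life sketch does not give an independent proof. Note finally that ``overtwisted'' for $\M(\Gamma_0,\Gamma_1)$ means the rounded boundary sutures are disconnected, so no tight structure exists at all; the paper establishes this combinatorially, using lemma \ref{cancel_outermost} to cancel common outermost chords and reduce to the three-chord case, which is checked by inspection --- that reduction (or some equivalent combinatorial check of the boundary sutures) is what your proposal is missing.
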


\begin{proof}
We proved (1) above; by symmetry, (2) follows from showing $\M(\Gamma, \Down_c(\Gamma))$ is overtwisted. If $\Gamma$ has only $3$ chords, the result is true by inspection; if there are more, applying lemma \ref{cancel_outermost} repeatedly reduces to the case of $3$ chords.
\end{proof}

This lemma has another formulation; recall the definition of \emph{outer} from section \ref{sec_bypasses_3-ball}.
\begin{lem}
\label{lem_MGammaGamma_outer}
Every nontrivial attaching arc on $\Gamma \times \{0\}$ or $\Gamma \times \{1\}$ in $\M(\Gamma, \Gamma)$ is outer.
\qed
\end{lem}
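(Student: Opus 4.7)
The plan is to argue by contradiction, reducing the claim directly to lemma \ref{bypass-related}(2). Recall that an attaching arc on a tight convex ball boundary is outer precisely when no bypass exists inside along it. So we must show: if $c$ is a nontrivial attaching arc on $\Gamma \times \{0\}$ (resp.\ on $\Gamma \times \{1\}$), then there is no bypass inside $\M(\Gamma,\Gamma)$ along $c$.

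Suppose first that $c$ is a nontrivial attaching arc on $\Gamma \times \{0\}$, and suppose for contradiction that there is a bypass inside $\M(\Gamma,\Gamma)$ along $c$. Take a small collar neighbourhood of $\Gamma \times \{0\}$ inside $\M(\Gamma,\Gamma)$ that contains this bypass. Pushing $\Gamma \times \{0\}$ across the bypass produces a parallel convex disc just above, whose dividing set is $\Up_c \Gamma$. Slicing $\M(\Gamma,\Gamma)$ along this disc decomposes it as the bypass cobordism $\M(\Gamma, \Up_c \Gamma)$ on the bottom glued to a residual cylinder $\M(\Up_c \Gamma, \Gamma)$ on top. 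Since $\M(\Gamma,\Gamma)$ is tight (lemma \ref{MGammaGamma}), both pieces would have to carry tight contact structures. But lemma \ref{bypass-related}(2) asserts that $\M(\Up_c \Gamma, \Gamma)$ is overtwisted, a contradiction. Hence no such bypass exists, and $c$ is outer.

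The case of $c$ on $\Gamma \times \{1\}$ is entirely symmetric: a bypass inside $\M(\Gamma,\Gamma)$ along $c$ would, on pushing $\Gamma \times \{1\}$ downwards across it, produce a parallel convex disc with dividing set $\Down_c \Gamma$, splitting $\M(\Gamma,\Gamma)$ into $\M(\Gamma, \Down_c \Gamma)$ below and the bypass cobordism $\M(\Down_c \Gamma, \Gamma)$ above. Lemma \ref{bypass-related}(2) tells us $\M(\Gamma, \Down_c \Gamma)$ is overtwisted, again contradicting tightness of $\M(\Gamma,\Gamma)$.

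There is no real obstacle here beyond correctly identifying, for each side, which of the two surgered cobordisms ends up as the residual piece; the dichotomy between $\Up_c$ and $\Down_c$ in lemma \ref{bypass-related} then does all the work. Nontriviality of $c$ is used exactly so that lemma \ref{bypass-related} applies; if $c$ were trivial, both surgered diagrams would contain a closed loop and the discussion of inner versus outer would not arise in the same form.
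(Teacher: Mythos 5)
Your argument is correct and is essentially the paper's own: the paper gives no separate proof, stating the lemma as a direct reformulation of lemma \ref{bypass-related}, and your slicing argument (pushing the boundary disc across the putative inner bypass and observing that the residual piece would be the overtwisted $\M(\Up_c\Gamma,\Gamma)$ or $\M(\Gamma,\Down_c\Gamma)$ of lemma \ref{bypass-related}(2)) simply makes that translation explicit. Only your closing aside is slightly off: for a trivial arc just one direction of surgery creates a closed loop while the other leaves $\Gamma$ unchanged, which is precisely why trivial arcs can be inner by the right-to-life principle and why the nontriviality hypothesis is needed; this does not affect your proof.
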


A weaker statement can be proved purely algebraically: if $\Gamma_0, \Gamma_1$ are bypass-related then precisely one of $\M(\Gamma_0, \Gamma_1)$, $\M(\Gamma_1, \Gamma_0)$ is tight, i.e. $m(\Gamma_0, \Gamma_1) + m(\Gamma_1, \Gamma_0) = 1$. Just expand $m(\Gamma_0 + \Gamma_1, \Gamma_0 + \Gamma_1) = 1$ and use lemma \ref{MGammaGamma}.

\subsubsection{What bypasses and chord diagrams exist in a cobordism?}

\label{sec_what_bypasses_exist}

From the foregoing, it is now straightforward to describe what bypasses and chord diagrams ``exist'' in a cobordism. By this we mean the following.

\begin{defn}[Existence of chord diagram in cobordism]
\label{def_existence_chord_diagram}
Let $\M(\Gamma_0, \Gamma_1)$ be a tight cobordism. The chord diagram $\Gamma$ \emph{exists} or \emph{occurs} in $\M(\Gamma_0, \Gamma_1)$ if there exists an embedded convex disc $D'$ in the unique tight contact $\M(\Gamma_0, \Gamma_1)$ such that:
\begin{enumerate}
\item the boundary $\partial D'$ lies on $\partial D \times I$ and intersects the dividing set of $\partial \M(\Gamma_0, \Gamma_1)$ in precisely $2n$ points (i.e. as efficiently as possible);
\item inheriting a basepoint from $\Gamma_0$ (equivalently $\Gamma_1$), $D'$ has dividing set $\Gamma$.
\end{enumerate}
\end{defn}

For a tight $\M(\Gamma_0, \Gamma_1)$ and an attaching arc $c$ on $\Gamma_0$, there exists a bypass inside $\M(\Gamma_0, \Gamma_1)$ along $c$ iff $c$ is inner (section \ref{sec_bypasses_3-ball}), iff $m(\Up_c(\Gamma_0), \Gamma_1) = 1$. And, any cobordism is constructed from bypass attachments (lemma \ref{lem_cobs_constructed_bypasses}). Thus, $\Gamma$ exists in $\M(\Gamma_0, \Gamma_1)$ iff there is a sequence of inner bypasses which we successively ``dig out'', until we find $\Gamma$ as a boundary of a smaller ``excavated'' manifold.
\begin{lem}[Criterion for existence of $\Gamma$ in a cobordism]
\label{lem_existence_of_chord_diagram}
A chord diagram $\Gamma$ exists in $\M(\Gamma_0, \Gamma_1)$ iff there exists a sequence of chord diagrams
\[
\Gamma_0 = G_0, \; G_1, \; \ldots, \; G_k = \Gamma
\]
and attaching arcs $c_0, \ldots, c_{k-1}$, with $c_i$ on $G_i$, such that:
\begin{enumerate}
\item for $i = 0, \ldots, k-1$, we have $G_{i+1} = \Up_{c_i} G_i$.
\item $c_i$ is inner on $\M(G_i, \Gamma_1)$, or equivalently, $m(G_{i+1}, \Gamma_1) = 1$.
\end{enumerate}
\qed
\end{lem}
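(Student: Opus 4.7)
The plan is to combine two facts already established in the excerpt: (a) the characterization from section \ref{sec_bypasses_3-ball} that a bypass exists inside a tight ball along an attaching arc iff the arc is inner; and (b) lemma \ref{lem_cobs_constructed_bypasses}, which asserts that every tight cylinder $\M(\Gamma_0, \Gamma)$ is built by a finite sequence of bypass attachments above $\Gamma_0$. Together these give both directions of the lemma.

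For the ``if'' direction, I would induct on $k$. The base case $G_0 = \Gamma_0$ is realized by the convex disc $D \times \{0\} \subset \M(\Gamma_0, \Gamma_1)$. For the inductive step, suppose $G_i$ is realized by a convex disc $D_i$; then $D_i$ separates $\M(\Gamma_0, \Gamma_1)$ into a lower sub-cobordism $\M(\Gamma_0, G_i)$ and an upper sub-cobordism $\M(G_i, \Gamma_1)$, both tight. The hypothesis that $c_i$ is inner on $\M(G_i, \Gamma_1)$ produces a bypass just above $D_i$ inside this upper piece; excavating (attaching) this bypass yields a parallel convex disc $D_{i+1}$ with dividing set $\Up_{c_i} G_i = G_{i+1}$. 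Iterating $k$ times yields a convex disc realizing $G_k = \Gamma$. The equivalence $c_i$ inner $\iff m(G_{i+1}, \Gamma_1) = 1$ is exactly the bypass-existence criterion restated in the paragraph immediately preceding the lemma, invoking lemma \ref{bypass-related} to identify the dividing set after the bypass attachment with $G_{i+1}$.

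For the ``only if'' direction, let $D' \subset \M(\Gamma_0, \Gamma_1)$ be a convex disc with dividing set $\Gamma$ as in definition \ref{def_existence_chord_diagram}. Then $D'$ cuts $\M(\Gamma_0, \Gamma_1)$ into tight pieces $\M(\Gamma_0, \Gamma)$ and $\M(\Gamma, \Gamma_1)$. Applying lemma \ref{lem_cobs_constructed_bypasses} to the lower piece produces a sequence of bypass attachments above $\Gamma_0$, giving intermediate dividing sets $G_0 = \Gamma_0, G_1, \ldots, G_k = \Gamma$ on parallel convex discs, attached along arcs $c_i$. Each such bypass lies inside $\M(\Gamma_0, \Gamma)$ and hence inside the larger tight $\M(\Gamma_0, \Gamma_1)$. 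In particular, the $i$-th bypass lies inside the tight sub-cobordism $\M(G_i, \Gamma_1)$, so $c_i$ is inner there, as required.

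The main obstacle is not conceptual but bookkeeping: one has to check that the discs produced (both when excavating bypasses in the forward direction, and when decomposing the lower sub-cobordism in the backward direction) are of the type required by definition \ref{def_existence_chord_diagram}, i.e.\ convex with boundary on $\partial D \times I$ meeting the vertical dividing set in precisely $2n$ points, and that after edge-rounding the two pieces obtained by splitting along such a disc are honest sutured manifolds of the form $\M(\Gamma_0, G_i)$ and $\M(G_i, \Gamma_1)$ up to isotopy. Both points follow from standard convex-surface theory together with the edge-rounding conventions of section \ref{sec_convex_surfaces}, and the ``horizontal'' nature of the discs produced by lemma \ref{lem_cobs_constructed_bypasses}.
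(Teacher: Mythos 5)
Your proposal is correct and follows essentially the same route as the paper: the lemma is stated there as an immediate consequence of the inner-arc criterion for bypass existence (section \ref{sec_bypasses_3-ball}) together with lemma \ref{lem_cobs_constructed_bypasses}, which is exactly the pair of facts you combine. You merely spell out the induction and the convex-surface bookkeeping that the paper leaves implicit.
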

There is of course a similar result ``excavating'' from $\Gamma_1$ rather than $\Gamma_0$. As all attaching arcs on $\M(\Gamma, \Gamma)$ are outer (lemma \ref{lem_MGammaGamma_outer}), we immediately have:
\begin{lem}
\label{lem_CbGammaGamma}
The only chord diagram existing in $\M(\Gamma, \Gamma)$ is $\Gamma$.
\qed
\end{lem}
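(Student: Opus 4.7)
The plan is to deduce the lemma directly from the two results immediately preceding it: the existence criterion of Lemma \ref{lem_existence_of_chord_diagram}, together with the fact from Lemma \ref{lem_MGammaGamma_outer} that every nontrivial attaching arc on the top or bottom of $\M(\Gamma,\Gamma)$ is outer. The strategy is a contradiction at the very first step of any ``excavation'' sequence.

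Suppose some $\Gamma'$ exists in $\M(\Gamma, \Gamma)$; by Lemma \ref{lem_existence_of_chord_diagram}, there is a sequence of chord diagrams
\[
\Gamma = G_0, \; G_1, \; \ldots, \; G_k = \Gamma'
\]
together with attaching arcs $c_i$ on $G_i$ such that $G_{i+1} = \Up_{c_i} G_i$ and each $c_i$ is inner on $\M(G_i, \Gamma)$. If $k=0$ we immediately get $\Gamma' = \Gamma$, as desired. Otherwise, consider the first arc $c_0$ on $G_0 = \Gamma$. Inner-ness means exactly that a bypass exists inside $\M(\Gamma, \Gamma)$ along $c_0$; but Lemma \ref{lem_MGammaGamma_outer} asserts that every nontrivial attaching arc on $\Gamma \times \{0\}$ in $\M(\Gamma, \Gamma)$ is outer, so this is impossible when $c_0$ is nontrivial.

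The only remaining case is that $c_0$ is a trivial attaching arc. But a trivial bypass move either leaves $\Gamma$ unchanged (in which case we have made no progress and may delete this step from the sequence) or produces a diagram containing a closed loop, which is not a chord diagram and cannot appear as some $G_1$. Iterating this reduction, we see no nontrivial step is ever possible, so $k$ can be taken to be $0$, giving $\Gamma' = \Gamma$.

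There is no real obstacle here; everything rests on Lemma \ref{lem_MGammaGamma_outer}, which is the substantive contact-geometric input. The only point requiring a word of care is ruling out trivial attaching arcs in the definition of the excavation sequence, and this is handled by the observation that a trivial upward bypass move cannot produce a genuinely new chord diagram.
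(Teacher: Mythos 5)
Your proof is correct and follows exactly the paper's route: the paper deduces the lemma immediately from the excavation criterion of lemma \ref{lem_existence_of_chord_diagram} together with lemma \ref{lem_MGammaGamma_outer} (all nontrivial attaching arcs on $\M(\Gamma,\Gamma)$ are outer), which is precisely your argument. Your extra remark disposing of trivial attaching arcs (they either change nothing or create a closed loop, hence cannot contribute a step) is a small point the paper leaves implicit, but it is the same proof.
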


This lemma allows us to prove a classification result for tight contact structures on solid tori with longitudinal bypasses (in the spirit of \cite{Hon02}). We cited this without proof in section \ref{sec_SFH_contact_intro} above. A slight detour, but it illustrates the use of these methods.
\begin{prop}
\label{prop_tight_ct_str_solid_torus}
Tight contact structures up to isotopy on $(T,n)$, i.e. the solid torus $D^2 \times S^1$ with boundary dividing set $F \times S^1$, $F \subset \partial D^2$, $|F| = 2n$, are in bijective correspondence with chord diagrams of $n$ chords.
\end{prop}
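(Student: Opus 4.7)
The plan is to exhibit a map $\Gamma \mapsto \xi_\Gamma$ from chord diagrams to tight contact structures on $(T,n)$, show this map is surjective by cutting along a convex meridional disc, and show it is injective using lemma \ref{lem_CbGammaGamma}.

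First, for each chord diagram $\Gamma$ of $n$ chords, I would construct $\xi_\Gamma$ as the $S^1$-invariant contact structure on $D^2 \times S^1$ whose cross-sectional dividing set on $D^2 \times \{*\}$ is $\Gamma$. By Giroux's criterion for $S^1$-invariant contact structures, $\xi_\Gamma$ is tight precisely because $\Gamma$ is a chord diagram, i.e.\ contains no closed curves. The boundary dividing set is manifestly $F \times S^1$ with $|F|=2n$, so $\xi_\Gamma$ is a contact structure on $(T,n)$.

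For surjectivity, let $\xi$ be any tight contact structure on $(T,n)$. Choose a meridional disc $D \subset T$, perturb so that $\partial D$ is legendrian (intersecting the $2n$ longitudinal sutures efficiently in $2n$ points), and make $D$ convex. Because $\xi$ is tight and cutting along $D$ produces a contact ball, the dividing set on $D$ has no closed components, hence is a chord diagram $\Gamma$ of $n$ chords. Cutting $(T,n)$ along $D$ yields a sutured ball whose boundary dividing set, after rounding corners, is the same as the dividing set obtained by cutting $\xi_\Gamma$ along its cross-sectional disc. Eliashberg's uniqueness theorem for tight contact structures on $B^3$ with a given convex boundary then identifies the two cut-open contact balls, and re-gluing (which is unique, with the contact structure determined by the dividing data) shows $\xi \cong \xi_\Gamma$.

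For injectivity, suppose $\xi_{\Gamma_1}$ and $\xi_{\Gamma_2}$ are isotopic. Cut $\xi_{\Gamma_1}$ along its cross-sectional meridional disc; the result is exactly the cobordism $\M(\Gamma_1, \Gamma_1)$, carrying its unique tight contact structure by lemma \ref{MGammaGamma}. Under the isotopy, the convex meridional disc for $\xi_{\Gamma_2}$ (with dividing set $\Gamma_2$) can be transported into this cut-open cylinder as an embedded convex disc with legendrian boundary meeting the vertical sutures efficiently; its dividing set is $\Gamma_2$. Thus $\Gamma_2$ occurs in $\M(\Gamma_1, \Gamma_1)$ in the sense of definition \ref{def_existence_chord_diagram}. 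Lemma \ref{lem_CbGammaGamma} then forces $\Gamma_2 = \Gamma_1$, establishing injectivity.

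The main obstacle is the injectivity step: one must argue cleanly that the isotopy between $\xi_{\Gamma_1}$ and $\xi_{\Gamma_2}$ really does transport the second meridional disc into the cut-open cylinder as a disc with legendrian boundary meeting each vertical suture once (not spiralling around the $S^1$ factor). This is a standard Legendrian realisation argument once one observes that both discs represent the same meridional homology class, but it is where the topology rather than the combinatorics does the work; the subsequent appeal to lemma \ref{lem_CbGammaGamma} is then immediate.
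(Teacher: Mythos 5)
Your overall architecture matches the paper's: construct the $S^1$-invariant structure by gluing up the tight $\M(\Gamma,\Gamma)$, cut a given tight $\xi$ along a convex meridional disc meeting the boundary dividing set in $2n$ points to get the unique tight structure on $\M(\Gamma,\Gamma)$, and pin down the dividing set of a second disc via lemma \ref{lem_CbGammaGamma}. The surjectivity half is fine and is essentially the paper's argument.

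The injectivity step, however, has a genuine gap exactly where you flag it, and the patch you offer does not close it. Knowing that the two meridional discs represent the same homology class, plus Legendrian realisation, does not let you place the transported $\Gamma_2$-disc inside the cut-open cylinder in the sense of definition \ref{def_existence_chord_diagram}: after cutting along $D$, the second disc must have boundary on the vertical annulus meeting the dividing set in exactly $2n$ points, and the obstruction is precisely that its boundary (or the disc itself) may spiral around the $S^1$ direction and cross $D$; removing the spiralling by an ambient isotopy can destroy convexity or efficiency of the boundary intersection, and there is no "standard" argument that does this while staying disjoint from $D$. The paper resolves this with a specific extra idea: having cut along $D$ and identified the cut-open structure as the unique tight (hence $I$-invariant) one, it first concludes that $\xi$ is $S^1$-invariant and universally tight, so that every finite cover of $(T,n)$ in the $S^1$ direction is contactomorphic to $\xi$; passing to such a finite cover creates enough room to make the second convex meridional disc genuinely disjoint from (a lift of) $D$, after which cutting along $D$ gives a thicker $\M(\Gamma,\Gamma)$ containing the second disc, and lemma \ref{lem_existence_of_chord_diagram} (equivalently lemma \ref{lem_CbGammaGamma}) forces its dividing set to be $\Gamma$. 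Your proof needs this finite-cover step (or some equally concrete substitute) to be complete; as written, the appeal to lemma \ref{lem_CbGammaGamma} is applied to a disc you have not shown exists in $\M(\Gamma_1,\Gamma_1)$.
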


\begin{proof}
Suppose we have a tight contact structure $\xi$ on $(T,n)$. Take a convex meridional disc $D$ intersecting the boundary dividing set in precisely $2n$ points; its dividing set is a chord diagram $\Gamma$ of $n$ chords. Cutting along $D$ gives $\M(\Gamma, \Gamma)$ with the unique tight contact structure, hence an $I$-invariant one. Thus $\xi$ is $S^1$-invariant on $D^2 \times S^1$ and is universally tight; in fact any finite cover of $\xi$ is contactomorphic to $\xi$.

Let $D'$ be another convex meridional disc intersecting the boundary dividing set in $2n$ points. After taking (if necessary) a (tight) finite cover of $(T,n)$, we may consider $D'$ disjoint from $D$. Cutting along $D$ we still obtain a (thicker!) $\M(\Gamma, \Gamma)$ with tight contact structure, which contains an embedded $D'$; by lemma \ref{lem_existence_of_chord_diagram} the chord diagram on $D'$ is also $\Gamma$. 

This gives a well-defined map from isotopy classes of tight contact structures to chord diagrams. Conversely, given a chord diagram $\Gamma$, take the tight contact cobordism $\M(\Gamma, \Gamma)$ and glue the ends together, giving an $S^1$-invariant universally tight contact structure on $(T,n)$ with meridional disc dividing set $\Gamma$.
\end{proof}

\subsubsection{Elementary cobordisms and generalised bypass triples}

\label{sec_elementary_cobordisms}

Another simple type of cobordism is one obtained by attaching bypasses along a bypass system.
\begin{defn}[Elementary cobordism]
\label{def_elementary_cob}
A tight cobordism $\M(\Gamma_0, \Gamma_1)$ is \emph{elementary} if its tight contact structure can be constructed by attaching bypasses above $D \times \{0\}$ along a bypass system $c$ on $\Gamma_0$.
\end{defn}
We may think of an elementary cobordism as a ``generalised bypass cobordism''. Note the result of attaching bypasses need not be tight; there may be pinwheels.

An elementary $\M(\Gamma_0, \Gamma_1)$ arising from a bypass system $c_0$ on $\Gamma_0$ without upwards pinwheels gives rise to a triple
\[
 \Gamma_0, \quad \Gamma_1 = \Up_c (\Gamma), \quad \Gamma_{-1} = \Down_c(\Gamma)
\]
which we call a \emph{generalised bypass triple}. Extend the notation $\Up_c$ and $\Down_c$ to bypass systems in the obvious way.  In view of figure \ref{fig:49} and the symmetry of bypass moves as ``local $60^\circ$ rotations'', note there are also corresponding bypass systems $c_{\pm 1}$ on $\Gamma_{\pm 1}$. However $\Gamma_{-1}$ need not be a chord diagram: there may be closed loops; $c_0$ may contain downwards but not upwards pinwheels.

A bypass triple sums to zero; a generalised bypass triple usually does not. However we may expand every downwards bypass move as a sum of null and upwards bypass moves, and vice versa. This gives a sum with $2^k$ terms, where $|c| = k$, a sum over subsets of $c$.
\begin{lem}[Expanding down over up]
\label{lem_expanding_down_over_up}
For any bypass system $c$ on $\Gamma$,
\[
 \Down_c (\Gamma) = \sum_{c' \subseteq c} \Up_{c'} (\Gamma) \quad \text{and} \quad 
 \Up_c (\Gamma) = \sum_{c' \subseteq c} \Down_{c'} (\Gamma).
\]
\qed
\end{lem}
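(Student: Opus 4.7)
The plan is to prove the first identity by induction on $k = |c|$; the second will follow by an entirely symmetric argument (or equivalently, by solving the first for $\Up_c(\Gamma)$ in characteristic $2$).

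For the base cases: $k=0$ is trivial (both sides equal $\Gamma$). For $k=1$, say $c = \{a\}$, the bypass relation reads $\Gamma + \Up_a(\Gamma) + \Down_a(\Gamma) = 0$ in $SFH_{comb}$, which rearranges to $\Down_a(\Gamma) = \Gamma + \Up_a(\Gamma) = \Up_\emptyset(\Gamma) + \Up_{\{a\}}(\Gamma)$, which is exactly the claimed sum over the two subsets of $\{a\}$. Note this argument even covers the degenerate case where a bypass move produces a closed loop: the relation then has the form $x + x + 0 = 0$ and the formula remains valid.

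For the inductive step, pick any $a \in c$ and write $c = c_0 \sqcup \{a\}$. The central observation is that since the arcs of a bypass system are disjoint, each bypass surgery is supported in a small disc neighbourhood of its attaching arc, and these neighbourhoods may be taken pairwise disjoint; hence the surgeries commute, and each remaining arc continues to be a legitimate attaching arc after any subset of the others is applied. In particular $\Down_c(\Gamma) = \Down_a \, \Down_{c_0}(\Gamma)$. Applying the inductive hypothesis and then the $k=1$ case termwise (which is legitimate since $a$ is still an attaching arc on every chord diagram $\Up_{c'}(\Gamma)$ for $c' \subseteq c_0$), we obtain
\[
\Down_c(\Gamma) = \Down_a \sum_{c' \subseteq c_0} \Up_{c'}(\Gamma) = \sum_{c' \subseteq c_0} \bigl( \Up_{c'}(\Gamma) + \Up_{c' \cup \{a\}}(\Gamma) \bigr).
\]
Every subset $c'' \subseteq c$ appears exactly once on the right, according to whether $a \in c''$ or not, so this equals $\sum_{c'' \subseteq c} \Up_{c''}(\Gamma)$.

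The main thing to verify carefully is the commutation/locality step: that disjoint attaching arcs have disjoint surgery neighbourhoods, and that performing surgery on one such arc does not disturb the others as attaching arcs. Once this geometric fact is in hand (it is essentially the definition of a bypass system together with the local nature of bypass surgery), the combinatorial induction is routine. No obstacle beyond this local disjointness issue is anticipated; the remainder is bookkeeping on subsets of $c$ together with iterated use of the bypass relation.
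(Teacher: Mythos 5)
Your proof is correct, and it is essentially the argument the paper has in mind: the lemma is stated with its proof omitted as immediate, the intended justification being exactly your expansion $\Down_a = \mathrm{Id} + \Up_a$ from the bypass relation (including the degenerate $x+x+0=0$ case), combined with the fact that surgeries along disjoint attaching arcs are local and hence commute, so that the product expands over subsets of $c$. Your handling of the commutation/locality point and the inversion (or symmetric) argument for the second identity is sound.
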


Is every tight cobordism elementary? No: such optimism is crushed by the following.
\begin{lem}[Not every cobordism is elementary]
\label{lem_not_every_cob_elementary}
With $\Gamma_0, \Gamma_1$ as in figure \ref{fig:56}, $\M(\Gamma_0, \Gamma_1)$ is tight but not elementary.
\end{lem}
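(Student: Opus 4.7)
The plan is to establish the two claims separately: tightness of $\M(\Gamma_0, \Gamma_1)$, and failure of elementariness. Tightness is a single combinatorial check, whereas non-elementariness requires ruling out every bypass system on $\Gamma_0$ whose upward attachment could produce $\Gamma_1$.

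For tightness, it suffices to verify $m(\Gamma_0, \Gamma_1) = 1$; that is, after rounding the corners of $\partial \M(\Gamma_0, \Gamma_1)$ as in section \ref{sec_convex_surfaces}, the dividing set is a single closed curve on $S^2$. First I would apply lemma \ref{cancel_outermost} to eliminate any chord appearing as an outermost chord in the same position on both $\Gamma_0$ and $\Gamma_1$, reducing figure \ref{fig:56} to its ``essential core''. In the reduced picture one counts the sutures directly after edge rounding; connectedness then gives tightness by the definition of tight $\M$, and hence furnishes the unique tight contact structure on the resulting $3$-ball with convex boundary.

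For non-elementariness, suppose for contradiction that there exists a bypass system $c = c_1 \sqcup \cdots \sqcup c_k$ on $\Gamma_0$ with $\Up_c \Gamma_0 = \Gamma_1$ and with no upwards pinwheel (so that the resulting contact structure is tight by theorem \ref{thm_pinwheel}). Since the $c_i$ are disjoint, each $c_i$ acts as an independent local $60^\circ$ rotation (figure \ref{fig:49}), so outside a small neighbourhood of $c$ we must have $\Gamma_0 = \Gamma_1$, and inside each neighbourhood of $c_i$ the local difference between $\Gamma_0$ and $\Gamma_1$ must be exactly an ``up'' bypass move. I would enumerate, up to homotopy through attaching arcs, the finitely many possible attaching arcs on $\Gamma_0$ (there are only finitely many since $\Gamma_0$ has finitely many chords and an attaching arc meets exactly three of them). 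For each candidate $c_i$ I would compute $\Up_{c_i} \Gamma_0$ and compare with $\Gamma_1$ in the corresponding region. The desired conclusion is that no subset of these candidate arcs can be assembled into a disjoint family whose simultaneous upwards surgery yields $\Gamma_1$: either the local pictures never match the prescribed change, or any arcs matching the change cannot be made simultaneously disjoint, or combining them forces an upwards pinwheel.

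The main obstacle is the second half: the case analysis is inherently figure-dependent, and the delicate point is showing that even though $\Gamma_0$ and $\Gamma_1$ are related by \emph{some} sequence of bypass attachments (guaranteed by lemma \ref{lem_cobs_constructed_bypasses}), no such sequence can be pushed down to live entirely on $D \times \{0\}$. The cleanest way to package this is to exhibit an intermediate dividing set $\Gamma'$ occurring in $\M(\Gamma_0, \Gamma_1)$ (via lemma \ref{lem_existence_of_chord_diagram}) that is not reachable from $\Gamma_0$ by a single upward bypass attachment, so that any bypass decomposition must use $\Gamma'$ as a strictly intermediate convex disc — contradicting the definition of elementary in definition \ref{def_elementary_cob}.
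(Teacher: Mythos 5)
There is a genuine gap in the second half. Your non-elementariness argument is only a plan: you defer the entire case analysis (``I would enumerate\dots the desired conclusion is that\dots''), and, more importantly, you never identify the mechanism that makes the analysis close. The paper's proof works because elementariness forces \emph{tightness of each partial attachment}: if the tight structure on $\M(\Gamma_0,\Gamma_1)$ were built from a bypass system $c$ on $\Gamma_0$, then after attaching any single arc $c_i\in c$ the remaining cobordism $\M(\Up_{c_i}\Gamma_0,\Gamma_1)$ is a restriction of a tight structure, hence tight, so $m(\Up_{c_i}\Gamma_0,\Gamma_1)=1$. On the diagram of figure \ref{fig:56} there are only three nontrivial attaching arcs $\alpha,\beta,\gamma$; this criterion kills $\alpha$ and $\gamma$ outright, so $c$ can contain only copies of $\beta$ and trivial arcs, whence $\Up_c\Gamma_0=\Up_\beta\Gamma_0\neq\Gamma_1$. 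Your proposal instead tries to show combinatorially that no disjoint family of arcs surgered upwards yields $\Gamma_1$ (or that any that does has a pinwheel); that is a logically admissible but strictly harder route, and you give no argument for why the disjunction ``local pictures never match / arcs cannot be made disjoint / a pinwheel appears'' actually exhausts and excludes all cases. Also, your localization claim (``outside a small neighbourhood of $c$ we must have $\Gamma_0=\Gamma_1$'') only holds up to isotopy and cannot be used as a literal region-by-region rigidity statement.

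Your proposed ``cleanest packaging'' is in fact incorrect: exhibiting a dividing set $\Gamma'$ occurring in $\M(\Gamma_0,\Gamma_1)$ that is not reachable from $\Gamma_0$ by a \emph{single} upward bypass attachment does not contradict definition \ref{def_elementary_cob}. An elementary cobordism is built from a bypass \emph{system}, so intermediate discs realizing $\Up_{c'}\Gamma_0$ for subsets $c'$ of the system occur, and even a single-bypass cobordism contains many intermediate diagrams not one bypass away from the bottom (theorem \ref{thm_Cb_bypass_cob}: its bounded contact category is a whole $W(n_-,n_+)$). So that criterion cannot distinguish elementary from non-elementary cobordisms, and the intended contradiction does not arise. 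The tightness half of your proposal (rounding corners, optionally cancelling common outermost chords, and checking connectedness of the sutures) matches the paper and is fine.
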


\begin{figure}
\centering
\includegraphics[scale=0.4]{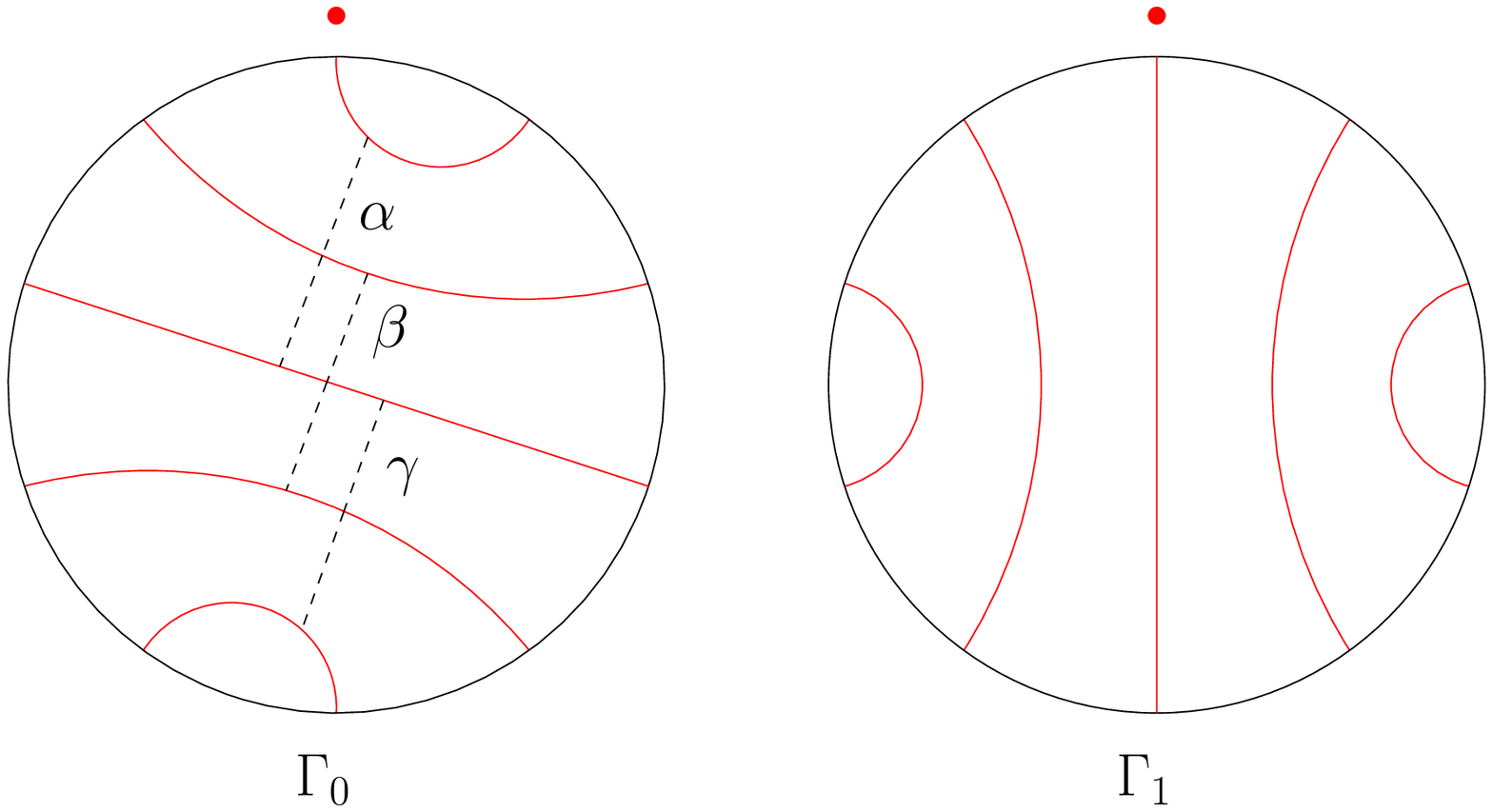}
\caption{Tight, non-elementary cobordism.} \label{fig:56}
\end{figure}

\begin{proof}
Rounding corners, $\M(\Gamma_0, \Gamma_1)$ is tight. Suppose it were elementary, so $\Gamma_1 = \Up_c  \Gamma_0$ for some bypass system $c$ on $\Gamma_0$. On $\Gamma_0$ there are only three nontrivial attaching arcs, namely $\alpha, \beta, \gamma$ as shown in figure \ref{fig:56}. Now $\M(\Up_\alpha \Gamma_0, \Gamma_1)$ and $\M(\Up_\gamma \Gamma_0, \Gamma_1)$ are overtwisted, while $\M(\Up_\beta \Gamma_0, \Gamma_1)$ is tight. Hence $c$ must consist of copies of $\beta$ and trivial attaching arcs; hence $\Up_c \Gamma_0 = \Up_\beta \Gamma_0$. However $\Up_\beta \Gamma_0 \neq \Gamma_1$.
\end{proof}

\subsubsection{The contact category}

\label{sec_The_contact_category}

Given a surface $\Sigma$ and a finite set $F \subset \partial \Sigma$, there is a category $\mathcal{C}(\Sigma, F)$, due to Honda; although \cite{HonCat} remains unpublished we give a definition for our purposes.
\begin{defn}[Contact category]
\label{defn_contact_category}
The category $\mathcal{C}(\Sigma, F)$ is as follows.
\begin{itemize}
\item
The \emph{objects} are isotopy classes of tight dividing sets $\Gamma$ on $\Sigma$ with $\partial \Gamma = F$.
\item
The \emph{morphisms} $\Gamma_0 \To \Gamma_1$ are:
\begin{enumerate}
\item 
isotopy classes of tight contact structures on $\Sigma \times I$, with dividing set $\Gamma_i$ on $\Sigma \times \{i\}$, and a vertical dividing set on $\partial \Sigma \times I$; and
\item
a single morphism, denoted $*$, referring to overtwisted structures on $\Sigma \times I$ with the same boundary conditions. (``The zero morphism.'')
\end{enumerate}
\item
\emph{Composition} of morphisms is given by gluing cobordisms.
\end{itemize}
\end{defn}

Honda \cite{HonCat} shows that this category obeys many of the properties of a \emph{triangulated category} (see e.g. \cite{Gelfand-Manin}.) In particular, there are distinguished triangles, arising from bypass additions, and these obey an octahedral axiom. TQFT properties imply that it behaves functorially with respect to $SFH$.

When $\Sigma = D^2$ and $|F| = 2n$, write $\mathcal{C}(\Sigma, F) = \mathcal{C}(D^2, n)$. Objects of $\mathcal{C}(D^2, n)$ are chord diagrams of $n$ chords; morphisms $\Gamma_0 \To \Gamma_1$ are contact structures on $\M(\Gamma_0, \Gamma_1)$. There is always the overtwisted morphism $*: \Gamma_0 \To \Gamma_1$. If $m(\Gamma_0, \Gamma_1) = 0$ this is all; otherwise there is precisely one other morphism, the unique tight contact structure. Any composition involving $*$ is $*$. If $\Gamma_0 \To \Gamma_1 \To \Gamma_2$ are both tight, then the composition is tight iff $m(\Gamma_0, \Gamma_2) = 1$ and $\Gamma_1$ exists in $\M(\Gamma_0, \Gamma_2)$; otherwise it is $*$.

By relative euler class orthogonality (proposition \ref{lem_euler_orthogonality}), a nontrivial morphism $\Gamma_0 \To \Gamma_1$ exists only when $\Gamma_0, \Gamma_1$ have the same $e$. Define $\mathcal{C}(D^2,n,e)$ to be the full subcategory of $\mathcal{C}(D^2,n)$ on the chord diagrams of relative euler class $e$.

\subsubsection{The bounded contact category}

\label{sec_The_bdd_ct_cat}

A given tight cobordism $\Gamma_0 \stackrel{\xi}{\rightarrow} \Gamma_1$ contains certain objects and morphisms of $\mathcal{C}(D^2, n)$; restricting to these leads to a notion of \emph{bounded contact category}, tenuously analogous to a bounded category, where:
\begin{itemize}
\item objects are dividing sets which occur in $\M(\Gamma_0, \Gamma_1)$ (definition \ref{def_existence_chord_diagram}); and
\item morphisms are ``cobordisms which exist'' in $\M(\Gamma_0, \Gamma_1)$.
\end{itemize}
In particular, there are no overtwisted morphisms. A ``cobordism which exists'' inside another can be defined precisely; the intuitive meaning is hopefully clear.
\begin{defn}[Existence of cobordism inside cobordism]
\label{def_existence_cobordism}
Let $\Gamma_0 \stackrel{\xi}{\rightarrow} \Gamma_1$ be a tight cobordism. The cobordism $\Gamma \stackrel{\xi'}{\rightarrow} \Gamma'$ \emph{exists} or \emph{occurs} in $\M(\Gamma_0, \Gamma_1)$ if:
\begin{itemize}
\item $\Gamma, \Gamma'$ both exist in $\M(\Gamma_0, \Gamma_1)$ on discs $D,D'$;
\item $D,D'$ are disjoint and $D$ is below $D'$; and
\item the restriction of $\xi$ to this embedded $\M(\Gamma, \Gamma')$ is isotopic to $\xi'$.
\end{itemize}
\end{defn}
Note if $\xi$ is tight then so is $\xi'$. So we may say $\M(\Gamma, \Gamma')$ occurs in $\M(\Gamma_0, \Gamma_1)$ without reference to the contact structure.

However, defining composition is a problem. Suppose two morphisms $\Gamma \rightarrow \Gamma'$, $\Gamma' \rightarrow \Gamma''$ occur in $\M(\Gamma_0, \Gamma_1)$; both have discs with dividing set $\Gamma'$, but there is no reason why we should be able to glue them together inside $\M(\Gamma_0, \Gamma_1)$. However, we can avoid this problem --- indeed avoid geometry altogether --- using the following lemma.
\begin{lem}[``Gluing cobordisms'']
\label{lem_gluing_cobordisms}
Suppose that two cobordisms $\M(\Gamma, \Gamma')$ and $\M(\Gamma', \Gamma'')$ occur in the tight cobordism $\M(\Gamma_0, \Gamma_1)$. Then $\M(\Gamma, \Gamma'')$ also occurs in this $\M(\Gamma_0, \Gamma_1)$, and the chord diagram $\Gamma'$ occurs in this tight $\M(\Gamma, \Gamma'')$.
\end{lem}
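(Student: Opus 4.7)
The plan is to produce, by a contact isotopy of the ambient tight $3$-ball $\M(\Gamma_0,\Gamma_1)$, a single configuration of four convex discs $D_1\prec D_1'=D_2\prec D_2'$ simultaneously realizing both sub-cobordisms. Once this is achieved, the region between $D_1$ and $D_2'$ is the desired embedded $\M(\Gamma,\Gamma'')$, and the common disc $D_1'=D_2$, which carries dividing set $\Gamma'$, witnesses the occurrence of $\Gamma'$ inside it.

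By hypothesis we are given convex discs $D_1\prec D_1'$ with dividing sets $\Gamma,\Gamma'$, and $D_2\prec D_2'$ with dividing sets $\Gamma',\Gamma''$, all properly embedded in $\M(\Gamma_0,\Gamma_1)$ with boundary on $\partial D\times I$. The crux is to isotope $D_1'$ and $D_2$ so that they coincide. First I would perturb so that $D_1'$ meets $D_2$ transversely along a disjoint union of arcs (with endpoints on $\partial D\times I$) and circles. Using Legendrian realization and the fact that the dividing sets on the two discs are both $\Gamma'$, I would arrange the components of $D_1'\cap D_2$ to be disjoint from the dividing sets. Then an innermost component bounds a bigon whose two faces are sub-discs of $D_1'$ and $D_2$ with identical (empty-on-the-circle) dividing set behavior; the sub-ball cut off has two convex faces carrying the same dividing set, so, as in the first proof of lemma \ref{MGammaGamma}, its tight contact structure is $I$-invariant and the bigon can be collapsed by a contact isotopy supported near the sub-ball. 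Iterating removes all intersections, leaving $D_1'$ and $D_2$ disjoint; by swapping roles if necessary, assume $D_1'$ lies below $D_2$.

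Once $D_1'$ lies below $D_2$ in the tight ball, the region between them is a tight $\M(\Gamma',\Gamma')$, which by lemma \ref{MGammaGamma} carries the unique $I$-invariant contact structure. I then push $D_1'$ upward along the $I$-direction until it agrees with $D_2$, carrying $D_1$ to a new convex disc $\tilde D_1\prec D_1'=D_2$ with dividing set $\Gamma$. The configuration $\tilde D_1\prec D_2\prec D_2'$ bounds an embedded tight sub-cobordism $\M(\Gamma,\Gamma'')$ inside $\M(\Gamma_0,\Gamma_1)$, and the intermediate convex disc $D_2$ has dividing set $\Gamma'$, proving both conclusions of the lemma.

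The hard part will be the cut-and-paste step that eliminates intersections of $D_1'$ and $D_2$. One must treat arcs (meeting the corner $\partial D\times I$) as well as circles, arrange via Legendrian realization that intersection components avoid the dividing sets so that the two sub-disc faces of each bigon really carry matching dividing data, and verify that the collapsing isotopy respects the vertical sutures on $\partial D\times I$. Once the intersections are removed, the remainder of the argument is a direct application of the $I$-invariance of $\M(\Gamma',\Gamma')$ already established in lemma \ref{MGammaGamma}.
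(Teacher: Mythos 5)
Your proposal is the naive geometric gluing that the paper explicitly flags as the problem just before this lemma (``both have discs with dividing set $\Gamma'$, but there is no reason why we should be able to glue them together''), and the entire content of your argument sits in the step you defer as ``the hard part''. As sketched, that step fails. First, you cannot arrange the components of $D_1'\cap D_2$ to be disjoint from the dividing sets: every dividing curve of a convex disc here is an arc running from boundary to boundary, so an intersection circle missing both dividing sets would cut off sub-discs of $D_1'$ and $D_2$ containing no dividing curves at all, and the sphere they bound inside the tight ball would be a convex sphere with empty dividing set, which is impossible; moreover Legendrian realization produces curves that meet the dividing set efficiently, not curves avoiding it, and in any case gives no control over the intersection with the second surface. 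Second, even for a controlled innermost component, nothing forces the two faces it cuts off in $D_1'$ and $D_2$ to carry matching dividing data merely because the full discs both realize $\Gamma'$ --- and that matching is exactly what your $I$-invariance/collapse step needs. Third, ``swapping roles if necessary'' is not available: the configuration is not symmetric (you need the $\Gamma$-disc at the bottom and the $\Gamma''$-disc at the top), and you never removed intersections of $D_1$ with $D_2$ or of $D_2'$ with $D_1'$, so in the ordering $D_2\prec D_1'$ the claimed final configuration $\tilde D_1\prec D_2\prec D_2'$ does not follow. In effect you are assuming that any two convex discs realizing $\Gamma'$ can be made to coincide by an ambient contact isotopy keeping the other discs in place --- a statement at least as hard as the lemma itself.

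The paper's proof is one line and deliberately avoids all geometry: by lemma \ref{lem_existence_of_chord_diagram}, occurrence of a chord diagram in a tight cobordism is equivalent to the existence of a chain of inner upward bypass attachments, a condition depending only on the bounding dividing sets and not on any particular embedded disc realizing them. So cut $\M(\Gamma_0,\Gamma_1)$ along the discs $D_1\prec D_1'$ of your first configuration: the piece above $D_1'$ is a tight $\M(\Gamma',\Gamma_1)$. Your second hypothesis, read through the same criterion, says $\Gamma''$ occurs in a tight $\M(\Gamma',\Gamma_1)$ (the piece above $D_2$), hence it occurs in the copy above $D_1'$ as well, even though that copy sits over a different $\Gamma'$-disc. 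The three stacked discs then give the embedded $\M(\Gamma,\Gamma'')$ with $\Gamma'$ occurring inside it, which is both conclusions at once. If you want to salvage a geometric argument you must first prove the coincidence/isotopy statement for $\Gamma'$-discs; the criterion is precisely how the paper sidesteps that difficulty.
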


\begin{proof}
Immediate from the criterion (lemma \ref{lem_existence_of_chord_diagram}) for the existence of chord diagrams in a cobordism.
\end{proof}

\begin{defn}[Bounded contact category]
\label{defn_bonded_ct_cat}
Suppose $m(\Gamma_0, \Gamma_1) = 1$. The \emph{bounded contact category} $\mathcal{C}^b (\Gamma_0, \Gamma_1)$ is as follows.
\begin{itemize}
\item 
The \emph{objects} are the chord diagrams $\Gamma$ which exist in the tight $\M(\Gamma_0, \Gamma_1)$.
\item
There is one \emph{morphism} $\Gamma \To \Gamma'$ precisely when $\M(\Gamma, \Gamma')$ exists in the tight $\M(\Gamma_0, \Gamma_1)$.
\item
\emph{Composition of morphisms} is given by ``gluing cobordisms'' as in lemma \ref{lem_gluing_cobordisms}: $\Gamma \To \Gamma' \To \Gamma''$ composes to the unique $\Gamma \To \Gamma''$.
\end{itemize}
\end{defn}

The following is now immediate from lemma \ref{lem_gluing_cobordisms} and the definition.
\begin{lem}
\label{lem_Cb_is_category}
For any tight $\M(\Gamma_0, \Gamma_1)$, $\mathcal{C}^b (\Gamma_0, \Gamma_1)$ is a category.
\qed
\end{lem}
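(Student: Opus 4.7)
The plan is to verify the three category axioms---closure under composition, existence of identities, and associativity---exploiting the observation that, by definition of $\mathcal{C}^b(\Gamma_0, \Gamma_1)$, there is at most one morphism between any ordered pair of objects. This ``thin category'' feature means associativity and the identity laws reduce to uniqueness, so the real content lies in closure and in producing identities.

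For closure, suppose $\Gamma \to \Gamma'$ and $\Gamma' \to \Gamma''$ are morphisms in $\mathcal{C}^b(\Gamma_0, \Gamma_1)$. Unpacking the definition, both $\M(\Gamma, \Gamma')$ and $\M(\Gamma', \Gamma'')$ occur in the tight $\M(\Gamma_0, \Gamma_1)$; lemma \ref{lem_gluing_cobordisms} then guarantees that $\M(\Gamma, \Gamma'')$ also occurs, which furnishes the composite morphism $\Gamma \to \Gamma''$. Composition in $\mathcal{C}^b$ is defined to be precisely this unique composite, and the definition is well-posed.

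For identities, take any object $\Gamma$, realized on a convex disc $D$ inside the tight $\M(\Gamma_0, \Gamma_1)$. Using the $I$-invariant collar of a convex surface, push $D$ slightly off itself to obtain a disjoint parallel disc $D'$ with the same dividing set $\Gamma$, with $D$ below $D'$; the slab between them is contactomorphic to an $I$-invariant (and hence tight) $\M(\Gamma, \Gamma)$, consistent with lemma \ref{MGammaGamma}. Hence $\M(\Gamma, \Gamma)$ occurs in $\M(\Gamma_0, \Gamma_1)$, giving an identity morphism $1_\Gamma$. The identity laws $1_\Gamma \circ f = f$ and $g \circ 1_\Gamma = g$ then hold because they are identities between morphisms with the same source and target, of which there is only one.

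Associativity follows by the same principle: given $\Gamma \to \Gamma' \to \Gamma'' \to \Gamma'''$, both bracketings yield morphisms $\Gamma \to \Gamma'''$, and these must coincide since at most one morphism with that source and target exists. The only step carrying genuine geometric content is closure, and that has already been handed to us by lemma \ref{lem_gluing_cobordisms}; the ``main obstacle'', if there is one, is simply arranging the parallel copy $D'$ for the identity morphism, which is immediate from the standard $I$-invariant collar of a convex surface.
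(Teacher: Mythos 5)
Your proof is correct and follows essentially the same route as the paper, which simply observes that the lemma is immediate from lemma \ref{lem_gluing_cobordisms} and the definition of $\mathcal{C}^b(\Gamma_0,\Gamma_1)$. Your added details — the $I$-invariant push-off producing the identity morphism and the ``thin category'' observation handling associativity and the unit laws — are exactly the points the paper leaves implicit.
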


The notion of ``bounding'' cobordisms in this way can be nested: if the tight $\M(\Gamma, \Gamma')$ exists in the tight $\M(\Gamma_0, \Gamma_1)$, there is a covariant fully faithful functor $\mathcal{C}^b (\Gamma, \Gamma') \To \mathcal{C}^b (\Gamma_0, \Gamma_1)$, injective on objects and morphisms; $\mathcal{C}^b(\Gamma, \Gamma')$ is isomorphic to the full sub-category on the image of its objects in $\mathcal{C}^b(\Gamma_0, \Gamma_1)$.

\subsubsection{The bounded contact category is partially ordered}

\label{sec_Cb_partially_ordered}

A partially ordered set can be considered a category, morphisms being given by the ordering relation: the \emph{category of a partially ordered set}. When does a given category arise from a partial ordering?  The following lemma is clear.
\begin{lem}[When a category is a partial order]
\label{lem_category_partial_order}
Let $\mathcal{C}$ be a category such that:
\begin{enumerate}
\item for every pair $A,B$ of objects of $\mathcal{C}$, there is at most one morphism $A \rightarrow B$;
\item if there are morphisms $A \rightarrow B$ and $B \rightarrow A$, then $A = B$.
\end{enumerate}
Then $\mathcal{C}$ is the category of the partially ordered set given by $A \preceq B$ iff there is a morphism $A \rightarrow B$.
\qed
\end{lem}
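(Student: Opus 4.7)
The plan is to verify directly that the relation defined by ``there exists a morphism $A \to B$'' is a partial order, and then to check that the associated category of this poset is canonically isomorphic to $\mathcal{C}$. Concretely, define $A \preceq B$ iff $\Hom_{\mathcal{C}}(A,B) \neq \emptyset$.

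First I would verify the three defining properties of a partial order. Reflexivity is immediate from the existence of the identity morphism $\mathrm{id}_A : A \to A$, so $A \preceq A$. Antisymmetry is precisely hypothesis (2): if both $A \preceq B$ and $B \preceq A$ then by assumption there are morphisms in both directions, forcing $A = B$. Transitivity follows from the fact that $\mathcal{C}$ is a category: given morphisms $A \to B$ and $B \to C$, their composition is a morphism $A \to C$, hence $A \preceq C$. Thus $\preceq$ defines a partial order on $\Ob \mathcal{C}$.

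Next I would exhibit the isomorphism of categories between $\mathcal{C}$ and the category $\mathcal{C}_{\preceq}$ associated to this poset. On objects, take the identity map. On morphisms, hypothesis (1) says that each hom-set in $\mathcal{C}$ is either empty or a singleton; in $\mathcal{C}_{\preceq}$ each hom-set is either empty (when $A \not\preceq B$) or a singleton (when $A \preceq B$); by construction, $A \preceq B$ exactly when $\Hom_{\mathcal{C}}(A,B)$ is nonempty, so the two hom-sets have the same cardinality and the unique-morphism assignment is forced and bijective. Compatibility with composition is automatic: in both categories the composite of any two composable morphisms is the unique element of its hom-set, which is nonempty by transitivity of $\preceq$ on one side and by closure of $\mathcal{C}$ under composition on the other. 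Identities correspond to identities for the same reason.

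There is no real obstacle here; the statement is essentially a bookkeeping tautology, which is why the author stamps it with \textbf{\qed} without writing anything down. The only mildly substantive point is noticing that hypothesis (1) is what forces the ``category of a poset'' picture (at most one morphism per hom-set), while hypothesis (2) is what rules out the more general ``preorder'' possibility where distinct objects could be isomorphic in a nontrivial way.
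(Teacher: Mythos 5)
Your proof is correct and matches the paper's intent exactly: the paper marks this lemma as clear and gives no argument, and your verification (reflexivity from identities, antisymmetry from hypothesis (2), transitivity from composition, and the forced isomorphism with the poset category via hypothesis (1)) is precisely the routine check being omitted. Nothing further is needed.
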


Proposition \ref{lem_Cb_partial_order}, that the bounded contact category $\mathcal{C}^b (\Gamma_0, \Gamma_1)$ is partially ordered, is now straightforward.
\begin{proof}[Proof of proposition \ref{lem_Cb_partial_order}]
Verify the conditions of lemma \ref{lem_category_partial_order}. The first is true by definition. The second follows from lemma \ref{lem_CbGammaGamma}.
\end{proof}

Write $\preceq$ for the partial order on $\mathcal{C}^b(\Gamma_0, \Gamma_1)$. Note $\Gamma \preceq \Gamma'$ implies $m(\Gamma, \Gamma') = 1$. We do not know if the converse is true: if $\Gamma, \Gamma'$ are objects in $\mathcal{C}^b(\Gamma_0, \Gamma_1)$ and $m(\Gamma, \Gamma') = 1$, is $\Gamma \preceq \Gamma'$?

We can regard the tight $\M(\Gamma_0, \Gamma_1)$ as a ``geometric realisation'' (morally, not technically!) of $\mathcal{C}^b(\Gamma_0, \Gamma_1)$. Slicing this cylinder along convex surfaces geometrically realises the objects; cutting it into pieces realises the morphisms.

\subsubsection{Functorial properties of elementary cobordisms}
\label{sec_functorial}

Consider $\mathcal{C}^b (\Gamma_0, \Gamma_1)$, where $\M(\Gamma_0, \Gamma_1)$  is an elementary cobordism obtained by attaching bypasses above $\Gamma_0$ along a bypass system $c$. The power set $\mathcal{P}(c)$ is partially ordered under inclusion; attaching bypasses along any subset $c'$ of $c$ gives a morphism $\Gamma_0 \rightarrow \Up_{c'} \Gamma_0$ in $\mathcal{C}^b (\Gamma_0, \Gamma_1)$; two subsets $c' \subset c''$ give a composition $\Gamma_0 \rightarrow \Up_{c'} \Gamma_0 \rightarrow \Up_{c''} \Gamma_0$. Phrasing this categorically we have the following.
\begin{lem}[Up functor]
\label{lem_up_down}
There is a covariant functor $\Up_c: \mathcal{P}(c) \To \mathcal{C}^b (\Gamma_0, \Gamma_1)$ given by
\[
c' \mapsto \Up_{c'} (\Gamma_0), \quad 
(c' \subseteq c'') \mapsto \left( \Up_{c'} (\Gamma_0) \rightarrow \Up_{c''} (\Gamma_0) \right).
\]
\qed
\end{lem}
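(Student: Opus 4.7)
The plan is to verify the two functor conditions: that the assignments are well-defined on objects and morphisms, and that they preserve composition and identities. Since $\mathcal{P}(c)$ is a partial order (hence a category with at most one morphism between any two objects) and since by proposition \ref{lem_Cb_partial_order} the bounded contact category $\mathcal{C}^b(\Gamma_0, \Gamma_1)$ is also a partial order, the compatibility with composition and identities will be automatic once we show the assignments land in the correct places.

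The key step is showing that for every $c' \subseteq c$, the chord diagram $\Up_{c'}(\Gamma_0)$ is an object of $\mathcal{C}^b(\Gamma_0, \Gamma_1)$. First I would use that $c$ is a bypass system, i.e., consists of disjoint attaching arcs on $\Gamma_0$, so the bypasses may be attached ``in parallel''; equivalently, we may stratify $\M(\Gamma_0, \Gamma_1)$ by attaching the bypasses in $c'$ first at a lower level, then those in $c \setminus c'$ at a higher level. This produces an embedded convex disc at the intermediate level whose dividing set is $\Up_{c'}(\Gamma_0)$. Because $\M(\Gamma_0, \Gamma_1)$ is tight (being an elementary cobordism), this dividing set contains no closed loops, so it is a genuine chord diagram. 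This also witnesses existence of $\Up_{c'}(\Gamma_0)$ in $\M(\Gamma_0, \Gamma_1)$ in the sense of definition \ref{def_existence_chord_diagram}.

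For morphisms, given $c' \subseteq c''$, I would stratify $\M(\Gamma_0, \Gamma_1)$ into three successive slabs, attaching the bypasses of $c'$, then $c'' \setminus c'$, then $c \setminus c''$, thereby producing two disjoint convex discs at the intermediate levels with dividing sets $\Up_{c'}(\Gamma_0)$ and $\Up_{c''}(\Gamma_0)$, with the first lying below the second. The slab between them realises $\M(\Up_{c'}(\Gamma_0), \Up_{c''}(\Gamma_0))$ embedded in $\M(\Gamma_0, \Gamma_1)$ in the sense of definition \ref{def_existence_cobordism}; so there is a morphism $\Up_{c'}(\Gamma_0) \to \Up_{c''}(\Gamma_0)$ in $\mathcal{C}^b(\Gamma_0, \Gamma_1)$. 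Finally, $\Up_\emptyset(\Gamma_0) = \Gamma_0$ is sent to the identity morphism, and for $c' \subseteq c'' \subseteq c'''$ the composition is correct automatically since both source and target categories are partial orders with at most one morphism between any pair of objects.

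The main obstacle will be justifying rigorously the ``stratification'' of an elementary cobordism into slabs indexed by a subset of the bypass system; concretely, that since the attaching arcs of $c$ are pairwise disjoint on $\Gamma_0$, one may realise any subset as a sub-layer attached first without changing the ambient contact structure up to contactomorphism. Once this is granted, everything else is a routine verification.
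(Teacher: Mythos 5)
Your proposal is correct and follows essentially the same route as the paper: the paper's justification is precisely that attaching the bypasses of a subset $c'$ first (possible since the arcs of $c$ are disjoint) exhibits $\Up_{c'}(\Gamma_0)$ as a convex disc inside the tight $\M(\Gamma_0,\Gamma_1)$ and exhibits the slab between two nested subsets as the required morphism, with functoriality automatic because both $\mathcal{P}(c)$ and $\mathcal{C}^b(\Gamma_0,\Gamma_1)$ are partially ordered. Your added remarks (tightness excluding closed loops on the intermediate disc, and identities/composition being forced) only make explicit what the paper leaves implicit.
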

Similarly, for an elementary $\M(\Gamma_0, \Gamma_1)$ obtained from bypass attachments below $\Gamma_1$ along a bypass system $d$, there is a contravariant functor $\Down_d: \mathcal{P}(d) \To \mathcal{C}^b (\Gamma_0, \Gamma_1)$.

Note $\Up_c$ need not be injective or surjective on objects. Non-injectivity indicates that some attaching arc of $c$ is trivial, or \emph{becomes} trivial after other bypass attachments.

\subsubsection{Other categorical structures}

\label{sec_other_cat_str}

Honda \cite{HonCat} notes that bypass triples can be considered as the distinguished triangles of a triangulated category. But if bypass triples are distinguished triangles, the contact category fails to be triangulated: not every morphism extends to a distinguished triangle.

An analogy also exists with generalised bypass triples (defined in section \ref{sec_elementary_cobordisms}). Suppose we have a generalised bypass triple $\Gamma$, $\Gamma' = \Up_c \Gamma$, $\Gamma'' = \Down_c \Gamma$, for a bypass system $c$ on $\Gamma$ without pinwheels (upwards or downwards). We have corresponding bypass systems on $c', c''$ on $\Gamma', \Gamma''$ and upwards bypass attachments give a triangle of morphisms in $\mathcal{C}(D,n)$. The composition of any two morphisms contains several obvious overtwisted discs. 
\[
\xymatrix{
	      & \Gamma \ar[dr] & \\
\Down_c(\Gamma) \ar[ur] && \Up_c \Gamma \ar[ll] }
\]
With generalised bypass triples for distinguished triangles, every elementary cobordism arising from a bypass system without (up or down) pinwheels includes into a distinguished triangle; however the cone depends on the choice of bypass system, which is unsatisfactory. (We will describe in section \ref{ch_bypass_systems_basis_diagrams} and discuss in section \ref{sec_categorical_meaning} certain ``canonical'' bypass systems, but these only exist for basis chord diagrams.) Moreover, by lemma \ref{lem_not_every_cob_elementary} not all tight cobordisms are elementary. We may consider $\Down_c (\Gamma)$ to be ``a cone'' of the morphism $\Gamma \To \Up_c (\Gamma)$: ``The cone of $\Up_c$ is $\Down_c$''.

\section{The basis of contact elements}
\label{ch_basis_interpretations}

We now examine \emph{basis chord diagrams} in detail: how to construct them (section \ref{sec_basis_construction}); how to decompose in terms of them (section \ref{sec_decomposition}); then the partial order $\preceq$ and its relation to stackability (section \ref{sec_contact_partial_order}).

\subsection{Construction of basis elements}
\label{sec_basis_construction}

\subsubsection{An example}

Consider the basis element $v_{-+-++}$ and its chord diagram. By definition $v_{-+-++} = B_- (v_{+-++})$. Hence there is an outermost chord which encloses a negative region to the immediate ``left'' of the base point; one of its endpoints is the base point. After removing this outermost chord, and adjusting the base point appropriately, we then have  $v_{+-++}$, and repeat. Eventually we arrive at the vacuum $v_\emptyset$. See figure \ref{fig:18}.

\begin{figure}
\centering
\includegraphics[scale=0.4]{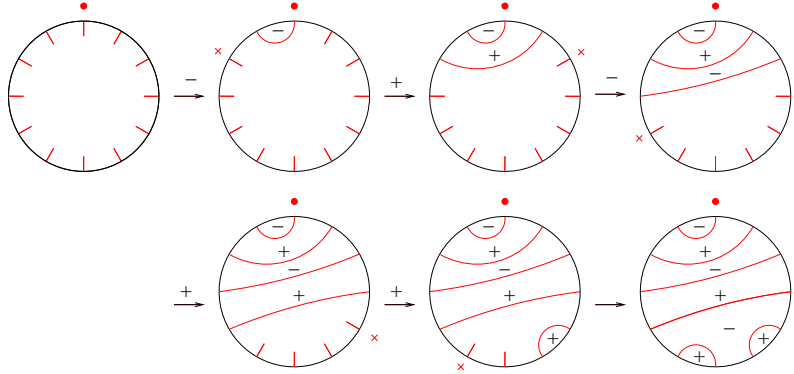}
\caption[Construction of $v_{-+-++}$.]{Construction of $v_{-+-++}$. The temporary base point at each stage is denoted by a red cross.} \label{fig:18}
\end{figure}

\subsubsection{The base point construction algorithm}
\label{sec_base_point_construction}

We formalise the above construction of $v_w$ as an algorithm. This algorithm starts from the base point; hence the name. Throughout this section, $w \in W(n_-, n_+)$ and we consider a disc with $2n+2$ points marked on the boundary, one of them a base point. 
\begin{alg}[Base point construction algorithm]
\label{alg_base_point_construction}
Read $w$ from left to right. For each symbol $s$, draw a chord and move the base point ``temporarily'' as follows. Once there is a chord ending at a marked point, it is called \emph{used}.
\begin{enumerate}
\item
If $s=-$, draw a chord from the current temporary base point to the next unused marked point anticlockwise (``left'') from it. After drawing this chord, move the temporary base point to the next unused marked point in the anticlockwise direction (``left''). (I.e., immediately anticlockwise of the new chord.)
\item
If $s=+$, draw a chord from the current temporary base point to the next unused marked point clockwise (``right'') from it. After drawing this chord, move the temporary base point to the next unused marked point in the clockwise direction. (I.e., immediately clockwise of the new chord.)
\end{enumerate}
This constructs $n$ chords connecting $2n$ marked points. Finally, connect the remaining two marked points with a chord. 
\end{alg}
Figure \ref{fig:18} depicts the construction of $v_{-+-++}$. The terminology ``left'' and ``right'' for directions around a circle is very bad, but there is a reason for it.

That this algorithm is well-defined and produces $v_w$ is easy to check; however we need some details later, and give a lemma describing it precisely.

Label the $2n+2$ marked points by integers modulo $2n+2$, with the (permanent) base point $0$ and the numbering increasing clockwise. As the ``temporary base point'' moves, the numbering of marked points does not change. Note that a chord connecting $(2j-1, 2j)$ (resp. $(2j, 2j+1)$) encloses an outermost negative (resp. positive) region.

A \emph{discrete interval} of integers $[a,b]$ is $\{a, a+1, \ldots, b\}$. A \emph{substring} of a word/string $w$ is a set of adjacent symbols of $w$. (So $-+$ is a substring of $-++--+$ but $---+$ is not.) A \emph{block} of a word is a maximal substring of identical symbols. A \emph{leading} symbol is the first in its block (read left to right). A non-leading symbol is \emph{following}.
\begin{lem}[Mechanics of base point construction algorithm]\
\label{construction_mechanics}
\begin{enumerate}
\item 
Consider the stage of the base point algorithm which processes the $i$'th $-$ sign in $w$. Let $i_+$ be the number of $+$ signs processed up to this point. At this stage:
\begin{enumerate}
\item  A chord is drawn with an endpoint at $1-2i$, precisely:
\begin{align*}
(1-2i, 2 i_+) & \quad \text{if the $i$'th $-$ sign is leading} \\
(1-2i, 2-2i) & \quad \text{if the $i$'th $-$ sign is following}
\end{align*}
\item The temporary base point then moves to the marked point $-2i$.
\item The set of used marked points is now $[1 - 2i, 2 i_+]$.
\end{enumerate}
\item 
Consider the stage of the base point algorithm which processes the $j$'th $+$ sign in $w$. Let $j_-$ be the number of $-$ signs processed up to this point. At this stage:
\begin{enumerate}
\item A chord is drawn with an endpoint at $2j-1$, precisely:
\begin{align*}
(-2j_-, 2j - 1) & \text{if the $j$'th $+$ sign is leading} \\
(2j - 2, 2j - 1) & \text{if the $j$'th $+$ sign is following}
\end{align*}
\item The temporary base point then moves to the marked point $2j$.
\item The set of used marked points is now $[-2j_-, 2j - 1]$.
\end{enumerate}
\end{enumerate}
\qed
\end{lem}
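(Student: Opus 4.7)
The plan is to prove both parts simultaneously by induction on the number of symbols of $w$ processed, maintaining a strengthened invariant: after processing a prefix with $i$ minus signs and $j$ plus signs, the set of used marked points is always a contiguous discrete interval of $\Z/(2n+2)\Z$, and the temporary base point is the unused marked point immediately adjacent to this interval on a side determined by the last symbol. Specifically:
\begin{enumerate}
\item if the most recently processed symbol is $-$, then the used set is $[1-2i,\ 2j]$ and the temporary base point is $-2i$;
\item if the most recently processed symbol is $+$, then the used set is $[-2i,\ 2j-1]$ and the temporary base point is $2j$.
\end{enumerate}
Both statements (1)(a)(b)(c) and (2)(a)(b)(c) of the lemma can be read off directly from this invariant together with the algorithm's single-step description, so it is enough to verify the invariant.

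For the base case, the algorithm starts with no used marked points and the temporary base point at $0$. If the first symbol is $-$ (so $i=1$, $j=0$) the chord drawn is $(0,-1)$, the base point moves to $-2$, and the used set is $\{-1,0\} = [1-2,\ 0]$, matching the invariant. The case of a first $+$ symbol is obtained by the reflection $k \leftrightarrow -k$.

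For the inductive step, four subcases arise according to the pair (previous symbol, current symbol). In each the chord drawn connects the current temporary base point to the next unused marked point in the prescribed direction; because the used set is a contiguous arc, this next unused point is just the endpoint of the used interval shifted by one. For instance, if the current symbol is a following $-$ (the $i$th one, previous symbol $-$) then by induction the used interval is $[3-2i,\ 2i_+]$ with base point $2-2i$, so the next unused point anticlockwise is $1-2i$, producing the chord $(1-2i,\ 2-2i)$ and new base point $-2i$. If the current symbol is a leading $-$ (previous $+$), the used interval is $[-2(i-1),\ 2i_+-1]$ with base point $2i_+$, and going anticlockwise from $2i_+$ one skips the whole used interval and lands at $1-2i$, giving chord $(1-2i,\ 2i_+)$ and new base point $-2i$. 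The two $+$ subcases are symmetric under $+ \leftrightarrow -$ together with the reflection $k \leftrightarrow -k$. In every subcase the new state matches the invariant, completing the induction.

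I do not expect any serious obstacle: the lemma is essentially a bookkeeping unpacking of the algorithm. The one subtlety to keep straight is the cyclic indexing on the $2n+2$ marked points and the fact that the used set remains a single arc throughout, so that ``the next unused marked point'' always has the explicit closed form used above; once the invariant is stated in this sharpened contiguous-interval form, the four cases check themselves in parallel.
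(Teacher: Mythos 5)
Your proof is correct: the strengthened invariant (used points form a contiguous interval $[1-2i,2j]$ or $[-2i,2j-1]$ with the temporary base point adjacent to its end) is exactly the bookkeeping needed, and the four inductive subcases check out. The paper itself offers no argument for this lemma, treating it as a routine unpacking of Algorithm \ref{alg_base_point_construction}, so your careful induction is just an explicit version of the verification the paper leaves implicit.
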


\begin{lem}[Existence of root point]
The position of the final chord drawn in the base point construction algorithm only depends on $n,e$ (equivalently $n_-,n_+$) and the final symbol $s$ of $w$. It encloses an outermost region of sign $s$, and has an endpoint at
\[
2n_+ + 1 = -2n_- - 1 = e+n+1 = e-(n+1).
\]
\qed
\end{lem}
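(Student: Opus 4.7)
The plan is to read off the unused marked points after the algorithm has completed, using the bookkeeping already established in Lemma \ref{construction_mechanics}. Since everything is determined by the state of the ``used'' set, and that set depends only on $n_-, n_+$ and the identity of the last symbol processed, the result should follow immediately.

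First I would separate into two cases according to the final symbol $s$ of $w$. If $s = -$, then the last step of the algorithm processes the $n_-$'th minus sign, at which point exactly all $n_+$ plus signs have already been processed, so $i = n_-$ and $i_+ = n_+$ in part (1) of Lemma \ref{construction_mechanics}. Hence the final used set is $[1 - 2n_-, \, 2n_+]$, a discrete interval of length $2n_+ + 2n_- = 2n$. Since there are $2n+2$ marked points mod $2n+2$, the two remaining (unused) points are $2n_+ + 1$ and $2n_+ + 2 \equiv -2n_-$. Symmetrically, if $s = +$, part (2) of Lemma \ref{construction_mechanics} with $j = n_+$ and $j_- = n_-$ gives used set $[-2n_-, \, 2n_+ - 1]$, leaving the two unused points $2n_+$ and $2n_+ + 1 \equiv -2n_- - 1$.

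In both cases one endpoint of the final chord is $2n_+ + 1$, and the identity $2n_+ + 1 = -2n_- - 1 \pmod{2n+2}$ together with the relations $n_+ = (n+e)/2$, $n_- = (n-e)/2$ (from Remark \ref{rem_variables}) immediately yield $2n_+ + 1 = e + n + 1 = e - (n+1) \pmod{2n+2}$, as required. The position of the final chord therefore depends only on $n_\pm$ and on $s$, as claimed.

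Finally, I would identify the sign of the enclosed outermost region by parity. Recall from Section \ref{sec_base_point_construction} that a chord joining $(2j-1, 2j)$ bounds an outermost negative region, while one joining $(2j, 2j+1)$ bounds an outermost positive region. In the $s = -$ case the chord connects $(2n_+ + 1, \, 2n_+ + 2)$, an (odd, even) pair, so it bounds a negative region; in the $s = +$ case it connects $(2n_+, \, 2n_+ + 1)$, an (even, odd) pair, so it bounds a positive region. Thus the outermost region has sign $s$. The main ``obstacle'' is nothing more than careful bookkeeping of indices mod $2n+2$; once the two cases are set up, the lemma drops out.
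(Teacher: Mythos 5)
Your proposal is correct and follows essentially the argument the paper intends: the lemma is stated with no explicit proof precisely because it is immediate from Lemma \ref{construction_mechanics}, which is exactly the bookkeeping you perform (reading off the used interval at the last symbol, identifying the two unused points $\{2n_+ +1, 2n_+ +2\}$ or $\{2n_+, 2n_+ +1\}$, and using the parity convention for outermost regions). The modular identities $2n_+ + 1 = -2n_- - 1 = e+n+1 = e-(n+1)$ are verified correctly, so nothing is missing.
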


\begin{defn}[Root point]
The point numbered $e\pm(n+1)$ is the \emph{root point}.
\end{defn}

\begin{rem}
The root point is always denoted by a hollow red dot.
\end{rem}

We now see that our terminology of ``left'' and ``right'' is not so bad after all, if the base point is ``north'' and the root point is ``south''.
\begin{defn}[Left/west \& right/east]
The marked points in  $[e-n-1, 0]$ form the \emph{left} or \emph{westside}. The marked points $[0, e+n+1]$ form the \emph{right} or \emph{eastside}.
\end{defn}

The algorithm numbers chords and regions in $\Gamma_w$ in a way that will be useful.
\begin{defn}[Base-$\pm$ numbering of chords and regions] \
\label{def_base_numbering}
The chord created in the base point construction algorithm by processing the $i$'th $-$ sign (resp. $j$'th $+$ sign) of $w$ is called the \emph{base-$i$'th $-$ chord} (resp. \emph{base-$j$'th $+$ chord}). It encloses a $-$ (resp. $+$) region, which is also a region of the completed chord diagram $\Gamma_w$, which we call the \emph{base-$i$'th $-$ region} (resp. \emph{base-$j$'th $+$ region}).
\end{defn}
Note every chord has a base-$\pm$ numbering, except the chord at the root point; every region has a base-$\pm$ numbering, except the two regions adjacent to the root point. Every $\pm$ sign creates a $\pm$ region, with two regions left at the end; $e = n_+ - n_-$.

\subsubsection{The root point construction algorithm}
\label{sec_root_point_construction}

The above algorithm takes $w$ and constructs $\Gamma_w$, starting from the base point, reading $w$ left to right. We can also construct the chord diagram from the root point, reading $w$ right to left. The algorithm is essentially identical.
\begin{alg}[\bf Root point construction algorithm]
Read $w$ from right to left. For each symbol $s$, draw a chord and move the root point ``temporarily'' as follows.
\begin{enumerate}
\item
If $s=-$, draw a chord from the current root point to the next unused marked point \emph{clockwise}/left from it. After drawing this chord, move the temporary root point to the next unused marked point in the \emph{clockwise}/left direction.
\item
If $s=+$, draw a chord from the current root point to the next unused marked point \emph{anticlockwise}/right from it. After drawing this chord, move the temporary root point to the next unused marked point in the \emph{anticlockwise}/right direction.
\end{enumerate}
This constructs $n$ chords. Finally, connect the remaining two marked points.
\end{alg}
It's easy to see that this algorithm constructs $\Gamma_w$. We also have root-numberings of chords and regions, which we will need later, in analogy to base-numberings.
\begin{defn}[Root-$\pm$ numbering of chords and regions]\
\label{def_root_numbering}
The chord created in the root point construction algorithm by processing the $i$'th $-$ sign (resp. $j$'th $+$ sign) of $w$ (from the \emph{left}) is called the \emph{root-$i$'th $-$ chord} (resp. \emph{root-$j$'th $+$ chord}). It encloses a $-$ (resp. $+$) region, which is also a region of the completed chord diagram $\Gamma_w$, which we call the \emph{root-$i$'th $-$ region} (resp. \emph{root-$j$'th $+$ region}).
\end{defn}
Note that the root point construction algorithm processes $w$ from \emph{right to left}: but when we speak of the root-$i$'th $\pm$ region we are reading $w$ from \emph{left to right}.

Every chord has a root-$\pm$ numbering, except the chord at the base point. And every region has a root-$\pm$ numbering, except the two regions adjacent to the base point. Thus, when $n>1$ \emph{every chord} has some numbering, whether from the base or the root.

\subsection{Decomposition into basis elements}

\label{sec_decomposition}

We now show algorithmically how to decompose a chord diagram into basis diagrams. This formalises the discussion in section \ref{sec_basis} and figure \ref{fig:15}.

From $\Gamma$, we successively obtain sets of chord diagrams
\[
 \{\Gamma\} = \Upsilon_0 \rightsquigarrow \Upsilon_1 \rightsquigarrow \cdots \rightsquigarrow \Upsilon_n
\]
where $\Upsilon_k$ is the set of all diagrams obtained at the $k$'th stage. The final set $\Upsilon_n$ contains precisely the basis decomposition of $\Gamma$. In particular, $|\Upsilon_k| \geq |\Upsilon_{k-1}|$ and 
\[
 \Gamma = \sum_{\Gamma' \in \Upsilon_0} \Gamma' = \sum_{\Gamma' \in \Upsilon_1} \Gamma' = \cdots = \sum_{\Gamma' \in \Upsilon_n} \Gamma'.
\]

Note that the first $k$ steps of the base (resp. root) point construction algorithm depend only on the $k$ leftmost (resp. rightmost) symbols of the word $w$.
\begin{defn}[Partial chord diagram]
Let $w$ be a word of length $k \leq n$. The \emph{partial chord diagram for $w \cdot$} (resp. $\cdot w$) consists of the first $k$ chords drawn in processing any word of length $n$ beginning with (resp. ending in) $w$ in the base (resp. root) point construction algorithm.
\end{defn}
The dots in $w\cdot$ and $\cdot w$ describe ``where the rest of the word goes''. If we cut the disc along the chords of a partial chord diagram, all the unused marked points lie in a single component; call this the \emph{unused disc}.

We will label the elements of $\Upsilon_k$ as $\Gamma_{w \cdot}$ (resp. $\Gamma_{\cdot w}$), where $w$ varies over words of length $k$. The chord diagram $\Gamma_{w \cdot}$ (resp. $\Gamma_{\cdot w}$) will be the sum of all basis elements of $\Gamma$ whose words begin (resp. end) with $w$, and it will contain the partial chord diagram for $w \cdot$ (resp. $\cdot w$).  Consider $\Gamma$ itself as corresponding to the empty word, $\Gamma = \Gamma_{\emptyset \cdot} = \Gamma_{\cdot \emptyset}$. 
\begin{alg}[Base point decomposition algorithm]
\label{alg_base_point_decomposition}
Begin with $\Upsilon_0 = \{\Gamma\} = \{\Gamma_{\emptyset \cdot}\}$. At the $k$'th step, consider each $\Gamma_{w\cdot} \in \Upsilon_{k-1}$, with $w$ of length $k-1$.
\begin{enumerate}
\item If there exists a word $w' = w +$ or $w-$ such that $\Gamma_{w\cdot}$ contains the partial chord diagram for $w' \cdot$, then place $\Gamma_{w \cdot}$ in $\Upsilon_k$ and name it $\Gamma_{w' \cdot}$.

\item Otherwise, on the unused disc of $\Gamma_{w \cdot}$, there is no outermost chord at the temporary base point after $k-1$ steps of the base point construction algorithm. Consider an attaching arc which runs close to the boundary of the unused disc, which is centred on the chord emanating from the temporary base point, and which has its two ends on the two chords emanating from the marked points adjacent to the temporary base point on the unused disc. Perform the two possible bypass moves, obtaining two distinct chord diagrams, respectively containing the partial chord diagrams for $w \pm \cdot $. Label them $\Gamma_{w \pm \cdot }$ and place them in $\Upsilon_k$.
\end{enumerate}
This constructs $\Upsilon_k$ from $\Upsilon_{k-1}$. 
\end{alg}

It's clear that the $\Upsilon_k$ have the desired properties. The elements of each $\Upsilon_k$ can be grouped so as to be summable, and they sum to $\Gamma$; the decomposition process gives a directed binary tree of chord diagrams, equivalent to a bracketing.

We may apply the same idea from the root point, obtaining a \emph{root point decomposition algorithm}. Each $\Upsilon_k$ contains elements $\Gamma_{\cdot w}$ where $w$ has length $k$. The analogous properties are clear.

\subsection{Contact interpretation of the partial order $\preceq$}
\label{sec_contact_partial_order}

We now consider stackability of basis chord diagrams, i.e. $m(\Gamma_{w_0}, \Gamma_{w_1})$ for words $w_0, w_1$. Tightness requires $w_0, w_1$ in the same $W(n_-, n_+)$ (proposition \ref{lem_euler_orthogonality}). Write $\M(w_0, w_1) = \M(\Gamma_{w_0}, \Gamma_{w_1})$ and $m(w_0, w_1) = m(\Gamma_{w_0}, \Gamma_{w_1})$. In this section we prove proposition \ref{contact_interp_partial_order}: $\M( w_0, w_1 )$ is tight iff $w_0 \preceq w_1$. Proposition \ref{general_stackability} is then clear by expanding over basis elements.

\begin{lem}
\label{easy_direction}
If $w_0 \npreceq w_1$, then $\M(w_0, w_1)$ is overtwisted.
\end{lem}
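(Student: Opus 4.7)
The plan is to construct, whenever $w_0 \npreceq w_1$, an explicit closed subcomponent of the dividing set on the rounded boundary of $\M(w_0, w_1) := \M(\Gamma_{w_0}, \Gamma_{w_1})$ that uses only an initial portion of each basis chord diagram, and then observe that it cannot exhaust the full dividing set, forcing it to be disconnected. Writing $s_m(w)$ for the number of $-$ signs among the first $m$ symbols of $w$, I would first locate the smallest index $k$ with $s_k(w_0) < s_k(w_1)$; since $s_{k-1}(w_0) \ge s_{k-1}(w_1)$ by minimality, and $s_m$ increments by $1$ on $-$ and by $0$ on $+$, this forces $w_0[k]=+$, $w_1[k]=-$, and $s_k(w_1) = s_k(w_0)+1$, with $1 \le k \le n-1$.

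Setting $a = s_k(w_0)$, Lemma \ref{construction_mechanics} pins down the marked points on $\partial D^2$ used by the first $k$ chords in the base-point construction algorithm:
\[
U_0 = [-2a,\; 2(k-a)-1], \qquad U_1 = [-2a-1,\; 2(k-a)-2] = U_0 - 1 \pmod{2n+2},
\]
both of size $2k$. The first $k$ chords of $\Gamma_{w_0}$ pair up $U_0$ among itself, and likewise the first $k$ chords of $\Gamma_{w_1}$ pair up $U_1$. The crux is to match the shift $U_1 = U_0 - 1$ with the shift induced by edge-rounding at the corners of $\M(w_0, w_1)$ (section \ref{sec_convex_surfaces}): the interleaving convention for chord endpoints and vertical sutures identifies each bottom endpoint $p_i \times \{0\}$ with a unique top endpoint $p_{i-1} \times \{1\}$ through the adjoining suture. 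Because $U_0 - 1 = U_1$, rounding sends the $2k$ bottom endpoints in $U_0$ bijectively onto the $2k$ top endpoints in $U_1$, so the first $k$ chords on each side together with their adjoining sutures and rounded corners close into a subcomponent of the dividing set. Since $2k \le 2n-2 < 2(n+1)$ leaves chord endpoints unused, the dividing set has at least two components and $\M(w_0, w_1)$ is overtwisted.

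The main obstacle I anticipate is fixing the sign of the edge-rounding shift (whether $-1$ or $+1$) from the orientation and sign conventions of $\M$ set up in section \ref{sec_Mandm}; once this is pinned down, the rest is a direct calculation from Lemma \ref{construction_mechanics}. An alternative inductive approach uses Lemma \ref{cancel_outermost} to cancel common outermost chords at the base or root point whenever the first or last symbols of $w_0, w_1$ agree, reducing to a direct closed-bigon argument when $w_0 = +u$, $w_1 = -v$; however, the residual cases in which $w_0$ and $w_1$ disagree at both ends yet still have $w_0 \npreceq w_1$ (for instance $w_0 = -++--+$, $w_1 = +--++-$) admit no common outermost chord and force essentially the same closed-loop computation, so the direct approach above seems cleaner.
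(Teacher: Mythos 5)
Your proposal is correct and takes essentially the same route as the paper's own proof: find the first index $k$ at which $s_k(w_0) < s_k(w_1)$ (so $w_0[k]=+$, $w_1[k]=-$), read off from Lemma \ref{construction_mechanics} the used intervals $[-2a,\,2(k-a)-1]$ and $[-2a-1,\,2(k-a)-2]$, and note that after edge-rounding these chords and their adjoining vertical sutures close up into a proper closed subcomponent of the dividing set, so the sutures are disconnected. The rounding shift you flagged is indeed ``bottom point $x$ connects to top point $x-1$'' in the paper's conventions (it is used in exactly this form in the proof of Proposition \ref{contact_interp_partial_order}), so no further work is needed.
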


\begin{proof}
Using the ``baseball interpretation'', there is some inning $m$ where team $0$ first takes the lead; so in $w_0$, there are $i$ minus signs and $j$ plus signs up to the $m$'th position, but in $w_1$ there are $i+1$ minus signs and $j-1$ plus signs up to the $m$'th position. Moreover, the $m$'th symbol in $w_0$ is a $+$, while in $w_1$ the $m$'th symbol is a $-$.

By lemma \ref{construction_mechanics}, at this stage of the base point algorithm, in $\Gamma_0$ the discrete interval of used marked points is $[-2i, 2j-1]$, while in $\Gamma_1$ it is $[-2i-1, 2j-2]$. After rounding corners, the chords with endpoints in these intervals precisely match. Since the $m$'th stage is not the final stage, the rounded $\M(w_0, w_1)$ has disconnected sutures.
\end{proof}

\begin{lem}
\label{see_overtwisted_early}
If $\M(w_0, w_1)$ is overtwisted, then a separate component of the sutures can be observed in constructing the basis chord diagrams $\Gamma_{w_0}, \Gamma_{w_1}$ with the base point construction algorithm, before the final step.
\end{lem}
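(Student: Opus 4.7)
My approach is induction on $n = |w_0| = |w_1|$, running the base point algorithm in parallel on the bottom face (producing $\Gamma_{w_0}$) and the top face (producing $\Gamma_{w_1}$) of $\M(w_0, w_1)$. At each stage $k$ I would track the partial dividing set assembled from the chord-arcs drawn so far on each face together with the rounded-corner and vertical-suture arcs linking any pair of already-used marked points whose partners are both used. A separate component of the final sutures visible at stage $k$ is exactly a closed loop in this partial assembly.

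If $w_0$ and $w_1$ begin with the same symbol, both first-stage chords are outermost chords placed at the same position of the disc; lemma~\ref{cancel_outermost} then shows that overtwistedness passes to the smaller cobordism $\M(w_0', w_1')$ obtained by stripping the initial letter from each word, and the inductive hypothesis supplies a separate component at some stage $k' < n-1$, which lifts to one at stage $k'+1 < n$ in the original construction.

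If the first symbols differ, there are two sub-cases. When $w_0 = +u$ and $w_1 = -v$, the used marked-point intervals after stage $1$ are precisely $[0,1]$ on the bottom and $[-1,0]$ on the top; this is exactly the shifted-by-one configuration at the critical stage in the proof of lemma~\ref{easy_direction}, so that argument applied with $m = 1$ closes off a four-vertex component immediately. When instead $w_0 = -u$ and $w_1 = +v$, the partial assembly at stage $1$ has no rounded-corner arcs linking the two faces and no cycle can yet form; here I would track when each loop of the final dividing set first becomes entirely contained in the used marked points. Using lemma~\ref{construction_mechanics}, each loop visits a prescribed sequence of marked points on the two faces, and the final (root) chord on each face contributes only two unused marked points at stage $n$, chord-paired within a single loop. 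Therefore at most two loops can fail to complete by stage $n$, so in the presence of three or more loops at least one completes before stage $n+1$, producing the required cycle.

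The crux is the remaining possibility of exactly two loops whose unused marked points distribute one pair to each loop, which would defer detection past stage $n$. I expect to rule this out by a case analysis on the last symbols of $w_0$ and $w_1$: using lemma~\ref{construction_mechanics} to pin down the positions of the two final-chord endpoints on each face, three of the four end-symbol pairings force the two would-be loops to be linked by a rounding arc at a shared position and therefore to coincide, and in the one remaining configuration ($w_0$ ending $+$, $w_1$ ending $-$) a partial-sum comparison of $w_0, w_1$ exhibits an earlier stage $m < n$ at which the used intervals display the shifted-by-one configuration, so lemma~\ref{easy_direction}'s chord-matching argument provides a cycle at stage $m$. This last reduction is the main obstacle, as it must be carried out without circularly invoking the still-unproved converse of lemma~\ref{easy_direction}.
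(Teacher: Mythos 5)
Your reduction correctly isolates the real issue, but then does not resolve it: the case of exactly two suture components, with the two root-point (final) chords on distinct components and $w_0$ ending in $+$, $w_1$ ending in $-$, is only ``expected'' to be ruled out by an unspecified partial-sum comparison. As you yourself note, this is the main obstacle, and nothing in your hypotheses (overtwistedness, two components, the end-symbol data) has been shown to produce the shifted-by-one configuration of lemma~\ref{easy_direction} at an earlier stage; any attempt to get it from ``overtwisted $\Rightarrow w_0 \npreceq w_1$'' is exactly the circularity you are trying to avoid, since this lemma is an ingredient in proving that equivalence. So as it stands the proof has a genuine gap precisely at its crux.

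The missing idea is a global one, and it is the whole content of the paper's proof: because $w_0, w_1 \in W(n_-,n_+)$ have equal relative euler class, the argument of proposition~\ref{lem_euler_orthogonality} shows the rounded dividing set on $S^2$ has relative euler class $e_0 - e_1 = 0$; but a dividing set on $S^2$ with exactly two components bounds regions disc/annulus/disc of alternating sign and has euler class $\pm 2$. Hence disconnected sutures force at least three components, and since each root-point chord lies on only one component, some component misses both final chords and is therefore assembled entirely from chords drawn before the final step --- which is the conclusion. This observation makes your two-component case vacuous and renders the induction on first symbols, the loop counting, and the last-symbol case analysis unnecessary (your verification that three of the four end-symbol pairings join the two final chords by a rounding arc is correct, but it is not needed once the component count is known). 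If you want to salvage your combinatorial route, you must either supply an actual argument for the crux sub-case or replace it with this euler-class parity count.
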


\begin{proof}
After rounding, we have disconnected sutures on $S^2$ and, by the argument of proposition \ref{lem_euler_orthogonality} (section \ref{sec_Mandm}), the total euler class is $0$; hence there are at least $3$ components. Thus some component intersects neither of the two the root points.
\end{proof}

\begin{lem}
\label{reduce_word}
Suppose $w_0, w_1$ begin with the same symbol, $w_0 = s w'_0$, $w_1 = s w'_1$, $s \in \{-, +\}$. Proposition \ref{contact_interp_partial_order} holds for $w_0, w_1$ iff it holds for $w'_0, w'_1$.
\end{lem}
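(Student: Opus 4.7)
The plan is to unwind Proposition \ref{contact_interp_partial_order} into its two sides and check that each side is unchanged by stripping a common leading symbol. Explicitly, I want to prove two separate equivalences:
\begin{enumerate}
\item $w_0 \preceq w_1$ iff $w'_0 \preceq w'_1$;
\item $m(\Gamma_{w_0}, \Gamma_{w_1}) = m(\Gamma_{w'_0}, \Gamma_{w'_1})$.
\end{enumerate}
The lemma then follows by combining the two.

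For the combinatorial part (1), the observation is that prepending the same symbol $s$ to both $w'_0$ and $w'_1$ has no effect on the relative positions of corresponding $-$ signs. If $s = +$, the $i$-th $-$ sign in $w_0$ is just the $i$-th $-$ sign in $w'_0$, shifted one position to the right; likewise on the $w_1$ side. If $s = -$, then each $w_i$ has an extra leading $-$ which occupies position $1$ in both words (a tie for ``first $-$ sign''), and the $(i+1)$-th $-$ sign in $w_i$ is the $i$-th $-$ sign of $w'_i$ shifted by one. In either case, the defining inequalities of $\preceq$ are term-by-term equivalent for the two pairs.

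For the contact-geometric part (2), I will use Lemma \ref{cancel_outermost} (cancelling outermost chords). By Lemma \ref{construction_mechanics}, if $s = -$ the first step of the base point construction algorithm on $w_0$ (respectively $w_1$) draws the chord $(-1, 0)$ and moves the temporary base point to $-2$; if $s = +$ it draws the chord $(0,1)$ and moves the base point to $2$. In either case $\Gamma_{w_0}$ and $\Gamma_{w_1}$ share a common outermost chord $\gamma$ in the same position enclosing an outermost $s$-region adjacent to the (permanent) base point. After removing $\gamma$ and taking the new temporary base point of the algorithm as the base point of the smaller disc, the subsequent steps of the algorithm on $w_0$ and $w_1$ agree with the base point construction on $w'_0$ and $w'_1$ respectively, so $\Gamma_{w_i} - \gamma = \Gamma_{w'_i}$. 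Lemma \ref{cancel_outermost} then gives $\M(\Gamma_{w_0}, \Gamma_{w_1})$ tight iff $\M(\Gamma_{w'_0}, \Gamma_{w'_1})$ tight, i.e. equality of the values of $m$.

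The only step that requires any care is the bookkeeping in the last paragraph: verifying that the outermost chord constructed in the first step of the algorithm is literally at the same pair of marked points in $\Gamma_{w_0}$ and $\Gamma_{w_1}$, and that the natural base point inherited by the subcase is precisely the one used in applying the algorithm to $w'_i$. Both follow directly from Lemma \ref{construction_mechanics}, so no essential obstacle arises.
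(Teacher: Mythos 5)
Your proposal is correct and follows essentially the same route as the paper: the paper's proof also strips the common leading symbol by applying Lemma \ref{cancel_outermost} to the shared outermost chord at the base point (noting $\Gamma_{sw'} = B_s\Gamma_{w'}$), and observes that $w_0 \preceq w_1$ iff $w'_0 \preceq w'_1$. Your extra bookkeeping via Lemma \ref{construction_mechanics} just makes explicit what the paper leaves as "clearly".
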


\begin{proof}
By lemma \ref{cancel_outermost} (cancelling outermost chords), $\M(w_0, w_1)$ is equivalent to $\M(w'_0, w'_1)$, rounding and re-folding. Clearly $w_0 \preceq w_1$ iff $w'_0 \preceq w'_1$.
\end{proof}

\begin{proof}[Proof of proposition \ref{contact_interp_partial_order}]
It remains to show if $w_0 \preceq w_1$ then $\M(w_0, w_1)$ is tight. Proof by induction on the length $n$ of the words. It is clearly true by inspection for short words; now assume it is true for all lengths less than $n$.

By lemma \ref{see_overtwisted_early}, if $\M(w_0, w_1)$ is overtwisted, then we see a closed loop before the final stage of the construction algorithm. So we show that no closed loop appears at the $m$'th stage of the base point construction algorithm, for all $m$ before the final step. By lemmas \ref{reduce_word} and \ref{easy_direction}, we can assume $w_0$ begins with $-$ and $w_1$ begins with $+$; so the claim is true for $m=1$. At the $m$'th stage of the algorithm, let $[a_m, b_m]$, $[c_m, d_m]$ denote respectively the discrete intervals of used marked points on $\Gamma_0, \Gamma_1$. The hypothesis $w_0 \preceq w_1$ means that for all $m$, $a_m - 1  < c_m$ and (equivalently) $b_m - 1 < d_m$. Suppose that a closed loop appears first at stage $m$; this loop must contain the chord added on either $\Gamma_0$ or $\Gamma_1$ at this stage. Assume that the chord added on $\Gamma_0$ is part of the new closed loop, and call this chord $\gamma_m$; the $\Gamma_1$ case is similar.

Now $\gamma_m$ cannot include $a_m$, since $a_m$ on $D \times \{0\}$ connects to $a_m - 1$ on $D \times \{1\}$, and $a_m - 1 < c_m$, left of all used points of $\Gamma_1$ at this stage, so cannot form a closed loop. Thus, $\gamma_m$ is $(b_{m-1}+1, b_{m-1}+2) = (b_m - 1, b_m)$, and it forms part of a closed loop. Moreover, it encloses an outermost region on the eastside of $\Gamma_0$, and is constructed by processing a following $+$ sign in $w_0$. Let this be the $j$'th $+$ sign in $w$, so using lemma \ref{construction_mechanics}, $(b_m - 1, b_m) = (2j-2, 2j-1)$ Thus $w_0 = u++v$, where $u$ (possibly empty) contains $j-2$ plus signs, and has length $m-2$.

The endpoints of $\gamma_m$ on $\Gamma_0$ connect to the two marked points $\{b_m - 2, b_m -1 \} = \{2j-3, 2j-2\}$ on $\Gamma_1$. We have $d_m > b_m - 1$, so these are not the rightmost points among the used points on $\Gamma_1$, at this $m$'th stage. However, the closed loop we have just created cannot involve any of the points right of $2j-2 = b_m - 1$ on $\Gamma_1$, since these points connect to marked points right of $b_m$ on $\Gamma_0$, which have not been used yet.

Thus, the chord emanating from $2j-2$ on $\Gamma_1$ must go to the westside, enclosing a $-$ region, and must be created by processing a leading $-$ symbol in $w_1$. And the chord emanating from $2j-3$ on $\Gamma_1$, by lemma \ref{construction_mechanics}, is created by processing the $(j-1)$'th $+$ sign in $w_1$. Thus $w_1 = y +-z$, where $y$ (possibly empty) contains $j-2$ plus signs.

Now, rounding the ball and refolding, we may perform a ``finger move'', pushing the whole new chord $\gamma_m$ off $D \times \{0\}$ and up to $D \times \{1\}$, which has the effect of removing $\gamma_m$ from $D \times \{0\}$, and closing off the marked points labelled $2j-3, 2j-2$ on $D \times \{1\}$. See figure \ref{fig:41}.

\begin{figure}
\centering
\includegraphics[scale=0.6]{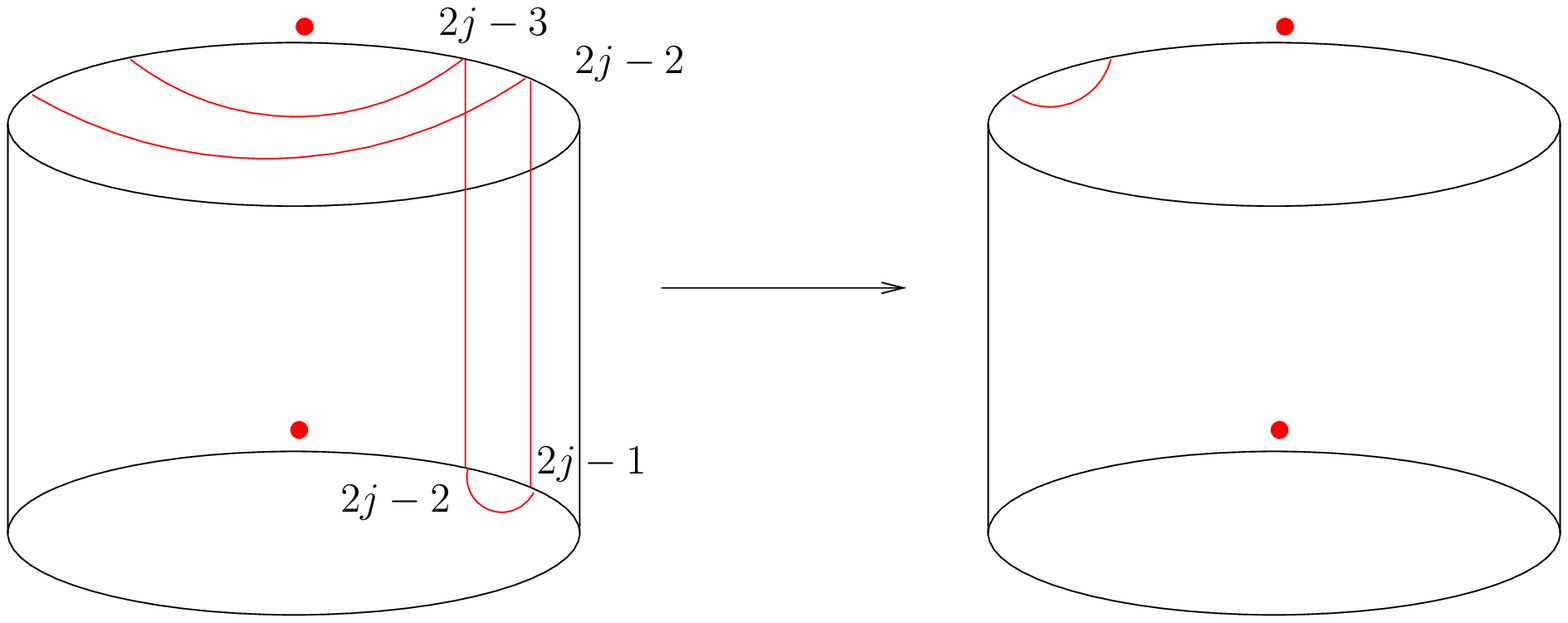}
\caption{Finger move on $\M(\Gamma_0, \Gamma_1)$.} \label{fig:41}
\end{figure}

The chord diagram on $D \times \{0\}$ then reduces to the chord diagram for $w'_0 = u+v$, deleting the $(j-1)$'th $+$ sign from $w_0$. The chord diagram on $D \times \{1\}$ reduces to the chord diagram for $w'_1 = y-z$, also deleting the $(j-1)$'th $+$ sign. Thus the situation reduces to $\M(w'_0, w'_1)$ for two smaller words obtained from deleting the $(j-1)$'th $+$ signs from both $w_0$ and $w_1$; since we deleted the same numbered $+$ signs, $w'_0 \preceq w'_1$. By induction $\M(w'_0, w'_1)$ is tight; so there cannot be any closed loop, a contradiction. Hence we never see a closed loop, and $\M(w_0, w_1)$ is tight.
\end{proof}

\label{sec_stackable}

\section{Bypass systems on basis chord diagrams}

\label{ch_bypass_systems_basis_diagrams}

This section contains the bulk of this paper: we construct concrete bypass systems on basis chord diagrams (section \ref{sec_bypass_systems}); then turn to contact categories (section \ref{sec_ct_cat_comp}).

\subsection{Concrete combinatorial constructions}
\label{sec_bypass_systems}

We now embark upon the proof of proposition \ref{bypass_system_one_to_other}, constructing bypass systems between $\Gamma_1$ and $\Gamma_2$ when $\Gamma_1 \preceq \Gamma_2$ are basis chord diagrams; and showing how performing the same bypass moves in the opposite direction gives a chord diagram with a prescribed minimum and maximum in its basis decomposition. The construction will be explicit. As mentioned in section \ref{sec_moves_diags_words}, we develop a series of increasingly involved analogies between ``word-processing'' and bypass systems. This will take some time. We first give some examples illustrating various phenomena observed when performing multiple bypass moves on a basis chord diagram.

\subsubsection{A menagerie of examples}
\label{sec_menagerie}

First, to go from $\Gamma_{---++++}$ to $\Gamma_{--++-++}$, we ``move the third $-$ sign past the first two $+$ signs''. This is a \emph{forwards elementary move} and is obtained by a single upwards bypass. See figure \ref{fig:19}.

\begin{figure}[tbh]
\centering
\includegraphics[scale=0.35]{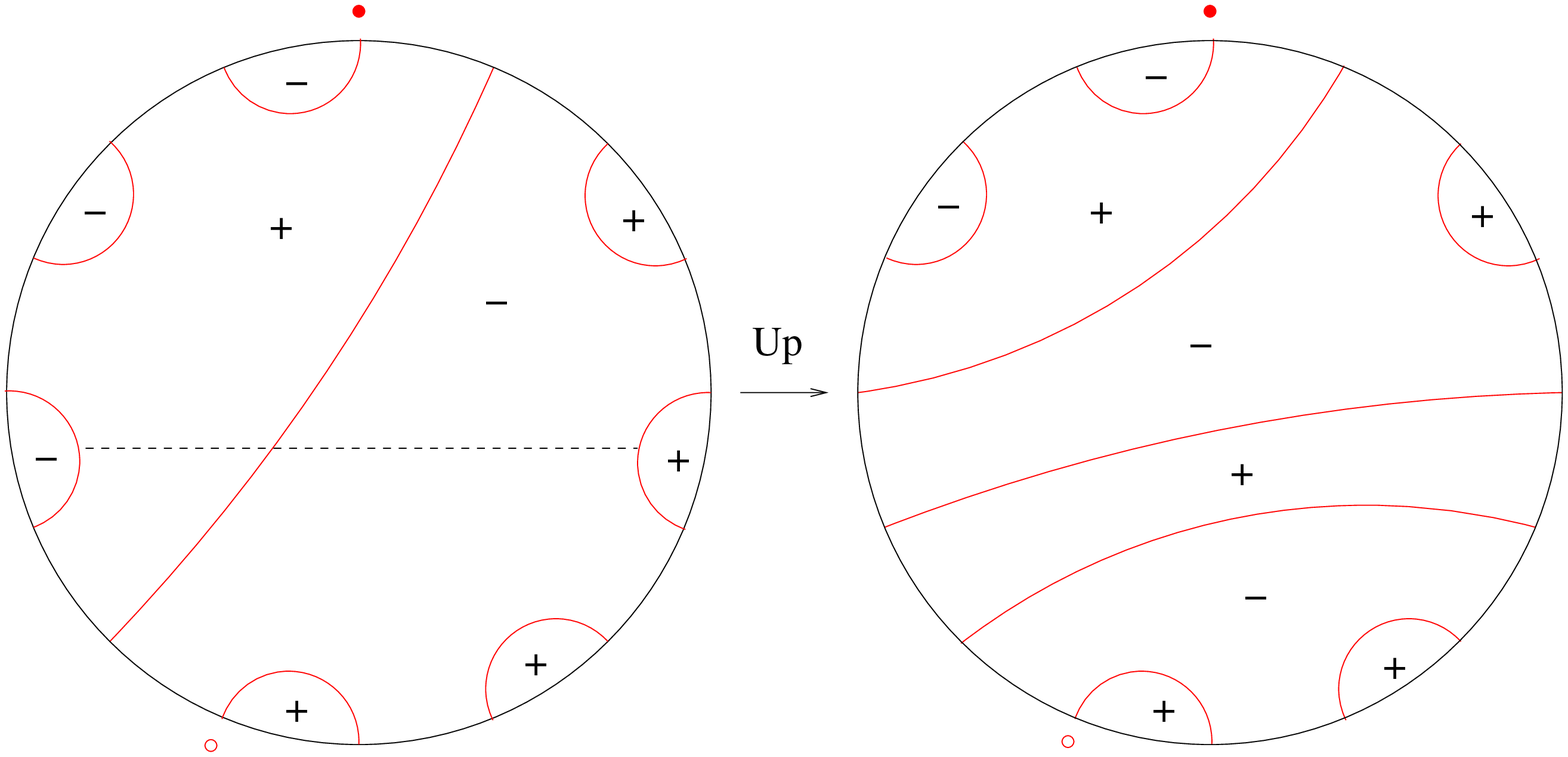}
\caption{Upwards move from $\Gamma_{---++++}$ to $\Gamma_{--++-++}$.} \label{fig:19}
\end{figure}

Next, to go from $\Gamma_{---++++}$ to $\Gamma_{-++--++}$, we ``move the second $-$ sign past the second $+$ sign''. In moving the second $-$ sign, the third $-$ sign is ``brought along for the ride''. This is also a forwards elementary move, also obtained by a single upwards bypass. See figure \ref{fig:20}.

\begin{figure}[tbh]
\centering
\includegraphics[scale=0.35]{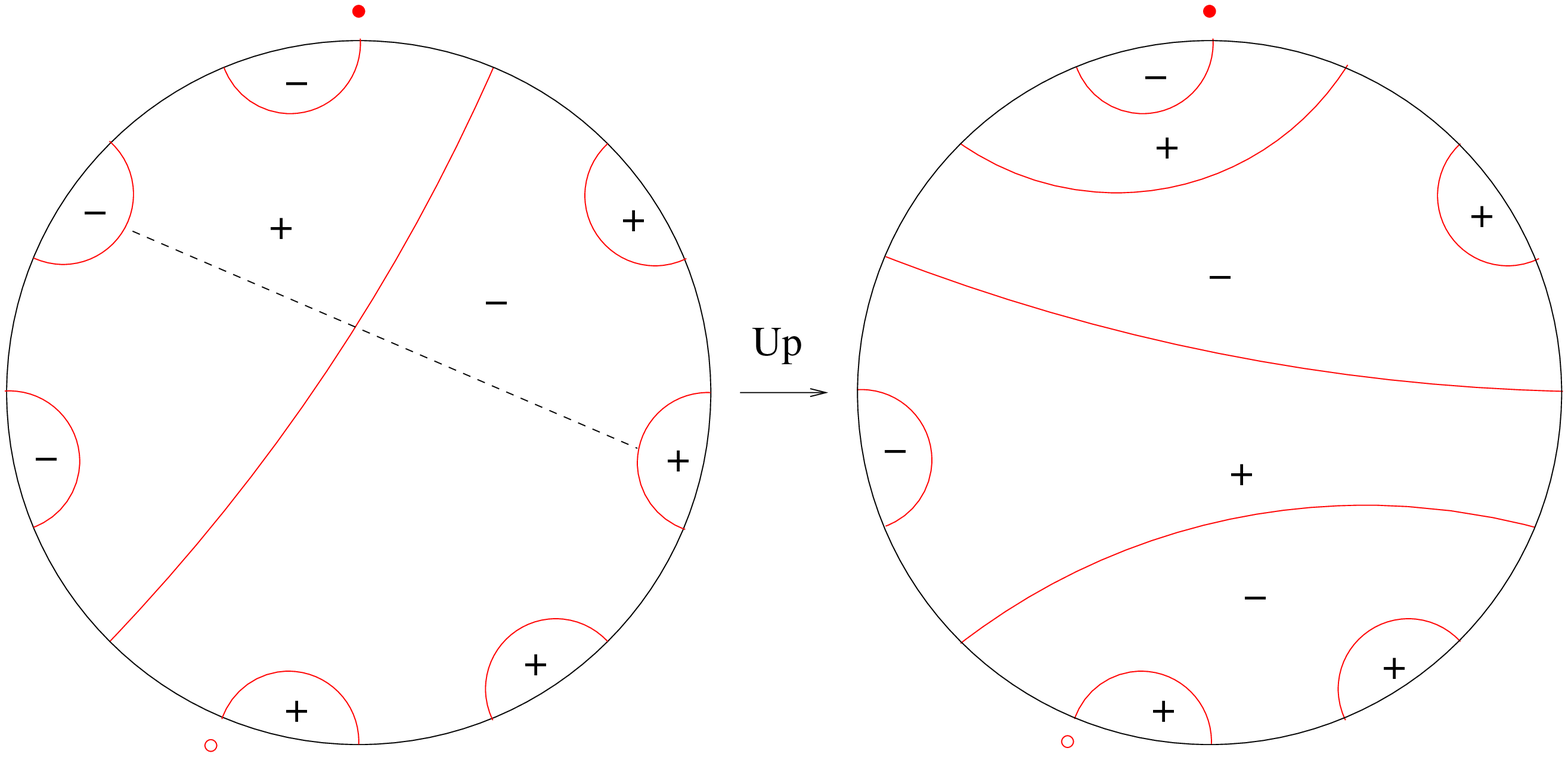}
\caption{Upwards move from $\Gamma_{---++++}$ to $\Gamma_{-++--++}$.} \label{fig:20}
\end{figure}

Alternatively, we could ``treat the two $-$ signs individually'', and perform one bypass move for each, respectively encoding the instruction to move them past the second $+$ sign. See figure \ref{fig:21}. It gives the same result: this is bypass ``redundancy'' or ``rotation'' (section \ref{sec_bypasses_exist}, figure \ref{fig:22}(left) ).

\begin{figure}[tbh]
\centering
\includegraphics[scale=0.35]{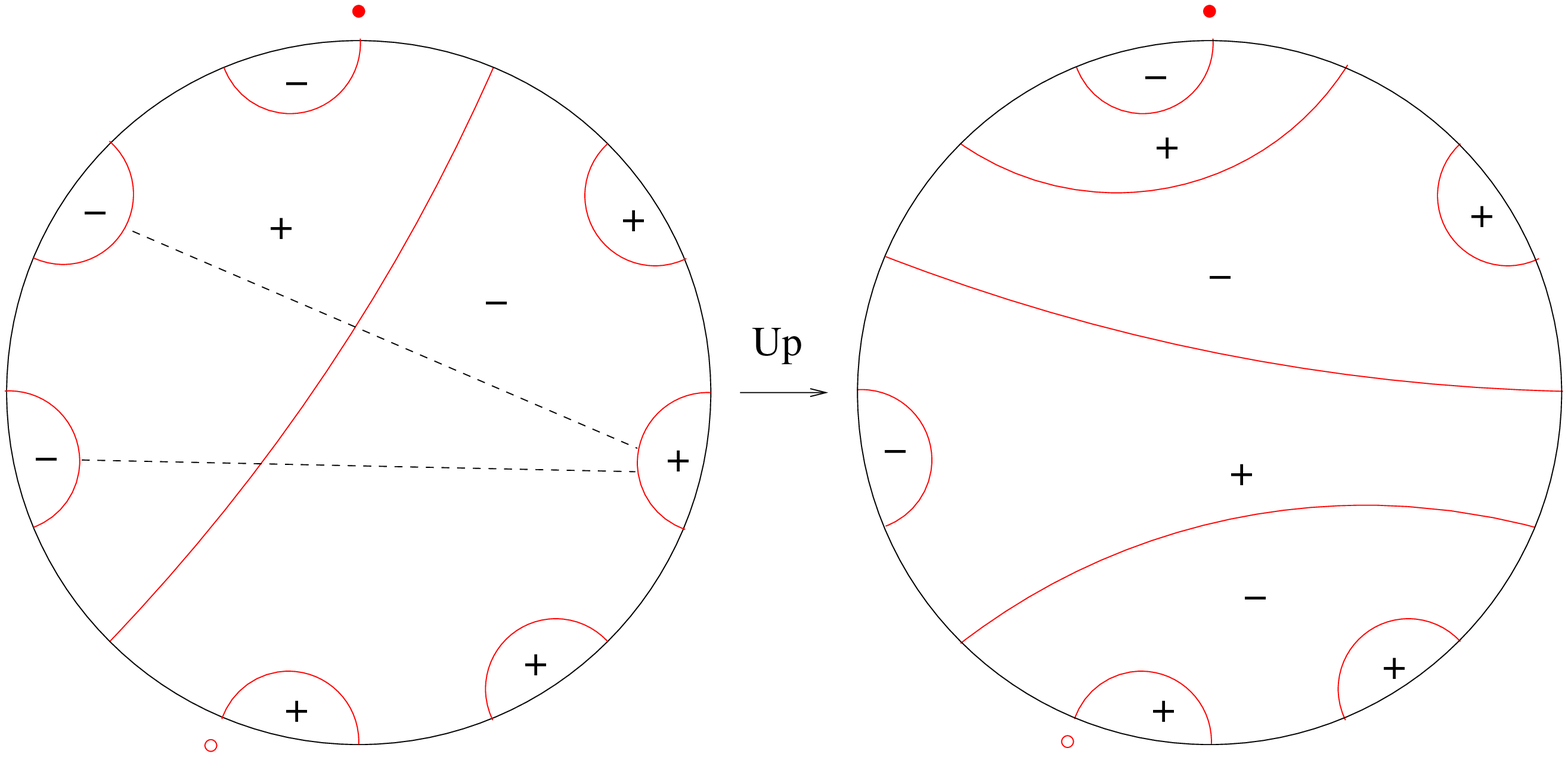}
\caption{Upwards move from $\Gamma_{---++++}$ to $\Gamma_{-++--++}$, another way.} \label{fig:21}
\end{figure}

Next, to go from $\Gamma_{--++--++}$ to $\Gamma_{++--++--}$, we move the first $-$ sign past the second $+$ sign, and the third $-$ sign past the fourth $+$ sign. There are two forwards elementary moves involved, obtained by two upwards bypasses. See figure \ref{fig:23}.

\begin{figure}[tbh]
\centering
\includegraphics[scale=0.35]{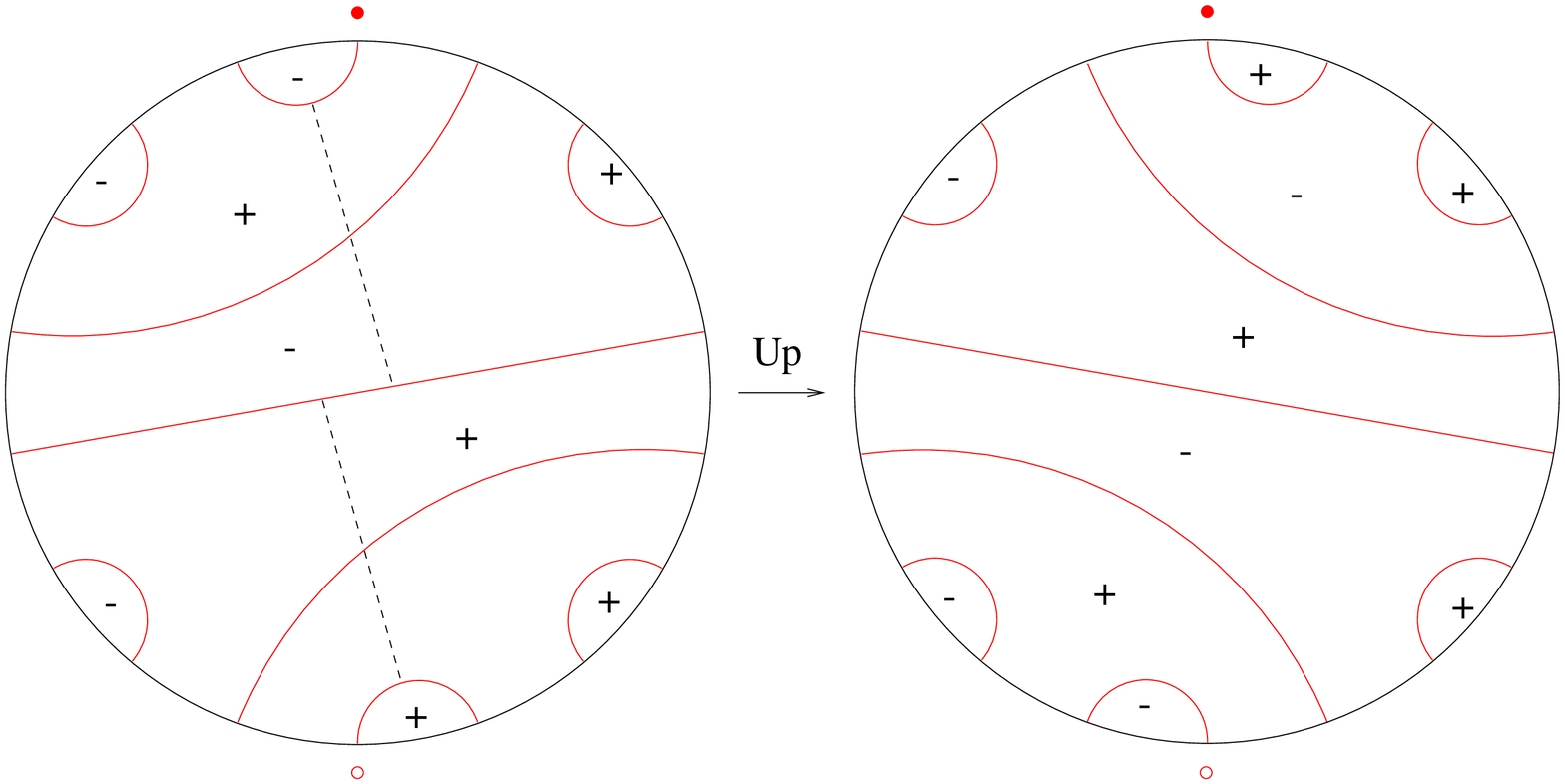}
\caption{Upwards moves from $\Gamma_{--++--++}$ to $\Gamma_{++--++--}$.} \label{fig:23}
\end{figure}

While the position of each of the attaching arcs might be clear from the foregoing, there are \emph{two distinct} ways to place them relative to each other. If we consider these attaching arcs in the other possible arrangement, we obtain the drastically different result $\Gamma_{++++----}$. See figure \ref{fig:24}. This positioning of arcs encodes ``move the first $-$ sign past the fourth $+$ sign'': a \emph{generalised elementary move}.

\begin{figure}[tbh]
\centering
\includegraphics[scale=0.35]{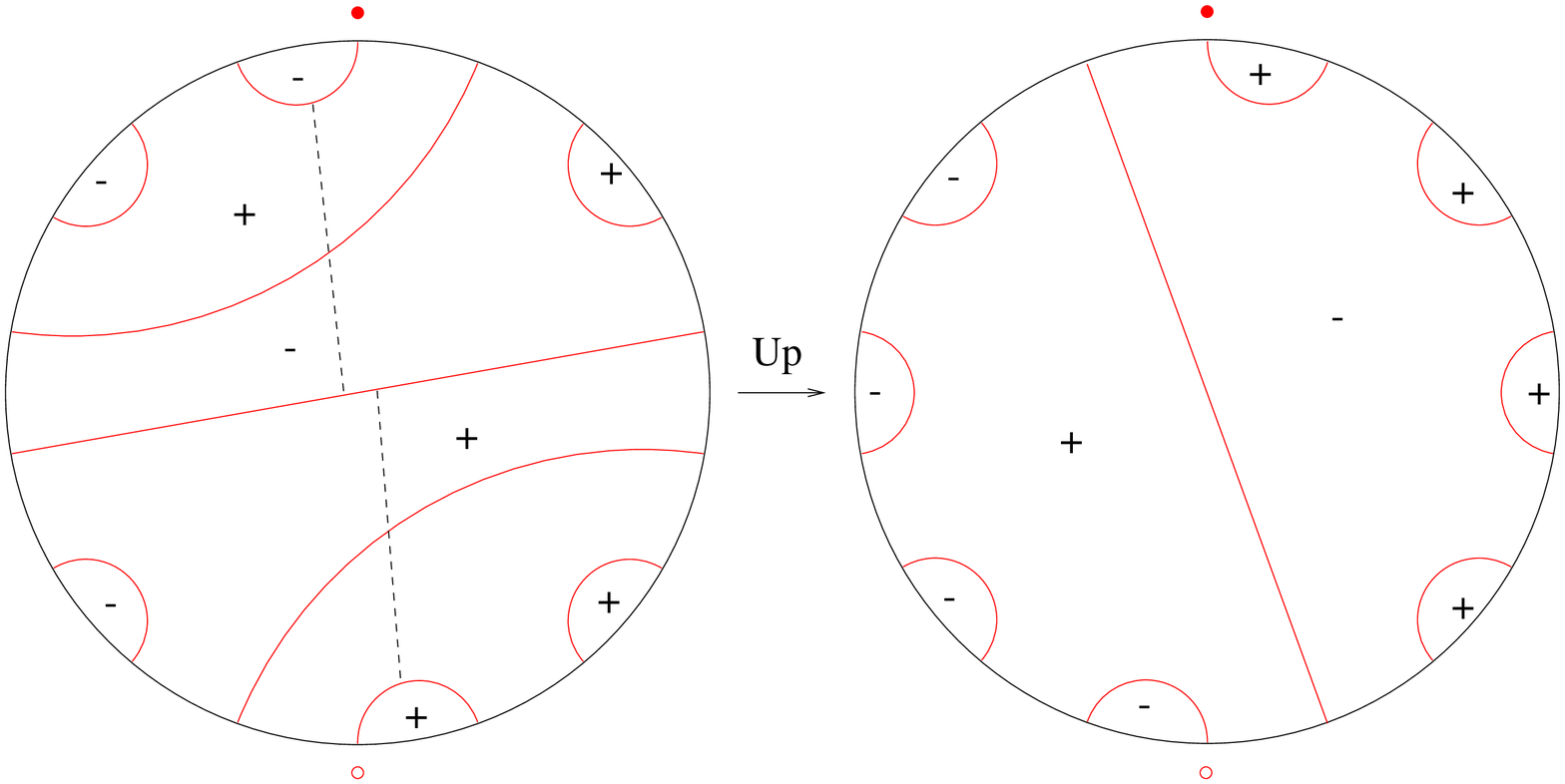}
\caption{Upwards moves from $\Gamma_{--++--++}$ to $\Gamma_{++++----}$.} \label{fig:24}
\end{figure}

Alternatively, the ``treat each $-$ sign individually'' approach to $\Gamma_{--++--++} \rightarrow \Gamma_{++++----}$ requires six bypass arcs: 2 for the first $-$ sign, 2 for the second, 1 for the third, and 1 for the fourth. See figure \ref{fig:25}.

\begin{figure}[tbh]
\centering
\includegraphics[scale=0.35]{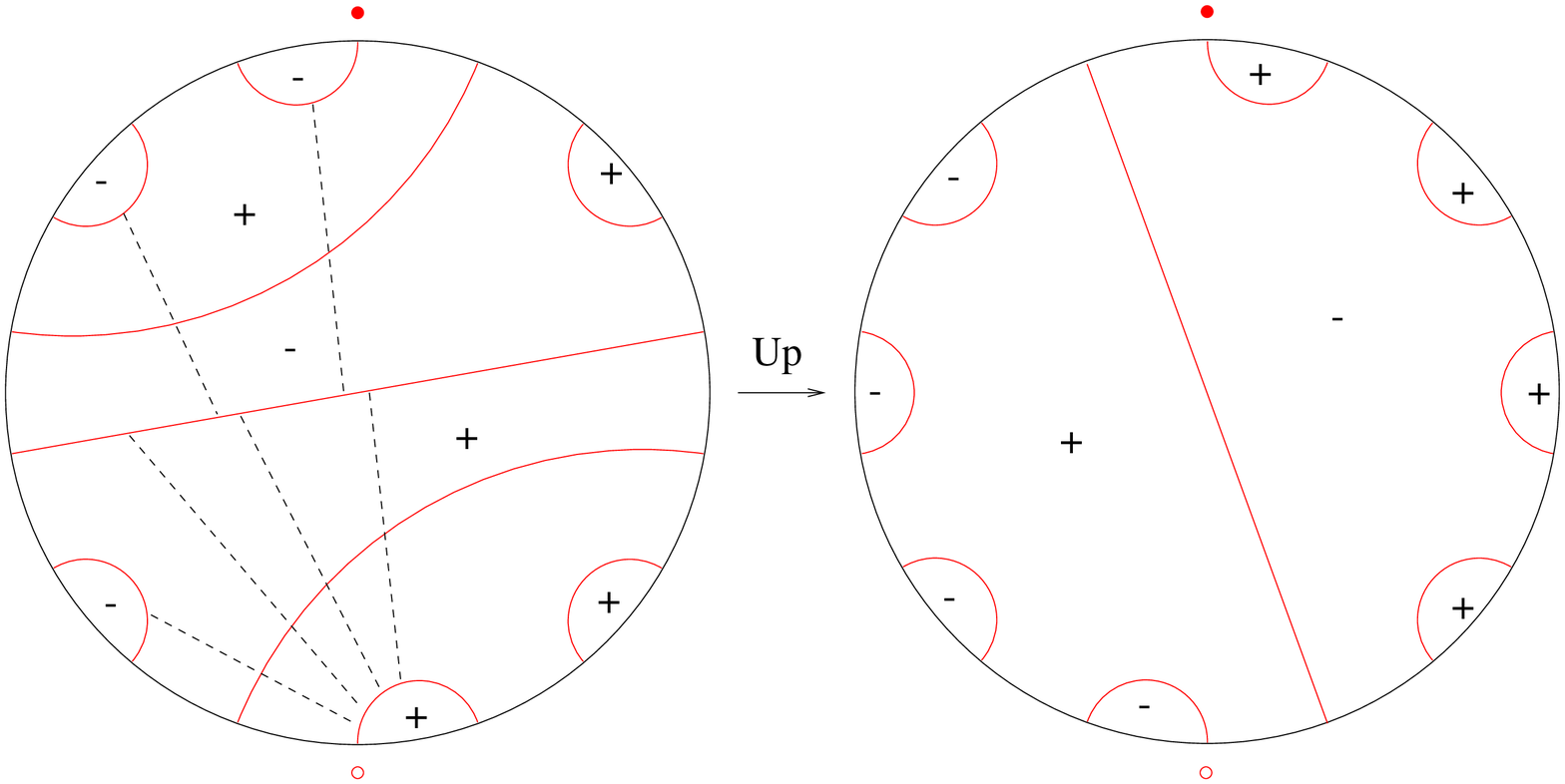}
\caption{``Individual care'' approach to $\Gamma_{--++--++} \rightarrow \Gamma_{++++----}$.} \label{fig:25}
\end{figure}

In general, in the following we apply the ``individual care'' approach, being easier to formalise, even though the sets of bypass moves so obtained often contain  redundancy. This will lead to the notion of \emph{coarse bypass system of a pair} of comparable basis chord diagrams, which we will then refine to a ``minimal'' \emph{bypass system of a pair}.

\subsubsection{Elementary moves on words}
\label{sec_el_moves_words}

Given a word $w$, group it into \emph{blocks} of $\pm$ symbols and write $w = (-)^{a_1} (+)^{b_1} \cdots (-)^{a_k} (+)^{b_k}$.
Possibly $k=1$; possibly $a_1$ or $b_k$ is $0$; but every other $a_i, b_i$ is nonzero.

\begin{defn}[Elementary moves on words]
A \emph{forwards elementary move} on $w$ takes a substring $(-)^a (+)^b$ and replaces it with $(+)^b (-)^a$. A \emph{backwards elementary move} on $w$ takes a substring $(+)^b (-)^a$ and replaces it with $(-)^a (+)^b$.
\end{defn}
Collectively these are \emph{elementary moves}; the direction, forwards or backwards, refers to the partial order $\preceq$.

\begin{defn}[Denoting elementary moves]
$FE(i,j)$ denotes the forwards elementary move taking the $i$'th $-$ sign to the position immediately right of the $j$'th $+$ sign. $BE(i,j)$ denotes the backwards elementary move taking the $j$'th $+$ sign to the position immediately right of the $i$'th $-$ sign.
\end{defn}
Clearly $1 \leq i \leq n_-$, $1 \leq j \leq n_+$ here; but $FE(i,j)$ is not always defined. The move $FE(i,j)$ (resp. $BE(i,j)$) is defined iff the block of the $i$'th $-$ sign is immediately left (resp. right) of the block of the $j$'th $+$ sign.

\subsubsection{Anatomy of attaching arcs on basis chord diagrams}

\label{sec_anatomy_attaching_arcs}

We now give a complete description of attaching arcs on basis chord diagrams.  For $w = (-)^{a_1} (+)^{b_1} \cdots (-)^{a_k} (+)^{b_k}$, $\Gamma_w$ is as shown in figure \ref{fig:26}. There is a nice bijection between nontrivial elementary moves on $w$ and nontrivial arcs of attachment on $\Gamma_w$; to formalise this we need several definitions.

\begin{figure}[tbh]
\centering
\includegraphics[scale=0.3]{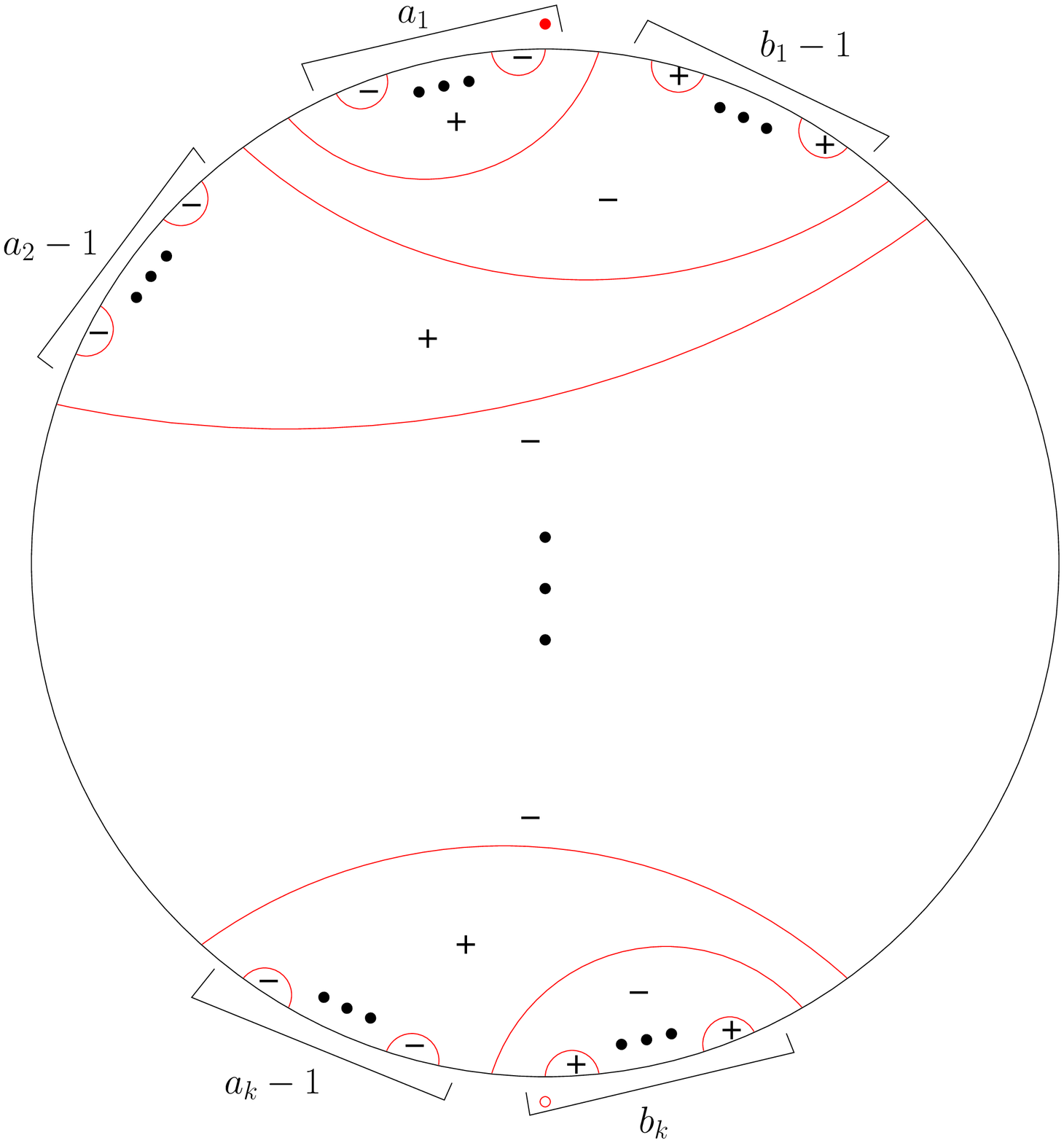}
\caption{General basis chord diagram $(-)^{a_1} \cdots (+)^{b_k}$.} \label{fig:26}
\end{figure}

\begin{defn}[Attaching arc types]
An attaching arc $c$ on a chord diagram $\Gamma$ is:
\begin{enumerate}
\item
\emph{nontrivial} if $c$ intersects three distinct chords of $\Gamma$;
\item
\emph{trivial} if $c$ intersects less than three distinct chords of $\Gamma$;
\begin{enumerate}
\item
\emph{slightly} trivial if $c$ intersects precisely two distinct chords of $\Gamma$.
\item
\emph{super}trivial if $c$ intersects only one chord of $\Gamma$.
\end{enumerate}
\end{enumerate}
\end{defn}

For any trivial arc, performing a bypass move on it in one direction creates a closed curve; in the other direction, the chord diagram is unchanged. If the upwards move produces the same chord diagram (and downwards creates a closed loop), the arc is \emph{upwards}; vice versa for \emph{downwards}.

Supertrivial attaching arcs come in two types. Consider traversing $c$ from one end to the other; let the three intersection points of $c$ with a chord $\gamma$, in order along $c$, be $p_1, p_2, p_3$. If $p_1, p_2, p_3$ lie in order along $\gamma$, $c$ is \emph{direct}; otherwise $c$ is \emph{indirect}. See figure \ref{fig:68}.

\begin{figure}
\centering
\includegraphics[scale=0.4]{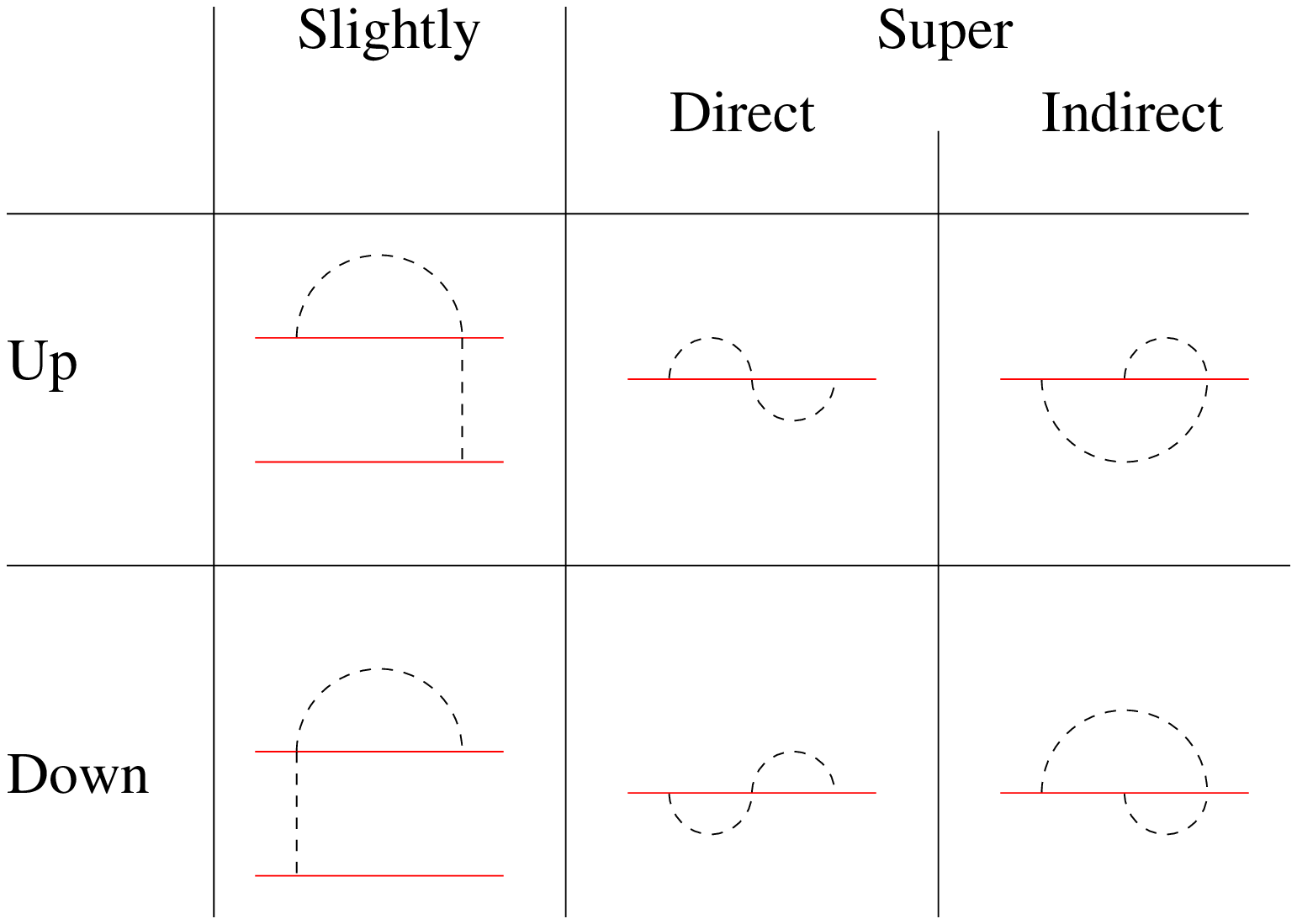}
\caption{Types of trivial attaching arcs.} \label{fig:68}
\end{figure}

A small neighbourhood $U$ of an attaching arc $c$ is cut by $\Gamma$ into $4$ regions. Two of these regions intersect $c$, and two do not. If $c$ is nontrivial, these $4$ regions are distinct; if $c$ is trivial, they are not. The two regions intersecting $c$ lie in components of $D - \Gamma$ called the \emph{inner regions} of $c$; the two regions not intersecting $c$ lie in components  of $D - \Gamma$ called the \emph{outer regions} of $c$. Note the two outer regions of $c$ have opposite sign; similarly for the inner regions. 

The construction algorithm's ordering on regions motivates the following definition.
\begin{defn}[Prior, latter chords]
Let $c$ be a nontrivial or slightly trivial arc of attachment. The endpoints of $c$ lie on two distinct chords; the one created
\begin{enumerate}
\item first in the base point construction algorithm is the \emph{prior chord} of $c$;
\item later is the \emph{latter chord} of $c$.
\end{enumerate}
\end{defn}
Similarly, the outer region adjacent to the prior (resp. latter) chord is the \emph{prior (resp. latter) outer region} of $c$. Note prior/latter do not apply to  supertrivial arcs.

\begin{defn}[Forwards/backwards attaching arcs] For a nontrivial attaching arc $c$:
\begin{enumerate}
\item 
if its prior outer region is negative, $c$ is \emph{forwards}.
\item
if its prior outer region is positive, $c$ is \emph{backwards}.
\end{enumerate}
\end{defn}
For slightly trivial arcs of attachment, we have similar forwards/backwards notions; to avoid (hopefully not create) confusion, we call them \emph{quasi-forwards} or \emph{quasi-backwards}.

For nontrivial attaching arcs, the prior outer region is not adjacent to the root point, and the latter outer region is not adjacent to the base point. Hence we introduce the following notation, recalling numberings of regions (definitions \ref{def_base_numbering}, \ref{def_root_numbering}).
\begin{defn}[Denoting arcs of attachment]\
\begin{enumerate}
\item 
$FA(i,j)$ is the nontrivial forwards attaching arc whose prior outer region is the base-$i$'th $-$ region and latter outer region is the root-$j$'th $+$ region.
\item
$BA(i,j)$ is the nontrivial backwards attaching arc whose prior outer region is the base-$j$'th $+$ region and latter outer region is the root-$i$'th $-$ region.
\end{enumerate}
\end{defn}

\subsubsection{Single bypass moves and elementary moves}
\label{sec_single_bypass_el_moves}

The bijection between bypass moves on basis chord diagrams and elementary moves on words is now straightforward.
\begin{lem}[Existence of attaching arcs]
\label{lem_existence_attaching_arcs}
There is a forwards (resp. backwards) attaching arc $FA(i,j)$ (resp. $BA(i,j)$) on $\Gamma_w$ iff there is a forwards (resp. backwards) elementary move $FE(i,j)$ (resp. $BE(i,j)$) on $w$.
\end{lem}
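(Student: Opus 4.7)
The plan is to prove this by direct inspection of the explicit structure of $\Gamma_w$, using the base point construction algorithm together with its mechanics lemma and figure \ref{fig:26}. I will focus on the forwards case, since the backwards case then follows by the obvious symmetry that reverses $w$ and swaps $\pm$ (equivalently, $180^\circ$ rotation of the disc), which exchanges base and root points and interchanges the two notions of ``forwards'' and ``backwards'' for moves and for attaching arcs.

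For the $(\Leftarrow)$ direction, suppose $FE(i,j)$ is a valid forwards elementary move on $w$. Then the $i$-th $-$ sign lies in a block $(-)^{a_\ell}$ that is immediately followed by a block $(+)^{b_\ell}$ containing the $j$-th $+$ sign. Using lemma \ref{construction_mechanics}, I would pinpoint the base-$i$-th $-$ region as a wedge adjacent to the $i$-th $-$ chord on the westside, and (by the analogous mechanics of the root point algorithm) pinpoint the root-$j$-th $+$ region as a wedge adjacent to the $j$-th $+$ chord on the eastside. Inspection of figure \ref{fig:26} shows that in the nested block structure of $\Gamma_w$, these two regions are separated by exactly one chord, namely the leading chord of the $\ell$-th $-$-block or of the $\ell$-th $+$-block (depending on which is longer; these are identified as a single ``bridge'' chord in the picture). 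One can then draw a small arc from one region to the other, crossing precisely three chords: the chord bounding the base-$i$-th $-$ region, the bridge chord, and the chord bounding the root-$j$-th $+$ region. Since the three chords are distinct, the arc is nontrivial, and by construction its prior outer region is the base-$i$-th $-$ region and its latter outer region is the root-$j$-th $+$ region, so it is $FA(i,j)$.

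For the $(\Rightarrow)$ direction, suppose a nontrivial forwards attaching arc $FA(i,j)$ exists on $\Gamma_w$. Its endpoints lie on two chords bounding the prescribed prior and latter outer regions, and its interior intersection point lies on a third chord separating them in $D - \Gamma_w$. I would split into four sub-cases according to whether each of the $i$-th $-$ sign and the $j$-th $+$ sign is leading or following. In each sub-case, the mechanics lemma pins down the labels of the marked-point endpoints of the two bounding chords, and hence the full list of chords separating the two regions in $\Gamma_w$. Examination of figure \ref{fig:26} shows that these two regions are separated by a unique chord (suitable for the third intersection of a nontrivial arc) precisely when the $\ell$-th $-$-block containing the $i$-th $-$ sign is immediately followed by the $\ell'$-th $+$-block containing the $j$-th $+$ sign with $\ell = \ell'$; in all other configurations, the two regions are separated by multiple chords and any arc joining them intersects $\Gamma_w$ in more than three points. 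This adjacency is exactly the condition for $FE(i,j)$ to be defined.

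The main obstacle is the case analysis forced by the leading/following distinction and by the edge cases at the extremes of $w$ (in particular, the role of the root chord when $a_1 = 0$ or $b_k = 0$). For each sub-case one must verify not only the counting of separating chords but also that the resulting attaching arc is nontrivial (rather than slightly trivial or supertrivial) and that the assigned prior/latter regions match the definitions, which ensures that the attaching arc is indeed forwards. Careful bookkeeping via lemma \ref{construction_mechanics} handles this uniformly, and the symmetry exchanging base and root points reduces any near-duplicate cases.
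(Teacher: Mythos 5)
Your proposal is correct and takes essentially the same route as the paper: the paper's proof likewise observes that $FE(i,j)$ exists iff the $i$'th $-$ sign and $j$'th $+$ sign lie in adjacent blocks, and that, via the base point construction algorithm and the explicit form of basis diagrams, this is equivalent to $\Gamma_w$ containing the local arrangement admitting $FA(i,j)$. One small descriptive slip, which does not affect the argument: the single ``bridge'' chord crossed in the middle is the chord created by the leading $+$ of the adjacent $+$-block (equivalently, in root numbering, by the trailing $-$ of the $-$-block), so no case distinction ``depending on which block is longer'' is needed.
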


\begin{proof}
We prove the forwards case; backwards is similar. There exists an $FE(i,j)$ iff the $i$'th $-$ sign and $j$'th $+$ sign appear in adjacent blocks $(-)^{a} (+)^{b}$. Considering the base point construction algorithm, this is equivalent to $\Gamma_w$ containing an arrangement as shown in the left of figure \ref{fig:28}, and hence to the existence of an $FA(i,j)$.
\end{proof}

\begin{rem}[Forwards and backwards analogous]
Throughout this section we have arguments which come in analogous ``forwards'' and ``backwards'' versions. We often give arguments, and sometimes statements, for the forwards version only.
\end{rem}

\begin{lem}[Bypass \& elementary moves]
\label{bypass_moves_elementary_moves}
Performing upwards (resp. downwards) bypass moves on $\Gamma_w$ along $FA(i,j)$ (resp. $BA(i,j)$) gives $\Gamma_{w'}$, where $w' = FE(i,j)(w)$ (resp. $BE(i,j)(w)$).
\[
\xymatrix{
w \ar@{|->}[rr]^{FE(i,j)} \ar@{<~>}[dd] && w' \ar@{<~>}[dd] && w \ar@{|->}[rr]^{BE(i,j)} \ar@{<~>}[dd] && w' \ar@{<~>}[dd] \\
&&& 
 &&& \\
\Gamma_w \ar@{|->}[rr]^{\Up(FA(i,j))} && \Gamma_{w'} && \Gamma_w \ar@{|->}[rr]^{\Down(BA(i,j))} && \Gamma_{w'} }
\]
A downwards bypass move along $FA(i,j)$ (resp. upwards along $BA(i,j)$) gives $\Gamma_w + \Gamma_{w'}$.
\end{lem}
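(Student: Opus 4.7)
The plan is to verify the forwards statement by a direct local analysis, deduce the backwards statement by the symmetric argument, and then obtain the final claim about $\Gamma_w + \Gamma_{w'}$ from proposition \ref{prop_sum_to_zero}: since upwards and downwards bypass surgery along a common attaching arc produce two of the three members of a bypass triple, and the triple sums to $0$, the downwards move along $FA(i,j)$ must give $\Gamma_w + \Up_{FA(i,j)}(\Gamma_w) = \Gamma_w + \Gamma_{w'}$.

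For the main claim I would first pin down the three chords crossed by $FA(i,j)$ using lemma \ref{construction_mechanics}. By lemma \ref{lem_existence_attaching_arcs}, the existence of $FA(i,j)$ forces $w$ to contain a substring $(-)^a (+)^b$ (the $i$'th $-$ sign and $j$'th $+$ sign both appearing in it), and the prior chord of $FA(i,j)$ is the base-$i$'th $-$ chord while the latter chord is the root-$j$'th $+$ chord. A short case analysis on whether the $i$'th $-$ sign is leading or following in its block, and symmetrically for the $j$'th $+$ sign read from the right, determines the precise endpoints of these two chords and of the third chord which $FA(i,j)$ meets between them. The outcome is the local configuration pictured in figure \ref{fig:28}.

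Next I would substitute the ``upwards'' local model from figure \ref{fig:2a} into this picture and read off the reconnected chord arrangement; I then compare against the chord diagram produced by the base point construction algorithm applied to $w' = FE(i,j)(w)$. Outside a neighbourhood of the prior and latter chords of $FA(i,j)$, the two algorithms agree chord-for-chord, since $w$ and $w'$ differ only in the substring $(-)^a (+)^b \leftrightarrow (+)^b (-)^a$, and the construction algorithm uses only the partial prefix/suffix to determine earlier chords (and, via the root algorithm, later ones). Inside the neighbourhood, the reconnection produced by the upwards bypass is exactly the reconnection prescribed by the construction algorithm after the block swap: this is the combinatorial heart of the lemma, verified by direct inspection.

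The main technical obstacle is the bookkeeping when $i$ or $j$ is not the leading symbol of its block. In that case the bypass move carries additional ``passenger'' chords with it (as illustrated in figure \ref{fig:20}), and one must check that each passenger lands in the position demanded by the construction algorithm applied to $w'$. This reduces, after using lemma \ref{construction_mechanics} to compute the relevant discrete intervals of used marked points before and after the swap, to matching two sets of endpoints --- a finite check in each of the four cases (leading/following $\times$ leading/following).
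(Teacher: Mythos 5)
Your proposal is correct and follows essentially the same route as the paper: use lemma \ref{lem_existence_attaching_arcs} to reduce to the adjacent-block configuration of figure \ref{fig:28}, verify by direct local inspection (with lemma \ref{construction_mechanics} tracking the "passenger" symbols, which the paper handles via its indices $l$ and $m$) that upwards surgery along $FA(i,j)$ yields the basis diagram of the swapped word, and obtain the final claim about $\Gamma_w + \Gamma_{w'}$ from the bypass relation (proposition \ref{prop_sum_to_zero}).
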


\begin{proof}
We prove the forwards case. Consider $FE(i,j)$ on $w$ and $FA(i,j)$ on $\Gamma_w$; by lemma \ref{lem_existence_attaching_arcs}, one exists iff the other does.  So the $i$'th $-$ and $j$'th $+$ in $w$ occur in adjacent blocks, and $\Gamma_w$ is as in figure \ref{fig:28}. Let the last $-$ sign in the block of the $i$'th $-$ sign be the $l$'th (so $l \geq i$), and let the first $+$ sign in the block with the $j$'th $+$ sign be numbered $m$ (so $m \leq j$). An upwards bypass move along $FA(i,j)$ then has the effect shown; it produces the basis chord diagram for the word $w'$, where $w'$ is obtained from $w$ by swapping the string of $i$'th thru $l$'th $-$ signs with the string of $m$'th thru $j$'th $+$ signs, $(-)^{l-i+1} (+)^{j-m+1} \mapsto (+)^{j-m+1} (-)^{l-i+1}$. Thus $w' = FE(i,j)(w)$. The bypass relation gives the final statement.
\end{proof}
In particular, a bypass move on a basis diagram yields either a basis diagram or a sum of two basis diagrams.

\begin{figure}
\centering
\includegraphics[scale=0.3]{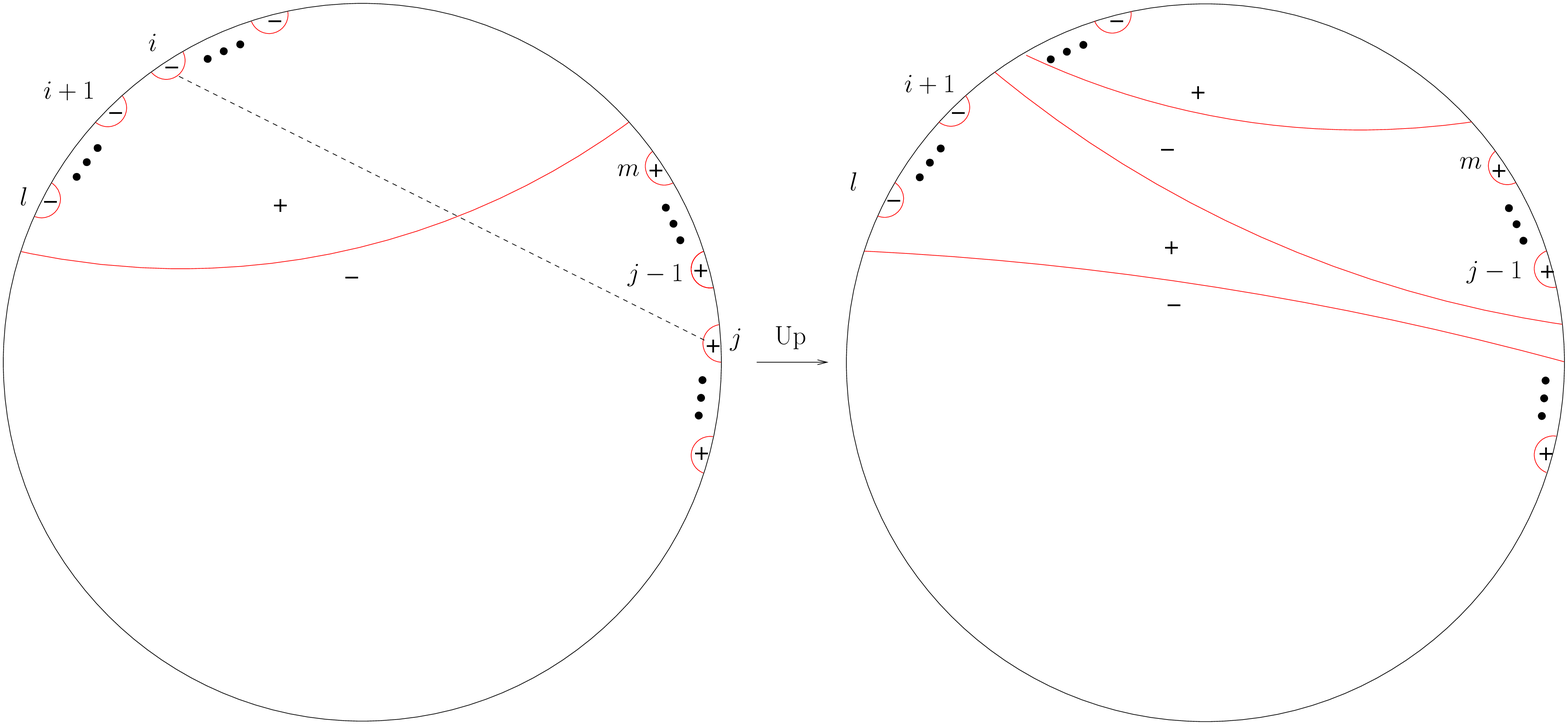}
\caption{Effect of bypass move along $FA(i,j)$.} \label{fig:28}
\end{figure}

\subsubsection{Stability of basis diagrams}

\label{sec_bypass_systems_general}

Certain bypass moves on basis chord diagrams always yield again basis diagrams.
\begin{prop}[Stability of basis diagrams]
\label{upwards_moves_forwards}
Performing upwards (resp. downwards) bypass moves on a basis chord diagram $\Gamma_{w_1}$ along a bypass system consisting entirely of forwards (resp. backwards) nontrivial attaching arcs yields a basis chord diagram $\Gamma_{w_2}$, with $w_1 \preceq w_2$ (resp. $w_2 \preceq w_1$).
\end{prop}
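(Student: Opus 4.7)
The plan is to induct on $k = |c|$, the size of the bypass system. The base case $k = 1$ is immediate from Lemma \ref{bypass_moves_elementary_moves}: the upwards bypass along a forwards nontrivial attaching arc $FA(i,j)$ sends $\Gamma_{w_1}$ to $\Gamma_{w'}$ with $w' = FE(i,j)(w_1)$. Since a forwards elementary move replaces a substring $(-)^a(+)^b$ by $(+)^b(-)^a$, every $-$ sign either stays put or shifts strictly rightward; thus $w_1 \preceq w'$.

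For $k \geq 2$, the plan is to single out one arc $c_\alpha$, perform its upwards bypass first to obtain a basis diagram $\Gamma_{w'}$ with $w_1 \preceq w'$ (by the $k=1$ case), and then apply the induction hypothesis to the remaining $k-1$ arcs viewed on $\Gamma_{w'}$. The arcs $c_\beta$ ($\beta \neq \alpha$) are disjoint from $c_\alpha$, so they survive the local surgery as embedded arcs still incident to the same three marked points on $\partial D$ and hence still qualify as attaching arcs. The heart of the argument is then a dichotomy: each such $c_\beta$ is either (i) still a forwards nontrivial attaching arc on $\Gamma_{w'}$, in which case induction applies, or (ii) has become a trivial arc of upwards type, in which case performing its upwards bypass is a no-op and it can simply be deleted from the system. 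Combining these alternatives yields a basis diagram $\Gamma_{w_2}$ with $w' \preceq w_2$, and transitivity gives $w_1 \preceq w_2$.

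The main obstacle is establishing this dichotomy, i.e.\ ruling out that a remaining arc becomes backwards nontrivial or trivial of \emph{downwards} type (the latter would create a closed loop under the upwards bypass and thereby kill the diagram). To handle it, I would choose $c_\alpha$ with care using the explicit nested-block picture of figure \ref{fig:26}, taking $c_\alpha$ to be innermost with respect to the nesting of the arcs—for instance, one whose prior and latter outer regions are smallest in the forest of regions cut out by $c$—so that the local surgery near $c_\alpha$ is confined to a sub-disc that meets no other $c_\beta$ on its prior or latter sides, and therefore cannot flip their prior/latter data. An alternative, perhaps cleaner, route is to invoke bypass rotation/commutation (section \ref{sec_bypasses_exist}), which shows that the order in which disjoint bypasses are attached is immaterial up to contactomorphism. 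This lets me assemble the whole system into a single elementary cobordism $\M(\Gamma_{w_1}, \Gamma')$; reading off the effect arc by arc, each $FA(i_\alpha, j_\alpha)$ encodes a forwards elementary move $FE(i_\alpha, j_\alpha)$, and disjointness of the arcs translates into non-interference of the corresponding substring swaps on $w_1$. Applying them in parallel produces a well-defined word $w_2$, still obtained from $w_1$ by rightward shifts of $-$ signs, so $w_1 \preceq w_2$, and a direct comparison of chord diagrams identifies $\Gamma' = \Gamma_{w_2}$.
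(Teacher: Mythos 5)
Your induction skeleton (do one arc, then handle the rest) is the same shape as the paper's argument, but the inductive step has a genuine gap, and it sits exactly where the paper does real work. The class ``all arcs forwards nontrivial'' is \emph{not} preserved by a single upwards surgery: after surgering $c_\alpha$, other arcs can become trivial (this is unavoidable --- see the redundant systems in figures \ref{fig:21} and \ref{fig:25}, where some arcs are guaranteed to become trivial no matter which arc you do first, so no clever ``innermost'' choice of $c_\alpha$ avoids it). Your proposed fix --- delete any arc that has become upwards trivial --- is not justified: an arc that is trivial on $\Gamma_{w'}$ can become nontrivial again after further surgeries along the remaining arcs (the paper flags exactly this: surgery may turn nontrivial arcs trivial ``or vice versa'', and surgery on a trivial arc, while not changing the diagram, changes the positions of the other arcs). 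The final diagram is the result of performing \emph{all} the surgeries, so dropping the temporarily-trivial arcs computes a possibly different diagram, and your induction then proves the wrong statement. This is precisely why the paper's proof does not induct within the class of forwards nontrivial systems, but instead enlarges the class to include slightly trivial quasi-forwards upwards arcs and supertrivial direct upwards arcs, and proves (lemma \ref{arc_still_okay}, a finite case-check) that this enlarged class is stable under a single upwards surgery; the extra adjectives ``quasi-forwards'' and ``direct'' are exactly the data needed to guarantee that when a trivial arc turns nontrivial again it turns \emph{forwards}, so the induction closes. You would need to prove an analogue of that case analysis; your dichotomy as stated both assumes it and then discards the information it provides.

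Your alternative route is also flawed as written. The claim that ``disjointness of the arcs translates into non-interference of the corresponding substring swaps,'' so that the result is obtained by applying the moves $FE(i_\alpha,j_\alpha)$ to $w_1$ in parallel, is contradicted by the paper's own example: figures \ref{fig:23} and \ref{fig:24} exhibit two disjoint placements of arcs with the same individual descriptions whose upwards surgeries yield $\Gamma_{++--++--}$ and $\Gamma_{++++----}$ respectively, so the outcome depends on the relative placement of the arcs and is not determined by the labels of the individual elementary moves. (Commutativity of surgeries along disjoint arcs is true and is used by the paper, but it only says the order is irrelevant for a \emph{fixed} placement; it does not let you read off the combinatorial effect move-by-move on $w_1$.) The correct bookkeeping for multiple arcs is the ``nicely ordered'' machinery of section \ref{sec_bypass_systems}, which is developed \emph{after} this proposition and for special systems only; for the arbitrary systems covered by the proposition, the paper only claims stability, via lemma \ref{arc_still_okay}, not a formula for $w_2$.
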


In performing such bypass moves, various phenomena may occur: surgery on an arc $c_i$ may convert a nontrivial arc $c_j$ into a trivial one, or a slightly trivial $c_j$ into a supertrivial one, or vice versa; surgery on a trivial arc $c_i$ might not change the chord diagram but might change the locations of other arcs.

We therefore have the following lemma; it is for this reason that the anatomical terminology ``supertrivial'', ``quasi-forwards'', ``upwards trivial'' and ``direct supertrivial'' has been introduced. There are several cases to check, but it is not difficult to verify.
\begin{lem}
\label{arc_still_okay}
Consider a bypass system on a basis chord diagram consisting of:
\begin{enumerate}
\item nontrivial forwards attaching arcs;
\item slightly trivial, quasi-forwards, upwards attaching arcs;
\item supertrivial, direct, upwards attaching arcs.
\end{enumerate}
After performing an upwards bypass move along one attaching arc, we still have a basis chord diagram, and each remaining attaching arc is of one of the above three types.
\qed
\end{lem}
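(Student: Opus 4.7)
The plan is to verify the two claims separately: first, that an upwards bypass move along any arc of the system yields a basis chord diagram; second, that each remaining arc retains one of the three permitted types.

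For the first claim I would split into cases by the type of the surgered arc $c$. If $c$ is nontrivial forwards, Lemma~\ref{bypass_moves_elementary_moves} identifies the result with the basis diagram $\Gamma_{FE(i,j)(w)}$. If $c$ is slightly or supertrivial and upwards, then by the very definition of ``upwards'' an upwards move leaves the chord diagram unchanged, so the result is again a basis diagram. In either case the new diagram is the basis diagram of some word $w'$ with $w\preceq w'$.

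For the second claim, fix a remaining arc $c'$ in the system. Because the arcs of a bypass system are disjoint, the surgery on $c$ is supported in a small neighbourhood $U$ disjoint from $c'$; the arc $c'$ itself is unaltered but the (one, two, or three) chords it meets may be re-routed through $U$. I would track four quantities that together determine the type of $c'$: (a) the number of distinct chords meeting $c'$; (b) the signs of its two outer regions; (c) for a nontrivial or slightly trivial arc, which outer region is prior in the base-point algorithm for the new word; and (d) for a supertrivial arc, the order of the three intersection points along the unique chord met. Observation: since the signs of regions of a basis diagram are fixed by the boundary labelling (positive arc clockwise of the base point, negative arc anticlockwise), rerouting preserves (b), and this already forces the forwards/backwards dichotomy whenever the distinctness of chords is preserved.

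The main obstacle, and the bulk of the work, is a finite but tedious case check: for each of the three types of $c$ and each of the three types of $c'$, one draws the local picture of $c'$ together with the endpoints through which chords leave into $U$, applies the $60^\circ$ local rotation inside $U$, and reads off the new type of $c'$. The key facts that make every case come out favourably are that the $60^\circ$ rotation preserves the cyclic order of chord endpoints on $\partial U$ (so a direct supertrivial arc stays direct and an upwards trivial arc stays upwards, the closed-loop obstruction remaining on the same side), that it preserves the sign of every region (so quasi-forwards stays quasi-forwards and forwards stays forwards), and that it cannot merge two chords met by $c'$ into one without simultaneously providing the compensating structure needed for the arc to drop into an allowed lower type (nontrivial $\to$ slightly trivial quasi-forwards upwards, slightly trivial $\to$ supertrivial direct upwards). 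Once these local pictures are enumerated, no arc of the system ever becomes backwards, downwards, or indirect, completing the proof.
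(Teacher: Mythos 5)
Your proposal takes essentially the same route as the paper, which states this lemma with no written argument beyond the remark that ``there are several cases to check, but it is not difficult to verify'': your reduction of the first claim (a surgered nontrivial forwards arc is some $FA(i,j)$, so Lemma~\ref{bypass_moves_elementary_moves} gives a basis diagram, while a surgered upwards trivial arc leaves the diagram unchanged) plus a finite case check on the remaining arcs is exactly the intended verification. One caution: your claim that ``forwards stays forwards'' because the $60^\circ$ rotation preserves signs of regions is not by itself sufficient, since forwards/backwards also depends on which chord of the remaining arc is \emph{prior}, i.e.\ on creation order in the base point algorithm for the new word, so that point still has to be settled within the case-by-case check rather than deduced from sign preservation alone.
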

There is also a backwards version. Proposition \ref{upwards_moves_forwards} follows obviously.

\subsubsection{Generalised elementary moves and attaching arcs}

\label{sec_gen_el_words}

The elementary move $FE(i,j)$ moves the $i$'th $-$ sign right, past the $j$'th $+$ sign: \emph{provided they are in adjacent blocks}. The attaching arc $FA(i,j)$ connects the base-$i$'th $-$ region to the root-$j$'th $+$ region: \emph{provided one can draw an attaching arc between them}. We now generalise, removing the provisos in italics.

\begin{defn}[Generalised elementary move]
Let $w$ be a word.
\begin{enumerate}
\item
If the $i$'th $-$ sign in $w$ is left of the $j$'th $+$ sign, the \emph{forwards generalised elementary move} $FE(i,j)$ takes the $i$'th $-$ sign, and all $-$ signs between it and the $j$'th $+$ sign, and moves them to a position immediately after the $j$'th $+$ sign.
\item
If the $j$'th $+$ sign in $w$ is left of the $i$'th $-$ sign, the \emph{backwards generalised elementary move} $BE(i,j)$ takes the $j$'th $+$ sign, and all $+$ signs between it and the $i$'th $-$ sign, and moves them to a position immediately after the $i$'th $-$ sign.
\end{enumerate}
\end{defn}
Clearly for any $(i,j)$ precisely one of these moves exists. As clear generalisations of elementary moves, we use the same notation without contradiction. Forwards/backwards moves still move forwards/backwards in $\preceq$.

\label{sec_generalised_arcs}
\begin{defn}[Generalised arc of attachment]
A \emph{generalised arc of attachment} $c$ in a chord diagram $\Gamma$ is an arc intersecting $\Gamma$ in an odd number of points, including both its endpoints. A generalised attaching arc is \emph{nontrivial} if all its intersection points with $\Gamma$ lie on different components of $\Gamma$. Two generalised attaching arcs are \emph{equivalent} if they are homotopic through generalised attaching arcs.
\end{defn}
For any two chords in a chord diagram $\Gamma$, there is at most one nontrivial generalised arc of attachment between them, up to equivalence.

We have notions of prior and latter, outer regions, and forwards and backwards, for a nontrivial generalised attaching arc $c$, entirely analogously: $c$'s endpoints lie on two chords, \emph{prior} and \emph{latter} according to order of construction; adjacent to these chords but not intersecting $c$ are $c$'s prior and latter \emph{outer regions}; the two outer regions have opposite signs. Given two outer regions, there is at most one nontrivial generalised attaching arc between them. A generalised attaching arc with negative (resp. positive) prior outer region is \emph{forwards} (resp. \emph{backwards}).

The forwards generalised attaching arc with prior outer region the base-$i$'th $-$ region, and latter outer region the root-$j$'th $+$ region, is called $FA(i,j)$. The backwards generalised attaching arc with prior outer region the base-$j$'th $+$ region, and latter outer region the root-$i$'th $-$ region, is called $BA(i,j)$. Clearly this generalises previous notation. Every nontrivial generalised attaching arc is some $FA(i,j)$ or $BA(i,j)$. For any $(i,j)$, $1 \leq i \leq n_-$, $1 \leq j \leq n_+$ precisely one of $FA(i,j)$ or $BA(i,j)$ exists. The lemma generalising \ref{lem_existence_attaching_arcs} should now be clear.
\begin{lem}[Existence of generalised attaching arcs]\
\label{existence_generalised_arcs}
There is an $FA(i,j)$ in $\Gamma_w$ iff the $i$'th $-$ sign in $w$ occurs before the $j$'th $+$ sign, iff there is an $FE(i,j)$ on $w$.
There is a $BA(i,j)$ in $\Gamma_w$ iff the $j$'th $+$ sign in $w$ occurs before the $i$'th $-$ sign, iff there is a $BE(i,j)$ on $w$.
\qed
\end{lem}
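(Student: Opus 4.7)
The plan is to prove the first biconditional for $FA(i,j)$; the $BA(i,j)$ statement is exactly symmetric (swap the roles of $-$ and $+$), and the second biconditional in each case is immediate from the definition of a generalised elementary move. By lemma \ref{construction_mechanics}, the base-$i$'th $-$ chord has one endpoint at the marked point $1-2i$, with the (negative) base-$i$'th $-$ region lying immediately across the adjacent boundary arc on the westside; the symmetric statement applied to the root point construction algorithm locates the root-$j$'th $+$ chord and the (positive) root-$j$'th $+$ region on the eastside.

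The heart of the argument is a duality between the two construction algorithms: I claim that the chord drawn by the base algorithm while processing the $k$'th symbol of $w$ is the same chord as that drawn by the root algorithm while processing the $(k-1)$'th symbol of $w$, with the convention that the ``$0$'th symbol'' refers to the root-final chord (and correspondingly the base-final chord is paired with the $n$'th symbol under root). This can be verified by induction on $k$: after the base has processed $k-1$ symbols and the root has processed $n+1-k$ symbols, their two intervals of used marked points have sizes summing to $2n$, so leave exactly two points uncovered, which must be the endpoints of the single chord that both algorithms draw next. A direct consequence is that the base-$i$'th $-$ chord coincides with the root-$j$'th $+$ chord precisely when the $j$'th $+$ sign in $w$ occurs immediately before the $i$'th $-$ sign.

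Let $p,q$ denote the positions in $w$ of the $i$'th $-$ and the $j$'th $+$ respectively. By the duality, the base-$i$'th $-$ chord is drawn at base-step $p$ while the root-$j$'th $+$ chord is drawn at base-step $q+1$. Hence the former is prior to the latter --- a necessary condition for $FA(i,j)$ as defined to exist --- precisely when $p<q+1$; and since $p\neq q$ (they carry different signs) this amounts to $p<q$, i.e.\ the $i$'th $-$ precedes the $j$'th $+$ in $w$, which is exactly when $FE(i,j)$ is defined. Under this condition the prior outer region is the negative base-$i$'th $-$ region, so the arc is forwards. Finally, whenever the two chords are distinct such an arc can be realised: the $n+2$ regions of $D-\Gamma_w$ form a tree under adjacency across chords (the chords are pairwise disjoint in a simply connected disc), and the unique simple tree-path between the two outer regions --- which has odd length, since it connects regions of opposite sign and signs alternate across chords --- yields, after pushing its endpoints onto the two bounding chords, the desired nontrivial $FA(i,j)$, unique up to equivalence.

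The main obstacle is the duality step between the two algorithms; once it is carefully in hand, the rest is bookkeeping using lemma \ref{construction_mechanics} and the tree structure on regions of $D-\Gamma_w$.
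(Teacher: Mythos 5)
Your duality between the base and root algorithms is a genuinely nice (and correct) observation, and the dual-tree construction is a reasonable way to realise the arc; but as written the argument has a real gap, concentrated in the ``only if'' direction. The definition of $FA(i,j)$ constrains only the two \emph{outer regions} (the base-$i$'th $-$ region and the root-$j$'th $+$ region); it does not say that the endpoints of the arc lie on the base-$i$'th $-$ chord and the root-$j$'th $+$ chord, and in general they need not. For example, in $\Gamma_w$ with $w=-++-+-$, take $i=3$, $j=1$: the unique nontrivial generalised arc whose outer regions are the base-$3$rd $-$ region and the root-$1$st $+$ region has one endpoint on the base-$3$rd $+$ chord, not on the base-$3$rd $-$ chord (the arc is in fact $BA(2,2)$). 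So comparing the construction times of the base-$i$'th $-$ chord and the root-$j$'th $+$ chord is not, as you assert, ``a necessary condition for $FA(i,j)$ as defined to exist''; to rule out $FA(i,j)$ when the $i$'th $-$ follows the $j$'th $+$ you must control which chords actually carry the endpoints --- e.g.\ by showing that every chord bounding the base-$i$'th $-$ region is drawn at base step $\leq p$ and every chord bounding the root-$j$'th $+$ region at base step $\geq q+1$, and then analysing through which bounding chord the dual-tree path leaves each region (or, alternatively, by proving the $BA$ direction and invoking the fact that exactly one of $FA(i,j)$, $BA(i,j)$ exists).

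Two smaller points. Your existence step asserts ``whenever the two chords are distinct such an arc can be realised'', which is false as a general statement: for $w=++-$, $i=j=1$, the base-$1$st $-$ chord and the root-$1$st $+$ chord are distinct, yet the two outer regions are adjacent across a single chord (the tree path has length one) and no nontrivial generalised arc joins them. Under the hypothesis $p<q$ the construction does work, but you need the unstated fact that then no chord bounds both regions (so the path has odd length at least three, and the prior/latter assignment falls out because \emph{all} chords bounding the $-$ region precede \emph{all} chords bounding the $+$ region). Finally, in the duality step, ``the two uncovered points must be the endpoints of the single chord that both algorithms draw next'' needs the observation that the next chord drawn by either algorithm cannot have an endpoint in the other algorithm's used interval --- immediate once you use that both algorithms produce the same diagram $\Gamma_w$ and that each marked point lies on exactly one chord, but not a consequence of the counting alone. (For comparison, the paper treats this lemma as immediate from the explicit normal form of basis diagrams, figures \ref{fig:26} and \ref{fig:30}, where the equivalence can simply be read off.)
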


\subsubsection{Bypass system of a generalised attaching arc} 

\label{sec_byp_sys_gen_arc}

We cannot perform a bypass move on a generalised attaching arc. But from it, we can obtain a bypass system: since it intersects chords at an odd number of points, it may be broken into several bona fide attaching arcs, overlapping only at endpoints; we perturb these endpoints to become disjoint.

Precisely, let $c$ be a nontrivial generalised attaching arc in a basis chord diagram $\Gamma_w$, with $|c \cap \Gamma_w| = 2m+1$.  Let $p$ be the intersection of $c$ with a chord $\gamma$ of $\Gamma_w$, an interior point of $c$. Then there are ``prior'' and ``latter'' directions along $c$ from $p$, towards prior and latter chords. Note $\gamma$ cannot be an outermost chord; by the classification of chords in basis chord diagrams (lemma \ref{construction_mechanics}), $\gamma$ runs from the westside to eastside of $\Gamma_w$. So from $p$, there is a well-defined ``west'' and ``east'' direction along $\gamma$.

We split $c$ into a series of attaching arcs $c_1, \ldots, c_m$, labelled from prior chord to latter chord, intersecting only at endpoints. If $c$ is forwards (resp. backwards) then so are the $c_i$. The \emph{bypass system of $c$} is obtained by perturbing the $c_i$ as follows:
\begin{enumerate}
\item 
If $c$ is forwards, then at the intersection point $p$ of $c_i$ and $c_{i+1}$ on a non-outermost chord $\gamma$ of $\Gamma_w$, move the endpoint of $c_i$ slightly west of $p$ along $\gamma$, and the endpoint of $c_{i+1}$ slightly east of $p$ along $\gamma$.
\item
If $c$ is backwards, then at the intersection point $p$ of $c_i$ and $c_{i+1}$ on a non-outermost chord $\gamma$ of $\Gamma_w$, we move the endpoint of $c_i$ slightly east of $p$ along $\gamma$, and the endpoint of $c_{i+1}$ slightly west of $p$ along $\gamma$.
\end{enumerate}
See figure \ref{fig:30}. We now generalise lemma \ref{bypass_moves_elementary_moves}.
\begin{lem}[Generalised attaching arcs and elementary moves]\
\label{generalised_arc_generalised_move}
Performing upwards (resp. downwards) bypass moves on $\Gamma_w$ along the bypass system of $FA(i,j)$ (resp. $BA(i,j)$) gives $\Gamma_{w'}$, where $w' = FE(i,j)(w)$ (resp. $BE(i,j)(w)$).
\[
\xymatrix{
w \ar@{|->}[rr]^{FE(i,j)} \ar@{<~>}[dd] && w' \ar@{<~>}[dd] && w \ar@{|->}[rr]^{BE(i,j)} \ar@{<~>}[dd] && w' \ar@{<~>}[dd] \\
&&& &&& \\
\Gamma_w \ar@{|->}[rr]^{\Up(FA(i,j))} && \Gamma_{w'} && \Gamma_w \ar@{|->}[rr]^{\Down(BA(i,j))} && \Gamma_{w'} }
\]
\end{lem}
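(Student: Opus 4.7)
The forwards and backwards statements are analogous; I will prove only the forwards version. Note first that by lemma \ref{existence_generalised_arcs}, $FA(i,j)$ exists on $\Gamma_w$ if and only if $FE(i,j)$ is defined on $w$, so both sides of the equation are simultaneously defined. The plan is to induct on $m$, the number of ordinary attaching arcs in the bypass system of $c = FA(i,j)$ (so $|c \cap \Gamma_w| = 2m+1$). The base case $m=1$ is precisely lemma \ref{bypass_moves_elementary_moves}.

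For the inductive step I would perform the upwards bypass moves sequentially, beginning with the arc $c_1$ at the prior end of $c$. By construction $c_1$ is a nontrivial forwards attaching arc whose prior outer region is the base-$i$'th $-$ region of $\Gamma_w$ (inherited from $c$) and whose latter outer region is the root-$j_1$'th $+$ region, where $j_1$ is determined by the first non-outermost chord crossed by $c$. Thus $c_1 = FA(i,j_1)$ as an ordinary attaching arc, and lemma \ref{bypass_moves_elementary_moves} gives $\Up_{c_1}\Gamma_w = \Gamma_{w_1}$ with $w_1 = FE(i,j_1)(w)$. The crux is then to identify the remaining arcs $\{c_2,\ldots,c_m\}$, with their perturbations intact, as the bypass system of a generalised forwards attaching arc $FA(i',j)$ on $\Gamma_{w_1}$. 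The perturbation convention at the shared endpoint $p_2$ of $c_1$ and $c_2$ (the endpoint of $c_1$ shifted west along the chord at $p_2$, that of $c_2$ shifted east) is precisely what is required to guarantee that $c_2 \cup \cdots \cup c_m$ descends to a valid generalised attaching arc on $\Gamma_{w_1}$ with bypass system exactly $\{c_2,\ldots,c_m\}$; its latter outer region is unchanged (still the root-$j$'th $+$ region), and the new prior base-index $i'$ is read off via lemma \ref{construction_mechanics}. Lemma \ref{arc_still_okay} ensures each remaining arc is still of an admissible forwards type on the new basis diagram.

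Applying the inductive hypothesis to $\{c_2,\ldots,c_m\}$ on $\Gamma_{w_1}$ will yield $\Gamma_{w'}$ with $w' = FE(i',j)(w_1)$, and I expect a direct check from the definition of generalised elementary moves to give the word-level identity $FE(i',j) \circ FE(i,j_1) = FE(i,j)$, closing the induction. The main obstacle will be the re-indexing bookkeeping: pinning down $j_1$ and $i'$ combinatorially in terms of the block structure of $w$ and $w_1$, and verifying that the geometric perturbation convention at the shared points aligns with the combinatorial relabeling forced by $FE(i,j_1)$. Once this local-picture analysis is carried out the word-level identity should be routine, but that is where the real content of the proof lies. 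An alternative, which avoids the induction at the cost of a more elaborate enumeration, would be to cut out a small disc neighbourhood of $c$ in $D$ and check directly that performing all the bypass surgeries simultaneously produces inside this neighbourhood the local picture prescribed by $\Gamma_{FE(i,j)(w)}$.
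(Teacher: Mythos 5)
Your route is genuinely different from the paper's. The paper does not induct on the number of arcs in the bypass system: it writes the substring of $w$ from the $i$'th $-$ sign to the $j$'th $+$ sign as $(-)^{a_1}(+)^{b_1}\cdots(-)^{a_k}(+)^{b_k}$, draws the general local picture of $\Gamma_w$ together with the entire bypass system of $FA(i,j)$ (figure \ref{fig:30}), and reads off that performing all the upwards surgeries replaces this substring by $(+)^{b_1+\cdots+b_k}(-)^{a_1+\cdots+a_k}$, which is by definition $FE(i,j)(w)$; existence of both sides is lemma \ref{existence_generalised_arcs}, as you say. So the ``alternative'' you mention in your last sentence and set aside as a more elaborate enumeration is essentially the paper's actual proof, and it is if anything shorter than the induction: one picture, no re-indexing, no composition identity.

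Your inductive scheme can be made to work, but as written the decisive steps are deferred rather than proved, and they are exactly the content of the lemma. Peeling off $c_1$ is fine: it is a nontrivial forwards arc $FA(i,j_1)$, where $j_1$ turns out to be the index of the last $+$ sign in the first $+$-block of the substring above, and lemma \ref{bypass_moves_elementary_moves} gives $\Up_{c_1}\Gamma_w=\Gamma_{w_1}$ with $w_1=FE(i,j_1)(w)$, i.e.\ $(-)^{a_1}(+)^{b_1}$ is replaced by $(+)^{b_1}(-)^{a_1}$. What then needs an argument, and is only ``expected'' in your write-up, is the geometric identification: the east-perturbed prior endpoint of $c_2$ lands on the long chord created by the surgery, which is precisely the base-$i$'th $-$ chord of $\Gamma_{w_1}$ (so in fact $i'=i$, which you leave undetermined); the latter outer region is still the root-$j$'th $+$ region; and no remaining arc becomes trivial (this last point does follow from nontriviality of $c$, since the chords through the remaining interior intersections are untouched by the surgery, while the new chord is assembled from the three chords met by $c_1$). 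Granting this, the word identity is the easy one $FE(i,j)\bigl(FE(i,j_1)(w)\bigr)=FE(i,j)(w)$, because the surgery merges the block $(-)^{a_1}$ with the following $-$ block, so the $i$'th $-$ sign of $w_1$ heads the merged block. None of these checks fails --- the plan is completable --- but ``I expect'' and ``should be routine'' occur precisely where the proof has to be carried out, so as submitted the argument is incomplete at its crux; carrying out that local analysis once, globally, is exactly what the paper's figure-based proof does.
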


\begin{proof}
From lemma \ref{existence_generalised_arcs}, $FA(i,j)$ exists iff $FE(i,j)$ does, iff the $i$'th $-$ sign occurs before the $j$'th $+$ sign in $w$. Let the substring of $w$ between the $i$'th $-$ sign and the $j$'th $+$ sign be $(-)^{a_1} (+)^{b_1} \cdots  (+)^{b_{k-1}} (-)^{a_k} (+)^{b_k}$. Then the situation appears as shown in figure \ref{fig:30}. Performing upwards bypass moves along the arcs of attachment produces the result shown, which corresponds to replacing this substring with $(+)^{b_1 + \cdots + b_k} (-)^{a_1 + \cdots + a_k}$. That is, $FE(i,j)$ is performed on $w$.
\end{proof}

\begin{figure}
\centering
\includegraphics[scale=0.3]{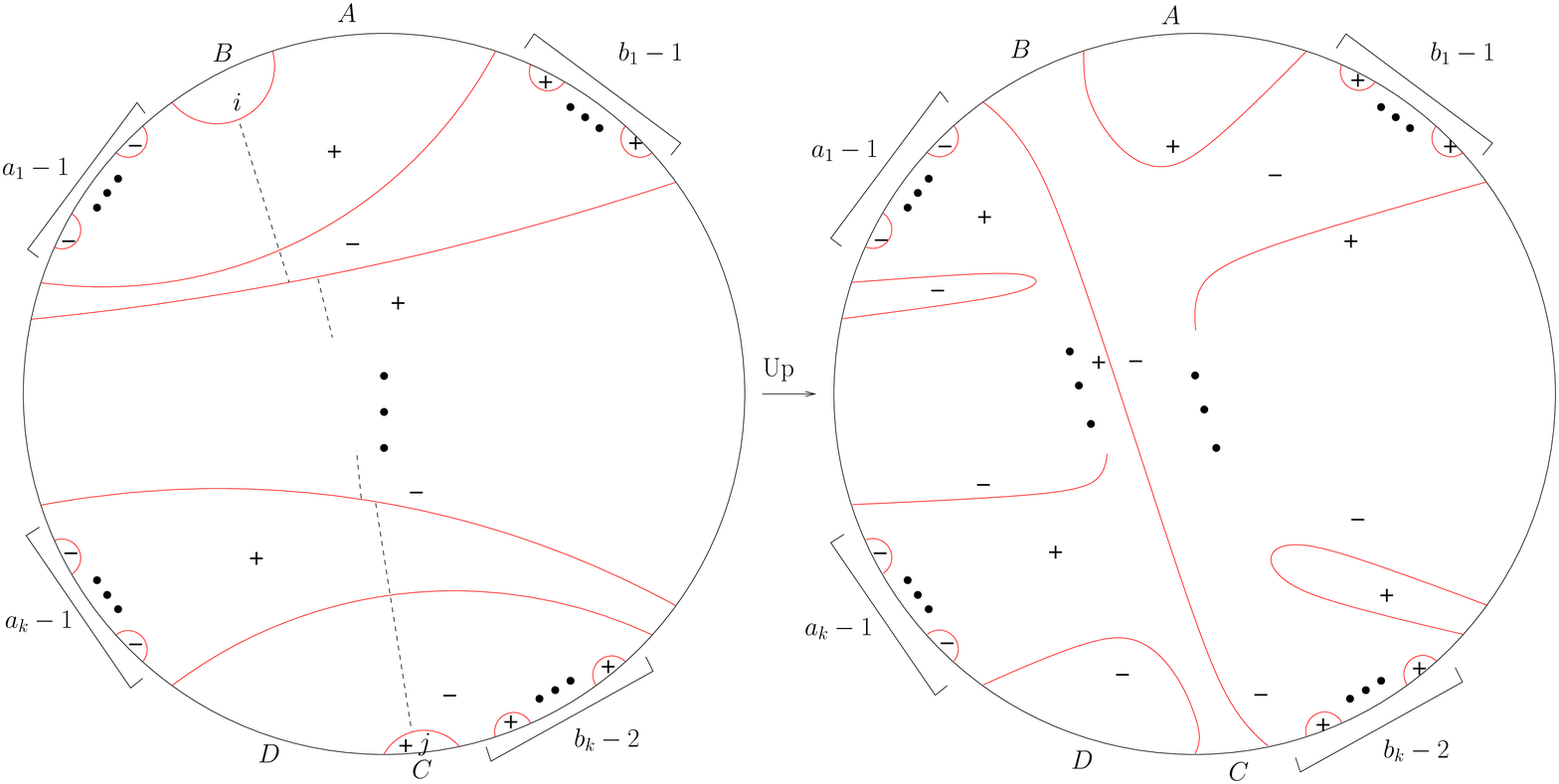}
\caption{Effect of upwards bypass moves on a forwards generalised attaching arc.} \label{fig:30}
\end{figure}

Loosely, bypass moves along the bypass system of a generalised attaching arc create a ``long chord'' running along the arc, and ``close off'' chords on either side of it.

\subsubsection{Anatomy of multiple generalised attaching arcs}

\label{sec_anatomy_multiple_gen_arcs}

We now consider multiple generalised attaching arcs. Two generalised arcs $FA(i_1, j_1)$, $FA(i_2, j_2)$ might intersect; even if they do not intersect, it might be that having placed $FA(i_1, j_1)$, we can place $FA(i_2, j_2)$ on either side of it, leading to different outcomes. We will specify precisely how to make such placements. 

We therefore make some definitions. Note that a forwards arc $FA(i,j)$, taken together with its prior and latter chords, splits the disc $D$ into four regions; see figure \ref{fig:31}(left):
\begin{enumerate}
\item The piece containing the prior outer region of $FA(i,j)$.
\item The piece which contains the marked points on the eastside immediately anticlockwise/right of the latter chord of $FA(i,j)$
\item The piece containing the latter outer region of $FA(i,j)$.
\item The piece which contains the marked points on the westside immediately anticlockwise/left of the prior chord of $FA(i,j)$.
\end{enumerate}
Regions (i) and (ii) are the \emph{northeast} of $FA(i,j)$; regions (iii) and (iv) the \emph{southwest}. All chords and regions constructed in the base point construction algorithm prior to the $i$'th $-$ region lie in the northeast; all chords and regions created after the root-$j$'th $+$ region lie to the southwest. 

Similar definitions of \emph{northwest} and \emph{southeast} exist in the backwards case.

\begin{figure}[tbh]
\centering
\mbox{
\includegraphics[scale=0.3]{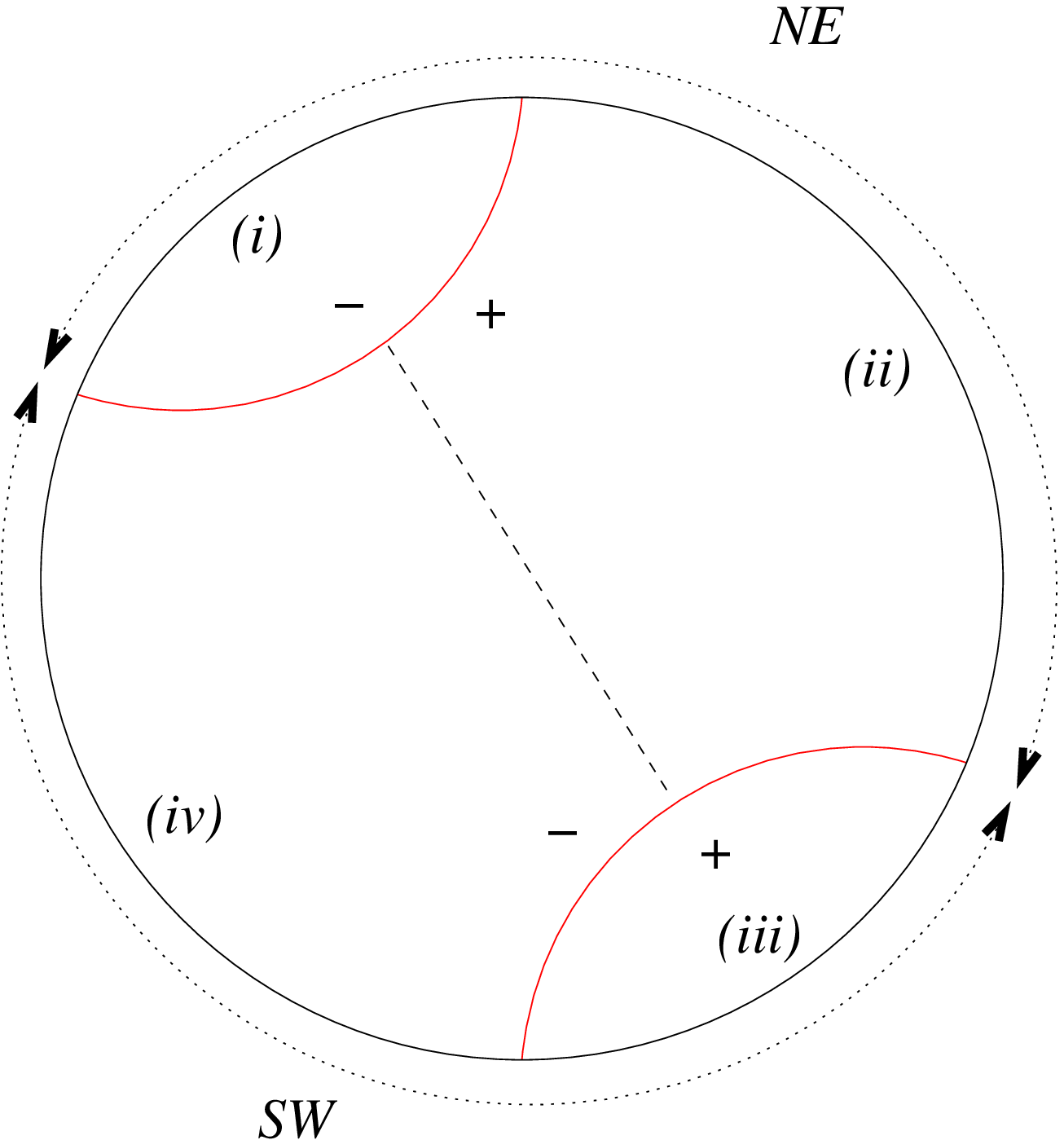}
\quad \quad \quad \quad
\includegraphics[scale=0.25]{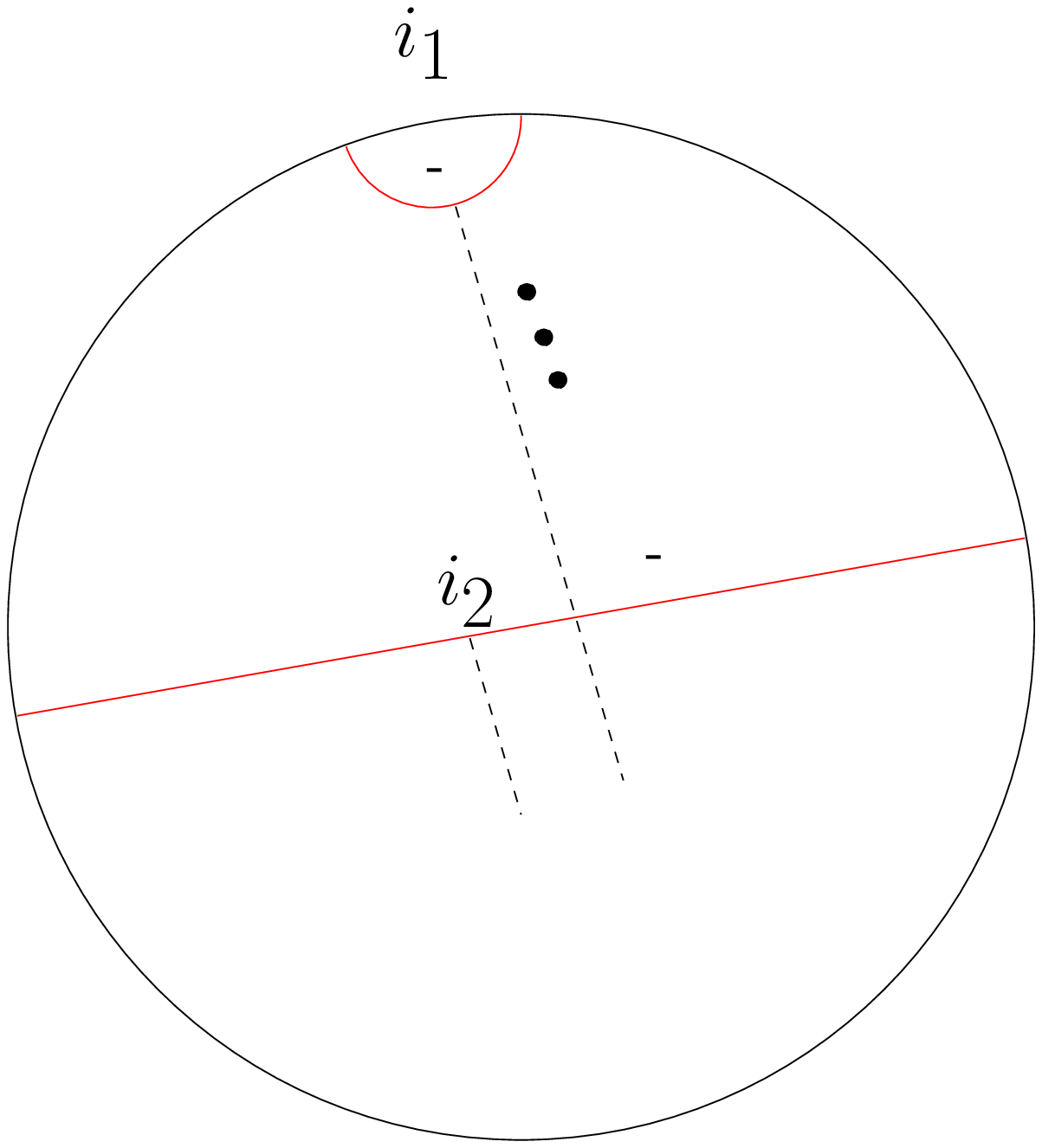}
}
\caption{Left: Compass points. Right: Placing two arcs; $i_1 < i_2$.} 
\label{fig:31}
\label{fig:32}
\end{figure}

\label{sec_placing_two_gen_arcs}

We use these ``compass points'' to place two generalised attaching arcs $FA(i_1, j_1)$ and $FA(i_2, j_2)$, \emph{under the assumptions $i_1 < i_2$, $j_1 \leq j_2$}. From the base point construction algorithm, since $i_1 < i_2$, the base-$i_2$'th $-$ region either lies entirely in the southwest of $FA(i_1, j_1)$, or in both the southwest and northeast regions of $FA(i_1, j_1)$. In the first case the prior endpoint of $FA(i_2, j_2)$ is determined up to equivalence. In the second case there is a choice: the prior endpoint of $FA(i_2, j_2)$ may be southwest or northeast of $FA(i_1, j_1)$. We choose southwest. See figure \ref{fig:32}(right).

Similarly, since $j_1 \leq j_2$, the root-$j_2$'th $+$ region is either identical to the root-$j_1$'th $+$ region (i.e. $j_1 = j_2$); or lies southwest of $FA(i_1, j_1)$ in region (iii) of figure \ref{fig:31}(left); or lies southwest of $FA(i_1, j_1)$ in region (iv). In the last case there is no choice; in the other cases there is a choice: we always choose southwest.

The conditions $i_1 < i_2$ and $j_1 \leq j_2$, with our ``southwest choices'', give a well-defined way to draw $FA(i_2, j_2)$ after $FA(i_1, j_1)$. These choices ensure $FA(i_2, j_2)$ lies disjoint from $FA(i_1, j_1)$, and entirely in its southwest; in fact these conditions determine the positions of $FA(i_1, j_1)$ and $FA(i_2, j_2)$ up to equivalence.

\label{well_placed_two}
Similar considerations apply to backwards generalised attaching arcs.

\label{sec_placing_nicely_multiple_gen_arcs}

Now consider multiple generalised arcs of attachment, similarly ``nicely ordered''.
\begin{defn}[Nicely ordered generalised arcs of attachment]\
\label{nicely_ordered_generalised_arcs}
\begin{enumerate}
\item
A sequence of forwards generalised attaching arcs $FA(i_1, j_1), \ldots, FA(i_m, j_m)$ on $\Gamma_w$ is \emph{nicely ordered} if
\[
 i_1 < i_2 < \cdots < i_m \quad \text{and} \quad j_1 \leq j_2 \leq \cdots \leq j_m.
\]
\item
A sequence of backwards generalised attaching arcs $BA(i_1, j_1), \ldots, BA(i_m, j_m)$ on $\Gamma_w$ is \emph{nicely ordered} if
\[
 j_m < j_{m-1} < \cdots < j_1 \quad \text{and} \quad i_m \leq i_{m-1} \leq \cdots \leq i_1.
\]
\end{enumerate}
\end{defn}

We place the generalised attaching arcs in a nicely ordered sequence by repeating the procedure for two arcs. Start from $FA(i_1, j_1)$ and proceed through to $FA(i_m, j_m)$, choosing ``southwest'' whenever we face a choice. Starting from $FA(i_1, j_1)$, place $FA(i_2, j_2)$ ``southwest'' of it, as above. Then place $FA(i_3, j_3)$ similarly ``southwest'' of $FA(i_2, j_2)$. To see that this defines $FA(i_3, j_3)$ uniquely, consider a chord $\gamma$ of $\Gamma_w$ that intersects $FA(i_3, j_3)$. The nicely ordered condition implies that if $FA(i_1, j_1)$ intersects $\gamma$, then $FA(i_2, j_2)$ does also. Thus, placing $FA(i_3, j_3)$ southwest of $FA(i_2,j_2)$ ensures it is southwest of $FA(i_1, j_1)$ also. Proceeding inductively, all generalised arcs of attachment are placed uniquely: we obtain the following lemma.
\begin{lem}[Placing multiple nicely ordered arcs]
\label{well_placed_many}
Let $FA(i_1, j_1)$, $\ldots$, $FA(i_m, j_m)$ be a nicely ordered sequence of forwards generalised arcs of attachment. They can be placed on $\Gamma_w$, so that they are disjoint, and so that for $u<v$, $FA(i_v, j_v)$ lies entirely in the southwest of $FA(i_u, j_u)$. Moreover, there is only one way to place the arcs satisfying these conditions, up to equivalence (i.e. homotopy through generalised attaching arcs).
\qed
\end{lem}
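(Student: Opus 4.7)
The plan is to induct on $m$, using the two-arc placement from the preceding discussion (section \ref{sec_placing_two_gen_arcs}, see \ref{well_placed_two}) as the engine. The case $m=1$ is immediate since any nontrivial generalised attaching arc is determined up to equivalence by its prior and latter outer regions; the case $m=2$ is exactly what has just been established.

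For the inductive step, suppose $FA(i_1,j_1), \ldots, FA(i_{m-1},j_{m-1})$ have been placed disjointly, so that $FA(i_v,j_v)$ lies entirely southwest of $FA(i_u,j_u)$ for $u < v \leq m-1$. Applying the two-arc construction to the pair $FA(i_{m-1},j_{m-1}), FA(i_m,j_m)$ (permissible because $i_{m-1} < i_m$ and $j_{m-1} \leq j_m$) produces a placement of $FA(i_m,j_m)$ southwest of $FA(i_{m-1},j_{m-1})$, which by the $m=2$ analysis is uniquely determined up to equivalence. The content of the induction is verifying that this placement is automatically southwest of every $FA(i_u,j_u)$ with $u < m-1$, so that no extra choices reappear.

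Both the propagation and the uniqueness reduce to the following monotonicity principle, which I expect to be the main step: for any chord $\gamma$ of $\Gamma_w$, if $FA(i_u,j_u)$ crosses $\gamma$ then $FA(i_v,j_v)$ also crosses $\gamma$ whenever $u \leq v$ in a nicely ordered sequence. To verify this I would unpack the description of chords given by lemma \ref{construction_mechanics}: each non-outermost chord of $\Gamma_w$ runs between a specific westside endpoint and a specific eastside endpoint whose labels are determined by the base- and root-numberings, and whether $FA(i,j)$ crosses $\gamma$ is governed purely by the comparison of $(i,j)$ with the pair of numberings for $\gamma$'s endpoints. The inequalities $i_u < i_v$ and $j_u \leq j_v$ then mechanically force crossings to propagate. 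Granted this, on every chord $\gamma$ crossed by $FA(i_m,j_m)$ the arc $FA(i_{m-1},j_{m-1})$ also crosses $\gamma$, so by the inductive hypothesis lies southwest of $FA(i_u,j_u)$ on $\gamma$; placing $FA(i_m,j_m)$ southwest of $FA(i_{m-1},j_{m-1})$ therefore forces it to be southwest of $FA(i_u,j_u)$ as well. Uniqueness up to equivalence then cascades from the two-arc uniqueness. The only real work is the careful case check for the monotonicity lemma; once that is in hand, the induction is purely formal, and the backwards case is entirely analogous with east and west interchanged.
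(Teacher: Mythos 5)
Your overall skeleton---induct on $m$, feed each new arc into the two-arc placement, and reduce both the propagation and the uniqueness to a crossing statement about chords of $\Gamma_w$---is the same as the paper's. But the "monotonicity principle" you isolate as the main step is false, and this is a genuine gap. It is not true that a chord met by $FA(i_u,j_u)$ is met by every later arc of a nicely ordered sequence. Take $w=-+-+$, whose five chords are linearly ordered from the base point to the root point: the base-$1$st $-$ chord, the base-$1$st $+$ chord, the base-$2$nd $-$ chord (which is also the root-$1$st $+$ chord), the base-$2$nd $+$ chord, and the root-$2$nd $+$ chord. The nicely ordered pair $FA(1,1)$, $FA(2,2)$ has $FA(1,1)$ meeting the first three of these chords and $FA(2,2)$ meeting the last three: the chord at the base point is met by $FA(1,1)$ but not $FA(2,2)$, and the chord at the root point is met by $FA(2,2)$ but not $FA(1,1)$. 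So neither unconditional direction of propagation holds---the crossing sets translate southwest along the diagram, they do not nest. Note also that you actually invoke the principle in the reverse direction from the one you state ("every chord crossed by $FA(i_m,j_m)$ is crossed by $FA(i_{m-1},j_{m-1})$"), and that direction fails in the same example, so the argument as written is both internally inconsistent and resting on a false lemma.

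What the induction really needs, and what the paper's proof uses, is an interpolation (betweenness) property rather than monotonicity: if a chord $\gamma$ is met by $FA(i_u,j_u)$ and by $FA(i_t,j_t)$ with $u<v<t$ in the nicely ordered sequence, then it is met by $FA(i_v,j_v)$. This is exactly what is required, because the relative position of $FA(i_m,j_m)$ and an earlier arc $FA(i_u,j_u)$ only has content along chords that both of them cross; on such a chord, interpolation puts all the intermediate arcs there as well, and the southwest relation then follows by transitivity from the pairwise placements ($FA(i_m,j_m)$ southwest of $FA(i_{m-1},j_{m-1})$, which by induction is southwest of $FA(i_u,j_u)$). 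The interpolation statement does follow from the kind of bookkeeping you sketch via lemma \ref{construction_mechanics} (the chords met by $FA(i,j)$ form an "interval" between the base-$i$'th $-$ chord and the root-$j$'th $+$ chord, and these intervals move monotonically with $(i,j)$), so your plan can be repaired; but as proposed, the stated lemma is wrong and the deduction from it would fail.
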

This bypass system is called the \emph{bypass system of the nicely ordered sequence} of forwards generalised arcs of attachment. For backwards arcs the procedure is similar, from $BA(i_m, j_m)$ to $BA(i_1, j_1)$, always taking the ``northwest'' choice. 
\label{bypass_system_nicely_ordered_generalised_arcs}

\subsubsection{Nicely ordered sequences of generalised moves}

\label{sec_nicely_ordered_seq_gen_el_moves}

We extend the notion of ``nicely ordered'' to generalised elementary moves. A sequence of forwards generalised elementary moves $FE(i_1, j_1), \ldots, FE(i_m, j_m)$ on $w$ is \emph{nicely ordered} if $i_1 < \cdots < i_m$ and $j_1 \leq \cdots \leq j_m$, and a sequence of backwards generalised elementary moves $BE(i_1, j_1), \ldots, BE(i_m, j_m)$ is \emph{nicely ordered} if $j_m < \cdots < j_1$ and $i_m \leq \cdots \leq i_1$.

For convenience, extend the definition of $FE(i,j)$ a little: if $FE(i,j)$ is not well-defined, then regard it as the ``null move'', with trivial effect. The following result is then clear.
\begin{lem} [Commutativity of generalised moves]
\label{generalised_moves_commute}
The generalised elementary moves of a nicely ordered sequence commute.
\qed
\end{lem}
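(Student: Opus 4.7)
The plan is first to reduce to pairwise commutativity: since any permutation of a finite sequence is a product of adjacent transpositions, once any two generalised elementary moves drawn from the sequence commute, the whole composition is independent of the order in which its moves are applied. By the forwards/backwards symmetry noted in remark preceding lemma~\ref{generalised_arc_generalised_move}, it suffices to consider two forwards moves $FE(i,j)$ and $FE(i',j')$ from a nicely ordered forwards sequence, so $i<i'$ and $j\le j'$; both being applicable, if $p,q,p',q'$ denote the positions in $w$ of the $i$-th $-$, $j$-th $+$, $i'$-th $-$, $j'$-th $+$ respectively, then $p<p'$, $q\le q'$, $p<q$, $p'<q'$.

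Case analysis on the relative positions of $p',q,q'$ proceeds as follows. If $p'>q$, the substrings $w[p,q]$ and $w[p',q']$ are disjoint, each move alters only its own interval, and the other move's positions and signs are left intact; commutativity is immediate. If $p'\le q$ and $j=j'$ (so $q=q'$), then $FE(i,j)$ transports every $-$ from the $i$-th to the $(i+a-1)$-th past the shared $j$-th $+$, in particular the $i'$-th $-$, so $FE(i',j)$ becomes the null move afterwards; in the opposite order, $FE(i',j)$ reorganises only a sub-block of $w[p,q]$, and a direct check shows that $FE(i,j)$ then produces the same word as $FE(i,j)$ alone. The remaining case has $p'\le q$ and $j<j'$: writing $a,b$ for the number of $-$'s and $+$'s in $w[p,q]$, $a',b'$ for those in $w[p',q]$, $b_1 = b-b'$ for the $+$'s in $w[p,p'-1]$, and $a_2,b_2 = j'-j-1$ for those in $w[q+1,q'-1]$, one finds that applying $FE(i,j)$ first converts $[p,q]$ into $(+)^b(-)^a$ and slides the $i'$-th $-$ to position $p'' := q-a+(i'-i)+1$, after which $FE(i',j')$ rearranges $[p'',q']$ as $(+)^{b_2+1}(-)^{a'+a_2}$; applying $FE(i',j')$ first converts $[p',q']$ into $(+)^{b'+b_2+1}(-)^{a'+a_2}$ and places the $j$-th $+$ at position $p'+b'-1$, after which $FE(i,j)$ moves the $-$'s of $[p,p'-1]$ past it. The pivotal identity
\[
p'' \;=\; q - a + (i'-i) + 1 \;=\; p + b + (i'-i) \;=\; p' + b'
\]
(using $q-p+1 = a+b$ and $p'-p = (i'-i)+b_1$) shows that the two orderings deposit the $+$-block and $-$-block at the same positions, and a sign-by-sign identification of numberings then completes the match.

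The main obstacle is the bookkeeping in this last case: tracking precisely where each $\pm$ of $w[p,q']$ lands under each ordering and leveraging the identity $p''=p'+b'$ to force character-by-character agreement of the two output words. A cleaner conceptual route is geometric: by lemma~\ref{well_placed_many} the bypass systems of distinct generalised attaching arcs in a nicely ordered sequence lie in nested southwest (resp.\ northwest) regions and are therefore disjoint, so the underlying bypass moves commute; combining this with lemma~\ref{generalised_arc_generalised_move}, applied to each arc in turn, transports the commutativity back to the level of words. The backwards case is handled by the symmetric argument throughout.
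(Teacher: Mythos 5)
Your combinatorial argument is correct, and it is in fact more detailed than the paper, which offers no proof of Lemma \ref{generalised_moves_commute} at all (the result is simply declared clear after extending $FE(i,j)$ by the null-move convention). Your reduction to adjacent transpositions and the three-case analysis for a nicely ordered pair $FE(i,j)$, $FE(i',j')$ ($i<i'$, $j\le j'$) checks out: the disjoint-interval case, the $j=j'$ case (where one order makes the second move null), and the bookkeeping in the $j<j'$ case, including the identity $p''=p'+b'$, all give the common output $(+)^{b}(-)^{i'-i}(+)^{b_2+1}(-)^{a'+a_2}$ on the span $[p,q']$.

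Two caveats. First, after the reduction to adjacent transpositions the two moves are applied to an \emph{intermediate} word, where one of them may already be the null move even though both are genuine moves on the original $w$; your analysis assumes ``both being applicable''. This is exactly where the null-move convention is needed, but it is a one-line fix: a forwards move only pushes $-$ signs to the right, so a move that is null stays null after the other is applied, and a move with smaller $i$ is unaffected by a move with larger $i'$; hence both orders collapse to the single non-null move. (A tidier packaging of your whole computation: encode a word by the nondecreasing sequence $\alpha_1,\dots,\alpha_{n_-}$, where $\alpha_l$ is the number of $+$ signs left of the $l$'th $-$ sign; then $FE(i,j)$, with the null convention, is $\alpha_l\mapsto\alpha_l$ for $l<i$ and $\alpha_l\mapsto\max(\alpha_l,j)$ for $l\ge i$, and commutativity of any two such operators is immediate.) Second, your suggested ``cleaner conceptual route'' via Lemmas \ref{well_placed_many} and \ref{generalised_arc_generalised_move} is not as immediate as stated: after surgery along the first arc's bypass system, the remaining arcs only realise the corresponding generalised moves on the new diagram after the ``pushing off'' analysis underlying Lemma \ref{multiple_generalised_elementary_moves_multiple_generalised_arcs}, which comes later in the paper; so the combinatorial argument, not the geometric aside, is the proof to rely on here.
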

Hence we may speak of applying a nicely ordered set of generalised forwards elementary moves to a word, without regard to their order. There are forwards and backwards versions.

\label{sec_gen_el_moves_comp_pair}

Consider two comparable words $w_1 \preceq w_2$ in $W(n_-, n_+)$. Then, for every $1 \leq i \leq n_-$, let the $i$'th $-$ sign have $\alpha_i$ $+$ signs to its left in $w_1$, and $\beta_i$ $+$ signs to its left in $w_2$; so $\alpha_i \leq \beta_i$. Then the following inequalities describe $w_1 \preceq w_2$.
\[
\begin{array}{ccccccc}
\alpha_1 & \leq & \alpha_2 & \leq & \cdots & \leq & \alpha_{n_-} \\
\begin{turn}{90} $\geq$ \end{turn} && \begin{turn}{90} $\geq$ \end{turn} &&&& \begin{turn}{90} $\geq$ \end{turn} \\
\beta_1 & \leq & \beta_2 & \leq & \cdots & \leq & \beta_{n_-}
\end{array}
\]

\begin{defn}[Elementary moves of comparable pair] 
\label{def_gen_fwd_el_moves_comp_pair}
The \emph{generalised forwards elementary moves of the pair $w_1 \preceq w_2$} are
\[
 FE(1, \beta_1), \; FE(2, \beta_2), \; \ldots, \; FE(n_-, \beta_{n_-}),
\]
where $\beta_i$ denotes the number of $+$ signs to the left of the $i$'th $-$ sign in $w_2$. 
\end{defn}
This is clearly a nicely ordered sequence; by lemma \ref{generalised_moves_commute} the moves commute. Their effect is not difficult to see.
\begin{lem}[Elementary moves between comparable words]\
\label{elementary_moves_one_to_other}
The result of applying the generalised forwards elementary moves of the pair $w_1 \preceq w_2$, to $w_1$, is $w_2$.
\qed
\end{lem}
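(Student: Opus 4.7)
The plan is to prove the lemma by a downward induction that tracks, for each $-$ sign, the number of $+$ signs currently to its left. Since the generalised elementary moves of the pair commute by Lemma \ref{generalised_moves_commute}, I am free to apply them in the order that makes the bookkeeping cleanest. I would apply them in reverse: $FE(n_-, \beta_{n_-})$ first, then $FE(n_- - 1, \beta_{n_- - 1})$, and so on down to $FE(1, \beta_1)$. The target characterization of $w_2$ is that its $i$'th $-$ sign has exactly $\beta_i$ plus signs to its left; since forwards elementary moves never alter the relative order of the $+$ signs, knowing these $n_-$ numbers (together with $n_+$) pins the word down uniquely.

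The inductive claim is: after the moves $FE(n_-, \beta_{n_-}), \ldots, FE(k+1, \beta_{k+1})$ have been applied to $w_1$, the $i$'th $-$ sign has $\beta_i$ plus signs to its left for all $i > k$, while for $i \le k$ it still has $\alpha_i$ plus signs to its left (its position in $w_1$). The base $k = n_-$ is tautological. For the inductive step, the $k$'th $-$ sign currently has $\alpha_k$ plus signs to its left, and every $-$ sign with index $i > k$ currently has $\beta_i \ge \beta_k$ plus signs to its left by monotonicity of $\beta$. Hence those later $-$ signs lie at or to the right of the $\beta_k$'th $+$ sign and do not fall strictly between the $k$'th $-$ sign and the $\beta_k$'th $+$ sign. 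Consequently, $FE(k, \beta_k)$ transports only the $k$'th $-$ sign, placing it immediately after the $\beta_k$'th $+$ sign, which means it now has $\beta_k$ plus signs to its left, as required. The $-$ signs with index $< k$ are untouched because they sit strictly to the left of the $k$'th $-$ sign.

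The main (small) obstacle is confirming that the indexing of $-$ signs by position is preserved throughout, because several $\beta_i$ may coincide. This is handled by noting that in case $\beta_k = \beta_{k+1}$ the $(k+1)$'th $-$ sign already sits immediately after the $\beta_k$'th $+$ sign, and the move slots the $k$'th $-$ sign in just before it; so the numerical order of $-$ signs is preserved at every step. Once the induction reaches $k = 0$, every $-$ sign has the prescribed number of $+$ signs to its left, and the resulting word is forced to be $w_2$.
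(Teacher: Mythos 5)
Your argument is correct. The paper itself gives no written proof of this lemma --- it is stated with a \qed after the remark that the sequence $FE(1,\beta_1),\ldots,FE(n_-,\beta_{n_-})$ is nicely ordered, hence commutes (lemma \ref{generalised_moves_commute}), and that "their effect is not difficult to see" --- so there is nothing to compare line by line; what you have done is supply the missing details, and you have done so validly within the paper's framework. Your choice to invoke commutativity and apply the moves in \emph{decreasing} order of $i$ is a genuinely convenient route: in that order each $FE(k,\beta_k)$ transports only the $k$'th $-$ sign (all $-$ signs of index $>k$ already have $\beta_i\geq\beta_k$ plus signs to their left, hence sit at or beyond the $\beta_k$'th $+$ sign), so one never has to track the blocks of $-$ signs that get ``brought along for the ride'' when the moves are applied in the forward order suggested by the paper's examples. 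Your handling of the tie case is also right, and worth making explicit in the form you implicitly use: if $\beta_{k+1}=\beta_k$ and the $(k+1)$'th sign had actually been moved, then $\alpha_{k+1}<\beta_{k+1}=\alpha_k\leq\alpha_{k+1}$ would be a contradiction, so ties only arise from null moves and the positional indexing of $-$ signs is never scrambled. The only cosmetic omission is the degenerate case $\alpha_k=\beta_k$, where $FE(k,\beta_k)$ is the null move rather than a transport; the inductive conclusion (the $k$'th $-$ sign has $\beta_k$ plus signs to its left) holds there trivially, so this does not affect the argument.
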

Note that while $w_1 \preceq w_2$ means ``all $-$ signs move right'', some may not move at all; and if the $i$'th $-$ sign does not move, then $FE(i, \beta_i)$ is trivial. We could delete such moves from the sequence; but it is easier to ``take individual care'' of each symbol in this way.

There is also a backwards version: letting $\delta_j$ be the number of $-$ signs to the left of the $j$'th $+$ sign in $w_1$, the \emph{generalised backwards elementary moves of the pair $w_1 \preceq w_2$} are $BE(\delta_1, 1), \ldots, BE(\delta_{n_+}, n_+)$; the result of applying them to $w_2$ is $w_1$.

Thus we can go from $w_1$ to $w_2$ (and vice versa) by a canonical nicely ordered sequence of generalised elementary moves. It remains to do the same with bypass moves.

\subsubsection{Bypass systems of nicely ordered sequences}

\label{sec_mechanics_byp_sys_nicely_ordered}

We now generalise lemma \ref{generalised_arc_generalised_move} to multiple (nicely ordered) generalised attaching arcs and multiple (nicely ordered) generalised elementary moves. Note commutativity of bypass moves is obvious --- bypass moves on disjoint attaching arcs obviously commute --- in analogy to lemma \ref{generalised_moves_commute}.

Let $FA(i_1, j_1), FA(i_2, j_2)$ be nicely ordered forwards generalised attaching arcs on $\Gamma_w$ (placed as in lemma \ref{well_placed_many}). Write $FA_w(i_1, j_1), FA_w(i_2, j_2)$ to emphasise they refer to $\Gamma_w$. Performing upwards surgery along $FA_w(i_1, j_1)$ gives $\Gamma_{w'}$, where $w' = FE(i_1, j_1)(w)$ (lemma \ref{generalised_arc_generalised_move}). 

On $\Gamma_{w'}$, $FA_w (i_2, j_2)$ may no longer be a nontrivial generalised attaching arc; but it is \emph{equivalent} to the generalised attaching arc $FA_{w'} (i_2, j_2)$ on $\Gamma_{w'}$, as we now describe. If on $\Gamma_{w'}$ there is no forwards generalised arc $FA_{w'}(i_2, j_2)$, then by lemma \ref{existence_generalised_arcs} $j_1 = j_2$. The bypass system of $FA_{w}(i_2, j_2)$ then consists entirely of trivial attaching arcs: see figure \ref{fig:34}. Otherwise, recall the effect of bypass surgery along the bypass system of a forwards generalised attaching arc, creating a ``long chord'', closing off outermost negative regions to the southwest and positive regions to the northeast. Some of these outermost negative regions now contain segments of $FA_w(i_2, j_2)$; they may be ``pushed off'' as in figure \ref{fig:35}. Thus we homotope $FA_{w}(i_2, j_2)$ to $FA_{w'}(i_2, j_2)$: a homotopy through generalised arcs, combined with finitely many local ``pushing off'' moves as in figure \ref{fig:35}. As shown in figure \ref{fig:36}, upwards bypass surgery along the bypass system of $FA_{w} (i_2, j_2)$ or of $FA_{w'} (i_2, j_2)$ gives the result.

\begin{figure}[tbh]
\centering
\includegraphics[scale=0.3]{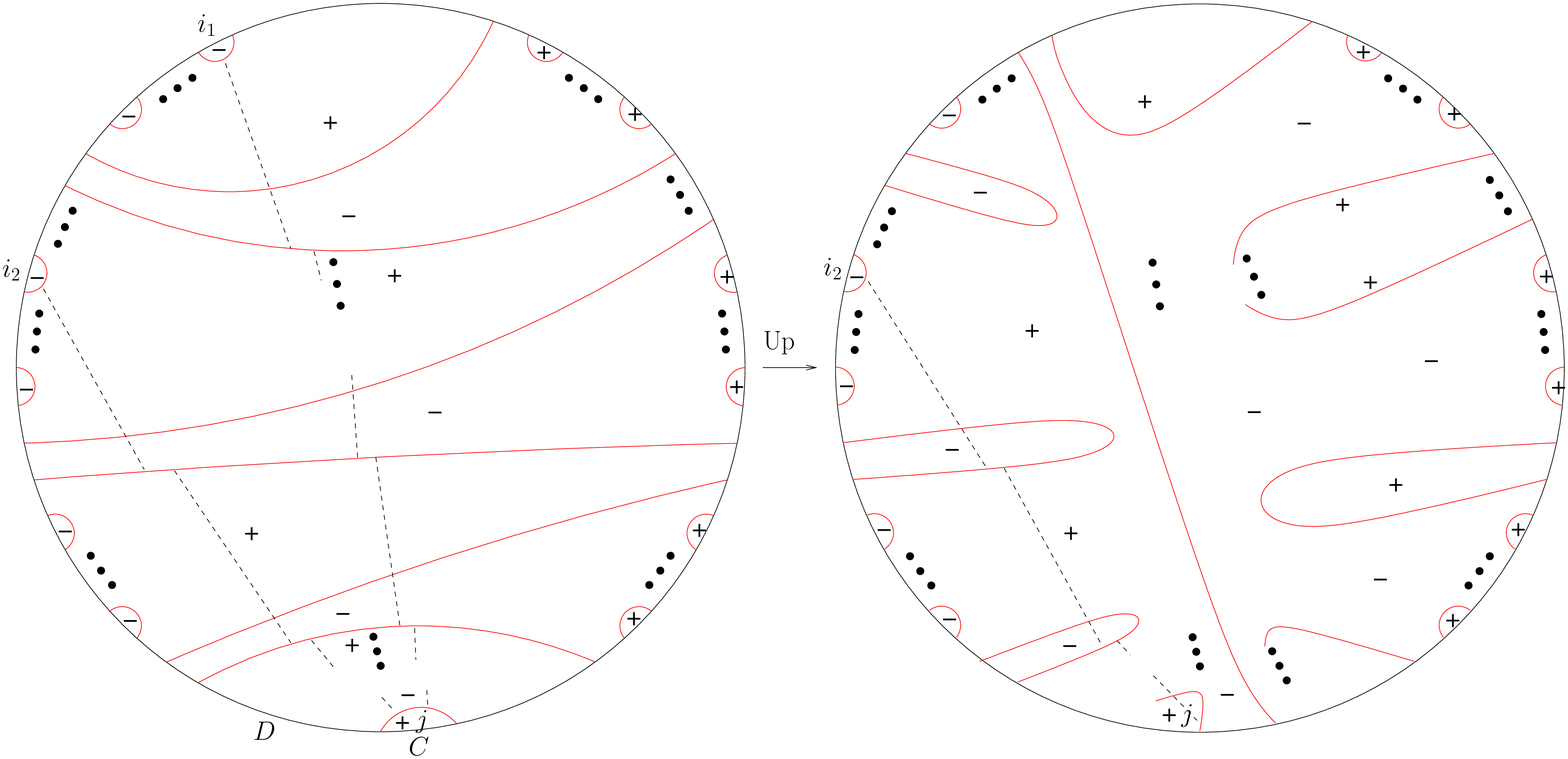}
\caption{The case $j_1 = j_2$.} \label{fig:34}
\end{figure}

\begin{figure}[tbh]
\centering
\includegraphics[scale=0.5]{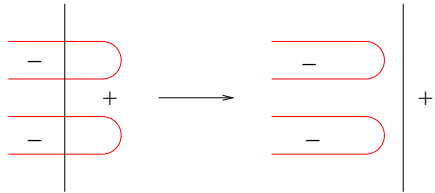}
\caption{Pushing off trivial parts of a generalised attaching arc.} \label{fig:35}
\end{figure}

\begin{figure}[tbh]
\centering
\includegraphics[scale=0.4]{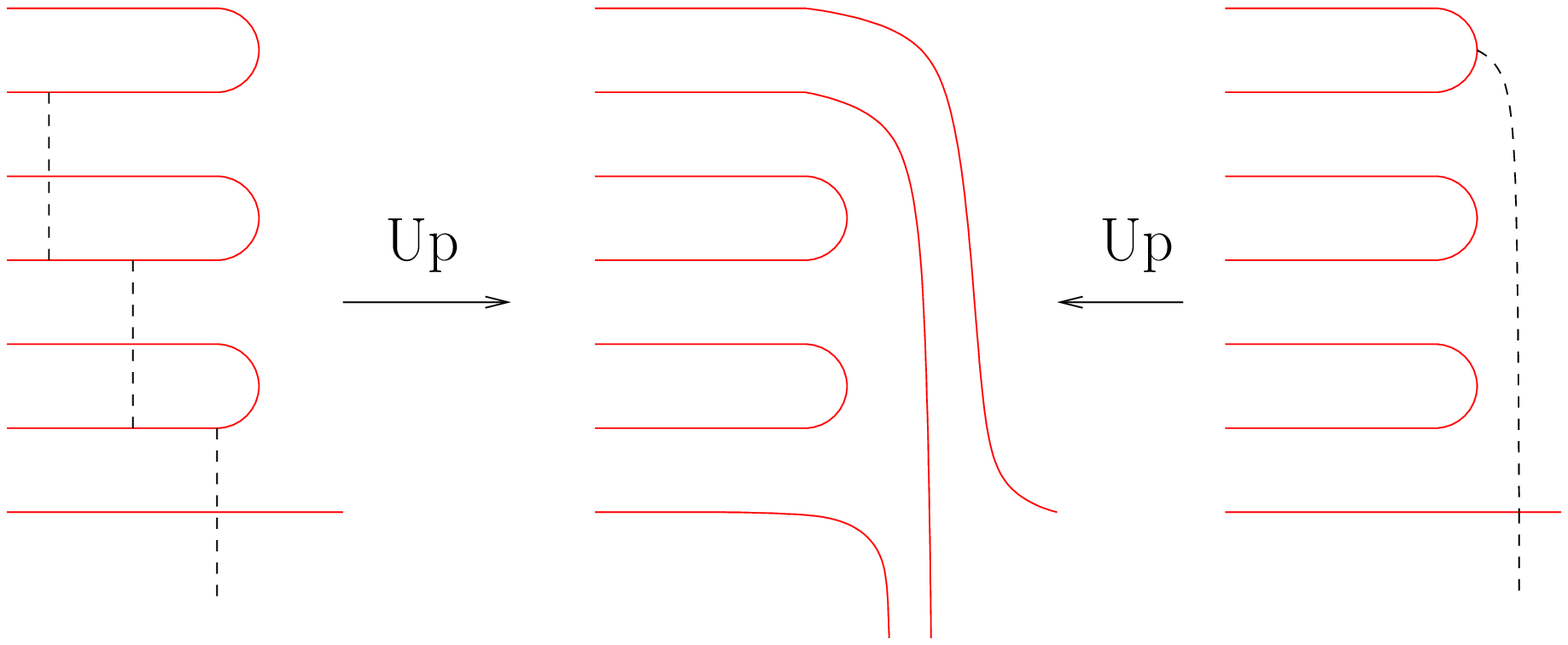}
\caption{Pushing off makes no difference to effect of bypass moves.} \label{fig:36}
\end{figure}

The same applies to a nicely ordered sequence $FA_w(i_1, j_1), \ldots, FA_w(i_m, j_m)$ of arbitrary length on $\Gamma_w$. Performing upwards bypass moves along the bypass system of $FA_w(i_1, j_1)$ yields $\Gamma_{w'}$ where $w' = FE(i_1, j_1)(w)$. On $w'$, each of $FA_w(i_2, j_2), \ldots, FA_w(i_m, j_m)$ may no longer be a nontrivial generalised attaching arc: if $FE(i_k, j_k)$ does not exist on $w'$ then the bypass system of $FA_w(i_k, j_k)$ consists entirely of trivial arcs of attachment, as discussed above; other arcs are \emph{equivalent} to the nontrivial generalised attaching arcs among $FA_{w'}(i_2, j_2), \ldots, FA_{w'} (i_m, j_m)$ on $\Gamma_{w'}$, in the following sense. As above, we may ``push them off'', but now we may have to push off several $FA_w(i_k, j_k)$ simultaneously, as in figure \ref{fig:37}. Thus we may homotope the nontrivial arcs among $FA_{w}(i_2, j_2), \ldots, FA_{w}(i_m, j_m)$ simultaneously, rel endpoints, to $FA_{w'}(i_2, j_2), \ldots, FA_{w'}(i_m, j_m)$, placed ``northeast to southwest'' as in section \ref{bypass_system_nicely_ordered_generalised_arcs}, via a homotopy through disjoint generalised arcs, combined with finitely many ``pushing off'' moves. Upwards bypass surgery on $\Gamma_{w'}$ along either $\{ FA_{w}(i_2, j_2), \ldots, FA_{w}(i_m, j_m) \}$ or $\{ FA_{w'}(i_2, j_2)$, $\ldots$, $FA_{w'}(i_m, j_m) \}$, has the same effect: see figure \ref{fig:38}.

\begin{figure}[tbh]
\centering
\includegraphics[scale=0.4]{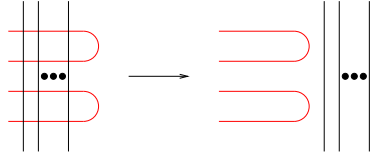}
\caption{Pushing off trivial parts of multiple generalised attaching arcs.} \label{fig:37}
\end{figure}

\begin{figure}[tbh]
\centering
\includegraphics[scale=0.4]{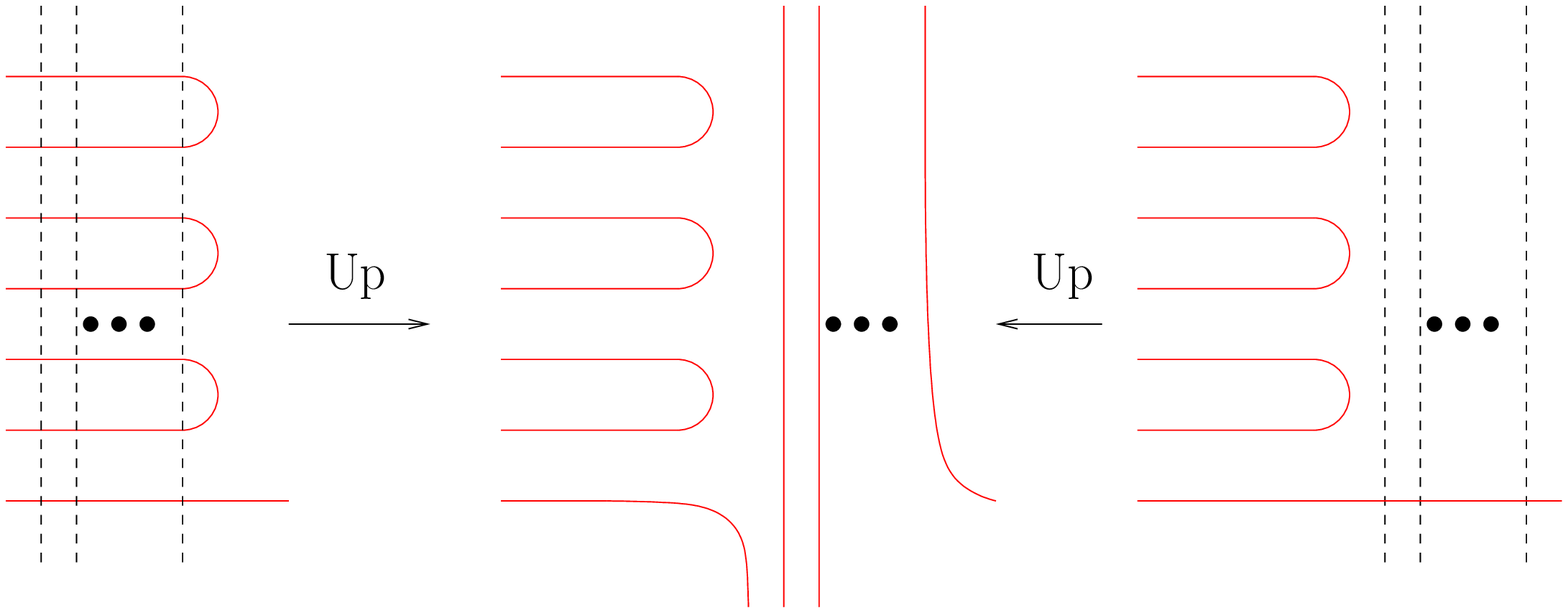}
\caption[Pushing off several generalised attaching arcs makes no difference.]{Pushing off several generalised attaching arcs makes no difference to effect of bypass moves.} \label{fig:38}
\end{figure}

This gives the complete analogy desired. There is also a backwards version.
\begin{lem}[Multiple moves \& bypass systems]
\label{multiple_generalised_elementary_moves_multiple_generalised_arcs}
Consider a nicely ordered sequence of forwards generalised elementary moves $FE(i_1, j_1), \ldots, FE(i_m, j_m)$ on $w$, and a nicely ordered sequence of forwards generalised attaching arcs $FA(i_1, j_1), \ldots, FA(i_m, j_m)$ on $\Gamma_w$. Upwards bypass surgery along the bypass system of this nicely ordered sequence of forwards generalised attaching arcs gives $\Gamma_{w'}$, where $w'$ is obtained from $w$ by performing the forwards generalised elementary moves.
\[
\xymatrix{
w \ar@{|->}[rrrr]^{FE(i_1, j_1) \circ \cdots \circ FE(i_m, j_m)} \ar@{<~>}[dd] &&&& w' \ar@{<~>}[dd] \\
\\
\Gamma_w \ar@{|->}[rrrr]^{\Up(FA (i_1, j_1), \ldots, FA(i_m,j_m))} &&&& \Gamma_{w'} }
\]
\qed
\end{lem}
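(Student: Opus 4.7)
My plan is induction on $m$, the length of the nicely ordered sequence. The base case $m=1$ is precisely lemma \ref{generalised_arc_generalised_move}, which was proved by direct inspection of figure \ref{fig:30}: upwards surgery along the bypass system of $FA(i_1,j_1)$ creates a long chord running from the prior to the latter chord and closes off the intermediate outermost regions, realising exactly the effect of $FE(i_1,j_1)$ on $w$.

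For the inductive step, suppose the result holds for nicely ordered sequences of length $m-1$. Given a nicely ordered sequence $FA_w(i_1,j_1),\ldots, FA_w(i_m, j_m)$ on $\Gamma_w$, placed by lemma \ref{well_placed_many}, first perform upwards bypass surgery along the bypass system of $FA_w(i_1, j_1)$. By the base case, this yields $\Gamma_{w'}$ with $w' = FE(i_1,j_1)(w)$. Since $FE$'s in a nicely ordered sequence commute (lemma \ref{generalised_moves_commute}), and since the remaining $FE(i_k, j_k)$ for $k \geq 2$ still form a nicely ordered sequence (the inequalities $i_2 < \cdots < i_m$ and $j_2 \leq \cdots \leq j_m$ persist), it suffices to show that performing upwards bypass surgery on $\Gamma_{w'}$ along an arc system equivalent to $FA_{w'}(i_2,j_2), \ldots, FA_{w'}(i_m, j_m)$ yields exactly $\Gamma_{w''}$ where $w'' = FE(i_2, j_2) \circ \cdots \circ FE(i_m, j_m)(w')$; this then follows from the inductive hypothesis.

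The heart of the matter is therefore to verify that, viewed on $\Gamma_{w'}$, the arcs $FA_w(i_2, j_2), \ldots, FA_w(i_m, j_m)$ behave identically to the nicely ordered arcs $FA_{w'}(i_2, j_2), \ldots, FA_{w'}(i_m, j_m)$ when we perform upwards bypass surgery. There are two cases for each $k \geq 2$: if $FE(i_k, j_k)$ no longer exists on $w'$ (which by lemma \ref{existence_generalised_arcs} happens precisely when $j_k = j_1$), then the bypass system of $FA_w(i_k, j_k)$ on $\Gamma_{w'}$ consists entirely of trivial attaching arcs (figure \ref{fig:34}), and these contribute nothing to the final diagram; this matches the null effect of the degenerate $FE(i_k, j_k)$ on $w'$. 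Otherwise, $FA_{w'}(i_k, j_k)$ exists and is nontrivial, and $FA_w(i_k, j_k)$ differs from it only by ``pushing off'' finger-like trivial excursions into the outermost regions created by the first surgery (figures \ref{fig:35}--\ref{fig:38}).

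The main obstacle, and the only delicate point, is to justify that these ``pushing off'' moves produce genuinely equivalent generalised attaching arcs --- simultaneously for all $k \geq 2$, with the arcs remaining disjoint and in nicely ordered position --- and that upwards bypass surgery along the pushed-off and non-pushed-off versions yields identical chord diagrams. For equivalence: each finger excursion lives in a small neighbourhood of an outermost region bounded by part of the newly created long chord, and can be homotoped straight across that region through generalised attaching arcs without passing through any endpoint, because the nicely ordered hypothesis $i_1 < i_k$, $j_1 \leq j_k$ places each $FA_w(i_k, j_k)$ entirely in the southwest of $FA_w(i_1, j_1)$ (lemma \ref{well_placed_many}), so the ``pushing off'' is a purely local operation disjoint from the other arcs. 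For equality of surgery results: a trivial excursion contributes only trivial arcs to the bypass system, each of which either leaves the diagram unchanged or produces a diagram containing a closed loop; the latter does not occur here since the final diagram $\Gamma_{w''}$ has no closed loops. So all such excursions may be safely discarded, and the surgery reduces to that of the nicely ordered system $FA_{w'}(i_2, j_2), \ldots, FA_{w'}(i_m, j_m)$, completing the induction.
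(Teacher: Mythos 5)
Your proof follows essentially the same route as the paper's: the paper's argument is precisely your induction, unrolled. It handles the first generalised arc via lemma \ref{generalised_arc_generalised_move}, identifies the remaining arcs on the new diagram with the freshly drawn nicely ordered arcs by the ``pushing off'' moves, and then observes that the same applies to sequences of arbitrary length. Your case split (the degenerate case $j_k=j_1$, where the remaining system becomes entirely trivial, versus the pushed-off case) matches what the paper checks in figures \ref{fig:34}--\ref{fig:38}, and your parenthetical that nonexistence of $FE(i_k,j_k)$ on $w'$ happens exactly when $j_k=j_1$ is correct.

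The one step whose justification does not stand as written is your claim that the trivial arcs coming from the finger excursions cannot produce closed loops ``since the final diagram $\Gamma_{w''}$ has no closed loops'': that the surgery yields $\Gamma_{w''}$ is exactly what is being proved, so this is circular. The paper settles the point by direct inspection of the local configurations (figures \ref{fig:34}, \ref{fig:36}, \ref{fig:38}), showing that upwards surgery along the excursion arcs leaves the diagram unchanged. You can repair your version with results already at hand: the bypass system of a nicely ordered sequence consists of nontrivial forwards attaching arcs, and lemma \ref{arc_still_okay} (or proposition \ref{upwards_moves_forwards}) guarantees that after each upwards surgery every remaining arc is nontrivial forwards, slightly trivial quasi-forwards upwards, or supertrivial direct upwards, none of which can create a closed curve under upwards surgery; hence the intermediate diagrams are always genuine basis chord diagrams and the excursion arcs may be discarded as you intend.
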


\subsubsection{Bypass system of a comparable pair}

\label{sec_byp_sys_comp_pair}

We have now built so much superstructure that we can almost use it. For a pair $w_1 \preceq w_2$, we have a nicely ordered sequence of generalised elementary moves
(definition \ref{def_gen_fwd_el_moves_comp_pair}); there is a corresponding \emph{nicely ordered sequence of forwards generalised attaching arcs of the pair $w_1 \preceq w_2$} $FA(1, \beta_1), \ldots, FA(n_-, \beta_{n_-})$. Again $\beta_i$ is the number of $+$ signs to the left of the $i$'th $-$ sign in $w_2$. Similarly we have a backwards version.  If the $i$'th $-$ sign does not move (so $FE(i, \beta_i)$ is trivial), then $FA(i, \beta_i)$ is considered a null arc. From these generalised attaching arcs, we obtain a bypass system:
\begin{defn}[Coarse bypass system of pair]
The \emph{coarse forwards (resp. backwards) bypass system} $CFBS(w_1, w_2)$ (resp. $CBBS(w_1, w_2)$) of the pair $w_1 \preceq w_2$ is the bypass system of the nicely ordered sequence of forwards (resp. backwards) generalised attaching arcs of $w_1 \preceq w_2$.
\end{defn}

We call these systems ``coarse'' because they may contain redundancy, with their ``individual care'' approach. From lemma \ref{multiple_generalised_elementary_moves_multiple_generalised_arcs}, we immediately have
\begin{equation}
\label{eqn_coarse_system_effect}
\Up_{CFBS(w_1, w_2)} \Gamma_{w_1} = \Gamma_{w_2}, \quad \Down_{CBBS(w_1, w_2)} \Gamma_{w_2} = \Gamma_{w_1}.
\end{equation}

Upwards bypass surgery along the various attaching arcs of $CFBS(w_1, w_2)$ takes $\Gamma_{w_1}$ to various basis chord diagrams (proposition \ref{upwards_moves_forwards}), corresponding to words in $W(n_-, n_+)$, always moving forwards in $\preceq$. That is, we have a covariant functor
\begin{align*}
 \Up_{CFBS(w_1, w_2)} \; : \; \mathcal{P} \left( CFBS(w_1, w_2) \right) &\rightarrow W(n_-, n_+) \\
		c' 	&\mapsto	\text{word of basis diagram } \Up_{c'} (\Gamma_1). 
\end{align*}
Here the power set is partially ordered by $\subseteq$, and $W(n_-, n_+)$ by $\preceq$; recall section \ref{sec_functorial}. (In section \ref{sec_functorial} we considered a functor $\Up_c$ to a bounded contact category; $W(n_-, n_+)$ is a bounded contact category; see section \ref{sec_ct_cat_U} and proposition \ref{prop_Cb_basis_cob}.) There is also a contravariant ``down'' functor from $CBBS(w_1, w_2)$. 

Now $CFBS$ is ``coarse'' in the sense that some proper subset may map to $w_2$ under this functor. We take a \emph{minimal subsystem} of the bypass system $CFBS (w_1, w_2)$ (resp. $CBBS(w_1, w_2)$ ). By this we mean a subset of these attaching arcs, such that:
\begin{enumerate}
\item 
it contains no trivial attaching arcs;
\item
upwards (resp. downwards) bypass surgery along it gives $\Gamma_{w_2}$ (resp. $\Gamma_{w_1}$);
\item
upwards (resp. downwards) bypass surgery along along any proper subset of it does not give $\Gamma_{w_2}$ (resp. $\Gamma_{w_1}$). 
\end{enumerate}
Condition (i) may appear redundant, especially given condition (iii). In this case it is, since by definition $CFBS(w_1, w_2)$ contains no trivial attaching arcs. However, in general, performing the reverse of a ``pushing off move'' might well result in a bypass system satisfying (ii) and (iii) but not (i). Not every bypass system has a minimal sub-system, but a bypass system \emph{without trivial attaching arcs} does. We say nothing about the uniqueness of this minimal bypass system, only that one exists. Hence the indefinite article in the following.
\begin{defn}[Bypass systems of a comparable pair]
A \emph{forwards (resp. backwards) bypass system} $FBS(w_1, w_2)$ (resp. $BBS(w_1, w_2)$) of the pair $w_1 \preceq w_2$ is a minimal sub-system of $CFBS(w_1, w_2)$ (resp. $CBBS(w_1, w_2)$).
\end{defn}

At last, we prove proposition \ref{bypass_system_one_to_other}. Parts (1) and (2) are immediate from equation (\ref{eqn_coarse_system_effect}) above and the definition of minimal bypass system. Backwards is similar to forwards.

\begin{proof}[Proof of proposition \ref{bypass_system_other_way}]
Let $FBS(w_1, w_2) = \{c_1, \ldots, c_m\}$ where the $c_i$ are attaching arcs. Let $\Gamma = \Down_{FBS(w_1,w_2)} \Gamma_{w_1}$. ``Expanding down over up'' (lemma \ref{lem_expanding_down_over_up}), $\Gamma$ is a sum of $2^m$ chord diagrams, obtained by performing upwards bypass moves on each of the $2^m$ subsets of $\{c_1, \ldots, c_m\}$. Each of these $2^m$ chord diagrams is a basis chord diagram, by ``stability'' (proposition \ref{upwards_moves_forwards}). They are not necessarily distinct, but by minimality, $\Gamma_{w_1}$ and $\Gamma_{w_2}$ each appear only once. Any other basis chord diagram appearing $\Gamma_w$ is obtained from $\Gamma_{w_1}$ by upwards bypass surgery along forwards attaching arcs; further upwards surgery on it yields $\Gamma_{w_2}$; thus $w_1 \preceq w \preceq w_2$.
\end{proof}

\subsection{Contact categorical computations}
\label{sec_ct_cat_comp}

In this section, we compute the bounded contact category $\mathcal{C}^b (\Gamma_{w_0}, \Gamma_{w_1})$ for basis chord diagrams, and for bypass cobordisms.

\subsubsection{Bounded contact categories for basis chord diagrams}
\label{sec_ct_cat_U}

We prove proposition \ref{prop_Cb_basis_cob}, computing $\mathcal{C}^b (\Gamma_{w_0}, \Gamma_{w_1})$. Recall the definition of $\mathcal{U}(n_-, n_+)$ from section \ref{sec_computation_Cb}. We have some lemmata.

\begin{lem}
\label{lem_only_basis_in_universal}
If the chord diagram $\Gamma$ exists in $\mathcal{U}(n_-, n_+)$, then $\Gamma = \Gamma_w$ for some word $w \in W(n_-, n_+)$.
\end{lem}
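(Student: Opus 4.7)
The plan is to invoke lemma \ref{lem_existence_of_chord_diagram} to realise $\Gamma$ as the top face of a sequence of upward bypass attachments above $\Gamma_{w_-}$, where $w_- = (-)^{n_-} (+)^{n_+}$ and $w_+ = (+)^{n_+} (-)^{n_-}$, each intermediate step being ``inner'' with respect to $\Gamma_{w_+}$. I would then induct on the length $k$ of this sequence, showing that each intermediate diagram is a basis chord diagram $\Gamma_w$ for some $w \in W(n_-, n_+)$. The base case $k=0$ is immediate since $\Gamma_{w_-}$ is itself a basis diagram.

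For the inductive step, assume $G_i = \Gamma_w$. I would analyse the attaching arc $c_i$ on $\Gamma_w$ using the classification in section \ref{sec_anatomy_attaching_arcs}: either $c_i$ is trivial, or $c_i = FA(i,j)$, or $c_i = BA(i,j)$. A downwards-trivial arc gives a closed loop, which is not a chord diagram, so cannot appear; an upwards-trivial arc leaves $G_i$ unchanged and can simply be deleted from the sequence. The key case to rule out is $c_i = BA(i,j)$, backwards nontrivial. By lemma \ref{bypass_moves_elementary_moves}, upwards bypass surgery along such an arc yields a chord diagram equal in $SFH_{comb}$ to $\Gamma_w + \Gamma_{w'}$ with $w' = BE(i,j)(w) \preceq w$.

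The hard — but decisive — observation is that innerness now fails. Since $w, w' \in W(n_-, n_+)$, both satisfy $w, w' \preceq w_+$, so by proposition \ref{contact_interp_partial_order} we get $m(\Gamma_w, \Gamma_{w_+}) = m(\Gamma_{w'}, \Gamma_{w_+}) = 1$. Bilinearity of $m$ then gives $m(G_{i+1}, \Gamma_{w_+}) = 1 + 1 = 0$, contradicting the innerness of $c_i$ required by lemma \ref{lem_existence_of_chord_diagram}. Hence $c_i$ must be a forwards nontrivial attaching arc $FA(i,j)$, and proposition \ref{upwards_moves_forwards} (stability of basis diagrams under upwards surgery along forwards nontrivial arcs) immediately gives that $G_{i+1}$ is again a basis chord diagram, completing the induction.

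The main obstacle — and the place where the previously developed machinery pays off — is precisely the backwards-nontrivial case: without proposition \ref{contact_interp_partial_order} identifying $\preceq$ with the stackability pairing $m$, there would be no way to convert the algebraic expansion $\Gamma_w + \Gamma_{w'}$ into a contradiction with the purely contact-topological innerness condition. Everything else is bookkeeping about trivial arcs and an appeal to stability.
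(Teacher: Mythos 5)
Your proposal is correct and is essentially the paper's own second proof (``by bypasses''): the same induction along the sequence furnished by lemma \ref{lem_existence_of_chord_diagram}, the same classification of attaching arcs on basis diagrams from section \ref{sec_single_bypass_el_moves}, and the same use of $m(\cdot, \Gamma_{(+)^{n_+}(-)^{n_-}})$ to exclude backwards arcs --- the paper phrases this via the parity of the number of basis elements in the decomposition, which is just your bilinearity computation $1+1=0$ in different words. (The paper also gives a second, independent argument by ``combinatorial skiing'', but your route matches its bypass proof.)
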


\begin{proof}(\# 1, by combinatorial skiing)
Such a $\Gamma$ must satisfy $m(\Gamma_{(-)^{n_-} (+)^{n_+}}, \Gamma) = 1$ and $m(\Gamma, \Gamma_{(+)^{n_+} (-)^{n_-}}) = 1$. After edge rounding, both conditions are equivalent to the condition that, when $\Gamma$ is placed in figure \ref{fig:58}, the curve obtained must be connected. The disc is drawn as a rectangle, with base point at the top, root point at the bottom, and other points on the right or left. For any such $\Gamma$, there can be no nesting of arcs on either side; hence arcs from base and root points must be outermost; and every other arc must either be outermost on the left side, outermost on the right side, or run from left to right. Such a $\Gamma$ fits into figure \ref{fig:58} to form a ``slalom course'' --- think of the top of the diagram as the top of a ski slope --- successively rounding obstacles on left or right sides. The sequence in which the skier rounds obstacles on the left $(-)$ or right $(+)$ precisely gives the word of which $\Gamma$ is the basis element.
\end{proof}

\begin{figure}
\centering
\includegraphics[scale=0.3]{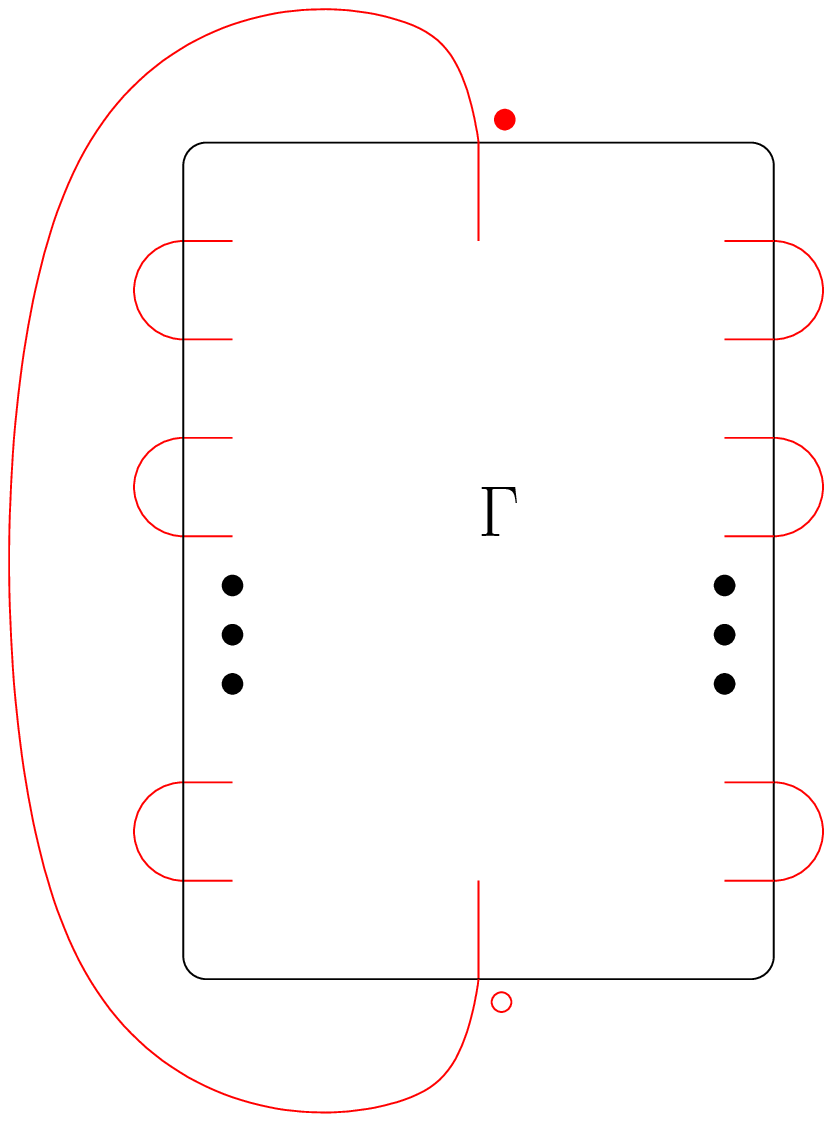}
\caption{Chord diagram ski slope.} \label{fig:58}
\end{figure}

\begin{proof}(\#2, by bypasses)
If $\Gamma$ exists in $\mathcal{U}(n_-, n_+)$, then by lemma \ref{lem_existence_of_chord_diagram} there is a sequence of upwards bypasses from $\Gamma_{(-)^{n_-} (+)^{n_+}} = G_0$, through $G_1, \ldots, G_k$ to $G_k = \Gamma$, where each $G_i$ satisfies $m(G_i, \Gamma_{(+)^{n_+} (-)^{n_-}}) = 1$. As every word is $\preceq (+)^{n_+} (-)^{n_-}$, each $G_i$ must have an odd number of basis elements in its decomposition.

From section \ref{sec_single_bypass_el_moves}, we know all bypass moves on basis chord diagrams. An upwards bypass move along a forwards attaching arc gives another basis diagram; and along a backwards attaching arc, gives a chord diagram which is a sum of two basis diagrams. Of these only the former satisfies $m( \cdot, \Gamma_{(+)^{n_+} (-)^{n_-}} ) = 1$; any bypass attachment upwards must move from a basis diagram to another basis diagram. Hence every diagram existing in $\mathcal{U}(n_-, n_+)$ is a basis diagram.
\end{proof}

Out of this, we serendipitously obtain proposition \ref{even_number_decomposition}: every non-basis chord diagram has an even number of elements in its basis decomposition. We also prove this directly in section \ref{sec_how_many_basis_elts_directly}.
\begin{proof}[Proof of proposition \ref{even_number_decomposition}]
Returning to the ski slope, a non-basis chord diagram $\Gamma$ satisfies $m(\Gamma, \Gamma_{(+)^{n_+} (-)^{n_-}}) = 0$.
\end{proof}

\begin{lem}
\label{lem_CbU}
$\mathcal{C}^b \left( \mathcal{U}(n_-, n_+) \right) \cong W(n_-, n_+).$
\end{lem}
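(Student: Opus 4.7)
The plan is to identify both $\mathcal{C}^b(\mathcal{U}(n_-, n_+))$ and $W(n_-, n_+)$ with the same partially ordered set, on the same underlying set with the same relation. Write $w_- = (-)^{n_-}(+)^{n_+}$ and $w_+ = (+)^{n_+}(-)^{n_-}$ for the unique total minimum and maximum of $W(n_-, n_+)$ under $\preceq$, so $\mathcal{U}(n_-, n_+) = \M(\Gamma_{w_-}, \Gamma_{w_+})$. Since $w_- \preceq w_+$, proposition \ref{contact_interp_partial_order} gives that $\mathcal{U}(n_-, n_+)$ is tight, and by proposition \ref{lem_Cb_partial_order} its bounded contact category is already known to be a partially ordered set.

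First I will identify the objects. By lemma \ref{lem_only_basis_in_universal}, every object of $\mathcal{C}^b(\mathcal{U}(n_-, n_+))$ has the form $\Gamma_w$ for some $w \in W(n_-, n_+)$. Conversely, I claim every such $\Gamma_w$ does occur in $\mathcal{U}(n_-, n_+)$. The argument is to apply the forwards bypass system $FBS(w_-, w)$ of proposition \ref{bypass_system_one_to_other} to $\Gamma_{w_-}$, one arc at a time, producing a sequence $\Gamma_{w_-} = G_0, G_1, \ldots, G_k = \Gamma_w$. By stability of basis diagrams (proposition \ref{upwards_moves_forwards}) together with lemma \ref{arc_still_okay}, each $G_i$ is a basis diagram $\Gamma_{w^{(i)}}$ with $w_- \preceq w^{(i)} \preceq w \preceq w_+$. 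Proposition \ref{contact_interp_partial_order} then gives $m(G_i, \Gamma_{w_+}) = 1$, so each attaching arc is inner on $\M(G_i, \Gamma_{w_+})$, and lemma \ref{lem_existence_of_chord_diagram} certifies that $\Gamma_w$ exists in $\mathcal{U}(n_-, n_+)$.

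Next I will identify the morphisms. The same bypass-system argument, applied inside the tight sub-cobordism $\M(\Gamma_{w_0}, \Gamma_{w_+})$ (which is tight by proposition \ref{contact_interp_partial_order}), shows that $\Gamma_{w_1}$ occurs in $\M(\Gamma_{w_0}, \Gamma_{w_+})$ whenever $w_0 \preceq w_1$, via the bypass system $FBS(w_0, w_1)$: each intermediate basis diagram $\Gamma_{w'}$ satisfies $w_0 \preceq w' \preceq w_1 \preceq w_+$, so remains stackable under $\Gamma_{w_+}$. Combining with step 1, the tight cobordism $\mathcal{U}(n_-, n_+)$ admits a decomposition containing convex discs with dividing sets $\Gamma_{w_0}$ below $\Gamma_{w_1}$, so by definition \ref{def_existence_cobordism} the cobordism $\M(\Gamma_{w_0}, \Gamma_{w_1})$ occurs in $\mathcal{U}(n_-, n_+)$, giving a morphism $\Gamma_{w_0} \to \Gamma_{w_1}$ in $\mathcal{C}^b(\mathcal{U}(n_-, n_+))$. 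Conversely, any morphism $\Gamma_{w_0} \to \Gamma_{w_1}$ implies $\M(\Gamma_{w_0}, \Gamma_{w_1})$ is tight, forcing $w_0 \preceq w_1$ by proposition \ref{contact_interp_partial_order}.

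Since both categories are partial orders, since the bijection $w \mapsto \Gamma_w$ on underlying sets has already been established, and since we have just verified that the ordering relations correspond under this bijection in both directions, the induced map is an isomorphism of categories. The main obstacle — and the place where the structural work of section \ref{sec_bypass_systems} is really used — is showing that every intermediate diagram produced by applying $FBS(w_-, w)$ to $\Gamma_{w_-}$ is again a basis diagram lying $\preceq$-below $w_+$, so that lemma \ref{lem_existence_of_chord_diagram} applies; this is exactly what proposition \ref{upwards_moves_forwards} delivers.
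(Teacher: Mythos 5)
Your proof is correct and follows essentially the same route as the paper's: lemma \ref{lem_only_basis_in_universal} for one inclusion on objects, excavation along $FBS(w_-,w)$ combined with stability and lemma \ref{lem_existence_of_chord_diagram} for the other, the analogous digging argument plus proposition \ref{contact_interp_partial_order} for morphisms in both directions, and agreement of compositions. Your only additions are spelling out the ``similar argument'' for morphisms and invoking the partial-order structure to dispose of composition, which the paper leaves as ``clearly agree''.
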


\begin{proof}
Lemma \ref{lem_only_basis_in_universal} shows $\Ob  \mathcal{C}^b(\mathcal{U}(n_-, n_+)) \subseteq \Ob W(n_-, n_+)$. To see $\Ob  \mathcal{C}^b(\mathcal{U}(n_-, n_+)) \supseteq \Ob W(n_-, n_+)$, i.e. every basis chord diagram $\Gamma_w$, $w \in W(n_-, n_+)$, exists in $\mathcal{U}(n_-, n_+)$: take the bypass system $FBS( (-)^{n_-} (+)^{n_+}, w )$ on $\Gamma_{(-)^{n_-} (+)^{n_+}}$. Upwards bypass surgery gives $\Gamma_w$. Each successive bypass attachment gives a basis diagram $\Gamma_{w'}$, where $w' \preceq w$ and $m(\Gamma_{w'}, \Gamma_{(+)^{n_+} (-)^{n_-}}) = 1$, hence (lemma \ref{lem_existence_of_chord_diagram}) $\Gamma_{w'}$ exists in $\mathcal{U}(n_-, n_+)$.

A similar argument shows that for any $w \preceq w'$ in $W(n_-, n_+)$, the cobordism $\M(\Gamma_w, \Gamma_{w'})$ exists in $\mathcal{U}(n_-, n_+)$: this is $\Mor W(n_-, n_+) \subseteq \Mor \mathcal{C}^b(\mathcal{U}(n_-, n_+))$. To see $\Mor W(n_-, n_+) \supseteq \Mor \mathcal{C}^b(\mathcal{U}(n_-, n_+))$, note that for any $w \npreceq w'$, $\M(\Gamma_w, \Gamma_{w'})$ is not tight, hence cannot exist in the tight $\mathcal{U}(n_-, n_+)$ Morphism compositions clearly agree.
\end{proof}

\begin{proof}[Proof of proposition \ref{prop_Cb_basis_cob}]
The cobordism $\M(\Gamma_{w_0}, \Gamma_{w_1})$ with tight contact structure exists in $\mathcal{U}(n_-, n_+)$ (lemma \ref{lem_CbU}). Hence its bounded contact category is the full sub-category on those objects $\Gamma_w$ with $w_0 \preceq w \preceq w_1$.
\end{proof}

\subsubsection{Bounded contact category of a bypass cobordism}

\label{sec_Cb_bypass_cob}

We now compute $\mathcal{C}^b (\Gamma_0, \Gamma_1)$ for any $\Gamma_0, \Gamma_1$ where $\Gamma_1 = \Up_c \Gamma_0$ for an attaching arc $c$. We prove $\mathcal{C}^b (\Gamma_0, \Gamma_1) \cong W(n_-, n_+)$ for some $n_-, n_+$ that we now describe; $n_-, n_+$ are essentially largest possible so that $\mathcal{U}(n_-, n_+)$ embeds into $\M(\Gamma_0, \Gamma_1)$.

\begin{figure}[tbh]
\centering
\includegraphics[scale=0.35]{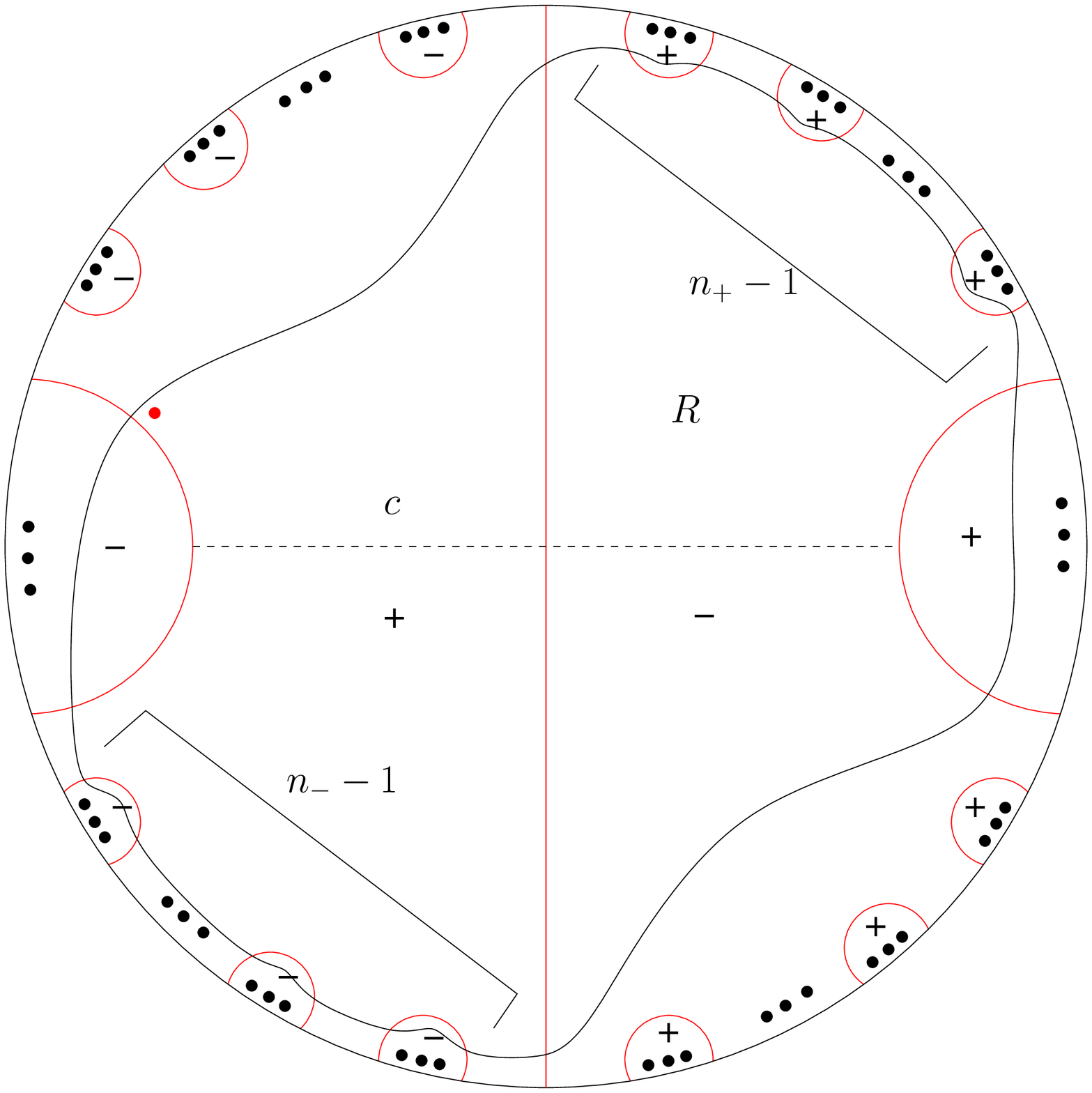}
\caption[A general bypass attachment.]{A general bypass attachment. Possible locations of other chords are denoted by black dots ($\ldots$). The region $R$ is enclosed by the black curve.} \label{fig:59}
\end{figure}

Figure \ref{fig:59} depicts a general bypass attachment on a chord diagram $\Gamma$ along an attaching arc $c$. For either of $c$'s inner regions (recall section \ref{sec_anatomy_attaching_arcs}), its boundary consists of several arcs: some on the boundary of the disc; two chords of $\Gamma$ which intersect $c$; and several other chords of $\Gamma$. Of these other chords, let the number bounding the inner $+$ region which lie anticlockwise of the outer $-$ region and clockwise of the inner $-$ region be $n_- - 1$; and let the number bounding the inner $-$ region which lie anticlockwise of the outer $+$ region and clockwise of the inner $+$ region be $n_+ - 1$.

\begin{thm}[Bounded contact category of bypass cobordism]
\label{thm_Cb_bypass_cob}
Let $\Gamma_1 = \Up_c \Gamma_0$ for an attaching arc $c$. Let $n_-, n_+$ be defined as above. Then $\mathcal{C}^b (\Gamma_0, \Gamma_1) \cong W(n_-, n_+)$.
\end{thm}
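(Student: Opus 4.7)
The plan is to reduce the theorem to the already-established universal case: by lemma \ref{lem_CbU}, $\mathcal{C}^b(\mathcal{U}(n_-, n_+)) \cong W(n_-, n_+)$, so it suffices to exhibit a contactomorphism-of-sorts between $\mathcal{C}^b(\Gamma_0, \Gamma_1)$ and $\mathcal{C}^b(\mathcal{U}(n_-, n_+))$. The strategy is to show that all of the ``interesting'' categorical content of $\M(\Gamma_0, \Gamma_1)$ is concentrated near the attaching arc $c$, where it looks exactly like a universal cobordism on $n_- + n_+$ chords.

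First I would identify, inside a small disc neighbourhood $N$ of $c$, the chords of $\Gamma_0$ that bound $c$'s two inner regions in the way specified (three chords crossed by $c$, together with the $n_- - 1$ chords bounding the inner $+$ region and the $n_+ - 1$ chords bounding the inner $-$ region). In appropriate coordinates on the relevant sub-disc, the restriction of $\Gamma_0$ to this sub-disc is exactly $\Gamma_{(-)^{n_-}(+)^{n_+}}$ and that of $\Gamma_1$ is exactly $\Gamma_{(+)^{n_+}(-)^{n_-}}$. Outside $N$, $\Gamma_0$ and $\Gamma_1$ agree (they differ only inside the bypass disc). Thus, cutting $\M(\Gamma_0, \Gamma_1)$ along a vertical convex annulus over $\partial N$, the ``inside'' piece is contactomorphic to $\mathcal{U}(n_-, n_+)$, and the ``outside'' piece has identical top and bottom dividing sets, hence is $I$-invariant and tight; this is a repeated application of the corner-rounding/cancellation argument of lemma \ref{cancel_outermost} (applied to chord arcs rather than just outermost chords, using that they appear identically on top and bottom).

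Second I would verify that every chord diagram $\Gamma$ occurring in $\M(\Gamma_0, \Gamma_1)$ agrees with $\Gamma_0$ outside $N$, and that its restriction inside $N$ is a chord diagram occurring in $\mathcal{U}(n_-, n_+)$. By lemma \ref{lem_existence_of_chord_diagram}, $\Gamma$ is obtained from $\Gamma_0$ by a sequence of inner bypass attachments. Any nontrivial attaching arc lying outside $N$ would sit in the $I$-invariant region $\M(\Gamma_0|_{\text{out}}, \Gamma_0|_{\text{out}})$, and by lemma \ref{lem_MGammaGamma_outer} every nontrivial attaching arc there is outer; hence no such bypass exists inside $\M(\Gamma_0, \Gamma_1)$. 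So every intermediate dividing set retains all chords of $\Gamma_0$ outside $N$, and the chord configuration inside $N$ evolves precisely as in $\mathcal{U}(n_-, n_+)$. The same argument applied to cobordisms $\M(\Gamma, \Gamma')$ occurring in $\M(\Gamma_0, \Gamma_1)$ gives a bijection of morphisms, which is compatible with composition by lemma \ref{lem_gluing_cobordisms}. Combining, $\mathcal{C}^b(\Gamma_0, \Gamma_1) \cong \mathcal{C}^b(\mathcal{U}(n_-, n_+)) \cong W(n_-, n_+)$.

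The main obstacle is making the ``localisation'' rigorous, i.e.\ establishing that chords of $\Gamma_0$ outside $N$ genuinely persist in every dividing set occurring in $\M(\Gamma_0, \Gamma_1)$, and that any attaching arc crossing $\partial N$ may be homotoped (or its effect realised) by an arc entirely inside $N$. The key tool is lemma \ref{lem_CbGammaGamma} applied in the outside region: once one shows that the outside of $N$ is $I$-invariant inside $\M(\Gamma_0, \Gamma_1)$, no nontrivial evolution can happen there. A second subtlety is to check that the objects obtained this way really are distinct objects of $\mathcal{C}^b(\Gamma_0, \Gamma_1)$ (not identified by some isotopy), which follows from proposition \ref{contact_distinct} together with proposition \ref{prop_tight_ct_str_solid_torus}. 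With these in hand, the isomorphism with $W(n_-, n_+)$ follows directly from lemma \ref{lem_CbU}.
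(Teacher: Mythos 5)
Your guiding intuition — that all the categorical content of $\M(\Gamma_0,\Gamma_1)$ localises to the region $R$ around $c$, where it looks like $\mathcal{U}(n_-,n_+)$ — is exactly the intuition stated in the paper, and your first step (constructing the objects $G_w$, $w\in W(n_-,n_+)$, by running the universal bypasses inside $R$ and cancelling the common chords outside) corresponds to the paper's easy lemma. But there is a genuine gap at the heart of your argument: you only dispose of attaching arcs lying entirely inside $N$ or entirely outside $N$, whereas the whole difficulty is attaching arcs that \emph{cross} $\partial N$ — arcs whose middle intersection point lies on a chord entering $R$ while one or both endpoints lie on chords that never enter $R$, or vice versa. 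Lemma \ref{lem_MGammaGamma_outer} says nothing about such arcs, and your proposed fix — cut along a convex vertical annulus over $\partial N$, declare the inside contactomorphic to $\mathcal{U}(n_-,n_+)$ and the outside $I$-invariant — is circular: the dividing set of that annulus could a priori contain boundary-parallel arcs, and such arcs are precisely bypasses straddling $\partial N$, i.e.\ the phenomenon you are trying to rule out. (Note also that this decomposition must be re-established not just for $\M(\Gamma_0,\Gamma_1)$ but for every intermediate excavated cobordism $\M(G_w,\Gamma_1)$, since lemma \ref{lem_existence_of_chord_diagram} forces you to control inner bypasses at every stage.) A further misstep: lemma \ref{cancel_outermost} is a statement about \emph{outermost} chords in matching positions, and your parenthetical extension of it ``to chord arcs rather than just outermost chords'' is not justified and is not how the paper uses it.

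The paper closes exactly this gap by brute force: for a nontrivial attaching arc $c$ on $G_w$ admitting an upward bypass in $\M(G_w,\Gamma_1)$, it draws the full dividing set on the rounded boundary sphere of $\M(G_w,\Gamma_1)$ (two copies of $R$ separated by two annuli carrying identical but shifted chord patterns), slides the endpoints of $c$ along the dividing curves, and checks case by case (middle intersection point outside $R$; inside $R$ with both, or exactly one, endpoint outside) that every non-localised configuration either disconnects the dividing set upon upward surgery or turns $c$ into a trivial or backwards arc on $\Gamma_w$ — so no upward bypass exists — leaving only arcs isotopic into $R$, which act as forwards arcs and send $G_w$ to $G_{w'}$ with $w\preceq w'$. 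Some argument of this kind (or a genuine proof that the cutting annulus can be taken with only vertical dividing arcs at every excavation stage) is what your proposal is missing; without it the claimed bijection on objects and morphisms with $\mathcal{C}^b(\mathcal{U}(n_-,n_+))$, and hence the isomorphism with $W(n_-,n_+)$ via lemma \ref{lem_CbU}, is not established.
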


In figure \ref{fig:59}, the region $R$ contains sub-arcs of $n_- + n_+ + 1$ chords from $\Gamma_0$: three chords intersecting $c$; and arcs of the $(n_- - 1) + (n_+ - 1)$ other chords described above bounding the inner regions of $c$. With base point as shown, $R$ encloses the chord diagram $\Gamma_{(-)^{n_-} (+)^{n_+}}$; and on $\Gamma_1$, $R$ encloses $\Gamma_{(+)^{n_+} (-)^{n_-}}$. We roughly think of $R \times I$ as an ``embedded $\mathcal{U}(n_-, n_+)$'' in $\M(\Gamma_0, \Gamma_1)$.

The point is that the bounded contact category of $\M(\Gamma_0, \Gamma_1)$ is precisely that of $\mathcal{U}(n_-, n_+)$; all nontrivial bypasses attached in $\M(\Gamma_0, \Gamma_1)$ upwards from $\Gamma_0$ can be attached within $R$; even after some bypasses are attached, any existing bypass can be attached along $R$. Recalling lemma \ref{lem_existence_of_chord_diagram}, the theorem is the consequence of the following two lemmas.

For $w \in W(n_-, n_+)$, let $G_w$ denote the chord diagram which consists of taking $\Gamma_0$, and within $R$, replacing $\Gamma_{(-)^{n_-} (+)^{n_+}}$ with $\Gamma_w$.
\begin{lem}
Let $\M(\Gamma_0, \Gamma_1)$ be a bypass cobordism as above. For any word $w$ in $W(n_-, n_+)$, the chord diagram $G_w$ exists in $\M(\Gamma_0, \Gamma_1)$.
\end{lem}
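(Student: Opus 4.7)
The plan is to apply the existence criterion of lemma \ref{lem_existence_of_chord_diagram}: it suffices to produce a sequence of chord diagrams $\Gamma_0 = G_0, G_1, \ldots, G_k = G_w$, each obtained from the previous by an upwards bypass attachment along a nontrivial attaching arc, with $m(G_i, \Gamma_1) = 1$ at every stage. The candidate bypass system is the forwards bypass system $FBS\bigl((-)^{n_-}(+)^{n_+}, w\bigr)$ constructed in section \ref{sec_bypass_systems}, drawn inside the region $R$ on the local basis diagram $\Gamma_{(-)^{n_-}(+)^{n_+}}$ that $R$ encloses in $\Gamma_0$.

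First I would observe that each attaching arc of this bypass system, although defined in the context of the universal cobordism $\mathcal{U}(n_-, n_+)$, is drawn entirely inside $R$, and so extends to a nontrivial attaching arc on the ambient chord diagram $\Gamma_0$. Performing upwards bypass surgery along these arcs in the order prescribed by proposition \ref{bypass_system_one_to_other} produces a sequence of intermediate chord diagrams that differ from $\Gamma_0$ only inside $R$, where the inner basis diagram has been replaced by $\Gamma_{w'}$ for some $w' \preceq w$ at each stage. The final diagram is precisely $G_w$.

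Second I would verify that each intermediate $G_{w'}$ is stackable on $\Gamma_1$. Since $G_{w'}$ and $\Gamma_1$ agree outside $R$, the cobordism $\M(G_{w'}, \Gamma_1)$ decomposes along $\partial R \times I$ into an outer piece that is $I$-invariant (same dividing set on top and bottom) and an inner piece that is exactly $\M(\Gamma_{w'}, \Gamma_{(+)^{n_+}(-)^{n_-}})$. As $w' \preceq w \preceq (+)^{n_+}(-)^{n_-}$, the inner piece is tight by proposition \ref{contact_interp_partial_order}; gluing in the $I$-invariant outer piece produces a tight contact structure on $\M(G_{w'}, \Gamma_1)$, so $m(G_{w'}, \Gamma_1) = 1$, and the criterion of lemma \ref{lem_existence_of_chord_diagram} is satisfied.

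The main obstacle is the second step — establishing that tightness genuinely localises to the region $R$. The subtlety is to confirm, after edge rounding, that the sutures on $\partial \M(G_{w'}, \Gamma_1)$ really do form a single connected curve: one must check that the dividing curves emerging from the inner $\M(\Gamma_{w'}, \Gamma_{(+)^{n_+}(-)^{n_-}})$ through $\partial R \times I$ hook up correctly with those of the $I$-invariant exterior, so that local connectedness inside $R$ translates to global connectedness. Given the identification of the dividing set outside $R$ on the two ends, this is a routine verification once the interleaving condition on $\partial R \times I$ is traced through, but it is the step that requires actual argument rather than quotation.
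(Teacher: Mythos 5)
Your argument is essentially the paper's: it too constructs $G_w$ by performing, inside $R$, the same upwards bypasses that produce $\Gamma_w$ from $\Gamma_{(-)^{n_-}(+)^{n_+}}$ in $\mathcal{U}(n_-,n_+)$, checking the existence criterion of lemma \ref{lem_existence_of_chord_diagram} at each stage. The localisation to $R$ that you leave as a ``routine verification'' is exactly what the paper disposes of by repeatedly applying lemma \ref{cancel_outermost} to the chords that $G_{w'}$ and $\Gamma_1$ share outside $R$, reducing $\M(G_{w'},\Gamma_1)$ to $\M\bigl(\Gamma_{w'},\Gamma_{(+)^{n_+}(-)^{n_-}}\bigr)$, which is tight since $w'\preceq(+)^{n_+}(-)^{n_-}$.
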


\begin{proof} 
By section \ref{sec_ct_cat_U}, $\Gamma_w$ exists in $\mathcal{U}(n_-, n_+)$; the same bypasses which on $\Gamma_{(-)^{n_-} (+)^{n_+}}$ give $\Gamma_w$, added along $R$ give $G_w$; at each stage, the bypasses exist inside $\M(\Gamma_0, \Gamma_1)$ since the extra arcs outside $R$ can be removed using lemma \ref{cancel_outermost}.
\end{proof}

\begin{lem}
Consider the cobordism $\M(G_w, \Gamma_1)$ within $\M(\Gamma_0, \Gamma_1)$. Let $c$ be a nontrivial attaching arc in $G_w$, such that a bypass exists upwards along $c$ in the tight $\M(G_w, \Gamma_1)$. Then $c$ is isotopic to an attaching arc lying entirely in the region $R$, and $\Up_c G_w = G_{w'}$ for some $w'$ where $w \preceq w'$.
\end{lem}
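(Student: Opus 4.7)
The plan is to reduce the cobordism $\M(G_w, \Gamma_1)$ to a cobordism supported entirely inside $R$, then to invoke the computation of $\mathcal{C}^b(\mathcal{U}(n_-, n_+))$ from the previous subsection. The crucial geometric input is that by construction $G_w$ and $\Gamma_1$ are identical everywhere outside $R$, differing only inside $R$ where they look like $\Gamma_w$ and $\Gamma_{(+)^{n_+}(-)^{n_-}}$ respectively.

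First I would iteratively apply lemma \ref{cancel_outermost} to cancel outermost chords of $G_w$ (equivalently of $\Gamma_1$) that lie in the complement of $R$. Because $G_w$ and $\Gamma_1$ agree outside $R$, every chord there occurs at the same position in both diagrams, so whenever an outermost chord of the diagram lies outside $R$ it is outermost in both and the lemma applies. Iterating, all chords strictly outside $R$ are peeled off via finger moves, and the cobordism is identified with $\M(\Gamma_w, \Gamma_{(+)^{n_+}(-)^{n_-}})$ sitting inside $\mathcal{U}(n_-, n_+)$. Under this identification, a chord diagram exists in $\M(G_w, \Gamma_1)$ if and only if the corresponding chord diagram inside $R$ exists in $\M(\Gamma_w, \Gamma_{(+)^{n_+}(-)^{n_-}})$. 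Now proposition \ref{prop_Cb_basis_cob}, applied with $w_0 = w$ and $w_1 = (+)^{n_+}(-)^{n_-}$, says that the diagrams existing in the reduced cobordism are exactly the basis diagrams $\Gamma_{w'}$ with $w \preceq w' \preceq (+)^{n_+}(-)^{n_-}$. Restoring the outer chords, the chord diagrams existing in $\M(G_w, \Gamma_1)$ are precisely the $G_{w'}$ with $w \preceq w'$.

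Given the nontrivial attaching arc $c$ with an upwards bypass existing in $\M(G_w, \Gamma_1)$, lemma \ref{lem_existence_of_chord_diagram} tells us $\Up_c G_w$ is a chord diagram existing in $\M(G_w, \Gamma_1)$. By the previous paragraph, $\Up_c G_w = G_{w'}$ for some $w'$ with $w \preceq w'$, which is the second claim of the lemma. For the first claim, observe that $G_{w'}$ coincides with $G_w$ outside $R$, so the bypass along $c$ changes nothing outside $R$; but a bypass alters exactly the three chord arcs met by $c$ in a small neighborhood of $c$, so each of these three intersection points must lie in $R$ and the portions of chords outside $R$ are untouched. Consequently $c$ may be isotoped, through nontrivial attaching arcs, to an arc lying entirely in $R$, since the complement of $R$ is unaffected and any detour of $c$ outside $R$ meets no chords and so can be retracted.

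The main obstacle is making the iterated cancellation of step one precise: we need to check that at every intermediate stage there exists an outermost chord of the remaining configuration whose position lies outside $R$, and that the reduction genuinely induces a bijection between objects and morphisms of the bounded contact categories. This amounts to running the finger-move construction of proof \#2 of lemma \ref{MGammaGamma} repeatedly, while keeping track of how cobordisms embed into each other under successive cancellations; the only new input is that the chord structure on $D - R$ forms a well-defined planar diagram whose outermost chords can be peeled inductively, which is immediate from the setup of figure \ref{fig:59}.
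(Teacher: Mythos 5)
There is a genuine gap, and it sits exactly where the real content of this lemma lies. Your argument hinges on the ``reduction'' step: that after cancelling chords outside $R$ the cobordism $\M(G_w, \Gamma_1)$ becomes $\M(\Gamma_w, \Gamma_{(+)^{n_+}(-)^{n_-}})$ \emph{and} that this identification induces a bijection on the chord diagrams existing inside, so that every diagram existing in $\M(G_w,\Gamma_1)$ is some $G_{w'}$. But lemma \ref{cancel_outermost} only removes a chord that is \emph{outermost and common to both} boundary diagrams; it can never remove the $n_-+n_++1$ chords that cross $\partial R$ (they differ inside $R$ on the two ends of the cylinder and are not outermost), and even after peeling the chords lying entirely outside $R$ you are left with those crossing chords together with their external tails, plus the problem that a sutured isomorphism obtained by finger moves does not obviously match up ``efficient'' meridian discs, i.e.\ objects and morphisms of the two bounded contact categories. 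More seriously, the assertion that every chord diagram existing in $\M(G_w,\Gamma_1)$ agrees with $G_w$ outside $R$ is not an innocuous bookkeeping point: a priori an upward bypass in the cobordism could be attached along an arc with an endpoint on a chord not entering $R$, or on the outer portion of a crossing chord, and destroy the configuration outside $R$. Ruling this out is precisely what the lemma (and hence theorem \ref{thm_Cb_bypass_cob}) is for, so your step one assumes essentially the statement to be proved; the paper instead does the work directly, drawing the full rounded dividing set of $\partial\M(G_w,\Gamma_1)$ (figure \ref{fig:60}, with the two annuli shifted by one marked point) and sliding the endpoints of $c$ along it to show that any nontrivial arc not isotopic into $R$ either disconnects the dividing set under upward surgery or becomes a trivial or backwards arc on $\Gamma_w$, so no upward bypass exists along it.

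A secondary problem: even granting that the only existing diagrams are the $G_{w'}$, your deduction that the three intersection points of $c$ lie in $R$ does not follow. The equality $\Up_c G_w = G_{w'}$ holds up to isotopy of chord diagrams, and ``agreeing with $G_w$ outside $R$'' is not a property of an isotopy class; a surgery performed partly outside $R$ could in principle yield a diagram isotopic to one that agrees with $G_w$ outside $R$, so you cannot read off the location of $c$ from the answer. (The final retraction of detours of $c$ into $R$ is also stated too quickly, since $D-R$ does contain chords and tails, though that part can be repaired by a separation argument.) What is correct in your proposal is the easy half: once $c$ \emph{is} known to be isotopic into $R$, lemma \ref{cancel_outermost} reduces it to an arc on $\Gamma_w$ inside $\mathcal{U}(n_-,n_+)$, where an upward bypass exists only along a forwards arc and the result is $G_{w'}$ with $w \preceq w'$ --- this is the first paragraph of the paper's proof. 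The missing piece is the case analysis for arcs not contained in $R$, which cannot be bypassed by the categorical reformulation.
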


\begin{proof}
We consider all the possible locations of the nontrivial attaching arc $c$. 

First suppose $c$ is equivalent to an attaching arc lying entirely in $R$. Then, applying lemma \ref{cancel_outermost}, $c$ can be taken as an attaching arc in $\Gamma_w$ within $\mathcal{U}(n_-, n_+)$. Hence an upwards bypass exists iff $c$ is forwards; the result is $G_{w'}$, where $w \preceq w'$.

Thus we may assume $c$ is not isotopic to an attaching arc in $R$. Hence $c$ intersects chords of $G_w$ not entering $R$; indeed has an endpoint on a chord not entering $R$.

\begin{figure}
\centering
\includegraphics[scale=0.35]{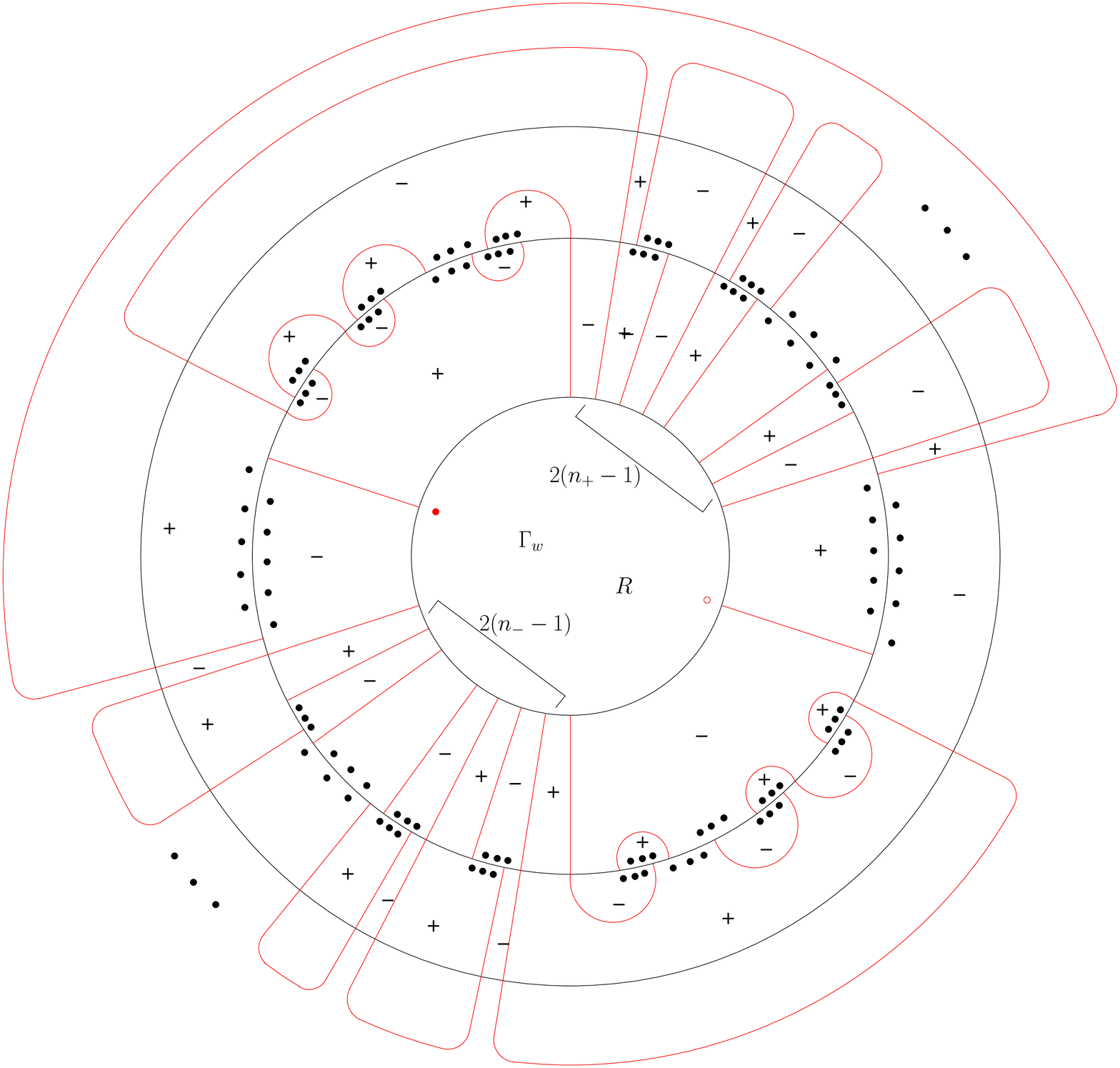}
\caption[General dividing set on $\M(G_w, \Gamma_1)$ within a general bypass cobordism.]{Dividing set on $\M(G_w, \Gamma_1)$ within a general bypass cobordism. Each set of black dots ($\ldots$) represents extra chords; corresponding sets of black dots contain copies of the same arrangements of chords. The four concentric regions are respectively (from inside to out): the disc with dividing set $\Gamma_w$; the two annuli with identical (but relatively shifted) dividing sets; and then the disc with $\Gamma_{(+)^{n_+} (-)^{n_-}}$ (although in ``flipping'' this disc to draw it in our diagram, the signs of regions are reversed).
} \label{fig:60}
\end{figure}

Consider now the arrangement of dividing curves on the whole boundary $S^2$ of $\M(G_w, \Gamma_1)$. We regard this $S^2$ as consisting of four regions: two discs arising from the region $R$ (containing $\Gamma_w$ and $\Gamma_{(+)^{n_+} (-)^{n_-}}$ as dividing sets respectively) separated by two annuli containing identical dividing sets; although in rounding corners, the two identical dividing sets on the annuli meet each other relatively shifted by one marked point. Taking figure \ref{fig:59} and drawing $\Gamma_1$ ``on the outside'', we obtain the dividing set on $S^2$: see figure \ref{fig:60}.

Suppose the middle intersection point of $c$ lies on a chord not entering $R$; hence half of $c$ can be isotoped to lie entirely outside $R$. Observe from the arrangement of figure \ref{fig:60}, arising from the clockwise rotation (as depicted in the diagram) of $\Gamma_1$ relative to $G_w$, that we may slide an endpoint of $c$ along the dividing set on the sphere $S^2$, until it approaches the middle intersection point of $c$, and the result is as in figure \ref{fig:61}. Upwards bypass surgery along $c$ would disconnect the dividing set, hence no such bypass exists.

\begin{figure}[tbh]
\centering
\includegraphics[scale=0.25]{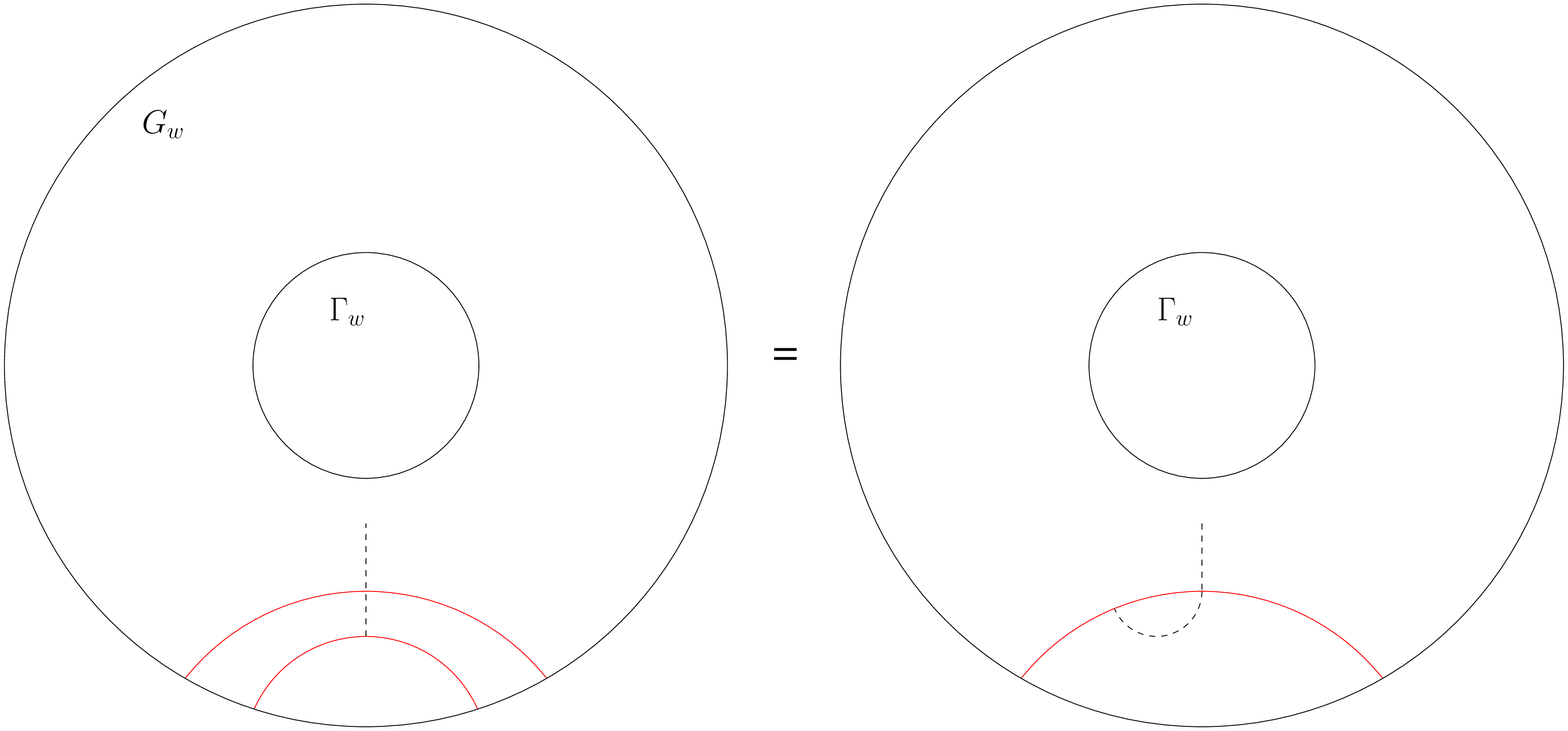}
\caption{Middle intersection point of $c$ lies outside $R$.} \label{fig:61}
\end{figure}

Thus we may assume the middle intersection point of $c$ lies on a chord entering $R$. There are two cases: (i) both endpoints of $c$ lie on chords outside $R$, or (ii) precisely one endpoint lies on a chord which enters $R$. See figure \ref{fig:62}. 

\begin{figure}[tbh]
\centering
\includegraphics[scale=0.25]{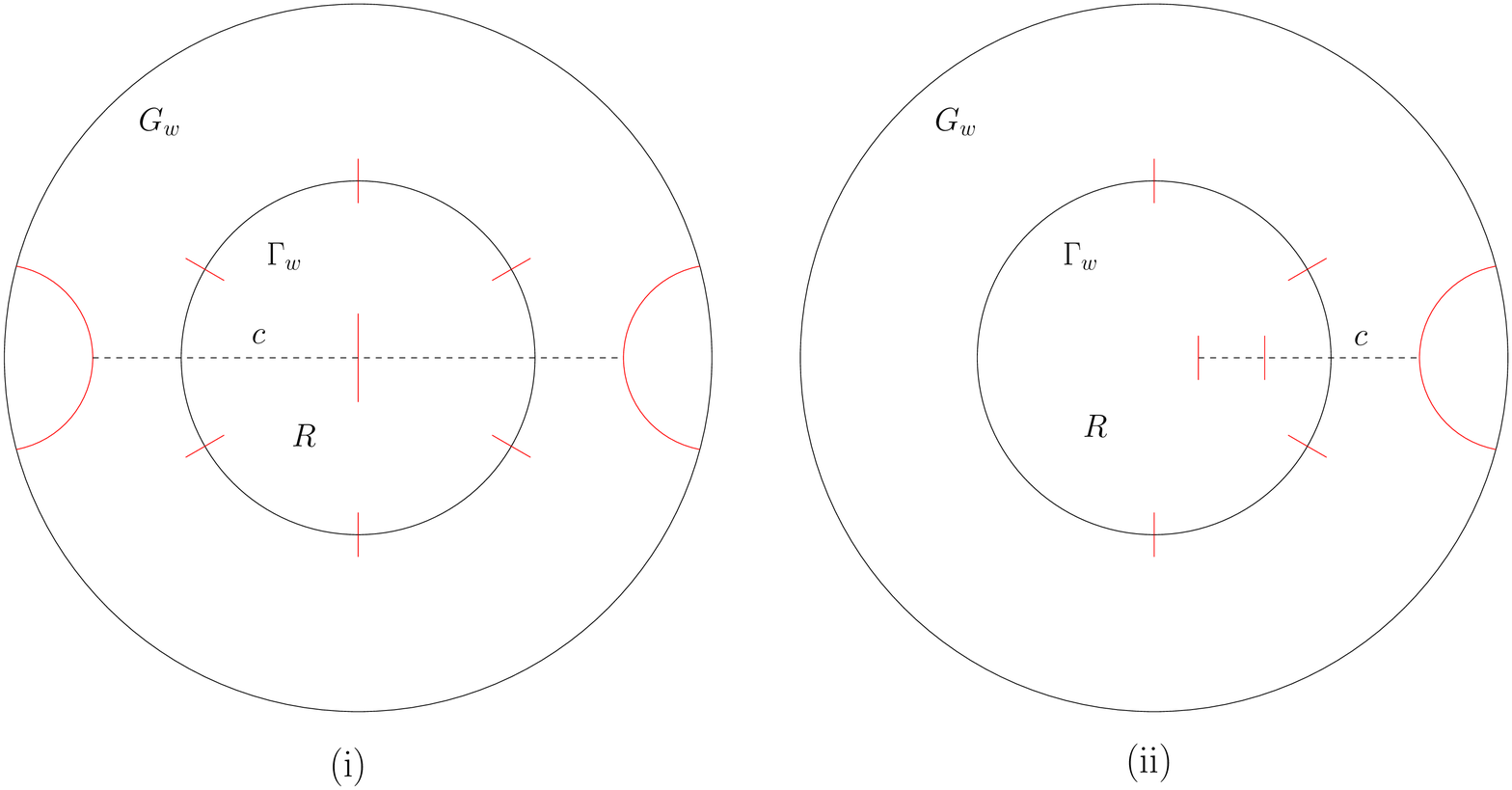}
\caption{Middle intersection point of $c$ lies inside $R$: cases (i) and (ii).} \label{fig:62}
\end{figure}

Consider again figure \ref{fig:60}. If $c$ exits $R$ and then intersects a chord of $G_w$ outside $R$, then $c$ exits $R$ either through a positive region on the eastside of $\Gamma_w$, or through a negative region on the westside of $\Gamma_w$. In case (i), therefore, $c$ exits $R$ at one end through a positive region $p$ on the eastside; and at the other end through a negative region $n$ on the westside. Again slide endpoints of $c$ along the dividing set on $S^2$, until they lie in $R$; the result is figure \ref{fig:63}(a). If either $p$ or $n$ is an outermost region (i.e. enclosed by an outermost chord) in $\Gamma_w$, then it is clear upwards bypass surgery along $c$ creates a dividing set with multiple components. Thus we may assume neither $p$ nor $n$ is outermost. Hence the middle intersection point of $c$ lies on a non-outermost chord $\gamma$ of $\Gamma_w$. Since $\Gamma_w$ is a basis diagram, $\gamma$ runs from the eastside to the westside; there are two possibilities, \ref{fig:63}(b) and (c). In \ref{fig:63}(b), upwards surgery along $c$ disconnects the dividing set. In \ref{fig:63}(c), $c$ has become a backwards attaching arc on $\Gamma_w$; and hence no bypass exists above it in $\M(\Gamma_w, \Gamma_{(+)^{n_+} (-)^{n_-}})$, or equivalently, in $\M(G_w, \Gamma_1)$.

\begin{figure}[tbh]
\centering
\includegraphics[scale=0.25]{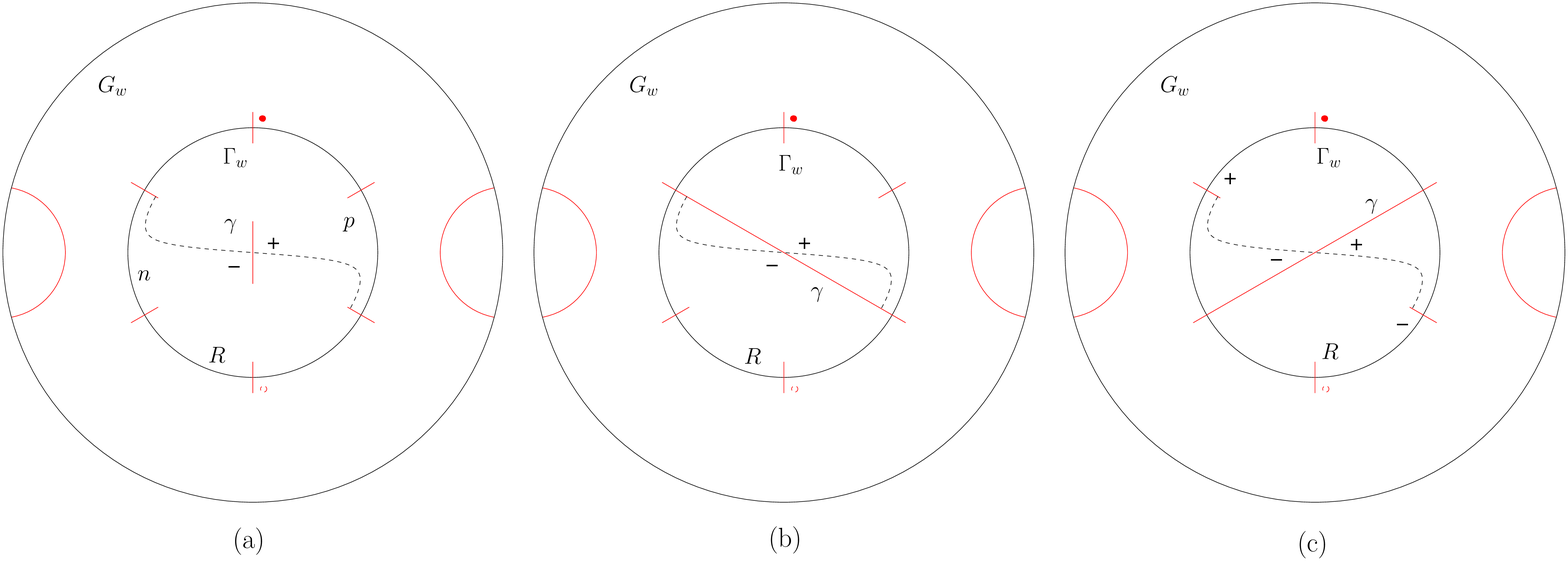}
\caption{Arrangements in case (i).} \label{fig:63}
\end{figure}

In case (ii), we assume $c$ exits $R$ through a positive region $p$ on the eastside; exiting through a negative westside region is similar. Again slide the endpoint of $c$ outside $R$ along the dividing set on the sphere $S^2$ until it lies in $R$; the result is figure \ref{fig:64}(a). If $p$ is outermost in $\Gamma_w$, then an upwards bypass move along $c$ disconnects the dividing set. So the two chords $\gamma_1, \gamma_2$ of $\Gamma_w$ adjacent to the exit point of $c$ are non-outermost, and proceed to the westside. The region $p$ may have several components of $\Gamma_w$ on its boundary; since $\Gamma_w$ is a basis chord diagram, in addition to $\gamma_1, \gamma_2$ on the boundary of $p$, there may also be outermost chords of $\Gamma_w$ on the westside. However if $c$ intersects any of these then its final intersection point must also lie on the same chord, contradicting nontriviality. Thus the middle intersection point of $c$ lies on $\gamma_1$ or $\gamma_2$; the two possibilities are depicted in figure \ref{fig:64}(b); $c$ has become either trivial or backwards; in neither case can a bypass exist above it.
\end{proof}

\begin{figure}[tbh]
\centering
\includegraphics[scale=0.25]{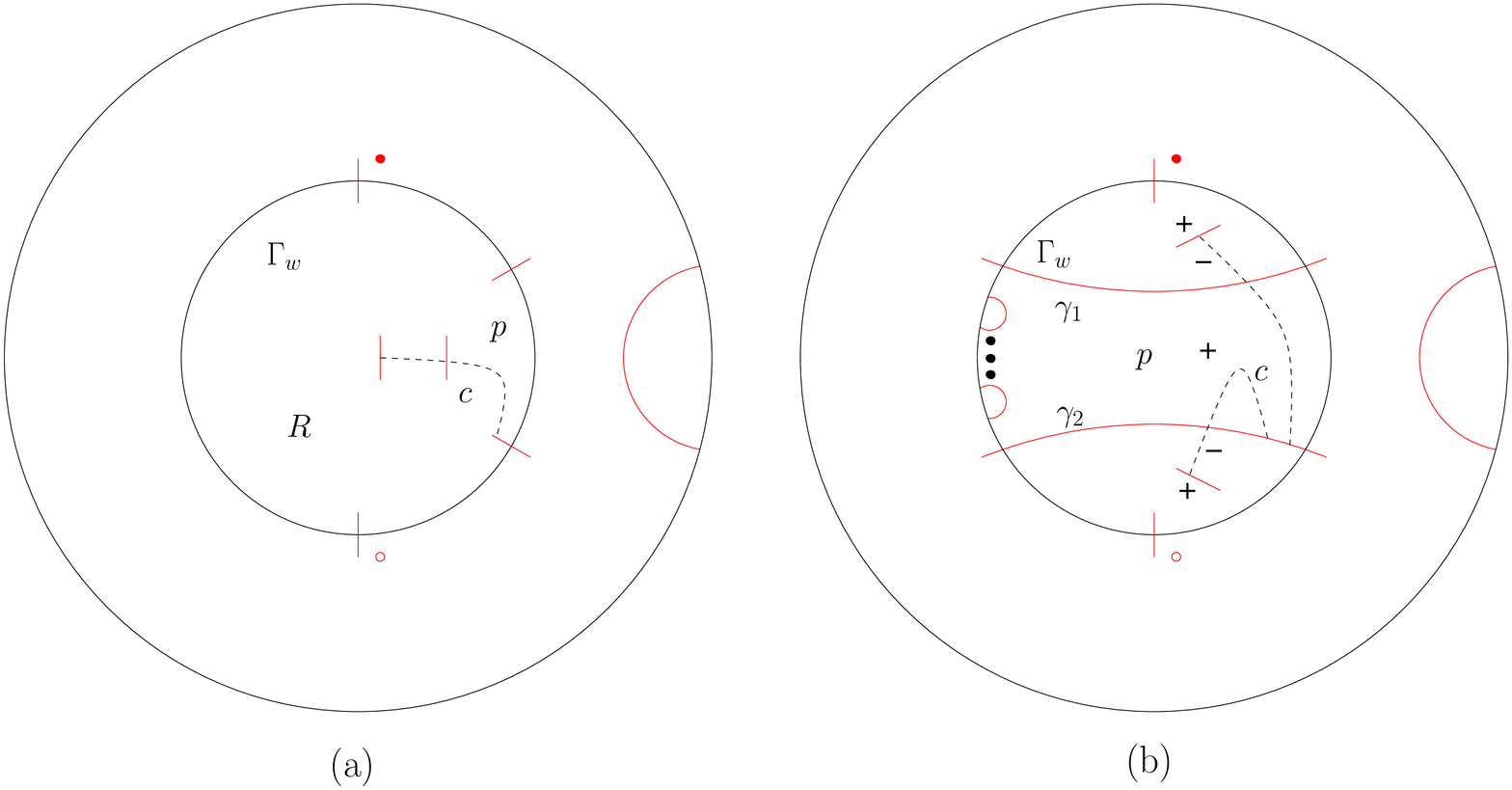}
\caption{Arrangements in case (ii).} \label{fig:64}
\end{figure}

\section{Main results and consequences}

\label{ch_ct_elts}

\subsection{Proof of main theorem}
\label{sec_main_proof}

We prove our main theorem \ref{main_theorem}: the bijection between chord diagrams and pairs of comparable words, taking a chord diagram to the first and last elements $w_-, w_+$ occurring in its decomposition; for every $w$ occurring, $w_- \preceq w \preceq w_+$.

Proposition \ref{bypass_system_other_way} constructs, for any $w_1 \preceq w_2$, a chord diagram whose basis decomposition contains $w_1$, $w_2$, and other words $w$ such that $w_1 \preceq w \preceq w_2$. This map
\[
 \left\{ \begin{array}{c} \text{Comparable pairs of} \\ \text{words $w_1 \preceq w_2$} \end{array} \right\} \To \left\{ \begin{array}{c} \text{Chord} \\ \text{Diagrams} \end{array} \right\}
\]
is clearly injective. By proposition \ref{enumerative_bijection} these two sets have the same cardinality. Thus we have the desired bijection. In particular, every chord diagram is of the form described in proposition \ref{bypass_system_other_way}. This proves theorem \ref{main_theorem}.

\subsection{Consequences of main results}

\label{sec_conseq_main_results}

\subsubsection{Up and Down}

\label{sec_up_down}

The idea of proposition \ref{bypass_system_other_way} gives the following corollary, which we need later.
\begin{cor}[Upwards vs. downwards bypass moves]\
\label{general_bypass_system_other_way}
Suppose there is a minimal bypass system $B$ on $\Gamma_{w_1}$ (resp. $\Gamma_{w_2}$) such that attaching bypasses above $\Gamma_{w_1}$ (resp. below $\Gamma_{w_2}$) along $B$ gives a tight $\M(\Gamma_{w_1}, \Gamma_{w_2})$. Minimality here means $B$ has no trivial attaching arcs, and no proper subset $B' \subset B$ satisfies $\Up_{B'}(\Gamma_{w_1}) = \Gamma_{w_2}$ (resp. $\Down_{B'}(\Gamma_{w_2}) = \Gamma_{w_1}$). 

Then $\Down_B (\Gamma_{w_1})$ (resp. $\Up_B (\Gamma_{w_2})$) is $[\Gamma_{w_1}, \Gamma_{w_2}]$.
\end{cor}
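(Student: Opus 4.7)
The plan is to adapt the proof of proposition \ref{bypass_system_other_way}, substituting the general bypass system $B$ for the specific $FBS(w_1, w_2)$ used there. First I would apply the ``expanding down over up'' identity (lemma \ref{lem_expanding_down_over_up}) to write
\[
\Down_B(\Gamma_{w_1}) \;=\; \sum_{B' \subseteq B} \Up_{B'}(\Gamma_{w_1})
\]
as a sum in $SFH_{comb}(T,n,e)$, with $2^{|B|}$ terms counted with multiplicity.

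Next, I would identify every term in this sum as a basis chord diagram with word in $W(w_1, w_2)$. Since the attaching arcs of $B$ are pairwise disjoint, their bypass attachments commute; so for any $B' \subseteq B$, the tight cobordism $\M(\Gamma_{w_1}, \Gamma_{w_2})$ can be factored by attaching the bypasses of $B'$ first (producing a convex slice whose dividing set is $\Up_{B'}(\Gamma_{w_1})$), then attaching those of $B \setminus B'$. Hence $\Up_{B'}(\Gamma_{w_1})$ exists in $\M(\Gamma_{w_1}, \Gamma_{w_2})$ in the sense of definition \ref{def_existence_chord_diagram}, and proposition \ref{prop_Cb_basis_cob} forces it to be some basis diagram $\Gamma_{w'}$ with $w_1 \preceq w' \preceq w_2$.

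The critical step is to use the minimality of $B$ to pin down the multiplicities of $\Gamma_{w_1}$ and $\Gamma_{w_2}$ in the sum. By direct application of the minimality hypothesis, the equation $\Up_{B'}(\Gamma_{w_1}) = \Gamma_{w_2}$ cannot hold for any proper subset $B' \subsetneq B$, so $\Gamma_{w_2}$ arises only from $B' = B$ and appears exactly once. For $\Gamma_{w_1}$, if some non-empty $B' \subseteq B$ satisfied $\Up_{B'}(\Gamma_{w_1}) = \Gamma_{w_1}$, then by commutativity of disjoint bypass moves
\[
\Up_{B \setminus B'}(\Gamma_{w_1}) \;=\; \Up_{B \setminus B'} \Up_{B'}(\Gamma_{w_1}) \;=\; \Up_B(\Gamma_{w_1}) \;=\; \Gamma_{w_2}
\]
with $B \setminus B' \subsetneq B$, again contradicting minimality; hence $\Gamma_{w_1}$ also appears exactly once. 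I expect this commutativity-plus-minimality argument to be the main (though ultimately straightforward) obstacle.

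Putting everything together, $\Down_B(\Gamma_{w_1})$ has a basis decomposition in which $\Gamma_{w_1}$ and $\Gamma_{w_2}$ each appear with odd multiplicity, and every remaining basis term $\Gamma_w$ satisfies $w_1 \preceq w \preceq w_2$. Because $\preceq$ is a sub-order of the lexicographic order, $w_1$ and $w_2$ are lexicographically first and last among the words appearing, and the main theorem \ref{main_theorem} identifies the resulting contact element uniquely as $[\Gamma_{w_1}, \Gamma_{w_2}]$. The dual assertion about $\Up_B(\Gamma_{w_2})$ is proved symmetrically, interchanging the roles of upwards and downwards throughout.
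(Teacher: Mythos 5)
Your proof is correct and follows essentially the same route as the paper's: expand $\Down_B(\Gamma_{w_1})$ over subsets via lemma \ref{lem_expanding_down_over_up}, use proposition \ref{prop_Cb_basis_cob} to see that every term is a basis diagram $\Gamma_w$ with $w_1 \preceq w \preceq w_2$, and use minimality to see that $\Gamma_{w_1}$ and $\Gamma_{w_2}$ each occur exactly once and hence survive, so the main theorem identifies the element as $[\Gamma_{w_1},\Gamma_{w_2}]$. The only (harmless) differences are that the paper first deduces every arc of $B$ is forwards and then invokes the combinatorial stability proposition \ref{upwards_moves_forwards}, whereas you identify each $\Up_{B'}(\Gamma_{w_1})$ directly as an object of $\mathcal{C}^b(\Gamma_{w_1},\Gamma_{w_2})$ by factoring the tight cobordism, and you make explicit the commutativity-plus-minimality argument for the multiplicity of $\Gamma_{w_1}$ that the paper compresses into ``by minimality''.
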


\begin{proof}
We prove the upwards case; downwards is similar. From tightness $w_1 \preceq w_2$. By proposition \ref{prop_Cb_basis_cob}, all chord diagrams in $\M(\Gamma_{w_1}, \Gamma_{w_2})$ are basis diagrams; so every attaching arc of $B$ is forwards. Expanding $\Down_B \Gamma_{w_1}$ over subsets of upwards bypasses (lemma \ref{lem_expanding_down_over_up}) and using stability proposition \ref{upwards_moves_forwards}, we have a sum of basis chord diagrams, where $w_1$ is the minimal element occurring in this sum and $w_2$ the maximum; and $w_1$ and $w_2$ occur only once by minimality, hence do not cancel.
\end{proof}

\subsubsection{Generalised bypass triple $\Gamma_-$, $\Gamma_+$, $[\Gamma_-, \Gamma_+]$}

\label{sec_ct_interp_gamma+-}
\label{sec_generalised_byp_triple_GammaGammaGamma}

We now consider further the chord diagrams $\Gamma_-$, $\Gamma_+$ and $\Gamma = [\Gamma_-, \Gamma_+]$.  By the constructions of the main theorem, these chord diagrams form a generalised bypass triple, analogous to an ``exact triangle'' in the contact category (sections \ref{sec_elementary_cobordisms}, \ref{sec_other_cat_str}). There is a bypass system $c_- = FBS(w_-, w_+)$ on  $\Gamma_-$ such that $\Up_{c_-} (\Gamma_-) = \Gamma_+$ and $\Down_{c_-} (\Gamma_-) = \Gamma$; and as bypass moves are local $60^\circ$ rotations, there are corresponding bypass systems $c_+$ on $\Gamma_+$ and $c$ on $[\Gamma_-, \Gamma_+]$. Similarly, there is $d_+ = BBS(w_-, w_+)$ on $\Gamma_+$, and corresponding $d_-$ on $\Gamma_-$ and $d$ on $\Gamma$. Note $c_-$ and $d_+$ are minimal, but the other bypass systems need not be.
\[
\xymatrix{
&& \Gamma \ar@/^/[ddrr]^{\Down(c)} \ar@/^/[ddll]^{\Up(c)} && 		&&&& \Gamma \ar@/^/[ddrr]^{\Down(d)} \ar@/^/[ddll]^{\Up(d)} \\
\\
\Gamma_-  \ar@/^/[uurr]^{\Down(c_-)}  \ar@/^/[rrrr]^{\Up(c_-)} &&&& \Gamma_+ \ar@/^/[uull]^{\Up(c_+)} \ar@/^/[llll]^{\Down(c_+)} && \Gamma_-  \ar@/^/[uurr]^{\Down(d_-)}  \ar@/^/[rrrr]^{\Up(d_-)} &&&& \Gamma_+ \ar@/^/[uull]^{\Up(d_+)} \ar@/^/[llll]^{\Down(d_+)} 
}
\]
\begin{prop}[Contact generalised bypass triple] 
\label{MGamma+-}
The contact structure on:
\begin{enumerate}
\item
$\M(\Gamma, \Gamma_-)$ obtained from performing upwards bypass attachments on $c$ or downwards bypass attachments on $c_-$ is tight.
\item
$\M(\Gamma_-, \Gamma_+)$ obtained from performing upwards bypass attachments on $c_-$ or $d_-$ or downwards bypass attachments on $c_+$ or $d_+$ is tight.
\item
$\M(\Gamma_+, \Gamma)$ obtained from performing upwards bypass attachments on $d_+$ or downwards bypass attachments on $d$ is tight.
\end{enumerate}
\end{prop}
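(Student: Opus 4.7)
The strategy is to prove (2) first by an inductive ``stacking'' argument exploiting stability of basis chord diagrams, and then to derive (1) and (3) from (2) via the rotational symmetry of bypass triples.

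\emph{Proof of (2).} Attach the arcs of $c_- = FBS(w_-, w_+)$ upwards to $\Gamma_-$ one at a time, in an order compatible with the nicely-ordered northeast-to-southwest placement of the underlying generalised attaching arcs (Lemma \ref{well_placed_many}). Let $N_0 = \Gamma_- \times I$ with the tight $I$-invariant contact structure, and let $N_{k+1}$ be $N_k$ with the $(k+1)$st bypass attached on top. By stability (Proposition \ref{upwards_moves_forwards}) and Lemma \ref{arc_still_okay}, the top dividing set $G_k$ of $N_k$ remains a basis diagram after every attachment, and each remaining arc is either nontrivial forwards or of a trivial upwards type. Each single-arc attachment is tight: the nontrivial case is Lemma \ref{bypass-related}, and trivial upwards arcs produce $I$-invariant collars. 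Inductively, $N_{k+1}$ is formed by gluing a tight bypass cobordism to a tight ball along a convex surface with matching dividing set; since stability gives $w_- \preceq$ word of $G_{k+1}$, Proposition \ref{contact_interp_partial_order} gives $m(\Gamma_-, G_{k+1}) = 1$, so $\partial N_{k+1}$ has connected dividing set after rounding and admits a unique tight filling, which must be our construction (no overtwisted disc can reside in either tight piece or span them by the standard convex decomposition). Hence the $c_-$-upwards construction of $\M(\Gamma_-, \Gamma_+)$ is tight; by uniqueness of tight fillings on this 3-ball, the other three constructions in (2) produce the same contact structure as soon as each is independently tight. The $d_+$-downwards case is symmetric (backwards stability on $\Gamma_+$); the remaining two follow from the rotation principle below.

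\emph{Proof of (1) and (3).} First check $m(\Gamma, \Gamma_-) = m(\Gamma_+, \Gamma) = 1$: expanding $\Gamma = [\Gamma_-, \Gamma_+]$ into its basis decomposition and using Proposition \ref{contact_interp_partial_order}, only the $w = w_-$ (resp.\ $w = w_+$) summand contributes, since by Theorem \ref{main_theorem} every $w$ in the decomposition satisfies $w_- \preceq w \preceq w_+$, and $m(\Gamma_-, \Gamma_-) = m(\Gamma_+, \Gamma_+) = 1$ by Lemma \ref{MGammaGamma}. Hence both $\M(\Gamma, \Gamma_-)$ and $\M(\Gamma_+, \Gamma)$ are 3-balls with connected boundary dividing set, admitting unique tight fillings. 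To identify the bypass-system constructions with these tight fillings, invoke the $60^\circ$ rotational symmetry of bypass triples arc-by-arc (Lemma \ref{bypass-related}): for each attaching arc, the three tight cobordisms $\M(\Gamma, \Gamma_-)$, $\M(\Gamma_-, \Gamma_+)$, $\M(\Gamma_+, \Gamma)$ produced by the three possible single-bypass attachments are mutually related by rotating the bypass neighbourhood. Propagating this rotation through the whole system $c_-$ identifies the constructed $\M(\Gamma, \Gamma_-)$ (via downwards $c_-$, or equivalently via upwards $c$ on $\Gamma$) with the unique tight filling; symmetrically for $\M(\Gamma_+, \Gamma)$ using $d_+$ and $d$.

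\emph{Main obstacle.} Unlike the upwards $c_-$ case, the intermediate dividing sets produced during a downwards attachment of $c_-$ are generically non-basis: by Lemma \ref{bypass_moves_elementary_moves}, downwards attachment along a forwards arc on a basis diagram yields a sum of two basis diagrams. Consequently, the clean inductive stacking argument of (2) does not apply directly. Making the rotational symmetry argument rigorous at the level of the whole system, rather than arc by arc, requires either packaging multiple local rotations into a single global contactomorphism, or verifying via Theorem \ref{thm_pinwheel} that the nested southwest-ordered placement of the arcs of $c_-$ prescribed by Lemma \ref{well_placed_many} admits no cyclic configuration capable of forming either an upwards or downwards pinwheel. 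Carrying out this case analysis is the technical crux of the proof.
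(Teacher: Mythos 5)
Your proposal does not close the argument: the one step that carries the actual content of the proposition --- showing that the bypass system $c_-=FBS(w_-,w_+)$ has no upwards or downwards pinwheels --- is exactly what you defer at the end as ``the technical crux'' without carrying it out. The paper's proof is precisely that verification, and it is short: the boundary of a putative pinwheel $P$ alternates between arcs $\gamma_i$ lying on chords of $\Gamma_{w_-}$ and halves $\alpha_i$ of the attaching arcs; writing $\gamma_i$ as the base-$s_i$'th chord and using that every arc of $FBS(w_-,w_+)$ is forwards (negative prior outer region), one obtains a cyclic chain of strict inequalities $s_k<s_{k-1}<\cdots<s_1<s_k$ when $P$ is negative (and the reverse when $P$ is positive), a contradiction. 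Pinwheel-freeness in both directions then gives, via Theorem \ref{thm_pinwheel}, tightness of the upwards attachment along $c_-$ (the structure on $\M(\Gamma_-,\Gamma_+)$) and of the downwards attachment along $c_-$ (the structure on $\M(\Gamma,\Gamma_-)$) simultaneously, and the remaining cases follow because $c_+$, $d_\pm$, $c$, $d$ are the corresponding systems on the other diagrams of the triple. So the ``rotational symmetry'' propagation you sketch is unnecessary once the pinwheel check is done, and without that check nothing in parts (1) and (3) is actually proved; knowing $m(\Gamma,\Gamma_-)=m(\Gamma_+,\Gamma)=1$ only gives existence of some tight structure, not tightness of the constructed one.

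There is also a genuine flaw in your inductive proof of (2). From tightness of $N_k$, tightness of the next bypass layer, and $m(\Gamma_-,G_{k+1})=1$ you conclude that the stacked structure is the unique tight filling, justified by the parenthetical claim that no overtwisted disc can span the two pieces ``by the standard convex decomposition''. No such principle exists: gluing two tight contact structures along a convex surface can be overtwisted even when the resulting sutured ball admits a tight structure --- this is exactly the pinwheel phenomenon, and it is why composition of tight morphisms in $\mathcal{C}(D^2,n)$ can be $*$ (the composite is tight only if the intermediate dividing set actually occurs inside the tight $\M(\Gamma_-,G_{k+1})$, which your induction never establishes). One could repair this step by invoking Proposition \ref{prop_Cb_basis_cob}, whose proof is independent of the present proposition, but as written your induction rests on a false gluing principle; the paper avoids the issue entirely by the pinwheel argument above, which settles all three parts at once.
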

Note that our computation of $\mathcal{C}^b(\Gamma_-, \Gamma_+)$ implies that the contact structure on $\M(\Gamma_-, \Gamma_+)$ arising from $c_-$ or $d_+$ is tight; we give an independent proof using pinwheels.
\begin{proof}
We show that $c_- = FBS(w_-, w_+)$ has no pinwheels, upwards or downwards. Suppose there is an upwards pinwheel $P$; downwards is similar. Recall (definition \ref{def_pinwheel}) the boundary of $P$ consists of arcs $\alpha_i$ and $\gamma_i$, where each $\gamma_i$ runs along $\Gamma_- = \Gamma_{w_-}$, and each $\alpha_i$ runs along the attaching arcs of $c_-$. Let $\gamma_i$ be the base-$s_i$'th chord (definition \ref{def_base_numbering}). Each attaching arc of $FBS(w_-,w_+)$, being forwards, has negative prior outer region. If $P$ is a negative region, this implies $s_k < s_{k-1} < \cdots < s_1 < s_k$, a contradiction. Similarly, if $P$ is positive, $s_1 < \cdots < s_k < s_1$.

So $c_-$ is pinwheel-free; performing upwards (resp. downwards) bypass attachments along $c_-$ gives a tight contact structure on $\M(\Gamma_-, \Gamma_+)$ (resp. $\M(\Gamma, \Gamma_-)$). Since $c_+$ corresponds to $c_-$, it gives the same tight contact structure on $\M(\Gamma_-, \Gamma_+)$. Similar arguments apply to the remaining desired tight contact structures.
\end{proof}

The weaker result that $m(\Gamma, \Gamma_-) = m(\Gamma_+, \Gamma) = m(\Gamma_-, \Gamma_+) = 1$ can be proved by other means. For instance, simply expanding out over basis elements. More geometrically, consider $\M(\Gamma, \Gamma_-)$. We may round and un-round corners; each chord created in the base point construction algorithm for $\Gamma_-$ can be successively pushed down the cylinder into the bottom disc, where it simplifies $\Gamma$ to the chord diagram on the unused disc of some $\Gamma_{w \cdot}$. In this way we see $m(\Gamma, \Gamma_-) = 1$ directly; similarly for $m(\Gamma_+, \Gamma)$.

\subsubsection{Categorical meaning of main theorem}

\label{sec_categorical_meaning}

We now interpret the main theorem and above remarks in the language of the contact category, proving the results announced in section \ref{sec_intro_categorical_meaning}.

Proposition \ref{prop_tight_basis_cob_elementary}, that tight basis cobordisms are elementary, now follows immediately from proposition \ref{MGamma+-}.

We regard the following as an ``exact triangle'' in $\mathcal{C}(D^2,n)$:
\[
 \To \Gamma_1 \To [\Gamma_0, \Gamma_1] \To \Gamma_0 \To.
\]
With the (unsatisfactory) notion of ``cone'' of section \ref{sec_other_cat_str}, $[\Gamma_0, \Gamma_1]$ is the ``cone'' of the morphism $\Gamma_0 \To \Gamma_1$ arising from the bypass system $FBS(\Gamma_0,\Gamma_1)$. However, although $FBS(\Gamma_0, \Gamma_1)$ can be any minimal subsystem of $CFBS(\Gamma_0, \Gamma_1)$, any choice gives $\Down_{FBS(\Gamma_0, \Gamma_1)} \Gamma_0 = [\Gamma_0, \Gamma_1]$. So $FBS(\Gamma_0, \Gamma_1)$ has a canonical effect, and we define $[\Gamma_0, \Gamma_1]$ to be \emph{the cone} of $\Gamma_0 \To \Gamma_1$. This defines the cone of any tight morphism between basis chord diagrams.

Thus, chord diagrams $[\Gamma_0, \Gamma_1]$, the objects of $\mathcal{C}(D^2,n+1)$, correspond precisely, via this cone construction, to nontrivial morphisms $\Gamma_0 \To \Gamma_1$ of basis elements, which are precisely the morphisms of $\mathcal{C}^b ( \mathcal{U} (n_-, n_+) )$. This gives the following proposition.
\begin{prop}[Chord diagrams as cones]
\label{prop_chord_diagrams_cones}
Chord diagrams with $n+1$ chords and euler class $e$ are in bijective correspondence with the morphisms of $\mathcal{C}^b (\mathcal{U}(n_-, n_+))$:
\[
\text{Mor} \left( \mathcal{C}^b \left( \mathcal{U} (n_-, n_+) \right) \right) \cong \text{Ob} \left( \mathcal{C} (D^2,n+1,e) \right).
\]
Moreover, under the inclusion $\iota \; : \; \mathcal{C}^b \left( \mathcal{U}(n_-, n_+) \right) \hookrightarrow \mathcal{C}(D^2,n+1,e)$,
every morphism of $\mathcal{C}^b( \mathcal{U}(n_-, n_+) )$ has a well-defined cone in $\mathcal{C}(D^2,n+1,e)$;
\begin{align*}
 \text{Cone} \circ \iota \; : \; \text{Mor} \left( \mathcal{C}^b \left( \mathcal{U}(n_-, n_+) \right) \right) & \stackrel{\cong}{\To} \text{Ob} \left( \mathcal{C}(D^2,n+1,e) \right) \\
 \left( \Gamma_{w_0} \rightarrow \Gamma_{w_1} \right) &\mapsto \text{Cone} \left( \iota ( \Gamma_{w_0} \rightarrow \Gamma_{w_1} ) \right) = [\Gamma_{w_0}, \Gamma_{w_1}]
\end{align*}
gives the bijection explicitly.
\qed
\end{prop}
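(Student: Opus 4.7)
The plan is to assemble the claimed bijection by composing two correspondences already established in the paper, and then to verify that the resulting map agrees with the cone construction. First I would identify the morphisms of $\mathcal{C}^b(\mathcal{U}(n_-, n_+))$: by Proposition \ref{prop_Cb_basis_cob} (applied with $w_0 = (-)^{n_-}(+)^{n_+}$ and $w_1 = (+)^{n_+}(-)^{n_-}$), this bounded contact category is isomorphic as a category to the partially ordered set $W(n_-, n_+)$. Hence its morphisms are exactly the comparable pairs $w_0 \preceq w_1$ of words in $W(n_-, n_+)$, with the morphism $\Gamma_{w_0} \to \Gamma_{w_1}$ realised by the tight cobordism $\M(\Gamma_{w_0}, \Gamma_{w_1})$.

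Second, I would invoke the main theorem \ref{main_theorem}, which gives the bijection
\[
\left\{ w_0 \preceq w_1 \text{ in } W(n_-, n_+) \right\} \;\longleftrightarrow\; \left\{ \text{chord diagrams with } n+1 \text{ chords, euler class } e \right\}
\]
sending $(w_0, w_1) \mapsto [w_0, w_1]$. Composing this with the previous identification produces the desired bijection between $\Mor(\mathcal{C}^b(\mathcal{U}(n_-, n_+)))$ and $\Ob(\mathcal{C}(D^2, n+1, e))$.

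To finish, I need to show that this composed bijection is realised explicitly by $\text{Cone} \circ \iota$. Given a morphism $\Gamma_{w_0} \to \Gamma_{w_1}$ in $\mathcal{C}^b(\mathcal{U}(n_-, n_+))$, Proposition \ref{prop_tight_basis_cob_elementary} tells us $\M(\Gamma_{w_0}, \Gamma_{w_1})$ is elementary, built by upwards bypass attachments along some minimal bypass system $FBS(w_0, w_1)$; following the discussion preceding the proposition, its cone is defined to be $\Down_{FBS(w_0, w_1)} \Gamma_{w_0}$. By Corollary \ref{general_bypass_system_other_way}, this equals $[\Gamma_{w_0}, \Gamma_{w_1}]$, independently of which minimal subsystem of $CFBS(w_0, w_1)$ is chosen; hence the cone is well-defined in $\mathcal{C}(D^2, n+1, e)$ and agrees with the map produced by composition.

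The only potentially subtle point, and what I would expect to be the main obstacle, is the well-definedness of $\text{Cone}$: a priori the cone could depend on the choice of bypass system witnessing the elementary structure. This is exactly what Corollary \ref{general_bypass_system_other_way} rules out for basis cobordisms, so the argument reduces to citing it, at which point bijectivity is immediate from the two displayed bijections above.
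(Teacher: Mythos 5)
Your proposal is correct and follows essentially the same route as the paper: identify $\Mor \mathcal{C}^b(\mathcal{U}(n_-,n_+))$ with comparable pairs $w_0 \preceq w_1$ via proposition \ref{prop_Cb_basis_cob} (lemma \ref{lem_CbU}), compose with the bijection of theorem \ref{main_theorem}, and settle well-definedness of the cone by the fact that any minimal bypass system yields $\Down_{FBS(w_0,w_1)}\Gamma_{w_0} = [\Gamma_{w_0},\Gamma_{w_1}]$ (corollary \ref{general_bypass_system_other_way}, as in the discussion of section \ref{sec_categorical_meaning}). No gaps.
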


The following lemma is not particularly profound, but perhaps of categorical interest.
\begin{lem}[``Snake lemma'']
\label{lem_snake}
Consider a tight cobordism $\M(\Gamma_1, \Gamma_2)$, where $\Gamma_1 = [\Gamma_1^-, \Gamma_1^+]$, $\Gamma_2 = [\Gamma_2^-, \Gamma_2^+]$. Then there is a tight morphism $\Gamma_1^- \To \Gamma_2^+$.
\end{lem}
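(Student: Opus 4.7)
The plan is to combine bilinearity of the stackability map $m$ with the main theorem and the transitivity of $\preceq$. A tight morphism $\Gamma_1^- \To \Gamma_2^+$ is the same as the statement $m(\Gamma_1^-, \Gamma_2^+) = 1$; since both $\Gamma_1^-$ and $\Gamma_2^+$ are basis chord diagrams, proposition \ref{contact_interp_partial_order} (``$m$ is $\preceq$ on basis elements'') reduces this to checking $w_1^- \preceq w_2^+$, where $w_i^{\pm}$ is the word of $\Gamma_i^{\pm}$.

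First I would observe that tightness of $\M(\Gamma_1, \Gamma_2)$ forces $\Gamma_1, \Gamma_2$ to have the same number of chords and (by proposition \ref{lem_euler_orthogonality}) the same relative euler class; hence all basis elements appearing anywhere lie in a single $W(n_-, n_+)$. Then I would use bilinearity of $m$ (proposition \ref{mexists}) to expand
\[
 1 = m(\Gamma_1, \Gamma_2) = \sum_{\Gamma_{w_1} \in \Gamma_1} \sum_{\Gamma_{w_2} \in \Gamma_2} m(\Gamma_{w_1}, \Gamma_{w_2}).
\]
Since the total is $1 \in \Z_2$, at least one summand is nonzero; so there exist basis elements $\Gamma_{w_1}$ in the decomposition of $\Gamma_1$ and $\Gamma_{w_2}$ in the decomposition of $\Gamma_2$ with $m(\Gamma_{w_1}, \Gamma_{w_2}) = 1$, i.e.\ $w_1 \preceq w_2$ by proposition \ref{contact_interp_partial_order}.

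Now I would invoke the main theorem \ref{main_theorem}: every $\Gamma_{w_1}$ occurring in the basis decomposition of $\Gamma_1 = [\Gamma_1^-, \Gamma_1^+]$ satisfies $w_1^- \preceq w_1$, and every $\Gamma_{w_2}$ occurring in $\Gamma_2 = [\Gamma_2^-, \Gamma_2^+]$ satisfies $w_2 \preceq w_2^+$. Combining with the chosen pair and transitivity of the partial order yields
\[
 w_1^- \preceq w_1 \preceq w_2 \preceq w_2^+,
\]
so $w_1^- \preceq w_2^+$. Applying proposition \ref{contact_interp_partial_order} in the opposite direction gives $m(\Gamma_1^-, \Gamma_2^+) = 1$, which is exactly the desired tight morphism.

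There is really no serious obstacle here: the ``main theorem + bilinearity + transitivity'' combination does all the work. The only point requiring a moment's care is justifying the use of bilinearity of $m$ on non-basis diagrams (which is proposition \ref{mexists}) and confirming that all four endpoints $\Gamma_1^-, \Gamma_1^+, \Gamma_2^-, \Gamma_2^+$ genuinely live in the same $W(n_-, n_+)$, so that the partial order is even meaningful between them.
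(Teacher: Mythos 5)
Your proof is correct and follows essentially the same route as the paper: the paper invokes proposition \ref{general_stackability} (which is exactly your bilinearity-plus-proposition-\ref{contact_interp_partial_order} expansion) to get a comparable pair $w_1 \preceq w_2$ with $\Gamma_{w_i}$ in the decomposition of $\Gamma_i$, and then uses theorem \ref{main_theorem_maxmin} and transitivity to conclude $\Gamma_1^- \preceq \Gamma_2^+$. The only difference is that you unpack that proposition by hand, which changes nothing substantive.
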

The name ``snake lemma'' arises from the following diagram, regarding the cobordism as a map of ``exact triangles''. All arrows represent tight cobordisms.
\[
\xymatrix{
\ar[r] & \Gamma_2^+ \ar[r] & [\Gamma_2^-, \Gamma_2^+] \ar[r] & \Gamma_2^- \ar[r] & \\
\ar[r] & \Gamma_1^+ \ar[r] & [\Gamma_1^-, \Gamma_1^+] \ar[r] \ar@{=>}[u] & \Gamma_1^- \ar[r] \ar@{.>}[ull] |!{[l];[ul]}\hole &
}
\]

\begin{proof}
Since $m(\Gamma_1, \Gamma_2) = 1$, by proposition \ref{general_stackability}, the number of pairs $w_1 \preceq w_2$ with $\Gamma_{w_1} \in \Gamma_1$ and $\Gamma_{w_2} \in \Gamma_2$ is odd, hence not zero. But (theorem \ref{main_theorem_maxmin}) $\Gamma_1^-$ is a total minimum for $\Gamma_1$, and $\Gamma_2^+$ is a total maximum for $\Gamma_2$. Hence $\Gamma_1^- \preceq \Gamma_2^+$.
\end{proof}

\subsection{Properties of contact elements}
\label{sec_properties_contact_elements}

\subsubsection{How many basis elements in a decomposition?}
\label{sec_how_many_basis_elts_directly}

We prove proposition \ref{even_number_decomposition}: a non-basis contact element has an even number of basis elements in its decomposition. (We proved this result by ``skiing'' in section \ref{sec_ct_cat_U}; we now give a more direct proof.)

\begin{proof}
Consider the decomposition algorithm \ref{alg_base_point_construction} applied to a non-basis chord diagram $\Gamma$. Each basis diagram in the decomposition of $\Gamma$ first appears at some stage of this algorithm, from decomposing a non-basis chord diagram related to it by a bypass move. By lemma \ref{bypass_moves_elementary_moves}, a non-basis chord diagram related by a bypass move to a basis chord diagram is a sum of two basis chord diagrams. So basis elements appear in pairs.
\end{proof}
In fact, we have proved a little more: writing the basis elements of $\Gamma$ in lexicographic order, the $(2j-1)$'th and $2j$'th are bypass-related. Using the root point decomposition algorithm instead, a similar result holds for the right-to-left lexicographic order.

We immediately obtain a criterion for whether a contact element is a basis element:
\[
m \left(  \Gamma_{(-)^{n_-} (+)^{n_+}}, \Gamma \right) = m \left( \Gamma, \Gamma_{(+)^{n_+} (-)^{n_-}} \right) 
= \left\{ \begin{array}{cl} 1 & \text{if $\Gamma$ is a basis diagram} \\ 0 & \text{otherwise} \end{array} \right.
\]
Unsurprisingly, this is identical to the criterion for existing in a universal cobordism (see section \ref{sec_ct_cat_U}); it can also be proved by skiing.

\subsubsection{Symbolic interpretation of outermost regions}
\label{symbolic_outermost}

The appearance of certain symbols in both $w_-, w_+$ implies the appearance of symbols in all basis elements of $[w_-, w_+]$, and means that $\Gamma$ has an outermost chord in a certain place.
\begin{lem} [Outermost regions at base point]
\label{symbolic_basepoint}
Let $\Gamma = [\Gamma_-, \Gamma_+] = [w_-, w_+]$.
The following are equivalent.
\begin{enumerate}
\item 
$\Gamma$ has an outermost chord enclosing a $-$ (resp. $+$) region at the base point.
\item
Every word $w$ in the basis decomposition of $\Gamma$ begins with a $-$ (resp. $+$), i.e. $\Gamma_w$ has an outermost chord enclosing a $-$ (resp. $+$) region at the base point.
\item
$w_-, w_+$ both begin with a $-$ (resp. $+$), i.e. $\Gamma_-, \Gamma_+$ both have outermost chords enclosing a $-$ (resp. $+$) at the base point.
\end{enumerate}
\end{lem}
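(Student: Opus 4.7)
The plan is to prove the chain $(1) \Rightarrow (2) \Rightarrow (3) \Rightarrow (2) \Rightarrow (1)$, treating only the minus-sign case, since the plus-sign case is completely symmetric.

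For $(1) \Rightarrow (2)$: if $\Gamma$ contains an outermost $-$ chord $\gamma$ at the base point, then by the very definition of $B_-$ we have $\Gamma = B_-(\Gamma - \gamma)$, so $\Gamma$ lies in the $B_-$-summand of the Pascal decomposition of Proposition \ref{categorification}. Since the basis of that summand is exactly $\{v_w : w \text{ begins with } -\}$, uniqueness of the direct-sum decomposition forces every $v_w$ in $\Gamma$'s basis expansion to begin with $-$.

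The implication $(2) \Rightarrow (3)$ is immediate, since $w_-$ and $w_+$ are themselves among the words in the decomposition. For $(3) \Rightarrow (2)$, I would invoke the main theorem: every basis element $v_w$ appearing in $\Gamma$ satisfies $w_- \preceq w \preceq w_+$. If both $w_\pm$ begin with $-$, then the first $-$ sign of $w_+$ sits in position $1$, so by the definition of $\preceq$ the first $-$ sign of $w$ lies at position $\leq 1$, i.e.\ at position $1$, and $w$ also begins with $-$.

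The main obstacle is $(2) \Rightarrow (1)$: from the algebraic fact $\Gamma \in \operatorname{image} B_-$ one must deduce the geometric statement that $\Gamma$, as a chord diagram, has an outermost $-$ chord at the base point. My approach is to run one step of the base-point decomposition Algorithm \ref{alg_base_point_decomposition}. If $\Gamma$ already has an outermost chord at the base point, its sign must be $-$: otherwise the already-proved $(1) \Rightarrow (2)$, applied with signs reversed, would force every basis element of $\Gamma$ to begin with $+$, contradicting (2). If instead $\Gamma$ has no outermost chord at the base point, a single bypass surgery near the base point writes $\Gamma = \Gamma_1 + \Gamma_2$, where $\Gamma_1$ and $\Gamma_2$ are bona fide chord diagrams with outermost $-$ and $+$ chords respectively at the base point; in particular $\Gamma_2$ is a nonzero contact element by Proposition \ref{contact_distinct}. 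Applying $(1) \Rightarrow (2)$ to $\Gamma_2$ contributes basis elements starting with $+$ to the decomposition of $\Gamma$, contradicting (2) once more. Hence $\Gamma$ must have an outermost $-$ chord at the base point.
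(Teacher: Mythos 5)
Your proof is correct, but you close the equivalence by a genuinely different route than the paper does for the key direction. The paper also gets $(1)\Rightarrow(2)$ from the decomposition algorithm (your argument via the direct sum of Proposition \ref{categorification} is an algebraic rephrasing of the same fact) and notes $(2)\Rightarrow(3)$ is obvious; but it then proves $(3)\Rightarrow(1)$ in one line, using linearity of $B_-$ together with the uniqueness half of Theorem \ref{main_theorem_maxmin}: if $w_\pm = -w'_\pm$, then $B_-[\Gamma_{w'_-},\Gamma_{w'_+}] = [\Gamma_{-w'_-},\Gamma_{-w'_+}] = \Gamma$, so $\Gamma$ lies in the image of $B_-$ and therefore carries the outermost $-$ chord at the base point. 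You instead split this into $(3)\Rightarrow(2)$, by the squeeze $w_-\preceq w\preceq w_+$ and the definition of $\preceq$ (using $w_+$ in the $-$ case, $w_-$ in the $+$ case), and $(2)\Rightarrow(1)$, by contradiction via one step of Algorithm \ref{alg_base_point_decomposition}. Both routes rest on the main theorem --- the paper on its uniqueness statement, you on the ``every basis word lies between $w_-$ and $w_+$'' statement --- and yours is longer but stays closer to explicit chord-diagram surgery. One small point in Case B of your $(2)\Rightarrow(1)$: to know that the $+$-initial basis words of $\Gamma_2$ actually survive in the decomposition of $\Gamma = \Gamma_1+\Gamma_2$, you should also apply $(1)\Rightarrow(2)$ to $\Gamma_1$, so that all of its basis words begin with $-$ and hence cannot cancel any word of $\Gamma_2$; with that sentence added (and $\Gamma_2\neq 0$, which you already note), the contradiction is airtight.
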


\begin{proof}
That (1) implies (2) follows immediately from the decomposition algorithm. That (2) implies (3) is obvious. That (3) implies (1) follows immediately from $B_- [\Gamma_{w_1}, \Gamma_{w_2}] = [B_- \Gamma_{w_1}, B_- \Gamma_{w_2}] = [ \Gamma_{-w_1}, \Gamma_{-w_2}]$.
\end{proof}

A similar result at the root point gives an equivalence between: an outermost chord on $\Gamma$ enclosing a $\pm$ region at the root point; every word $w$ in the decomposition of $\Gamma$ ending with a $\pm$; and $w_-, w_+$ both ending with a $\pm$.

In fact, we can detect any outermost chords enclosing negative regions on the westside or positive regions on the eastside similarly. To this end, define some more creation and annihilation operators (which can be defined anywhere; recall section \ref{sec_creation_annihilation}).
\begin{defn}[Eastside/westside creation operators]\
\begin{enumerate}
\item
For each $i=0, \ldots, n_-$, the operator
\[
 B_-^{west, i}: SFH(T, n+1, e) \rightarrow SFH(T, n+2, e-1)
\]
takes a chord diagram with $n+1$ chords and relative euler class $e$, and produces a chord diagram with $n+2$ chords and relative euler class $e-1$, adding an outermost chord between points $(-2i-3, -2i-2)$ (as labelled on the chord diagram with $n+2$ chords).
\item
For each $j=0, \ldots, n_+$, the operator
\[
 B_+^{east, j}: SFH(T, n+1, e) \rightarrow SFH(n+2, e+1)
\]
takes a chord diagram with $n+1$ chords and relative euler class $e$, and produces a chord diagram with $n+2$ chords and relative euler class $e+1$, adding an outermost chord between points $(2j+2,2j+3)$.
\end{enumerate}
\end{defn}

Note that $B_-^{west,0}$ adds an outermost negative region west of the base point (``$B_- = B_-^{west,-1}$''); and then the various $B_-^{west,i}$ add outermost regions further anticlockwise, until $B_-^{west,n_-}$ adds an outermost region ``east'' of the original root point, to create a new root point further anticlockwise of the original one. Similarly for the $B_+^{east,j}$.

\begin{defn}[Eastside/westside annihilation operators]\
\begin{enumerate}
\item
For each $i = 0, \ldots, n_-$, the operator
\[
 A_+^{west, i}: SFH(T, n+1, e) \To SFH(T, n, e+1)
\]
takes a chord diagram with $n+1$ chords and relative euler class $e$, and produces a chord diagram with $n$ chords and relative euler class $e+1$, by joining the chords at positions $(-2i-2,-2i-1)$.
\item
For each $j=0, \ldots, n_+$, the operator
\[
 A_-^{east, j}: SFH(T, n+1, e) \To SFH(T,n,e-1)
\]
takes a chord diagram with $n+1$ chords and relative euler class $e$, and produces a chord diagram with $n$ chords and relative euler class $e-1$, by joining the chords at positions $(2j+1,2j+2)$.
\end{enumerate}
\end{defn}
Note $A_+^{west,0}$ joins chords west of the base point (``$A_- = A_-^{west,-1}$''); the various $A_+^{west,i}$ join chords further anticlockwise; $A_+^{west,n_-}$ joins chords on the ``east'' of the original chord diagram, namely those at the root point and immediately east of it. 

As with our original annihilation and creation operators, 
\[
 A_+^{west,j} \circ B_-^{west,j} = 1, \quad A_-^{east,j} \circ B_+^{east,j} = 1.
\]
We investigate further relations in section \ref{sec_simplicial}.

From lemma \ref{construction_mechanics} we see that $B_-^{west,j}$ has the effect on $\Gamma_w$ of producing $\Gamma_{w'}$, where $w'$ is obtained from $w$ as follows. If $0 \leq j \leq n_- - 1$, then we insert a $-$ sign in $w$ immediately before (or after) the $(j+1)$'th $-$ sign, ``splitting the $(j+1)$'th $-$ sign into two $-$ signs''. If $j=n_-$, then we add a $-$ sign at the end of $w$. $B_+^{east,j}$ is analogous.

As for annihilation operators, $A_+^{west,j}$ deletes the $(j+1)$'th $-$ sign, for $0 \leq j \leq n_- - 1$; for $j=n_-$, it deletes the $-$ sign at the end of the word, if there is one, or returns $0$ if the word ends in a $+$. And $A_-^{east,j}$ is analogous with signs reversed..

The proof of lemma \ref{symbolic_basepoint}, applied to westside operators, immediately gives the following.
\begin{lem} [Outermost negative regions on westside]
\label{symbolic_westside}
Let $\Gamma = [\Gamma_-, \Gamma_+]$, and let $j$ be an integer from $1$ to $n_- - 1$.
The following are equivalent.
\begin{enumerate}
\item 
$\Gamma$ has an outermost chord enclosing a $-$ region between $(-2j-1, -2j)$.
\item
Every word $w$ in the basis decomposition of $\Gamma$ has $(j+1)$'th $-$ sign following (i.e. not the first in its block), i.e. $\Gamma_w$ has an outermost chord enclosing a $-$ region between $(-2j-1, -2j)$.
\item
$w_-, w_+$ both have $(j+1)$'th $-$ sign following, i.e. $\Gamma_-, \Gamma_+$ both have outermost chords enclosing a $-$ region between $(-2j-1, -2j)$.
\end{enumerate}
\qed
\end{lem}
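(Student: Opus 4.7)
The plan is to imitate closely the three-step proof of Lemma~\ref{symbolic_basepoint}, replacing the base-point creation operator $B_-$ with the westside creation operator $B_-^{west, j-1}$. The geometric translation is: with the indexing convention given, $B_-^{west, j-1}$ adds an outermost chord between marked points $(-2j-1, -2j)$. Hence a chord diagram $\Gamma$ (of $n+1$ chords, relative euler class $e$) has an outermost chord enclosing a $-$ region between $(-2j-1, -2j)$ if and only if $\Gamma$ lies in the image of $B_-^{west, j-1}$. On basis diagrams, the excerpt already records that $B_-^{west, j-1}$ sends $\Gamma_{w'}$ to $\Gamma_w$, where $w$ is obtained from $w'$ by inserting a $-$ immediately adjacent to the $j$'th $-$ sign; consequently, in $w$ the $(j+1)$'th $-$ sign is following.

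For (1)$\Rightarrow$(2): if $\Gamma = B_-^{west,j-1}(\Gamma_0)$, then decomposing $\Gamma_0 = \sum_{w'} \Gamma_{w'}$ and using linearity together with the paragraph above gives $\Gamma = \sum_{w'} \Gamma_{w}$, where each $w$ has $(j+1)$'th $-$ sign following. Since the $\Gamma_w$ so obtained are distinct basis elements, this is \emph{the} basis decomposition of $\Gamma$. For (2)$\Rightarrow$(3): immediate since $w_-, w_+$ lie in the decomposition of $\Gamma$.

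The substantive step is (3)$\Rightarrow$(1). Given that both $w_-$ and $w_+$ have the $(j+1)$'th $-$ sign following, write $w_\pm = \beta(w'_\pm)$, where $\beta$ denotes the word-level operation of inserting a $-$ sign immediately before the $j$'th $-$ sign (so that $\beta(w'_\pm)$ has the $(j+1)$'th $-$ following, and $\Gamma_{w_\pm} = B_-^{west, j-1}(\Gamma_{w'_\pm})$). I would first check that $w'_- \preceq w'_+$: this is a direct combinatorial verification, since comparing positions of $-$ signs in $w_-$ and $w_+$ after deletion of the inserted one preserves the inequalities defining $\preceq$. Then I would verify the identity
\[
B_-^{west, j-1}\bigl[\Gamma_{w'_-},\, \Gamma_{w'_+}\bigr] = \bigl[\Gamma_{w_-},\, \Gamma_{w_+}\bigr] = \Gamma,
\]
which, combined with the image-of-$B_-^{west,j-1}$ characterization, gives (1).

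The main obstacle is establishing this last identity, i.e.\ that $B_-^{west, j-1}$ is compatible with the $[\cdot, \cdot]$ construction of Theorem~\ref{main_theorem}. My approach is as follows: since $B_-^{west,j-1}$ is a linear injection (it has the left inverse $A_+^{west, j-1}$), it maps the basis decomposition of $[\Gamma_{w'_-}, \Gamma_{w'_+}]$ term-by-term to a sum of basis diagrams $\Gamma_w$, all of whose words have $(j+1)$'th $-$ sign following, with $\Gamma_{w_-}$ and $\Gamma_{w_+}$ appearing among them. Because $\beta$ is a strictly order-preserving injection from $W(n_-, n_+)$ into $W(n_- + 1, n_+)$ (with respect to both lexicographic order and $\preceq$), the image of the minimum $w'_-$ is precisely $w_-$ and the image of the maximum $w'_+$ is precisely $w_+$ of the resulting sum. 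By Theorem~\ref{main_theorem} this identifies the sum uniquely as $[\Gamma_{w_-}, \Gamma_{w_+}]$, finishing the proof.
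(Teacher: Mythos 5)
Your proposal is correct and follows essentially the same route as the paper, which proves this lemma by transplanting the three-step proof of Lemma \ref{symbolic_basepoint} to the westside operators, with the key step (3)$\Rightarrow$(1) resting on exactly the identity $B_-^{west,j-1}[\Gamma_{w'_-},\Gamma_{w'_+}] = [\Gamma_{w_-},\Gamma_{w_+}]$ that you establish. The only difference is that you spell out the details the paper treats as immediate (that insertion of the $-$ sign preserves both the lexicographic order and $\preceq$, so the first and last basis elements map to first and last), which is a fair and complete filling-in rather than a different argument.
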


There is an analogous lemma on the eastside. All the lemmas in this section say that, if a chord diagram has an outermost region in a specific place, then so do all the basis chord diagrams in its decomposition. As we proceed through the decomposition algorithm, there is no decomposition there. This can be seen explicitly from the algorithm.

\subsubsection{Relations from bypass systems}

\label{sec_relations_ct_elts}

We prove theorem \ref{not_much_comparability}: the only basis elements in $\Gamma$ comparable to all others are $\Gamma_{\pm}$.

First, we observe that in addition to the tight contact cylinders of proposition \ref{MGamma+-}, we have many more tight cylinders within them. As in section \ref{sec_generalised_byp_triple_GammaGammaGamma}, take $\Gamma = [w_-, w_+]$ and bypass systems $c_- = FBS(w_-, w_+)$, $d_+ = BBS(w_-, w_+)$ on $\Gamma_-, \Gamma_+$ respectively.
\begin{lem}[More tight cylinders]\
\label{lem_tight_cyls}
\begin{enumerate}
\item
For every $\Gamma_w$ obtained by performing upwards bypass moves on $\Gamma_-$ along some subset $a_-$ of $c_-$,
\[
m(\Gamma_-, \Gamma_w) = m(\Gamma_w, \Gamma_+) = m(\Gamma, [\Gamma_-, \Gamma_w]) = m([\Gamma_-, \Gamma_w], \Gamma_-) = 1,
\]
and tight contact structures on these cylinders can be obtained by bypass attachments from $c_-$.
\item
For every $\Gamma_w$ obtained by performing downwards bypass moves on $\Gamma_+$ along some subset $b_+$ of $d_+$,
\[
m(\Gamma_-, \Gamma_w) = m(\Gamma_w, \Gamma_+) = m(\Gamma_+, [\Gamma_w, \Gamma_+]) = m([\Gamma_w, \Gamma_+], \Gamma) = 1,
\]
and tight contact structures on these cylinders can be obtained by bypass attachments from $d_+$.
\end{enumerate}
\end{lem}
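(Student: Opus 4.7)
The plan is to exhibit both claims as straightforward consequences of slicing the tight cobordisms of Proposition \ref{MGamma+-} at an intermediate convex disc whose dividing set is read off from a minimal subset of the relevant bypass system. Throughout, I may replace $a_-$ (resp.\ $b_+$) by a minimal subset $a'_- \subseteq a_-$ (resp.\ $b'_+ \subseteq b_+$) still satisfying $\Up_{a'_-}(\Gamma_-) = \Gamma_w$ (resp.\ $\Down_{b'_+}(\Gamma_+) = \Gamma_w$), since the $[\cdot,\cdot]$ diagrams depend only on the endpoints. A preliminary observation I will need is that any subset of a pinwheel-free bypass system is itself pinwheel-free: a pinwheel is a cyclic arrangement of attaching arcs with respect to $\Gamma$, so deleting arcs can only destroy, never create, pinwheels.

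For part (1), the tight $\M(\Gamma_-,\Gamma_+)$ is constructed in Proposition \ref{MGamma+-} by attaching bypasses above $\Gamma_-$ along $c_-$. Performing the attachments along $a'_-$ first yields a convex intermediate disc with dividing set $\Gamma_w = \Up_{a'_-}(\Gamma_-)$; the remaining attachments along $c_-\setminus a'_-$ then reach $\Gamma_+$. Both halves inherit tight contact structures, giving $m(\Gamma_-,\Gamma_w)=m(\Gamma_w,\Gamma_+)=1$. For the other two equalities, I invoke Corollary \ref{general_bypass_system_other_way} to identify $\Down_{a'_-}(\Gamma_-) = [\Gamma_-,\Gamma_w]$, and then slice the tight $\M(\Gamma,\Gamma_-)$ (built by downward attachment along $c_-$ from $\Gamma_-$, per Proposition \ref{MGamma+-}) at the intermediate level whose dividing set is $[\Gamma_-,\Gamma_w]$; the two resulting sub-cobordisms are $\M(\Gamma,[\Gamma_-,\Gamma_w])$ and $\M([\Gamma_-,\Gamma_w],\Gamma_-)$, both tight.

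Part (2) is the mirror argument, using $d_+$ in place of $c_-$. Slicing the tight $\M(\Gamma_-,\Gamma_+)$ --- built by downward attachment from $\Gamma_+$ along $d_+$ --- at the intermediate stage $\Gamma_w = \Down_{b'_+}(\Gamma_+)$ gives $m(\Gamma_-,\Gamma_w)=m(\Gamma_w,\Gamma_+)=1$. The ``resp.'' half of Corollary \ref{general_bypass_system_other_way} yields $\Up_{b'_+}(\Gamma_+) = [\Gamma_w,\Gamma_+]$, and slicing the tight $\M(\Gamma_+,\Gamma)$ (built by upward attachment along $d_+$ from $\Gamma_+$) at this intermediate level produces $\M(\Gamma_+,[\Gamma_w,\Gamma_+])$ and $\M([\Gamma_w,\Gamma_+],\Gamma)$, both tight.

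The only real content here is the conjunction of two facts already in hand: that a subset of a pinwheel-free bypass system still supports a tight partial cobordism, and that Corollary \ref{general_bypass_system_other_way} identifies the ``downward shadow'' of an upward minimal bypass system as the bracket $[\cdot,\cdot]$. The principal obstacle, such as it is, is bookkeeping: being careful about which surface is the top and which is the bottom in each cobordism, and which direction of bypass attachment produces which intermediate dividing set. Once this is set up correctly, each of the four equalities in each part is a single slicing argument, and the explicit tight contact structures on the listed cylinders are precisely the restrictions of the tight structures on $\M(\Gamma_-,\Gamma_+)$, $\M(\Gamma,\Gamma_-)$, $\M(\Gamma_+,\Gamma)$ built from the bypass systems in question.
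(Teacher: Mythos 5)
Your argument is correct and follows essentially the same route as the paper's: slice the tight cobordisms of Proposition \ref{MGamma+-} at the intermediate convex disc determined by the subset of the bypass system, and pass to a minimal sub-system (which exists since $c_-$, $d_+$ contain no trivial arcs) so that Corollary \ref{general_bypass_system_other_way} identifies its downwards (resp.\ upwards) surgery with $[\Gamma_-,\Gamma_w]$ (resp.\ $[\Gamma_w,\Gamma_+]$). The preliminary observation about subsets of pinwheel-free systems is not actually needed --- the slicing argument already yields tightness of the partial cobordisms --- but it is harmless.
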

Note any $\Gamma_w \in \Gamma$ satisfies the hypotheses of both halves of this lemma; the lemma is more general, because the $\Gamma_w = \Up_{a_-} \Gamma_-$ may come in even multiplicity and cancel.

\begin{proof}
We prove part (1); (2) is similar. The contact structure on $\M(\Gamma_-, \Gamma_+)$ obtained by attaching bypasses above $\Gamma_-$ along $c_-$ is tight (proposition \ref{MGamma+-}). Attaching bypasses along the subset $a_-$ thus gives a tight $\M(\Gamma_-, \Gamma_w)$; above this is a tight $\M(\Gamma_w, \Gamma_+)$.

To construct a tight contact structure on $\M([\Gamma_-, \Gamma_w], \Gamma_-)$, we first take a \emph{minimal} sub-system $a_-^0$ of $a_-$ (as in section \ref{sec_byp_sys_comp_pair}). That is, $a_-^0$ contains no trivial attaching arcs, $\Up (a_-^0) \Gamma_- = \Gamma_w$, but upwards surgery on any proper subset of $a_-^0$ does not give $\Gamma_w$. As noted in section \ref{sec_byp_sys_comp_pair}, not every bypass system has a minimal sub-system; but a bypass system with no trivial attaching arcs, such as $a_-$, does.

Corollary \ref{general_bypass_system_other_way} then gives $\Down (a_-^0) \Gamma_- = [\Gamma_-, \Gamma_w]$. Moreover, from proposition \ref{MGamma+-}, attaching downwards bypasses to $\Gamma_-$ along all of $c_-$ gives a tight contact structure on $\M(\Gamma, \Gamma_-)$. Attaching only along the subset $a_-^0$ gives a tight $\M([\Gamma_-, \Gamma_w], \Gamma_-)$; below this is a tight $\M(\Gamma, [\Gamma_-, \Gamma_w])$.
\end{proof}

Now we can take these bypass system shenanigans a little further.
\begin{prop}
\label{MGammaGammaw}
Let $\Gamma_w$ be a basis chord diagram obtained by either performing upwards bypass moves on $\Gamma_-$ along a subset $a_-$ of $c_-$, or downwards bypass moves on $\Gamma_+$ along a subset $b_+$ of $d_+$. Then
\[
 m(\Gamma, \Gamma_w) = \left\{ \begin{array}{cl} 1 & \Gamma_w = \Gamma_- \\ 0 & \text{otherwise} \end{array} \right.
\quad \text{and} \quad
 m(\Gamma_w, \Gamma) = \left\{ \begin{array}{cl} 1 & \Gamma_w = \Gamma_+ \\ 0 & \text{otherwise}. \end{array} \right. 
\]
\end{prop}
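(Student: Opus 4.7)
The plan is to prove each of the two equalities by an induction on the word $w$ with respect to $\preceq$, combining three ingredients: (i) bilinearity of $m$ and the contact interpretation $m(\Gamma_{u_1},\Gamma_{u_2}) = 1 \Leftrightarrow u_1 \preceq u_2$ (proposition \ref{contact_interp_partial_order}) to convert the claims into mod-$2$ parity statements; (ii) the main theorem together with corollary \ref{general_bypass_system_other_way} to control the basis decompositions of $[\Gamma_-,\Gamma_w]$ and $[\Gamma_w,\Gamma_+]$; and (iii) lemma \ref{lem_tight_cyls}, which supplies the crucial tight cobordisms. I handle the first equality using the upward form $\Gamma_w = \Up_{a_-}\Gamma_-$, and the second symmetrically using the downward form.

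For the base case, the main theorem implies $w_-$ is the unique minimum among the basis words in $\Gamma = [\Gamma_-,\Gamma_+]$, so by bilinearity and proposition \ref{contact_interp_partial_order},
\[
 m(\Gamma,\Gamma_-) \;=\; \bigl|\{u \in \Gamma : u \preceq w_-\}\bigr| \bmod 2 \;=\; 1,
\]
and dually $m(\Gamma_+,\Gamma) = 1$. For the inductive step of the first equality, fix $\Gamma_w = \Up_{a_-}\Gamma_-$ with $w \neq w_-$ and pick a minimal subsystem $a_-^0 \subseteq c_-$ with $\Up_{a_-^0}\Gamma_- = \Gamma_w$, which exists because $c_-$ has no trivial attaching arcs. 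Corollary \ref{general_bypass_system_other_way} and lemma \ref{lem_expanding_down_over_up} give
\[
 [\Gamma_-,\Gamma_w] \;=\; \Down_{a_-^0}\Gamma_- \;=\; \sum_{S \subseteq a_-^0} \Up_S \Gamma_- \pmod{2},
\]
so the basis decomposition of $[\Gamma_-,\Gamma_w]$ has the shape $\Gamma_- + \Gamma_w + R$, where every $\Gamma_{w_r}$ appearing in $R$ arises as $\Up_S \Gamma_-$ for some $S \subseteq a_-^0 \subseteq c_-$ and satisfies $w_- \prec w_r \prec w$ by theorem \ref{main_theorem}. Each such $w_r$ therefore falls within the inductive hypothesis, giving $m(\Gamma,\Gamma_{w_r}) = 0$. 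Lemma \ref{lem_tight_cyls}(1) provides $m(\Gamma,[\Gamma_-,\Gamma_w]) = 1$, so bilinearity together with the base case yields
\[
 1 \;=\; m(\Gamma,\Gamma_-) + m(\Gamma,\Gamma_w) + m(\Gamma,R) \;=\; 1 + m(\Gamma,\Gamma_w) + 0,
\]
from which $m(\Gamma,\Gamma_w) = 0$. The inductive step for the second equality proceeds dually: one inducts downward from $w_+$, invokes $m([\Gamma_w,\Gamma_+],\Gamma) = 1$ from lemma \ref{lem_tight_cyls}(2), and uses a minimal $b_+^0 \subseteq d_+$ with $\Down_{b_+^0}\Gamma_+ = \Gamma_w$ to control the intermediate terms of $[\Gamma_w,\Gamma_+]$.

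The main obstacle is a ``bridging'' issue. The hypothesis of the proposition permits $\Gamma_w$ to arise via \emph{either} the upward form $\Up_{a_-}\Gamma_-$ or the downward form $\Down_{b_+}\Gamma_+$, but the two inductive arguments each use a specific form, and as the small example with $w_-=--++$, $w_+=+-+-$ shows, the two forms genuinely give different subsets of $\mathcal{C}^b(\Gamma_-,\Gamma_+)$. When $\Gamma_w$ is only of the upward form, the second equality requires a tight-cobordism statement such as $m([\Gamma_w,\Gamma_+],\Gamma) = 1$ which lemma \ref{lem_tight_cyls}(2) does not directly supply. The natural remedy is to re-place the complementary bypass system $c_- \setminus a_-^0$ onto $\Gamma_w$ via the mechanics of section \ref{sec_mechanics_byp_sys_nicely_ordered}, obtaining a bypass system reaching $\Gamma_+$ upwards from $\Gamma_w$, and then reprise the proof behind lemma \ref{lem_tight_cyls} to produce the needed tight cobordism. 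Verifying that this re-placed bypass system is minimal and pinwheel-free, so that the symmetric argument goes through, is where the bulk of the work lies.
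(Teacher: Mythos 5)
Your first two paragraphs are, modulo packaging, the paper's own proof. The paper also passes to a minimal subsystem $a_-^0$ (which exists because $a_-\subseteq c_-$ has no trivial arcs), expands via lemma \ref{lem_expanding_down_over_up}, and feeds the result into lemma \ref{lem_tight_cyls}; the only difference is that no induction is needed: writing $\Gamma_w=\Up_{a_-^0}\Gamma_-=\sum_{b_-\subseteq a_-^0}\Down_{b_-}\Gamma_-$ and observing that each nonempty $b_-$ gives $\Down_{b_-}\Gamma_-=[\Gamma_-,\Gamma_x]$ for some up-reachable $\Gamma_x$, lemma \ref{lem_tight_cyls}(1) evaluates \emph{every} term of $m(\Gamma,\Gamma_w)=\sum_{b_-\subseteq a_-^0}m(\Gamma,\Down_{b_-}\Gamma_-)$ to $1$, so the sum is $2^{|a_-^0|}\equiv 0 \pmod 2$ unless $a_-^0=\emptyset$, i.e.\ unless $\Gamma_w=\Gamma_-$. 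Your induction over $\preceq$ proves the same thing correctly but less directly, and your dual argument via $d_+$ and lemma \ref{lem_tight_cyls}(2) is exactly the paper's mirror case.

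On the ``bridging issue'': you have not missed an argument the paper actually gives, since its proof is the one case above plus the sentence ``other cases are similar''. Two remarks put the worry in proportion. First, every basis element appearing in the decomposition of $\Gamma$ is automatically of \emph{both} forms (expand $\Gamma=\Down_{c_-}\Gamma_-$ and $\Gamma=\Up_{d_+}\Gamma_+$ over subsets and use proposition \ref{upwards_moves_forwards}); the paper notes this just before lemma \ref{lem_tight_cyls}, and $\Gamma_w\in\Gamma$ is the only case used downstream (proposition \ref{prop_prec_follow_even}, theorem \ref{not_much_comparability}), so for everything the paper needs your two completed arguments already give both equalities. Second, your instinct that the mixed combination cannot be handled by literal mirroring is sound: for $w_-=-+-+$, $w_+=+-+-$ one has $\Gamma=[\Gamma_-,\Gamma_+]=v_{-+-+}+v_{+--+}+v_{-++-}+v_{+-+-}$ and $[\Gamma_-,\Gamma_{+--+}]=v_{-+-+}+v_{+--+}$, and counting comparable pairs (proposition \ref{general_stackability}) gives $m([\Gamma_-,\Gamma_{+--+}],\Gamma)=0$, so the up-form expansion genuinely cannot compute $m(\Gamma_w,\Gamma)$; on the other hand, in your own test example $w_-=--++$, $w_+=+-+-$ the up- and down-reachable families coincide (both are $\{--++,+--+,-++-,+-+-\}$ for the evident minimal systems), so it is not even clear the mixed case ever arises. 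Your proposed repair --- transporting $c_-\setminus a_-^0$ onto $\Gamma_w$ and rerunning the pinwheel argument --- is left unverified, so as a proof of the statement in its full ``either/or'' generality your write-up stops essentially where the paper's does; for the cases the paper proves and uses, your argument is complete and equivalent to it.
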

In particular, the above proposition applies to any $\Gamma_w \in \Gamma$.

\begin{proof}
We consider $m(\Gamma, \Gamma_w)$ and $a_- \subseteq c_-$; other cases are similar. As $a_-$ contains no trivial attaching arcs, it has a minimal sub-system; hence we may assume $a_-$ minimal. Thus $a_-$ is empty iff $\Gamma_w = \Gamma_-$. Expanding ``up over down'' (lemma \ref{lem_expanding_down_over_up}),
\[
\Gamma_w = \Up_{a_-} \Gamma_- = \sum_{b_- \subseteq a_-} \Down_{b_-} \Gamma_-.
\]
For each $b_- \subseteq a_-$, we have
\[
 \Down_{b_-} \Gamma_- = \sum_{e_- \subseteq b_-} \Up_{e_-} \Gamma_-.
\]
By ``stability'' proposition \ref{upwards_moves_forwards}, each $\Up_{e_-} \Gamma_-$ is a basis element; the least term is $\Gamma_-$, which appears precisely once, when $e_-$ is the empty set. If $b_-$ is not empty, then $\Down_{b_-} \Gamma_- \neq \Gamma_-$; hence
\[
\Down_{b_-} \Gamma_- = [\Gamma_-, \Gamma_x] \quad \text{for some word $x$,} \quad \Gamma_- \prec \Gamma_x \preceq \Gamma_+.
\]
Moreover, $\Gamma_x$ is obtained from upwards bypass moves from $\Gamma_-$ along some subset of $c_-$. (Note $\Gamma_x$ need not be $\Up_{b_-} \Gamma_-$, which may appear several times and cancel; minimality of $a_-$ does not imply minimality of each $b_-$.)

Now taking $m(\Gamma, \cdot)$ we obtain
\[
 m(\Gamma, \Gamma_w) = \sum_{b_- \subseteq a_-} m \left( \Gamma, \Down_{b_-} \Gamma_- \right).
\]
For $b_-$ empty, the term is $m(\Gamma, \Gamma_-) = 1$. For $b_-$ nonempty, each term is of the form $m(\Gamma, [\Gamma_-, \Gamma_x])$, where $\Gamma_x$ is obtained from upwards bypass moves from $\Gamma_-$ along some subset of $c_-$; by lemma \ref{lem_tight_cyls}, this term is $1$. So the sum is
\[
 m(\Gamma, \Gamma_w ) = \sum_{b_- \subseteq a_-} 1 = 2^{|a_-|}
\]
which (mod 2) is $0$ when $a_-$ is nonempty, and $1$ when $a_-$ is empty.
\end{proof}

\begin{prop}
\label{prop_prec_follow_even}
For every $\Gamma_w \in \Gamma$, other than $\Gamma_\pm$, the number of basis elements of $\Gamma$ which precede $\Gamma_w$ with respect to $\preceq$ is even, and the number of basis elements which follow $\Gamma_w$ with respect to $\preceq$ is also even.
\end{prop}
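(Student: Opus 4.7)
The plan is to deduce this proposition as an almost formal consequence of Proposition~\ref{MGammaGammaw} combined with the comparable-pair interpretation of the stackability map (Proposition~\ref{general_stackability}). All of the geometric and bypass-system work has already been done; what remains is a short bookkeeping argument.

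First I would verify that every basis element $\Gamma_w$ appearing in $\Gamma = [\Gamma_-,\Gamma_+]$ satisfies the hypothesis of Proposition~\ref{MGammaGammaw}. This follows directly from the construction in the main theorem: taking $c_- = FBS(w_-,w_+)$, we have $\Gamma = \Down_{c_-}\Gamma_-$, and expanding ``down over up'' via Lemma~\ref{lem_expanding_down_over_up} writes $\Gamma$ as the mod-$2$ sum of the diagrams $\Up_{a_-}\Gamma_-$ over subsets $a_- \subseteq c_-$. So any $\Gamma_w$ that survives in $\Gamma$ is obtained by upwards bypass moves on $\Gamma_-$ along some $a_- \subseteq c_-$, and symmetrically by downwards moves on $\Gamma_+$ along some subset of $d_+ = BBS(w_-,w_+)$.

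Next I would apply Proposition~\ref{general_stackability} with the second (respectively first) argument set equal to the single basis element $\Gamma_w$. Because $\Gamma_{w_1} \in \Gamma_w$ forces $w_1 = w$, the formula collapses to
\[
m(\Gamma,\Gamma_w) \equiv \#\{\, w' \in W(n_-,n_+) : w' \preceq w,\ \Gamma_{w'} \in \Gamma\,\} \pmod{2},
\]
and analogously
\[
m(\Gamma_w,\Gamma) \equiv \#\{\, w' \in W(n_-,n_+) : w \preceq w',\ \Gamma_{w'} \in \Gamma\,\} \pmod{2}.
\]
By Proposition~\ref{MGammaGammaw}, the first stackability vanishes whenever $\Gamma_w \neq \Gamma_-$, and the second vanishes whenever $\Gamma_w \neq \Gamma_+$. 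Restricting to $\Gamma_w \neq \Gamma_\pm$ therefore yields that both counts are even, which is exactly the assertion of the proposition (with the convention, consistent with the statement in section~\ref{sec_main_theorem}, that $w$ itself is counted on each side via reflexivity of $\preceq$).

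I do not anticipate a genuine obstacle: the real content has been absorbed into Proposition~\ref{MGammaGammaw} (which in turn rests on the tight cylinders of Lemma~\ref{lem_tight_cyls} and the bypass-system analysis of Section~\ref{sec_bypass_systems}), so the present argument is essentially translation between $m$-values and parities of comparability sets. As a bonus, Theorem~\ref{not_much_comparability} drops out immediately: if some $\Gamma_w \in \Gamma$ with $w \neq w_\pm$ were comparable to every other basis element, then the two parity counts above would partition the basis elements of $\Gamma$ with an overlap of exactly one (namely $w$ itself), giving $|P|+|S| = |\Gamma|+1$; but $|P|,|S|$ are both even while $|\Gamma|$ is even by Proposition~\ref{even_number_decomposition}, a contradiction.
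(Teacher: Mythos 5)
Your proposal is correct and takes essentially the same route as the paper: its one-line proof (``expand $m(\Gamma,\Gamma_w)=0$ and $m(\Gamma_w,\Gamma)=0$ over basis elements of $\Gamma$'') is precisely your combination of Proposition \ref{MGammaGammaw} with the bilinear expansion of $m$ recorded in Proposition \ref{general_stackability}, and the paper likewise notes that every $\Gamma_w \in \Gamma$ satisfies the hypothesis of Proposition \ref{MGammaGammaw}. Your parity derivation of Theorem \ref{not_much_comparability} also matches the paper's argument.
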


\begin{proof}
Expand $m(\Gamma, \Gamma_w) = 0$ and $m(\Gamma_w, \Gamma) = 0$ over basis elements of $\Gamma$.
\end{proof}

\begin{proof}[Proof of theorem \ref{not_much_comparability}]
For a basis element, it is clear. Otherwise, the number of elements comparable to $\Gamma_w$ is $m(\Gamma, \Gamma_w) + m(\Gamma_w, \Gamma) + 1$. (We overcount $\Gamma_w$ in the sum, so correct by adding $1$.) If $\Gamma_w$ is comparable to every basis element in $\Gamma$ then this number must be even, since $\Gamma$ contains an even number of basis elements (proposition \ref{even_number_decomposition}). But by proposition \ref{MGammaGammaw} it is odd unless $\Gamma_w = \Gamma_\pm$.
\end{proof}

\section{Further considerations}

\label{ch_further_considerations}

\subsection{The rotation operator}
\label{sec_rotation}

We consider the operation of rotating chord diagrams, or equivalently, moving the base point. To maintain our base point sign convention, we move the base point by two marked points. Such rotation corresponds to an inclusion $(T,n) \hookrightarrow (T,n)$ with $S^1$-invariant contact structure on the intermediate $(\text{annulus}) \times S^1$ given by the dividing set in figure \ref{fig:43}.

\begin{figure}[tbh]
\centering
\includegraphics[scale=0.4]{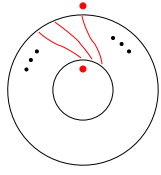}
\caption{The rotation operator.} \label{fig:43}
\end{figure}

TQFT-inclusion then gives a linear operator $R$ on $SFH$. Refining by $e$ gives a map
\[
 R: SFH(T, n+1, e) \To SFH(T, n+1, e), \quad \text{i.e.} \quad \Z_2^{\binom{n}{k}} \To \Z_2^{\binom{n}{k}}.
\]
Clearly: $m(\Gamma_0, \Gamma_1) = m(R \Gamma_0, R \Gamma_1)$; $R^{n+1} = 1$; and $R$ is a bijection on contact elements. When we wish to refer to a particular $\binom{n}{k}$ we write $R_{n,k}$ for the map on $\Z_2^{\binom{n}{k}}$

\subsubsection{Small cases}

For $SFH(T,1,0) = \Z_2$, obviously $R=1$. Similarly for an extremal euler class $SFH(T, n+1, e= \pm n) = \Z_2^{\binom{n}{0}}$ or $\Z_2^{\binom{n}{n}} = \Z_2$, again $R=1$. In the smallest non-identity case $SFH(T,3,0) = \Z_2^{\binom{2}{1}} = \Z_2^2$, the three chord diagrams form a bypass triple; we easily obtain $v_{-+} \mapsto v_{+-} \mapsto v_{-+} + v_{+-} \mapsto v_{-+}$. Writing matrices using the lexicographically ordered basis, we obtain:
\begin{align*}
R_{2,1} = \begin{bmatrix} 0 & 1 \\ 1 & 1 \end{bmatrix}, \quad
R_{3,1} = R_{3,2} = \begin{bmatrix} 0 & 1 & 0 \\ 0 & 0 & 1 \\ 1 & 1 & 1 \end{bmatrix}, \quad 
R_{4,1} = R_{4,3} = \begin{bmatrix} 0 & 1 & 0 & 0 \\ 0 & 0 & 1 & 0 \\ 0 & 0 & 0 & 1 \\ 1 & 1 & 1 & 1 \end{bmatrix},
\end{align*}

\[
R_{5,3} \quad = \quad
\begin{BMAT}{rccc}{cccc}
& \underbrace{--+ \cdot} & \underbrace{-+ \cdot} & \underbrace{+ \cdot}
\\
--+ \cdot \} &
0 & \begin{BMAT}{c:cc}{c} 1 & 0 & 0 \end{BMAT} & \begin{BMAT}{cccccc}{c} 0 & 0 & 0 & 0 & 0 & 0 \end{BMAT}
\\
\left. \begin{array}{c} -+ \cdot \end{array} \right\} &
\begin{BMAT}{c}{ccc} 0 \\ 0 \\ 0 \end{BMAT} & \begin{BMAT}{ccc}{ccc} 0 & 0 & 0 \\ 0 & 0 & 0 \\ 0 & 0 & 0 \end{BMAT} & \begin{BMAT}{ccc:ccc}{ccc} 0 & 1 & 0 & 0 & 0 & 0 \\ 0 & 0 & 1 & 0 & 0 & 0 \\ 1 & 1 & 1 & 0 & 0 & 0 \end{BMAT}
\\
\left. \begin{array}{c} + \cdot \end{array} \right\} &
\begin{BMAT}{c}{cccccc} 0 \\ 0 \\ 0 \\ 0 \\ 0 \\ 1 \end{BMAT} & \begin{BMAT}{ccc}{cccccc} 0 & 1 & 0 \\ 0 & 0 & 0 \\ 0 & 0 & 0 \\ 0 & 0 & 1 \\ 0 & 0 & 0 \\ 1 & 1 & 1 \end{BMAT} & \begin{BMAT}{cccccc}{cccccc} 0 & 1 & 0 & 0 & 0 & 0 \\ 0 & 0 & 0 & 0 & 1 & 0 \\ 0 & 0 & 0 & 1 & 1 & 0 \\ 0 & 0 & 1 & 0 & 1 & 0 \\ 0 & 0 & 0 & 0 & 0 & 1 \\ 1 & 1 & 1 & 1 & 1 & 1 \end{BMAT}
\addpath{(2,0,0)uuu}
\addpath{(3,0,0)uuu}
\addpath{(1,1,0)rrr}
\addpath{(1,2,0)rrr}
\end{BMAT}
\]
(It might appear that $R_{n,k} = R_{n,n-k}$, but this is not the case in general: e.g. $R_{5,2} \neq R_{5,3}$.)

\subsubsection{Computation of $R$}

\label{sec_computation_R}

We can compute $R$ recursively. If $x$ is a word in $\{-,+\}$ let \emph{$x$-basis elements} denote those $v_w$ where $w$ begins with the string $x$. Let \emph{$x$-rows} or \emph{$x$-columns} be rows or columns corresponding to $x$-basis elements. For two words $x$ and $y$, the \emph{$x \times y$ minor} of $R$ is the submatrix consisting of the intersection of the $x$-rows and $y$-columns.

For example, $R_{5,3}$ is written above to suggest a decomposition in terms of such minors.
\[
R_{5,3} \quad = \quad
\begin{BMAT}{rccc}{cccc}
& \underbrace{--+ \cdot} & \underbrace{-+ \cdot} & \underbrace{+ \cdot} \\
--+ \cdot \} & 0 & \begin{BMAT}{c:c}{c} R_{2,2} & 0 \end{BMAT} & 0 \\
-+ \cdot \} & 0 & 0 & \begin{BMAT}{c:c}{c} R_{3,2} & 0 \end{BMAT} \\
\left. \begin{array}{c} + \cdot \end{array} \right\} &
\begin{array}{c} (-- \cdot)\text{-cols} \\ \text{of } R_{4,2} \end{array} & \begin{array}{c} (- \cdot)\text{-cols} \\ \text{of } R_{4,2} \end{array} & R_{4,2} 
\addpath{(2,0,0)uuu}
\addpath{(3,0,0)uuu}
\addpath{(1,1,0)rrr}
\addpath{(1,2,0)rrr}
\end{BMAT}
\]
We now prove that something similar occurs for all $R_{n,k}$.

\begin{prop}[Recursive computation of $R$]
\label{prop_recursive_rotation}
$R_{n,k}$ is given by:
\begin{enumerate}
\item The $(+) \times (+)$ minor of $R_{n,k}$ consists of $R_{n-1,k-1}$.
\item The $(+) \times (-+)$ minor of $R_{n,k}$ contains the $(-)$-columns of $R_{n-1,k-1}$. More generally, the $(+) \times ((-)^j +)$ minor of $R_{n,k}$ contains the $((-)^{j})$-columns of $R_{n-1,k-1}$, for any $j=1, \ldots, n-k$.
\item The $(-+) \times (+-)$ minor of $R_{n,k}$ consists of $R_{n-2,k-1}$. More generally, for any $j = 0, \ldots, n-k-1$, the $((-)^j - +) \times ((-)^j +-)$ minor of $R_{n,k}$ consists of $R_{n-j-2,k-1}$.
\item All other entries are zero. To write these remaining entries out exhaustively (with some overlap):
\begin{enumerate}
\item (``Below and on the diagonal, in the $(-)$ rows.'') The $(-+) \times (-)$ minor of $R_{n,k}$ is zero. More generally, the $((-)^j +) \times ((-)^j)$ minor of $R_{n,k}$ is zero, for any $j=1, \ldots, n-k$.
\item (``Above the diagonal and the submatrices $R_{n-j-2,k-1}$.'') The $(--) \times (+)$ minor of $R_{n,k}$ is zero. More generally, the $((-)^j --) \times ((-)^j +)$ minor of $R_{n,k}$ is zero, for any $j=0, \ldots, n-k-2$.
\item (The pieces in the $(-)$ rows just to the right of the submatrices $R_{n-j-2,k-1}$.) The $(-) \times (++)$ minor of $R_{n,k}$ is zero. More generally, the $(-) \times ((-)^j ++)$ minor of $R_{n,k}$ is zero, for any $j = 0, \ldots, n-k$.
\end{enumerate}
\end{enumerate}
\end{prop}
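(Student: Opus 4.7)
The plan is to prove the proposition by direct combinatorial analysis. For each basis vector $v_w$ with $w\in W(n_-,n_+)$, I would compute the rotated chord diagram $R\Gamma_w$ explicitly (using the construction mechanics of lemma \ref{construction_mechanics}), and then expand it in the basis via the decomposition algorithm \ref{alg_base_point_decomposition}; the coefficients read off in each prefix class give the claimed matrix entries. The block structure of $R_{n,k}$ reflects how the first symbols of $w$ control which chords of $\Gamma_w$ are most affected by the shift of base point by two positions.

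The case $w = +w'$ (giving the $(+)$-columns) is the cleanest. Here $\Gamma_w = B_+\Gamma_{w'}$ has an outermost $+$-chord at positions $(0,1)$ and a shifted copy of $\Gamma_{w'}$ at positions $(2,\ldots,2n+1)$. Rotation moves the outermost chord to $(2n,2n+1)$ on the westside and places $\Gamma_{w'}$ at $(0,\ldots,2n-1)$. Viewed from the new base point, one checks that the expansion of this configuration in the basis of $SFH(T,n+1,e)$ is determined by the expansion of $R_{n-1,k-1}v_{w'}$ in the basis of the smaller space: the $(+)\times(+)$ minor is precisely $R_{n-1,k-1}$, and the $(-)\times(+)$ minor vanishes, establishing claim (1) together with the $j=0$ case of (4)(c). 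For the general case $w = (-)^j + w''$, lemma \ref{construction_mechanics} describes the first $j+1$ chords of $\Gamma_w$ explicitly as a nested family of outermost $-$-chords at the base and on the westside, together with the spanning chord emanating from the first $+$; a case analysis of the images of these chords under rotation yields claims (2), (3), (4)(a), and (4)(b) simultaneously.

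The main obstacle will be the bookkeeping in the recursive case: the rotation of the spanning first-$+$ chord, together with the adjacent $-$-chord originally at the base, effectively ``consume'' two chords of $\Gamma_w$, and matching the result to the claim that the appropriate minor equals $R_{n-j-2,k-1}$ (rather than $R_{n-1,k-1}$) requires explicitly identifying which basis element of the smaller space remains after the two outermost reductions have been performed. This involves carrying the base-$i$'th and root-$j$'th numberings of definitions \ref{def_base_numbering} and \ref{def_root_numbering} through the label shift and checking compatibility with the creation/annihilation factorisations in section \ref{sec_creation_annihilation}. A parallel approach, perhaps cleaner for the zero blocks in (4), is to phrase each claim of the proposition as an operator identity of the form $A \circ R_{n,k} \circ B = R_{m,\ell} \circ B'$ (or $= 0$) where $A, B, B'$ are compositions of the creation and annihilation operators from sections \ref{sec_creation_annihilation} and \ref{symbolic_outermost}, and to verify these identities via the TQFT-inclusion theorem by concatenating the annular intermediate regions for each operator and simplifying the resulting dividing set using edge-rounding and convex surface theory.
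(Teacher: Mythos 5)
Your closing ``parallel approach'' is in fact the paper's actual proof: each block statement is translated into an operator identity built from the creation/annihilation operators and $R$, namely $A_- R B_+ = R$, $A_- R (B_-)^j B_+ = R(B_-)^j$, $A_- A_+^{j+1} R (B_-)^j B_+ B_- = R$, together with the corresponding identities equal to zero for part (4), and each identity is then verified directly on chord diagrams by concatenating the defining annuli. So that part of your plan is sound, and if you promote it from a side remark to the main argument you essentially recover the paper's proof.

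Your primary route, however, contains a concrete error precisely in the case you call cleanest. For $w=+w'$ you assert that $Rv_w$ is supported in the $(+)$-rows, i.e.\ that the $(-)\times(+)$ minor of $R_{n,k}$ vanishes, and that this yields claim (1) together with the $j=0$ case of (4)(c). This contradicts the proposition itself: part (3) with $j=0$ says the $(-+)\times(+-)$ minor equals $R_{n-2,k-1}$, which is nonzero, and the smallest example already refutes the assertion, since in $SFH(T,3,0)$ one has $Rv_{+-}=v_{-+}+v_{+-}$, so the column of the $(+)$-word $+-$ has a nonzero entry in a $(-)$-row. Only the columns indexed by words beginning $++$ have no $(-)$-row entries, which is what (4)(c) with $j=0$ actually says (it concerns the $(-)\times(++)$ minor, not the $(-)\times(+)$ minor). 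Geometrically, the point you miss is that after rotating $B_+\Gamma_{w'}$ the new diagram need not have an outermost region at the base point (it does not when $w'$ begins with $-$), so the first step of the decomposition algorithm branches into both a $+$ and a $-$ term; this branching is exactly what produces the $R_{n-j-2,k-1}$ blocks of part (3), and the explicit formula of proposition \ref{prop_explicit_rotation} shows that a single column of $R$ generically spreads over several prefix blocks. Consequently the reduction ``the $(+)$-columns are governed by $R_{n-1,k-1}$ alone'' is false, and the case analysis you sketch for claims (2), (3), (4)(a), (4)(b) would need to be rebuilt on a correct description of the columns; the operator-identity formulation avoids this bookkeeping entirely and is the route to take.
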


\begin{proof}
We simply verify all these conditions. The conditions given are equivalent to the following equations on operators:
\begin{enumerate}
\item $A_- R B_+ = R$.
\item $A_- R (B_-)^j B_+ = R (B_-)^j$, for $j=1, \ldots, n-k$.
\item $A_- A_+ (A_+)^j R (B_-)^j B_+ B_- = R$, for $j=0, \ldots, n-k-1$.
\item \begin{enumerate}
\item $A_- (A_+)^j R (B_-)^j = 0$, for $j=1, \ldots, n-k$.
\item $A_+ A_+ (A_+)^j R (B_-)^j B_+ = 0$, for $j=0, \ldots, n-k-2$.
\item $A_+ R (B_-)^j B_+ B_+ = 0$, for $j=0, \ldots, n-k$.
\end{enumerate}
\end{enumerate}
These are now easily proved by examining the corresponding chord diagrams.
\end{proof}

These matrices have interesting combinatorial properties: for instance, for every row, there is precisely one column which has its highest nonzero element in that row. The above recursive form also gives a recursive formula:
\[
 R = \sum_{n=0}^\infty B_+ R B_-^n A_- A_+^n + B_-^{n+1} B_+ R A_+ A_- A_+^n 
 = \sum_{n=0}^\infty \left[ B_+ R A_+, B_-^{n+1} \right] A_- A_+^n.
\]

\subsubsection{An explicit description}

\label{sec_explicit_R}

We now describe $R$ explicitly on basis vectors $v_w$. Write $w = (-)^{a_1} (+)^{b_1} \cdots (-)^{a_k} (+)^{b_k}$ where possibly $a_1 = 0$ or $b_k = 0$, but all other $a_i, b_i$ are nonzero. Interpreting the formula above as a set of instructions for operating on $w$, removing or adding $-,+$ signs, and proceeding by induction, we obtain the following.

\begin{prop}[Explicit computation of $R$]
\label{prop_explicit_rotation}
If $k \geq 2$ then $R(v_w)$ is given by taking
\[
\begin{array}{c}
 (+)^{b_1 - 1} (-)^{a_1 + 1} (+)^{b_2} (-)^{a_2} \cdots (+)^{b_{k-1}} (-)^{a_{k-1}} (+)^{b_k + 1} (-)^{a_k - 1} \\
 = (+)^{b_1 - 1} (-)^{a_1 + 1} \left( \prod_{j=2}^{k-1} (+)^{b_j} (-)^{a_j} \right) (+)^{b_k + 1} (-)^{a_k - 1}
\end{array}
\]
and then, for each possible way of grouping $(1, 2, \ldots, k)$ into the form
\[
  ((1,2,\ldots,l_1), (l_1 + 1, l_1 + 2, \ldots, l_2), \ldots, (l_{T-1}+1, l_{T-1} + 1, \ldots, l_T = k)),
\]
(including the trivial grouping $((1), (2), \ldots, (k))$, taking the expression
\[
\begin{array}{c}
(+)^{b_1 + \cdots + b_{l_1} - 1} (-)^{a_1 + \cdots + a_{l_1} +1} (+)^{b_{l_1 + 1} + \cdots + b_{l_2}} (-)^{a_{l_1 + 1} + \cdots + a_{l_2}} \cdots \\
\cdots 
(+)^{b_{l_{T-2} + 1} + \cdots + b_{l_{T-1}}} (-)^{b_{a_{T-2} + 1} + \cdots + a_{l_{T-1}}}
(+)^{b_{l_{T-1} + 1} + \cdots + b_{l_T} + 1} (-)^{a_{l_{T-1} + 1} + \cdots + a_{l_T} - 1} \\ 
\\ =
(+)^{b_1 + \cdots + b_{l_1} - 1} (-)^{a_1 + \cdots + a_{l_1} +1} 
\left( \prod_{m=2}^{T-1} (+)^{b_{l_{m-1} + 1} + \cdots + b_{l_m}} (-)^{a_{l_{m-1} + 1} + \cdots + a_{l_m}} \right) \\
(+)^{b_{l_{T-1} + 1} + \cdots + b_{l_T} + 1} (-)^{a_{l_{T-1} + 1} + \cdots + a_{l_T} - 1}
\end{array}
\] 
obtained by grouping factors of the first expression accordingly, and summing all the corresponding basis elements.

If $k=1$, so that $w$ is of the form $(-)^a$ or $(+)^b$ or $(-)^a (+)^b$, then $R(v_w)$ is given by a single term $v_{w'}$ where $w'$ is given by:
\begin{enumerate}
 \item for $w = (-)^a$, $w' = (-)^a$ also;
 \item for $w = (+)^a$, $w' = (+)^a$ also;
 \item for $w = (-)^a (+)^b$, $w' = (+)^b (-)^a$;
\end{enumerate}
\qed
\end{prop}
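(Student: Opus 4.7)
The plan is to induct on $|w|$ using the recursive formula from Proposition \ref{prop_recursive_rotation},
\[
R = \sum_{n \geq 0} \bigl( B_+ R B_-^n A_- A_+^n + B_-^{n+1} B_+ R A_+ A_- A_+^n \bigr),
\]
together with the relations $A_\pm B_\mp = 1$ and $A_\pm B_\pm = 0$. The base cases $k = 1$ will be handled by direct inspection: the diagrams $\Gamma_{(-)^a}$ and $\Gamma_{(+)^b}$ are fans of outermost chords together with one chord at the root, and are manifestly invariant under rotation by two positions; the diagram $\Gamma_{(-)^a(+)^b}$ consists of a western negative fan and an eastern positive fan separated by the root chord, so rotation by two swaps east and west to yield $\Gamma_{(+)^b(-)^a}$.

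For the inductive step ($k \geq 2$), write $w = (-)^{a_1}(+)^{b_1} \cdots (-)^{a_k}(+)^{b_k}$ and evaluate each term of the recursion on $v_w$. By the commutation relations, $A_+^n v_w$ is nonzero only when $n \leq a_1$, after which $A_-$ requires the exposed symbol to be a $+$; this pins down $n = a_1$ (or $n = 0$ when $a_1 = 0$) and reduces the first summand to $B_+\, R(v_{w'})$, where $w'$ is $w$ with the first $+$ sign deleted. The number of blocks of $w'$ is $k$ when $b_1 \geq 2$ and $k - 1$ when $b_1 = 1$ (the two $-$-blocks straddling the removed $+$ coalesce). The second summand requires a further $A_+$ to act nontrivially, which forces $b_1 = 1$; it then reduces to $B_-^{a_1+1} B_+\, R(v_{w''})$, where $w''$ is obtained from $w$ by removing the initial $(-)^{a_1}(+)$ together with one further $-$ from the subsequent $(-)^{a_2}$. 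In either case $R$ is applied to a strictly shorter word, so the inductive hypothesis applies.

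Expanding $R(v_{w'})$ via the hypothesis and prepending $B_+$ produces target summands for $w$ according to the following pattern: when $b_1 \geq 2$, groupings of $w'$ correspond bijectively to groupings of $w$ (same partition of the $k$ blocks), and the second summand vanishes; when $b_1 = 1$, groupings of the $(k-1)$-block word $w'$ correspond to groupings of $w$ with its first two blocks lying in a single group ($l_1 \geq 2$), while the contribution of the second summand, after prepending $(-)^{a_1+1}(+)$ to $R(v_{w''})$, supplies precisely the remaining groupings ($l_1 = 1$). The main obstacle is the combinatorial bookkeeping: verifying that the correspondences between groupings of $w'$, $w''$, and $w$ are bijective, that the $\pm 1$ adjustments on the first and last block exponents in the target formula propagate correctly through the leading $B_\pm$, and that degenerate configurations ($a_1 = 0$, $b_k = 0$, $a_2 = 1$, etc.) in which the reduced word has even fewer blocks are handled uniformly by the inductive hypothesis.
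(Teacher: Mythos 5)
Your proposal is correct and is essentially the paper's own argument: the paper likewise proves Proposition \ref{prop_explicit_rotation} by interpreting the recursive operator formula of Proposition \ref{prop_recursive_rotation} as instructions for deleting and prepending $\pm$ symbols on $w$ and inducting, and your reduction (only $n=a_1$ survives, yielding $B_+R(v_{w'})$ plus, exactly when $b_1=1$, the term $B_-^{a_1+1}B_+R(v_{w''})$, matching the groupings with $l_1\geq 2$ and $l_1=1$ respectively) is the correct bookkeeping behind that sketch. In fact you supply more detail than the paper does; only the throwaway description of the $k=1$ diagram $\Gamma_{(-)^a(+)^b}$ is slightly off (the long chord is not the chord at the root point), but the rotation-by-two computation it supports is still right.
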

Note every chord diagram has an outermost region; after some rotation, an outermost region comes to the base point; and a diagram with an outermost region at the base point is of the form $B_\pm \Gamma$. Thus, rotation matrices give a quick way to compute all the contact elements in $SFH(T,n+1,e)$ from those in $SFH(T,n,e \pm 1)$.

\subsection{Simplicial structures}
\label{sec_simplicial}

Recall $B_-^{west,i}, A_+^{west,i}, B_+^{east,j}, A_-^{east,j}$ from section \ref{symbolic_outermost}: for $0 \leq i \leq n_-$ and $0 \leq j \leq n_+$,
\begin{enumerate}
\item  $B_-^{west,i}$ inserts a chord $(-2i-3,-2i-2)$,
\item  $A_+^{west,i}$ joins the chords at positions $(-2i-2,-2i-1)$,
\item  $B_+^{east,j}$ inserts a chord $(2i+2,2i+3)$,
\item  $A_-^{east,j}$ joins the chords at positions $(2j+1,2j+2)$.
\end{enumerate}
Taking $i,j=-1$ recovers the original $A_\pm, B_\pm$. We have seen $B_-^{west,j} \circ A_+^{west,j} = B_+^{east,j} \circ A_-^{east,j} = 1$. The following lemma gives further relations; the proof is clear, considering the effect on words, or on chord diagrams.
\begin{lem}[Westside simplicial structure]
For all $0 \leq i,j \leq n_-$:
\begin{align*}
A_+^{west,i} \circ A_+^{west,j} &= \begin{array}{cl} A_+^{west,j-1} \circ A_+^{west,i} & i<j \end{array} \\
A_+^{west,i} \circ B_-^{west,j} & = \left\{ \begin{array}{cl} B_-^{west,j-1} \circ A_+^{west,i} & i<j \\ 1 & i=j,j+1 \\ B_-^{west,j} \circ A_+^{west,i-1} & i>j+1 \end{array} \right. \\
B_-^{west,i} \circ B_-^{west,j} &= \begin{array}{cl} B_-^{west,j+1} \circ B_-^{west,i} & i \leq j \end{array}
\end{align*}\
\qed
\end{lem}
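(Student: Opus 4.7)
My plan is to reduce everything to combinatorics on words, as the section \ref{symbolic_outermost} descriptions already encode how each operator acts on basis vectors. Since $\{v_w : w \in W(n_-, n_+)\}$ is a basis of each $SFH(T, n+1, e)$, linearity reduces each identity to a check on $v_w$ for arbitrary $w$. Using the explicit recipe established in that section:
\begin{itemize}
\item $B_-^{west,j} v_w = v_{w'}$, where $w'$ is obtained from $w$ by inserting a new $-$ sign so that it becomes the $(j+1)$st $-$ sign of $w'$ (appending at the end when $j = n_-$);
\item $A_+^{west,i} v_w = v_{w'}$, where $w'$ is obtained from $w$ by deleting the $(i+1)$st $-$ sign, with the convention that $A_+^{west,n_-}$ deletes the final symbol if it is a $-$, and gives $0$ if it is a $+$.
\end{itemize}
These descriptions are precisely the combinatorial face/degeneracy maps of the standard simplex acting on the positions of $-$ signs. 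So the three relations asserted are the face--face, face--degeneracy, and degeneracy--degeneracy identities of a simplicial structure.

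First I would handle $A_+^{west,i} \circ A_+^{west,j}$ with $i < j$. Applying $A_+^{west,j}$ first removes the $(j+1)$st $-$ sign, leaving the $(i+1)$st $-$ sign where it was; then $A_+^{west,i}$ removes it. Applying the right-hand side, $A_+^{west,i}$ first removes the $(i+1)$st $-$ sign, so what was the $(j+1)$st $-$ sign becomes the $j$th $-$ sign, and $A_+^{west,j-1}$ deletes it. Both sides produce $v_{w'}$ for the same $w'$, or both give $0$ simultaneously (e.g.~the endpoint cases with $i=n_-$ or $j=n_-$, which are easy to track symbol by symbol). An analogous check handles $B_-^{west,i} \circ B_-^{west,j}$ with $i \leq j$: both insertion orders, by tracking whether the newly inserted $-$ becomes the $(i+1)$st or the $(j+2)$nd $-$ sign, yield identical words.

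Next I would verify the mixed relation $A_+^{west,i} \circ B_-^{west,j}$, splitting into the three cases. In the case $i < j$, the new $-$ sign inserted as the $(j+1)$st stays to the right of the $(i+1)$st $-$ sign; deleting the latter leaves the inserted one as the $j$th $-$ sign, matching $B_-^{west,j-1} \circ A_+^{west,i}$. In the degenerate cases $i=j$ and $i=j+1$, the $-$ sign that $A_+^{west,i}$ deletes is precisely the one $B_-^{west,j}$ inserted, so the composition is the identity. In the case $i > j+1$, the inserted $-$ sign lies strictly to the left of the $(i+1)$st $-$ sign, which remains the $i$th after insertion; so deletion of it matches $B_-^{west,j} \circ A_+^{west,i-1}$. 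The edge-case bookkeeping when $i$ or $j$ equals $n_-$ (the ``append at the end'' conventions) requires one extra check but is routine.

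The main obstacle, such as it is, is bookkeeping the index shifts cleanly through all the endpoint cases, especially the interaction with the boundary behaviour of $A_+^{west,n_-}$, which can return $0$. However, because in every subcase the effect of both sides on a word is an identical insertion/deletion pattern, the relations follow term-by-term; there is no nontrivial geometric content, only a careful reading of the base point construction algorithm. One could alternatively argue directly on chord diagrams using the base-$-$ numbering (lemma \ref{construction_mechanics}), checking that the operations on outermost chord endpoints commute in the stated manner, but the word-level argument is cleaner and already suffices.
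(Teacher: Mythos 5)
Your proposal is correct and is essentially the paper's own argument: the paper proves this lemma simply by noting it is clear from the effect of the operators on words (or chord diagrams), which is exactly the word-level insertion/deletion check you carry out, using the same descriptions of $A_+^{west,i}$ and $B_-^{west,j}$ from the section on symbolic interpretation of outermost regions. Your case analysis and edge-case remarks just make explicit what the paper leaves as a routine verification.
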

Hence there is a \emph{simplicial structure} on $SFH(T,n+1)$, with face maps $d_i^+ = A_+^{west,i}$ and degeneracy maps $s_j^+ = B_-^{west,j}$ for $0 \leq i,j \leq n_-$. The the (mod 2) ``boundary map'' $d^+ = \sum_{i=0}^{n_-} d_i^+ = \sum_{i=0}^{n_-} A_+^{west,i}$ satisfies $(d^+)^2=0$, and we obtain chain complexes
\[
 SFH(T,n+1,e) \stackrel{d^+}{\To} SFH(T,n,e+1) \stackrel{d^+}{\To} \cdots \stackrel{d^+}{\To} SFH \left( T, \frac{n+e}{2}+1, \frac{n+e}{2} \right)
\]
along which $n_+$ is constant and $n_-$ decreases to $0$; hence $n_-$ can be regarded as the ``dimension''. This is a ``northeast--southwest'' diagonal of Pascal's triangle. We call the chain complex $C^{+,n_+}_*$.

Recall (section \ref{symbolic_outermost}) the effect of $d_i^+ = A_+^{west,i}$ on words:
\begin{enumerate}
\item
For $0 \leq i \leq n_- - 1$, $A_+^{west,i}$ deletes the $(i+1)$'th $-$ sign in a word.
\item
For $i = n_-$, $A_+^{west,n_-}$ deletes the final $-$ sign, if possible; else returns $0$.
\end{enumerate}
Hence the effect of $d^+$ on a word $w$ is to give a sum over all of the above, and we easily obtain the following lemma. We write $w = (-)^{a_1} (+)^{b_1} \cdots (-)^{a_k} (+)^{b_k}$ as usual.
\begin{lem}[Effect of $d^+$]
If $b_k > 0$, i.e. $w$ ends in a $+$, then
\[
d^+ w = a_1 (-)^{a_1 - 1} (+)^{b_1} \cdots (-)^{a_k} (+)^{b_k} + \cdots + a_k (-)^{a_1} (+)^{b_1} \cdots (-)^{a_k - 1} (+)^{b_k}.
\]
If $b_k = 0$, so that $w$ ends in a $-$, then
\[
\begin{array}{rcl}
d^+ w &=& a_1 (-)^{a_1 - 1} (+)^{b_1} \cdots (-)^{a_k} + \cdots + a_{k-1} (-)^{a_1} \cdots (-)^{a_{k-1} - 1} (+)^{b_{k-1}} (-)^{a_k} \\
&& + (a_k + 1) (-)^{a_1} (+)^{b_1} \cdots (-)^{a_{k-1}} (+)^{b_{k-1}} (-)^{a_k - 1}.
\end{array}
\]
\qed
\end{lem}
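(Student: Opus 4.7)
The plan is to compute $d^+ w$ directly from the definition $d^+ = \sum_{i=0}^{n_-} A_+^{west,i}$, applying the explicit description of each face operator recalled just before the lemma. No geometry should be needed; the whole statement is a straightforward accounting in $\Z_2$.

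First I would enumerate the summands $A_+^{west,i}(w)$ for $i = 0, 1, \ldots, n_- - 1$. Each such operator deletes the $(i+1)$th $-$ sign from $w$. Within a single block $(-)^{a_j}$ all minus signs are indistinguishable, so the $a_j$ values of $i$ in the range $a_1 + \cdots + a_{j-1} \leq i \leq a_1 + \cdots + a_j - 1$ all yield the same word, namely the word obtained from $w$ by reducing $a_j$ to $a_j - 1$. Collecting these contributions, the partial sum $\sum_{i=0}^{n_- - 1} A_+^{west,i}(w)$ equals
\[
\sum_{j=1}^{k} a_j \, (-)^{a_1}(+)^{b_1} \cdots (-)^{a_j - 1}(+)^{b_j} \cdots (-)^{a_k}(+)^{b_k}.
\]

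Next I would deal with the remaining summand $A_+^{west,n_-}(w)$. If $b_k > 0$, then $w$ ends in a $+$ and this operator returns $0$; combined with the previous paragraph, this gives exactly the first formula of the lemma. If $b_k = 0$, then $w$ ends in a $-$ (necessarily the $n_-$th and last $-$ sign), and $A_+^{west,n_-}(w)$ deletes that final $-$. The resulting word coincides with the $j = k$ term already appearing (with coefficient $a_k$) in the previous sum, so the two contributions add to give coefficient $a_k + 1$ on that term. All other $j < k$ terms are unaffected, matching the second formula of the lemma.

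There is no real obstacle; the only subtlety — and the single point where care is required — is to recognise that when $w$ ends in $-$, the final $-$ is deleted by \emph{two} distinct summands of $d^+$, namely $A_+^{west,n_- - 1}$ and $A_+^{west,n_-}$, so their contributions must be added rather than identified, giving the coefficient $a_k + 1$ rather than $a_k - 1$.
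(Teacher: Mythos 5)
Your computation is correct and is exactly the argument the paper intends: the lemma is obtained directly by summing the face operators $A_+^{west,i}$, grouping the $a_j$ identical deletions within each block, and noting that the extra summand $A_+^{west,n_-}$ contributes $0$ when $w$ ends in $+$ and duplicates the $j=k$ term (giving coefficient $a_k+1$) when $w$ ends in $-$. Your emphasis on the final $-$ being removed by two distinct summands is precisely the one point of care in the paper's (essentially immediate) proof.
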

So when $w$ ends in $+$, $d^+$ behaves just like (non-commutative) ``partial differentiation by $-$, $d^+ = \frac{\partial}{\partial -}$''. When $w$ ends in $-$, there is an extra term. From this it is easy to see that $(d^+)^2 = 0$ directly.

All this applies analogously on the eastside. We have a simplicial structure with: face maps $d_i^- = A_-^{east,i}$; degeneracy maps $s_j^- = B_+^{east,j}$; a boundary operator $d^- = \sum_{i=0}^{n_+} d_i^-$ which is ``$d^- = \frac{\partial}{\partial +}$'' on words ending in $-$, else there is an extra term; and a chain complex where $n_-$ is constant and $n_+$ can be regarded as ``dimension'', which is a ``northwest--southeast'' diagonal of Pascal's triangle.
\[
\xymatrix{ 
&&& \Z_2^{\binom{0}{0}} &&& \\
&& \Z_2^{\binom{1}{0}} \ar[ur]^{d^+} && \Z_2^{\binom{1}{1}} \ar[ul]_{d^-} && \\
& \Z_2^{\binom{2}{0}} \ar[ur]^{d^+} && \Z_2^{\binom{2}{1}} \ar[ul]_{d^-} \ar[ur]^{d^+} && \Z_2^{\binom{2}{2}} \ar[ul]_{d^-}&
}
\]
The chain complex groups $C^{+,n_+}_{n_-} = C^{-,n_-}_{n_+} = SFH(T,n+1,e)$. It is not difficult to see that the two boundary operators $d^-, d^+$ commute; they are essentially partial differentiation by different variables, though some consideration must be paid to the final term. Thus we obtain a double complex structure on the categorified Pascal's triangle. The homology of the chain complexes is rather uninteresting.
\begin{prop}[Westside/eastside homology]
For all $i$, the homology of the complex $\left( C^{+,n_+}_*, d^+ \right)$ or $\left( C^{-,n_-}_*, d^- \right)$ is zero.
\end{prop}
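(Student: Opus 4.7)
The plan is to show contractibility by constructing an explicit chain homotopy. Morally, the formula $B_- = B_-^{west,-1}$ noted in Section \ref{symbolic_outermost} says that the simplicial structure carries an ``extra degeneracy'' in degree $-1$, and a standard simplicial argument then yields a contracting homotopy.

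Concretely, for the westside complex I would define $h : C^{+,n_+}_{n_-} \To C^{+,n_+}_{n_- + 1}$ to be the creation operator $B_-$. Since $B_-$ preserves $n_+$ and raises $n_-$ by one, it sits in the correct bidegree, and on words it simply prepends a $-$. The key step is to verify the two simplicial-type identities
\[
A_+^{west,0} \circ B_- \;=\; \mathrm{id},
\qquad
A_+^{west,i} \circ B_- \;=\; B_- \circ A_+^{west,i-1} \quad \text{for } 1 \leq i \leq n_- + 1.
\]
The first is immediate: $B_-$ prepends a $-$, and $A_+^{west,0}$ then deletes it. The second, for $1 \leq i \leq n_-$, is equally clear, since the $(i{+}1)$'th $-$ of $-w$ is the $i$'th $-$ of $w$. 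The only delicate case is the boundary index $i = n_- + 1$, where one splits according to whether $w$ ends in $+$ (both sides vanish) or in $-$ (both sides equal $B_-$ applied to the word obtained by deleting the final $-$ of $w$). This is the one place where the inhomogeneous last-block coefficient $a_k + 1$ in the formula for $d^+$ on words ending in $-$ enters the bookkeeping, but it cancels out exactly right.

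Given these identities, a telescoping computation yields
\[
d^+ h(w) \;=\; \sum_{i=0}^{n_- + 1} A_+^{west,i} B_-(w) \;=\; w + \sum_{i=1}^{n_- + 1} B_- A_+^{west,i-1}(w) \;=\; w + h\, d^+(w),
\]
so $d^+ h + h d^+ = \mathrm{id}$ (in $\Z_2$). Hence $\mathrm{id}$ is null-homotopic and the complex is acyclic. The eastside complex is handled by the symmetric argument with $h = B_+$ and the identities $A_-^{east,0} \circ B_+ = \mathrm{id}$ and $A_-^{east,j} \circ B_+ = B_+ \circ A_-^{east,j-1}$ for $j \geq 1$. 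The main (essentially only) obstacle is the combinatorial bookkeeping at the two boundary indices $i = 0$ and $i = n_- + 1$, since in between the face-degeneracy identity is purely mechanical; once those two endpoints are checked the contracting homotopy writes itself.
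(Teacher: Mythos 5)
Your proposal is correct and is essentially the paper's own proof: both use the creation operator $B_-$ (resp. $B_+$) as a contracting chain homotopy via the identities $A_+^{west,0} B_- = 1$ and $A_+^{west,i} B_- = B_- A_+^{west,i-1}$ for $i > 0$, giving $d^+ B_- + B_- d^+ = 1$. Your extra care at the boundary index $i = n_- + 1$ is a correct (and welcome) elaboration of a case the paper folds into the general identity, not a different argument.
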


We argue similarly to Frabetti in \cite{FraSimplicial}, in the context of planar binary trees. (Planar binary trees have a nice bijective correspondence with chord diagrams.)
\begin{proof}
We consider $C^{+,n_+}_*$; the other case is similar. Note that the ``original'' creation operator $B_- = B_-^{west,-1}: C^{+,n_+}_* \To C^{+,n_+}_{*+1}$ satisfies $A_+^{west,0} B_- = 1$ and $A_+^{west,i} B_- = B_- A_+^{west,i-1}$ for $i>0$. Hence for a word $w \in W(n_-, n_+)$, we have $B_- d^+ + d^+ B_- = 1$:
\begin{align*}
\left( B_- d^+ + d^+ B_- \right) w &= B_- \sum_{i=0}^{n_-} A_+^{west,i} w + \sum_{i=0}^{n_- + 1} A_+^{west,i} B_- w \\
&= A_+^{west,0} B_- w + \sum_{i=1}^{n_- + 1} \left( A_+^{west,i} B_- + B_- A_+^{west,i-1} \right) w.
\end{align*}
So $B_-$ is a chain homotopy from the chain maps $1$ to $0$.
\end{proof}
More directly, if $w$ is a cycle, $d^+ w = 0$, then $B_- d^+ w + d^+ B_- w = d^+ B_- w = w$, so $w = d^+ (B_- w)$ is a boundary. We have now proved proposition \ref{prop_pascal_double_complex}.

\subsection{QFT and higher categorical considerations}
\label{sec_QFT_categorical}

\subsubsection{A contact 2-category}

\label{sec_2-category}

We have shown that the objects of $\mathcal{C}(D^2,n)$ are chord diagrams $\Gamma$; such $\Gamma$ are naturally described by pairs $\Gamma_- \preceq \Gamma_+$, hence can be considered as morphisms in $\mathcal{C}(\mathcal{U}(n_-,n_+))$, or as cones thereof (proposition \ref{prop_chord_diagrams_cones}). That is, ``objects are morphisms''; so morphisms become ``morphisms between morphisms''. 

This leads us to a 2-category. The objects of Honda's category become our 1-morphisms; and its 1-morphisms become our 2-morphisms.
\begin{defn}[Contact 2-category]
The \emph{contact 2-category} $\mathcal{C}(n+1,e)$ is as follows.
\begin{enumerate}
\item The objects are words in $W(n_-, n_+)$, i.e. with $n_-$ $-$ signs and $n_+$ $+$ signs.
\item The 1-morphisms $w_0 \rightarrow w_1$ and their composition are defined by the partial order $\preceq$. Equivalently, a 1-morphism is a chord diagram $\Gamma = [\Gamma_{w_0}, \Gamma_{w_1}]$ with $n+1$ chords and relative euler class $e$; composition of chord diagrams $\Gamma = [\Gamma_{w_0}, \Gamma_{w_1}]$ and $\Gamma' = [\Gamma_{w_1}, \Gamma_{w_2}]$ is $\Gamma' \circ \Gamma = [\Gamma_{w_0}, \Gamma_{w_2}]$.
\item The 2-morphisms $\Gamma_0 \rightarrow \Gamma_1$ are the tight contact structures on $\M(\Gamma_0, \Gamma_1)$, along with one extra 2-morphism $\{*\}$ for overtwisted contact structures. There are two types of composition of 2-morphisms.
\begin{itemize}
\item 
Given two 2-morphisms $\Gamma_0 \stackrel{\xi_0}{\rightarrow} \Gamma_1 \stackrel{\xi_1}{\rightarrow} \Gamma_2$, their \emph{vertical composition} $\xi_0 \cdot \xi_1$ is the 2-morphism $\Gamma_0 \stackrel{\xi_0 \cup \xi_1}{\rightarrow} \Gamma_2$ which is the contact structure on $\M(\Gamma_0, \Gamma_2)$ obtained by stacking $\xi_0, \xi_1$.
\item
Given three objects $w_0, w_1, w_2$, two pairs of 1-morphisms between them
\[
 w_0 \stackrel{\Gamma_0, \Gamma'_0}{\To} w_1,
\quad w_1 \stackrel{\Gamma_1, \Gamma'_1}{\To} w_2,
\]
and two $2$-morphisms
\[
 \Gamma_0 \stackrel{\xi_0}{\To} \Gamma'_0, \quad \Gamma_1 \stackrel{\xi_1}{\To} \Gamma'_1,
\quad \text{i.e.} \quad 
\xymatrix{
w_0 \ar@/_2pc/[rr]^{\Gamma_0}="g0" \ar@/^2pc/[rr]_{\Gamma'_0}="gp0" && w_1 \ar@/_2pc/[rr]^{\Gamma_1}="g1" \ar@/^2pc/[rr]_{\Gamma'_1}="gp1" && w_2,
\ar@{=>}_{\xi_0} "g0";"gp0" 
\ar@{=>}_{\xi_1} "g1";"gp1"
}
\]
the \emph{horizontal composition} $\xi_0 \xi_1$ is a morphism $(\Gamma_1 \circ \Gamma_0) \rightarrow (\Gamma'_1 \circ \Gamma'_0)$ as follows. Since 1-morphisms arise from $\preceq$, $\Gamma_0 =\Gamma'_0$ and $\Gamma_1 = \Gamma'_1$. Thus each $\xi_i$ is a contact structure on $\M(\Gamma_i, \Gamma_i)$. If these are both the unique tight contact structures, then $\xi_0 \xi_1$ is the unique tight contact structure on $\M(\Gamma_1 \circ \Gamma_0, \Gamma_1 \circ \Gamma_0)$. Otherwise $\xi_0 \xi_1 = \{*\}$.
\end{itemize}
\end{enumerate}
\end{defn}
As a 1-category, $\mathcal{C}(n+1,e) \cong W(n_-, n_+) \cong \mathcal{C}^b (\mathcal{U}(n_-, n_+))$.
\begin{lem}
$\mathcal{C}(n+1,e)$ is a 2-category.
\end{lem}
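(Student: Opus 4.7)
The plan is to verify the 2-category axioms in turn, taking advantage of two structural features of this setup: (i) since the 1-morphisms arise from the partial order $\preceq$ (via theorem \ref{main_theorem} and proposition \ref{lem_Cb_partial_order}), there is at most one 1-morphism between any two objects, which severely constrains horizontal composition; and (ii) whenever $\M(\Gamma_0, \Gamma_1)$ is tight, its tight contact structure is unique up to isotopy, so any non-$*$ 2-morphism is determined by its source and target. These two facts reduce most of the verifications to an almost formal boolean check.

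First I would confirm the underlying 1-category structure. The composition $[\Gamma_{w_1}, \Gamma_{w_2}] \circ [\Gamma_{w_0}, \Gamma_{w_1}] = [\Gamma_{w_0}, \Gamma_{w_2}]$ is well-defined and associative by the transitivity of $\preceq$, and the identity on $w$ is $\id_w = \Gamma_w = [\Gamma_w, \Gamma_w]$. Next, for each pair of 1-morphisms $\Gamma_0, \Gamma_1$ I would check that vertical composition makes the collection of 2-morphisms $\Gamma_0 \to \Gamma_1$ into a category. Stacking of cobordisms is manifestly associative. The identity 2-morphism $\id_\Gamma$ is the unique tight contact structure on $\M(\Gamma, \Gamma)$, which exists by lemma \ref{MGammaGamma}; stacking $\id_\Gamma$ with any tight $\xi$ again yields a tight contact structure on $\M(\Gamma_0, \Gamma_1)$ with the same boundary as $\xi$, hence equals $\xi$ by uniqueness. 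Compositions involving $*$ return $*$, consistently.

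Horizontal composition requires more bookkeeping, but thanks to (i) it is almost forced. Since 1-morphisms are determined by endpoints, each $\xi_i$ appearing in the definition is necessarily a 2-endomorphism of a single 1-morphism $\Gamma_i$, and the composite $\xi_0\xi_1$ is a 2-endomorphism of $\Gamma_1 \circ \Gamma_0$; the rule ``tight if both are tight, else $*$'' is therefore well-defined. Associativity of horizontal composition reduces to noting that both $(\xi_0\xi_1)\xi_2$ and $\xi_0(\xi_1\xi_2)$ are the unique tight contact structure on $\M(\Gamma_2\circ\Gamma_1\circ\Gamma_0, \Gamma_2\circ\Gamma_1\circ\Gamma_0)$ when all three inputs are tight, and $*$ otherwise. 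The horizontal identity at $w$ is $\id_{\id_w} = \id_{\Gamma_w}$.

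The only nontrivial axiom is the interchange law $(\beta\cdot\alpha)(\delta\cdot\gamma) = (\beta\delta)\cdot(\alpha\gamma)$ for vertically and horizontally composable 2-morphisms. Both sides are 2-endomorphisms of $\Gamma_1 \circ \Gamma_0$. The vertical composite $\beta\cdot\alpha$ is tight iff both $\alpha, \beta$ are tight, and similarly $\beta\delta$ is tight iff both $\beta, \delta$ are tight; hence each side of the interchange is tight iff all four of $\alpha, \beta, \gamma, \delta$ are tight, and $*$ otherwise. In the tight case both sides are the unique tight contact structure on $\M(\Gamma_1\circ\Gamma_0, \Gamma_1\circ\Gamma_0)$ and therefore agree. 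The one subtlety to watch is that stacking two tight contact structures on $\M(\Gamma, \Gamma)$ really does give a tight structure — this holds because each is the $I$-invariant structure (the unique tight one, by uniqueness) and stacking preserves $I$-invariance. The main obstacle, such as it is, is this bookkeeping of sources, targets, and tightness across the two composition operations; the uniqueness of tight contact structures on any $\M(\Gamma, \Gamma')$ does the real work throughout.
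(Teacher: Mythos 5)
Your proposal is correct and follows essentially the same route as the paper: verify the 2-category axioms directly, using the uniqueness of the tight contact structure on any $\M(\Gamma,\Gamma')$ and the fact that 1-morphisms come from the partial order $\preceq$, so that horizontally composable 2-morphisms are forced to be 2-endomorphisms and every non-$*$ case in the interchange law reduces to identity 2-morphisms (the $I$-invariant structure on $\M(\Gamma,\Gamma)$). Your explicit note that stacking two tight structures on $\M(\Gamma,\Gamma)$ stays tight via $I$-invariance is exactly the point the paper makes by calling the identity ``a thickened convex surface.''
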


\begin{proof}
We verify the axioms of a 2-category as stated in \cite{BaezIntronCat}; we clearly already have a 1-category. That vertical composition is associative is clear, being a union of contact structures. Note that $\{*\}$ acts as a zero for this composition.

That horizontal composition is associative is also clear: if any $\xi_i$ is overtwisted then the horizontal composition is $\{*\}$; else associativity follows immediately since 1-morphisms arise from a partial order. Again $\{*\}$ acts as a zero.

There is an identity 2-morphism for each 1-morphism $\Gamma$, namely the tight contact structure on $\M(\Gamma, \Gamma)$. As a thickened convex surface, its vertical composition is indeed the identity; since it is not $\{*\}$, its horizontal composition is also the identity.

The ``interchange law''
\[
 (\xi_1 \cdot \xi_2)(\xi_3 \cdot \xi_4) = (\xi_1 \xi_3) \cdot (\xi_2 \xi_4),
\]
perhaps best understood from the diagram,
\[
\xymatrix{
w_0 \ar@/^3pc/[rr]_{\Gamma}="gtl" \ar[rr]_{\Gamma}="gcl" \ar@/_3pc/[rr]^{\Gamma}="gbl"
&&
w_1 \ar@/^3pc/[rr]_{\Gamma'}="gtr" \ar[rr]_{\Gamma'}="gcr" \ar@/_3pc/[rr]^{\Gamma'}="gbr"
&&
w_2, \ar@{=>}_{\xi_1} "gbl";"gcl" \ar@{=>}_{\xi_2} "gcl";"gtl" \ar@{=>}_{\xi_3} "gbr";"gcr" \ar@{=>}_{\xi_4} "gcr";"gtr"
}
\]
is only defined when the 2-morphisms $\xi_1, \xi_2$ are contact structures on some $\M(\Gamma, \Gamma)$, where $\Gamma = [\Gamma_{w_0}, \Gamma_{w_1}]$; and similarly the 2-morphisms $\xi_3, \xi_4$ are contact structures on some $\M(\Gamma', \Gamma')$, where $\Gamma' = [\Gamma_{w_1}, \Gamma_{w_2}]$. If any of these is $\{*\}$, we have $\{*\}$ on both sides. If not, then $\xi_1 = \xi_2 = 1_\Gamma$ and $\xi_3 = \xi_4 = 1_{\Gamma'}$, being standard neighbourhoods of chord diagrams; thus both sides are equal to $1_{\Gamma' \circ \Gamma}$.
\end{proof}

We have now proved proposition \ref{contact_2_category}.

\label{sec_improving_2-category}

Our contact 2-category is specific to an $n$ and $e$; in over all $n$ and $e$, we obtain a family of 2-categories indexed by words on $\{-,+\}$. But these words can be regarded as \emph{paths} on Pascal's triangle, suggesting the existence of a 3-category.

\subsubsection{Dimensionally-reduced TQFT}

\label{sec_dimensionally-reduced_TQFT}
\label{sec_QFT}

For sutured manifolds $(\Sigma \times S^1, F \times S^1)$, the TQFT-like properties of $SFH$ can be regarded as ``dimensionally reduced'', $(1+1)$-dimensional (see \cite{HKM08}). Here $\Sigma$ is a surface with boundary, and $F \subset \partial \Sigma$ is finite; $(T,n)$ is precisely the case $\Sigma = D^2$.

In \cite{HKM08}, it is computed that, if $|F| = 2n$,
\[
 SFH(\Sigma \times S^1, F \times S^1) = \Z_2^{2^{n-\chi(\Sigma)}}.
\]
As in the case $\Sigma = D^2$, contact structures on $(\Sigma \times S^1, F \times S^1)$ correspond bijectively to dividing sets $K$ drawn on $\Sigma$ without any contractible components \cite{Hon00II, Gi02}. However, on higher genus surfaces, $K$ may be tight with closed components.

The dimensionally-reduced TQFT has the following properties: see \cite{HKM08} for further details. To each $(\Sigma, F)$ we associate the vector space $V(\Sigma, F) = \Z_2^{2^{n - \chi(\Sigma)}}$. To every properly embedded 1-manifold $K \subset \Sigma$ with $\partial K = F$, dividing $\Sigma$ into positive and negative regions, consistent with the signs on $\partial \Sigma - F$, we associate an element $c(K) \in V(\Sigma, F)$. Moreover, $V(\Sigma, F)$ is generated by contact elements. We have seen these properties (and much more) for discs. 

TQFT-inclusion gives gluing maps in the dimensionally-reduced case. Let $\gamma, \gamma'$ be disjoint arcs of $\partial \Sigma$, with endpoints not in $F$. Let $\tau$ be a map $\gamma \stackrel{\sim}{\to} \gamma'$ preserving positive and negative regions. Gluing $\gamma$ to $\gamma'$ by $\tau$ produces a $(\Sigma', F')$, and we obtain a map $\Phi_\tau: V(\Sigma, F) \To V(\Sigma',F')$. This map takes contact elements $c(K) \mapsto c(\bar{K})$, where $\bar{K}$ is obtained by gluing $K$ by $\tau$.

In fact \cite[lemma 7.9]{HKM08}, if $\gamma, \gamma'$ each intersect $F$ precisely once, then $\Phi_\tau$ is an isomorphism. Such a gluing decreases $n$ by $1$ and decreases $\chi$ by $1$, so the dimension $2^{n - \chi(\Sigma)}$ of the vector spaces is preserved. In particular, for any $(\Sigma, F)$, there exists an isomorphism $V(D^2, F') \cong V(\Sigma, F)$ obtained from a composition of such gluing maps. Such an isomorphism takes our basis of contact elements in $V(D^2, F')$, with all of its rich structure developed in the foregoing, to a basis of contact elements for $V(\Sigma, F)$.

However, although there may be an isomorphism between any $V(\Sigma, F)$ and some $V(D^2, F')$, this need not not give a bijection between contact elements; or  between nonzero contact elements. Every contact element in $V(D^2, F')$ gives a corresponding contact element in $V(\Sigma, F)$; but not all contact elements in $V(\Sigma, F)$ arise in this way; only those arising from dividing sets which intersect every gluing arc precisely once. That is, the isomorphism $V(D^2, F') \rightarrow V(\Sigma, F)$ induces an injective but not surjective map on contact elements.

A simple case of  such an isomorphism is when $\partial \Sigma$ has components with two points of $F$.  We may simply glue up such a boundary component of $\Sigma$ and obtain a surface with one fewer boundary component. In a standard topological quantum field theory picture, this is a good reason why a chord diagram with 1 chord can be regarded as ``the vacuum''. It may be glued up, or ``filled in'', or ``capped off'', without any effect. It is, effectively, not there. A cobordism from vacua is equivalent to a cobordism from the empty set, in this TQFT.

We also remark that our chord diagrams are bijective, in an explicit fashion, with \emph{planar binary trees}, and the vector space generated by such objects has been considered previously; they have also been considered in physical contexts. See, e.g., \cite{ChaFra, FraSimplicial, FraGroups, HNT, Loday-Ronco, Novelli-Thibon}. The bypass relation translates into a similar linear relation on trees, which appears not to have been considered previously, so far as the author could find.

%
%
%
%

\bibliographystyle{amsplain}
\bibliography{danbib}


\end{document}